\titleclass{\chapter}{top}
\definecolor{darkblue}{HTML}{00008B} 
\definecolor{forestgreen}{HTML}{006600} 
\definecolor{halfgray}{gray}{0.55} 
\definecolor{webgreen}{rgb}{0,.5,0}
\definecolor{webbrown}{rgb}{.6,0,0}
\definecolor{Maroon}{cmyk}{0, 0.87, 0.68, 0.32}
\definecolor{RoyalBlue}{cmyk}{1, 0.50, 0, 0}
\providecommand\phantomsection{}
\renewcommand*{\backrefalt}[4]{%
\ifcase #1 %
[No citations.]%
\or
[$\uparrow$~#2]%
\else
[$\uparrow$~#2]%
\fi
}
\titleformat{\chapter}[display] 
  {\normalfont\Large \color{darkblue}} 
  {                       
  \LARGE\MakeUppercase{\chaptertitlename} \Huge \thechapter \filright%
  }
  {1pt}                   
  {\titlerule \vspace{0.9pc} \filright \color{darkblue}}   
  [\color{darkblue} \vspace{0.9pc} \filright {\titlerule}]
\newcommand*{\kopffusskomplett}{
	\pagestyle{scrheadings}
	\automark[section]{chapter}
	\clearscrheadfoot
	\lehead{\leftmark}
	\rohead{\rightmark}
	\lefoot[\pagemark]{\pagemark}
	\rofoot[\pagemark]{\pagemark}
}
\newcommand*{\kopffussTOC}{
	\pagestyle{scrheadings}
	\clearscrheadfoot
	\lehead{Table of contents}
	\lefoot[\pagemark]{\pagemark}
	\rofoot[\pagemark]{\pagemark}
}
\newcommand*{\kopffussEN}{
	\pagestyle{scrheadings}
	\automark[section]{chapter}
	\clearscrheadfoot
	\lehead{Preface}
	\lefoot[\pagemark]{\pagemark}
	\rofoot[\pagemark]{\pagemark}
}
\newcommand*{\kopffussBib}{
	\pagestyle{scrheadings}
	\automark[section]{chapter}
	\clearscrheadfoot
	\lehead{Bibliography}
	\lefoot[\pagemark]{\pagemark}
	\rofoot[\pagemark]{\pagemark}
}
\newcommand*{\kopffussleer}{
	\clearscrheadfoot
	\pagestyle{empty}
}
\theoremstyle{definition}    
\newtheorem{thm}{Theorem} 
\newtheorem{theorem}{Theorem}[chapter]
\newaliascnt{defi}{theorem}  
\newaliascnt{lemma}{theorem}  
\newtheorem{lemma}[lemma]{Lemma}   
\newaliascnt{prop}{theorem}  
\newtheorem{prop}[prop]{Proposition}
\newaliascnt{example}{theorem}  
\newtheorem{example}[example]{Example} 
\newaliascnt{rem}{theorem}  
\newtheorem{rem}[rem]{Remark}   
\newaliascnt{cor}{theorem}  
\newtheorem{cor}[cor]{Corollary}   
\renewcommand*{\epsilon}{\varepsilon}                                   
\renewcommand*{\rho}{\varrho}                                   				
\newcommand*{\nach}{\rightarrow}                                        
\newcommand*{\sep}{\; \vrule \;}                                        
\newcommand*{\N}{\mathbb{N}}                                            
\newcommand*{\R}{\mathbb{R}}                                            
\newcommand*{\C}{\mathbb{C}}                                            
\newcommand*{\E}{\mathbb{E}}                                            
\renewcommand*{\Pr}{\mathbb{P}}                                         
\newcommand*{\Hi}{\mathcal{H}}                                          
\newcommand*{\B}{\mathcal{B}}                                           
\newcommand*{\F}{\mathcal{F}}                                           
\newcommand*{\K}{\mathcal{K}}                                           
\newcommand*{\G}{\mathcal{G}}                                           
\newcommand*{\A}{\mathcal{A}}                                           
\renewcommand*{\S}{\mathcal{S}}                                         
\newcommand*{\I}{\mathcal{I}}                                           
\newcommand*{\AI}{\mathfrak{A}}                                         
\newcommand*{\SI}{\mathfrak{S}}                                         
\newcommand*{\M}{\mathcal{M}}                                           
\renewcommand*{\P}{\mathcal{P}}                                         
\newcommand*{\LO}{\mathcal{L}}                                          
\renewcommand*{\L}{\mathrm{L}}                                          
\renewcommand*{\a}{\mathfrak{a}}																				
\newcommand*{\fu}{\mathfrak{u}}																					
\renewcommand*{\l}{\ell}
\newcommand*{\leer}{\emptyset}                                          
\newcommand*{\0}{\mathcal{O}}                                           
\newcommand{\dlambda}{\,\mathrm{d}\uplambda}														
\newcommand{\wor}{\mathrm{wor}}
\newcommand{\lin}{\mathrm{lin}}
\newcommand{\all}{\mathrm{all}}
\newcommand{\init}{\mathrm{init}}
\newcommand{\eps}{\epsilon}
\newcommand{\ab}{\mathrm{abs}}
\newcommand{\no}{\mathrm{norm}}
\newcommand*{\norm}[1]{\left\| #1 \right\|}                             
\newcommand*{\abs}[1]{\left| #1 \right|}                                
\newcommand*{\floor}[1]{\left\lfloor #1 \right\rfloor}                  
\newcommand*{\ceil}[1]{\left\lceil #1 \right\rceil}                     
\newcommand*{\link}[1]{(\ref{#1})}                                      
\newcommand*{\distr}[2]{\left\langle #1, #2 \right\rangle}                         
\renewcommand{\max}[1]{ \mathop{\mathrm{max}}\left\{#1\right\} }        
\renewcommand{\min}[1]{ \mathop{\mathrm{min}}\left\{#1\right\} }
\newcommand*{\maxx}{\mathop{\mathrm{max}}\displaylimits}                
\newcommand*{\spann}[1]{\mathop{\mathrm{span}}\left\{#1\right\}}        
\renewcommand{\exp}[1]{ \mathop{\mathrm{exp}}\left( #1 \right) }        
\newcommand{\BIGOP}[1]{\mathop  
 {\mathchoice 
        {\raise-0.22em\hbox{\huge $#1$}} 
        {\raise-0.05em\hbox{\Large $#1$}}{\hbox{\large $#1$}}{#1}
 }}
\newcommand{\bigtimes}{\BIGOP{\times}}                                  
\DeclareMathOperator*{\esssup}{ess-sup}																	
\DeclareMathOperator{\rank}{rank}																	
\newcommand*{\id}{\mathrm{id}}                                          
\DeclareMathOperator{\inner}{int}																	
\DeclareMathOperator{\trace}{trace}																	
\newcommand{\wrt}{w.r.t.\ }
\newcommand{\ie}{i.e.\ }
\newcommand{\eg}{e.g.}
\newcommand\ackname{Acknowledgements}
  \newenvironment{acknowledgements}{%
      \titlepage
      \null\vfil
      \@beginparpenalty\@lowpenalty
      \begin{center}%
        \bfseries \ackname
        \@endparpenalty\@M
      \end{center}}%
     {\par\vfil\null\endtitlepage}
  \newenvironment{acknowledgements}{%
      \if@twocolumn
        \section*{\abstractname}%
      \else
        \small
        \begin{center}%
          {\bfseries \ackname\vspace{-.5em}\vspace{\z@}}%
        \end{center}%
        \quotation
      \fi}
      {\if@twocolumn\else\endquotation\fi}
\begin{document}
	\kopffussleer
\titlehead{
		\begin{center}
				\includegraphics[width=4.7cm, height=4.7cm,clip]{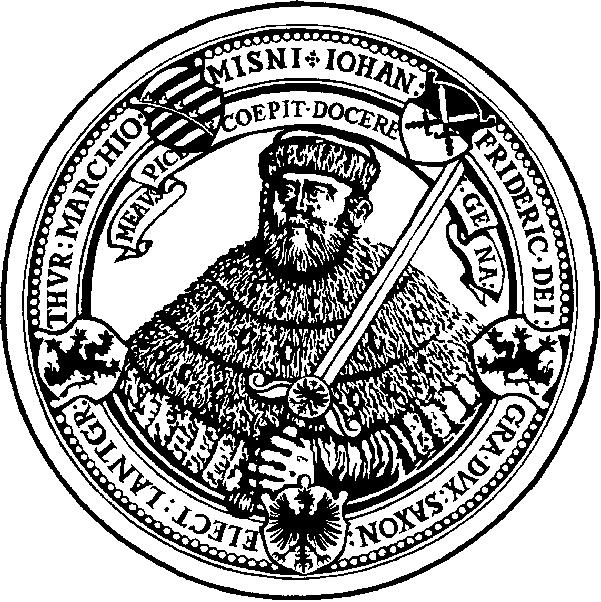}
		\end{center}
}

\title{
		Several Approaches to Break the\\
		Curse of Dimensionality
}

\author{
		\textbf{Dissertation}\\
		\normalsize{\textsl{zur Erlangung des akademischen Grades}}\\
		\normalsize{\textsl{doctor rerum naturalium (Dr. rer. nat.)}}\\
		\\[2.2 cm]
		\normalsize{vorgelegt dem Rat der}\\
		\normalsize{Fakult\"at f\"ur Mathematik und Informatik}\\
		\normalsize{der Friedrich-Schiller-Universit\"at Jena}\\
		\\[1 cm]
}

\date{
	\normalsize{von Dipl.-Math. Markus Weimar}\\
	\normalsize{geboren am 28. Februar 1986 in Weimar}
}

\lowertitleback{
	\textbf{Gutachter:}
	\vspace{5 mm}
	\begin{enumerate}[label=\arabic*.), leftmargin=*]
		\item Prof. Dr. Erich Novak (Jena) \, -- \, \emph{summa cum laude}
		\item Prof. Dr. Aicke Hinrichs (Rostock) \, -- \, \emph{summa cum laude}
		\item Prof. Dr. Henryk Wo\'zniakowski (New York, Warschau) \, -- \, \emph{summa cum laude}
	\end{enumerate}
	\vspace{8 mm}
	\textbf{Tag der \"offentlichen Verteidigung:} 13.05.2013
}

\pagenumbering{alph}
\maketitle

	\begin{acknowledgements}
I would like to express my deepest gratitude to my supervisor Professor Dr. Erich Novak for numerous hints, 
suggestions and remarks during the preparation of this work. 
Furthermore, I would like to thank all the members of the research groups ``Theoretical numerics'' 
and ``Function spaces'' in Jena for supporting me during the times of my Diploma thesis and my Ph.D. studies.
Finally, I like to thank our friends from the IBC community for many fruitful discussions at several conferences during the last years.
\end{acknowledgements}

	\frontmatter
	\kopffussTOC
	\phantomsection
	\pdfbookmark[1]{Table of contents}{table}
	\tableofcontents
	\cleardoubleplainpage

	\kopffussEN
	\phantomsection
\chapter*{Preface}
\addcontentsline{toc}{chapter}{Preface}
In modern science the efficient numerical treatment of high-dimensional problems becomes more and more important.
A fundamental insight of the theory of \emph{information-based complexity} (\emph{IBC} for short) is that the computational hardness of a problem can not be described properly only by the rate of convergence.
An impressive example that illustrates this fact was given recently by Novak and Wo\'{z}niakowski~\cite{NW09}. 
They studied a problem for which an exponential number of information operations is needed in order to reduce the initial error, although there exist algorithms which provide an arbitrary large rate of convergence.
Problems that yield this exponential dependence are said to suffer from the \emph{curse of dimensionality}.
While analyzing numerical problems it turns out that we can often vanquish this curse by exploiting additional structural properties.
The aim of this thesis is to present several approaches of this type.

A numerical problem $S$ is given by a sequence of compact linear operators~$S_d$
acting between normed spaces $\F_d$ and $\G_d$, where $d\in\N$.
In general we seek for algorithms~$A_{n,d}$ that approximate $S_d$ while using at most $n\in\N_0$ pieces of information on the input elements $f\in\F_d$.
The quality of this approximation is measured by the so-called \emph{worst case error}
\begin{gather*}
			\Delta^\wor(A_{n,d}; S_d) = \sup_{\norm{f\sep \F_d}\leq 1} \norm{S_d(f)-A_{n,d}(f) \sep \G_d}
\end{gather*}
which we try to minimize.
Problems based on tensor product structures, as well as linear algorithms that are easy to implement,
are of particular interest.
The minimal number of information operations needed to solve a given problem $S$ to within a threshold~$\eps>0$
is called \emph{information complexity}:
\begin{gather*}
		n(\eps,d;S_d) = \min{n\in\N_0 \sep \exists A_{n,d}\colon \Delta^\wor(A_{n,d}; S_d) \leq \eps }, 
		\quad \eps>0, d\in\N.
\end{gather*}
If this quantity grows exponentially fast with the dimension~$d$ then $S$ suffers from the curse of dimensionality. In the case where $n(\eps,d;S_d)$ is neither exponential in $d$, nor in $\eps^{-1}$, the problem $S$ is said to be \emph{weakly tractable}.
A special case is described by the notion of \emph{polynomial tractability} for which the information complexity needs to be bounded from above by a polynomial in $d$ and $\eps^{-1}$, \ie
\begin{gather*}
		n(\eps,d;S_d) \leq C\, \eps^{-p} \, d^q \quad \text{for some} \quad C,p>0,\, q\geq 0 \quad \text{and all} \quad \eps\in(0,1], d\in\N.
\end{gather*}
If the latter inequality is valid even for $q=0$ then $S$ is called \emph{strongly polynomially tractable}.\\

Next we present the three approaches to exploit structural properties we study in this thesis and
we briefly summarize our main complexity results.

A rather simple class of problems $S$ is given by the set of all compact linear operators between tensor products of Hilbert spaces.
Especially the complexity of tensor product problems $S_d=\bigotimes_{k=1}^d S_1 \colon H_d \nach \G_d$, induced by some operator $S_1\colon H_1\nach\G_1$, is well-understood.
It depends on the non-increasingly ordered sequence $\lambda=(\lambda_m)_{m\in\N}$ of the squares of the singular values of the underlying operator~$S_1$.
In particular, it is well-known that $S=(S_d)_{d\in\N}$ is not polynomially tractable if we have $\lambda_1 \geq 1$ and $\lambda_2>0$. Actually, we are faced with the curse of dimensionality if $\lambda_1$ is strictly larger than $1$ and $\lambda_2>0$, or if $\lambda_1\geq\lambda_2=1$; cf.~\autoref{thm:unweightedtensor_abs}.\\
A first approach to modify such a problem is to scale the inner products of the source spaces~$H_d$, $d\in\N$.
We set
\begin{gather*}
		\distr{\cdot}{\cdot}_{\F_d} = \frac{1}{s_d} \, \distr{\cdot}{\cdot}_{H_d} 
		\quad \text{for some} \quad s_d>0 \quad \text{and all} \quad d\in\N
\end{gather*}
and investigate the complexity of the problem operators $S_d$ interpreted as mappings between the Hilbert spaces~$\F_d$ and $\G_d$, $d\in\N$.
The resulting problem, scaled by factors from the sequence $s=(s_d)_{d\in\N}$, then is denoted by $S_{(s)}=(S_{d,s_d}\colon \F_d \nach \G_d)_{d\in\N}$.
We study the worst case setting with respect to the absolute error criterion and prove
\begin{thm}\label{Z_en_thm:scaled_PT}
		Using the introduced notation and assuming that $\lambda_2>0$ the following assertions are equivalent:
		\begin{enumerate}[label=(\Roman{*}), ref=\Roman{*}]
				\item \label{Cond_SPT_z_en} $S_{(s)}$ is strongly polynomially tractable.
				\item \label{Cond_PT_z_en} $S_{(s)}$ is polynomially tractable.
				\item \label{Cond_sup_z_en} There exists $\tau\in(0,\infty)$ such that $\lambda\in\l_\tau$ and $\sup_{d\in\N} s_d \norm{\lambda \sep \l_\tau}^d < \infty$.
				\item \label{Cond_limsup_z_en} There exists $\rho\in(0,\infty)$ such that $\lambda\in\l_\rho$ and $\limsup_{d\nach\infty} s_d^{1/d} < \frac{1}{\lambda_1}$.
		\end{enumerate}
		If one of these (and hence all) conditions applies then the \emph{exponent of strong
polynomial tractability} is given by
				$p^* = \inf\{2\tau \sep \tau \text{ fulfills condition \link{Cond_sup_z_en}} \}$.
\end{thm}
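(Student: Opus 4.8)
The plan is to run the cycle $\link{Cond_SPT_z_en}\Rightarrow\link{Cond_PT_z_en}\Rightarrow\link{Cond_sup_z_en}\Rightarrow\link{Cond_SPT_z_en}$, to establish the equivalence $\link{Cond_sup_z_en}\Leftrightarrow\link{Cond_limsup_z_en}$ separately, and to read off $p^*$ along the way. Everything rests on the standard description of the information complexity of a compact operator between Hilbert spaces in the worst case setting with the absolute error criterion (the same description underlying \autoref{thm:unweightedtensor_abs}): rescaling the source inner product by $1/s_d$ multiplies the singular values of $S_d=\bigotimes_{k=1}^d S_1$ by $\sqrt{s_d}$, and the squares of the singular values of $S_d$ form the family of products $\prod_{k=1}^d\lambda_{j_k}$, $\mathbf{j}\in\N^d$; hence the non-increasing rearrangement $\sigma^{(d)}=(\sigma^{(d)}_m)_{m\in\N}$ of $\bigl(s_d\prod_{k=1}^d\lambda_{j_k}\bigr)_{\mathbf{j}\in\N^d}$ lists the squares of the singular values of $S_{d,s_d}$, so that $n(\eps,d;S_{d,s_d})=\abs{\{m\in\N\sep\sigma^{(d)}_m>\eps^2\}}$ and $\sum_{m\in\N}(\sigma^{(d)}_m)^\tau=(s_d\,\norm{\lambda\sep\l_\tau}^d)^\tau$ for every $\tau>0$. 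Granting this, $\link{Cond_sup_z_en}\Rightarrow\link{Cond_SPT_z_en}$ is a one-line Markov estimate: if $\tau$ fulfills $\link{Cond_sup_z_en}$ with $K:=\sup_{d\in\N}s_d\norm{\lambda\sep\l_\tau}^d$, then $n(\eps,d;S_{d,s_d})\le\eps^{-2\tau}\sum_{m\in\N}(\sigma^{(d)}_m)^\tau=\eps^{-2\tau}(s_d\norm{\lambda\sep\l_\tau}^d)^\tau\le K^\tau\eps^{-2\tau}$, so $S_{(s)}$ is strongly polynomially tractable with exponent at most $2\tau$; in particular $p^*\le\inf\{2\tau\sep\tau\text{ fulfills }\link{Cond_sup_z_en}\}$. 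The implication $\link{Cond_SPT_z_en}\Rightarrow\link{Cond_PT_z_en}$ is trivial.

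The core is $\link{Cond_PT_z_en}\Rightarrow\link{Cond_sup_z_en}$. Assume $n(\eps,d;S_{d,s_d})\le C\eps^{-p}d^q$ for all $\eps\in(0,1]$, $d\in\N$. Feeding $\eps=(Cd^q/m)^{1/p}\le1$ into this bound for integers $m\ge Cd^q$ forces $\abs{\{m'\sep\sigma^{(d)}_{m'}>\eps^2\}}\le m$, \ie the decay estimate $\sigma^{(d)}_{m+1}\le(Cd^q/m)^{2/p}$. Combining this with the trivial bound $\sigma^{(d)}_m\le\sigma^{(d)}_1=s_d\lambda_1^d$ on the first $\ceil{Cd^q}$ indices and summing $\tau$-th powers for any fixed $\tau>p/2$ (so that the tail series converges and, by the case $d=1$, also $\lambda\in\l_\tau$) yields an estimate of the shape
\[
		\bigl(s_d\,\norm{\lambda\sep\l_\tau}^d\bigr)^\tau=\sum_{m\in\N}\bigl(\sigma^{(d)}_m\bigr)^\tau\le\ceil{Cd^q}\,s_d^\tau\lambda_1^{\tau d}+\widetilde{C}\,d^q .
\]
Here $\lambda_2>0$ enters decisively: it gives $\norm{\lambda\sep\l_\tau}^\tau=\sum_j\lambda_j^\tau\ge\lambda_1^\tau+\lambda_2^\tau>\lambda_1^\tau$, so the geometric factor $(\norm{\lambda\sep\l_\tau}^\tau/\lambda_1^\tau)^d$ eventually dominates the polynomial $\ceil{Cd^q}$ and the first summand on the right is at most half the left-hand side for all large $d$. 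This yields $s_d\norm{\lambda\sep\l_\tau}^d\le C''d^{q/\tau}$ for all $d\in\N$. If $q=0$ this is already $\link{Cond_sup_z_en}$ for every $\tau>p/2$; for general $q$ one passes to any $\tau'>\tau$ and uses that $\tau\mapsto\norm{\lambda\sep\l_\tau}$ is strictly decreasing (again since $\lambda$ has at least two non-zero terms), so that $s_d\norm{\lambda\sep\l_{\tau'}}^d\le C''d^{q/\tau}(\norm{\lambda\sep\l_{\tau'}}/\norm{\lambda\sep\l_\tau})^d\to0$ and $\link{Cond_sup_z_en}$ holds with $\tau'$. Applying the argument with $q=0$ and, for each $\eta>0$, the estimate $n(\eps,d;S_{d,s_d})\le C_\eta\eps^{-(p^*+\eta)}$ coming from the definition of $p^*$, one sees that every $\tau>(p^*+\eta)/2$ fulfills $\link{Cond_sup_z_en}$, whence $\inf\{2\tau\sep\tau\text{ fulfills }\link{Cond_sup_z_en}\}\le p^*$ and the stated value of $p^*$ follows.

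Finally, $\link{Cond_sup_z_en}\Leftrightarrow\link{Cond_limsup_z_en}$ is a purely analytic statement about $\lambda$, resting on the facts that $\norm{\lambda\sep\l_\tau}\downarrow\lambda_1$ as $\tau\nach\infty$ once $\lambda\in\l_{\tau_0}$ for some $\tau_0$, while $\norm{\lambda\sep\l_\tau}>\lambda_1$ for every finite $\tau$ (once more by $\lambda_2>0$). If $\link{Cond_sup_z_en}$ holds with parameter $\tau$ and $K:=\sup_d s_d\norm{\lambda\sep\l_\tau}^d$, then $s_d^{1/d}\le K^{1/d}\norm{\lambda\sep\l_\tau}^{-1}$, so $\limsup_{d\nach\infty}s_d^{1/d}\le\norm{\lambda\sep\l_\tau}^{-1}<\lambda_1^{-1}$ and $\rho=\tau$ works in $\link{Cond_limsup_z_en}$. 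Conversely, given $\rho$ as in $\link{Cond_limsup_z_en}$, choose $\tau\ge\rho$ so large that $\norm{\lambda\sep\l_\tau}\cdot\limsup_d s_d^{1/d}<1$; then $s_d\norm{\lambda\sep\l_\tau}^d=(s_d^{1/d}\norm{\lambda\sep\l_\tau})^d$ is eventually $<1$ and hence bounded, which is $\link{Cond_sup_z_en}$.

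The step I expect to be the main obstacle is $\link{Cond_PT_z_en}\Rightarrow\link{Cond_sup_z_en}$: one has to upgrade a bound that is merely polynomial in $d$ to a uniform bound on $s_d\norm{\lambda\sep\l_\tau}^d$, and the only available leverage is the strict gap between $\norm{\lambda\sep\l_\tau}^\tau$ and $\lambda_1^\tau$ produced by $\lambda_2>0$. Controlling the two regimes $m\lesssim d^q$ and $m\gtrsim d^q$ in the summation, and the final passage from $\tau$ to $\tau'$, is where the technical care concentrates; everything else is either soft (the Markov bound, the triviality $\link{Cond_SPT_z_en}\Rightarrow\link{Cond_PT_z_en}$) or standard $\l_\tau$-asymptotics.
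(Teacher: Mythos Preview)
Your proof is correct and follows essentially the same route as the paper. The paper wraps the Markov bound inside its general \autoref{Thm:General_Tract_abs} and runs $\link{Cond_PT_z_en}\Rightarrow\link{Cond_sup_z_en}$ by contradiction rather than via your direct passage from $\tau$ to $\tau'>\tau$, but the key ingredients---the identity $\sum_m(\sigma^{(d)}_m)^\tau=(s_d\norm{\lambda\sep\l_\tau}^d)^\tau$, the strict gap $\norm{\lambda\sep\l_\tau}>\lambda_1$ forced by $\lambda_2>0$, and the strict monotonicity of $\tau\mapsto\norm{\lambda\sep\l_\tau}$---are identical.
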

We refer to \autoref{thm:scaled_PT} in \autoref{sect:ScaledProbs_pt}.
It is remarkable that similar to unscaled problems polynomial tractability of the problem $S_{(s)}$ already implies 
strong polynomial tractability, despite the fact that we can choose the sequence of scaling factors $(s_d)_{d\in\N}$ completely arbitrary.\\
The less restrictive property weak tractability and the curse of dimensionality can be characterized, provided that we additionally assume a certain asymptotic behavior of the initial error $\eps_d^{\init}=\sqrt{s_d\cdot \lambda_1^d}$; see \autoref{thm:scaled_wt} in \autoref{sect:ScaledProbs_wt}.
\begin{thm}\label{Z_en_thm:scaled_WT}
		We study the scaled tensor product problem $S_{(s)} = (S_{d,s_d})_{d\in\N}$ in the worst case setting 
		\wrt the absolute error criterion and assume $\lambda_2>0$.
		Moreover, 
		\begin{itemize}
				\item let $\ln \!\left(\epsilon_d^{\mathrm{init}}\right)\notin o(d)$, as $d\nach\infty$. 
				Then we have the curse of dimensionality.
				\item let $\epsilon_d^{\mathrm{init}} \in \Theta(d^\alpha)$, as $d\nach\infty$, for some $\alpha\geq 0$.
							\begin{itemize}
									\item If $\lambda_1=\lambda_2$ then $S_{(s)}$ suffers from the curse of dimensionality.
									\item In the case $\lambda_1>\lambda_2$ the problem $S_{(s)}$ is weakly tractable if and only if
											  $\lambda_n \in o\!\left(\ln^{-2(1+\alpha)} n \right)$, as $n\nach\infty$.
							\end{itemize}
				\item let $\epsilon_d^{\mathrm{init}} \nach 0$, as $d$ approaches infinity.
				Then we are never faced with the curse of dimensionality.
				Furthermore, $S_{(s)}$ is weakly tractable if and only if
							\begin{enumerate}[label=(\roman{*}), ref=\roman{*}]
										\item $\lambda_1=\lambda_2$ and $\lambda_n \in o\!\left(\ln^{-2} n\right)$, as $n\nach\infty$, and $\epsilon_d^{\mathrm{init}}\in o(1/d)$, as $d\nach\infty$, or
										\item $\lambda_1>\lambda_2$ and $\lambda_n \in o\!\left(\ln^{-2} n\right)$, as $n\nach\infty$.
							\end{enumerate}
		\end{itemize}
\end{thm}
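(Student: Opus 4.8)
Since we are in the worst case setting with respect to the absolute error criterion and all spaces involved are Hilbert spaces, linear information is optimal and the $n$-th minimal error equals the square root of the $(n+1)$-st largest eigenvalue of $S_{d,s_d}^{*}S_{d,s_d}$. These eigenvalues are $s_d\prod_{k=1}^{d}\lambda_{i_k}$, $\mathbf{i}=(i_1,\dots,i_d)\in\N^{d}$, and with $\eps_d^{\init}=\sqrt{s_d\lambda_1^{d}}$ one obtains
\begin{gather*}
  n(\eps,d;S_{d,s_d})=\abs{\{\mathbf{i}\in\N^{d}\sep s_d\textstyle\prod_{k=1}^{d}\lambda_{i_k}>\eps^{2}\}}=\widetilde{N}_d\bigl((\eps/\eps_d^{\init})^{2}\bigr),
\end{gather*}
where $\widetilde{N}_d(u):=\abs{\{\mathbf{i}\in\N^{d}\sep\prod_{k=1}^{d}\mu_{i_k}>u\}}$ and $\mu_m:=\lambda_m/\lambda_1$. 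Thus $1=\mu_1\geq\mu_2\geq\dots$, $\widetilde{N}_d(u)=0$ whenever $u\geq1$, and $\widetilde{N}_d$ is the very quantity studied in \autoref{thm:unweightedtensor_abs}, only that we must now control it at the $d$-dependent threshold $(\eps/\eps_d^{\init})^{2}$. Let $\kappa:=\abs{\{m\sep\mu_m=1\}}\in\N$, so $\kappa\geq2$ exactly when $\lambda_1=\lambda_2$. The whole proof is an analysis of this function.

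The assertions about the curse are the easy part. If $\lambda_1=\lambda_2$ then every $\mathbf{i}\in\{1,\dots,\kappa\}^{d}$ contributes the product $1$, so $\widetilde{N}_d(u)\geq\kappa^{d}\geq2^{d}$ as soon as $u<1$, i.e.\ as soon as $\eps<\eps_d^{\init}$; this settles the first bullet of \autoref{Z_en_thm:scaled_WT} (there $\eps_d^{\init}$ is unbounded along a subsequence on which $\tfrac1d\ln\eps_d^{\init}$ stays positive) and the first subcase of the second bullet (there $(\eps/\eps_d^{\init})^{2}\asymp\eps^{2}d^{-2\alpha}\to0$, hence $\eps<\eps_d^{\init}$ for all large $d$). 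If $\lambda_1>\lambda_2$ and $\ln\eps_d^{\init}\notin o(d)$, one uses instead the binomial subfamily: the $\binom dj$ multi-indices with exactly $j$ coordinates equal to $2$ realise the product $\mu_2^{j}$, which still exceeds $u=(\eps/\eps_d^{\init})^{2}$ whenever $j<\ln(1/u)/\ln(1/\mu_2)$, and along the relevant subsequence $\ln(1/u)=2\ln(\eps_d^{\init}/\eps)$ is of order $d$; summing $\binom dj$ over $j\leq\gamma d$ yields $\widetilde{N}_d(u)\geq(1+\gamma')^{d}$ for suitable $\gamma,\gamma'>0$. Finally, that the third bullet is never cursed is immediate: if $\eps_d^{\init}\to0$ then for each fixed $\eps>0$ we have $(\eps/\eps_d^{\init})^{2}>1$, hence $n(\eps,d)=0$, for all large $d$.

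For the ``only if'' directions of the weak tractability statements I would construct bad parameter sequences, based on the observation that the multi-indices $(m,1,\dots,1)$ alone give $n(\eps,d)\geq\abs{\{m\sep\mu_m>(\eps/\eps_d^{\init})^{2}\}}$, together with the elementary fact that $\lambda_n\notin o(\ln^{-2(1+\alpha)}n)$ is equivalent to the existence of $c>0$ with $\abs{\{m\sep\mu_m>x\}}\geq e^{c\,x^{-1/(2(1+\alpha))}}$ for arbitrarily small $x>0$. In the second bullet I pick a subsequence $K_j\to\infty$ witnessing this and set $d_j\asymp\ln K_j$, $\eps_j\asymp1/d_j$, so that (after adjusting constants) $(\eps_j/\eps_{d_j}^{\init})^{2}\asymp d_j^{-2(1+\alpha)}\asymp\ln^{-2(1+\alpha)}K_j$ is below the threshold and $n(\eps_j,d_j)\geq K_j$, while $\eps_j^{-1}+d_j\asymp\ln K_j$; hence $\ln n(\eps_j,d_j)/(\eps_j^{-1}+d_j)$ stays bounded away from $0$ and weak tractability fails. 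In the third bullet the same argument is run at the \emph{fixed} dimension $d=1$, where $\eps_d^{\init}$ is just the positive constant $\sqrt{s_1\lambda_1}$, and the exponent $1+\alpha$ degenerates to $1$ since no dimensional amplification of the threshold is available; this yields the necessity of $\lambda_n\in o(\ln^{-2}n)$. The additional condition $\eps_d^{\init}\in o(1/d)$ in part (i) is then forced by the $2^{d}$-phenomenon: if $\eps_{d_j}^{\init}\geq c/d_j$ along a subsequence, taking $\eps_j=c/(2d_j)<\eps_{d_j}^{\init}$ gives $n(\eps_j,d_j)\geq2^{d_j}$ with $\eps_j^{-1}+d_j\asymp d_j$.

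The ``if'' directions require a matching upper bound for $\widetilde{N}_d(u)$, $u=(\eps/\eps_d^{\init})^{2}$, and this is the technical heart. With $v:=\ln(1/u)$ and $K(y):=\abs{\{m\sep\mu_m>e^{-y}\}}$ I would split a multi-index by its set of coordinates carrying an index $>\kappa$, of size $t$, say: then $\widetilde{N}_d(u)=\sum_{t\geq0}\binom dt\kappa^{d-t}B_t(u)$, where $B_t(u)$ counts the admissible value assignments $(j_1,\dots,j_t)\in\{\kappa+1,\kappa+2,\dots\}^{t}$ with $\prod_{l}\mu_{j_l}>u$, and since each factor is $\leq\mu_{\kappa+1}<1$ the sum runs over $t<v/\ln(1/\mu_{\kappa+1})$ only. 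The crude estimate $B_t(u)\leq K(v)^{t}$ is far too weak; instead, ordering the chosen values and using that the $q$-th smallest must satisfy $\mu_{j_{(q)}}^{\,t-q+1}>u$ one gets the layered bound
\begin{gather*}
  B_t(u)\leq t!\prod_{s=1}^{t}K(v/s).
\end{gather*}
Now $\lambda_n\in o(\ln^{-2(1+\alpha)}n)$ is equivalent to $\ln K(y)=o\!\left(e^{y/(2(1+\alpha))}\right)$ as $y\to\infty$; since the series $\sum_{s\geq1}e^{v/(2(1+\alpha)s)}$ is dominated (over the admissible range of $s$, which is $\asymp v$) by its first term and $\ln t!=o\!\left(e^{v/(2(1+\alpha))}\right)$, this gives $\ln B_t(u)=o\!\left(u^{-1/(2(1+\alpha))}\right)$ \emph{uniformly in $t$}. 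It remains to collect the remaining factors: $\ln\binom dt\leq t\ln(ed/t)\lesssim\ln(1/u)\cdot\ln d$ is $o(\eps^{-1}+d)$, the only delicate point being that when the admissible range of $t$ reaches $d$ the threshold satisfies $u\leq\mu_{\kappa+1}^{\,d}$, which forces $\eps^{-1}$ to be exponential in $d$, so that $\ln\binom dt\leq d\ln2$ is still $o(\eps^{-1}+d)$; the factor $\ln(\kappa^{d-t})\leq d\ln\kappa$ is relevant only when $\eps<\eps_d^{\init}$, and in every such situation (part (i) with $\eps_d^{\init}\in o(1/d)$, or the regime just mentioned) $\eps^{-1}$ again dominates $d$; and finally $u^{-1/(2(1+\alpha))}\asymp\eps^{-1/(1+\alpha)}d^{\alpha/(1+\alpha)}\leq\eps^{-1}+d$ by Young's inequality. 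Altogether $\ln n(\eps,d)=o(\eps^{-1}+d)$, i.e.\ weak tractability. The step I expect to be the main obstacle is precisely the passage from the useless bound $B_t(u)\leq K(v)^{t}$ to the layered estimate and the verification that it is uniform in $t$: without it, the $\asymp\ln(1/u)$ summands over the admissible values of $t$ already ruin the bound $o(\eps^{-1}+d)$.
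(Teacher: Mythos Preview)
Your proof is correct and follows the same overall route as the paper: both rewrite $n(\eps,d)$ as $\widetilde N_d\bigl((\eps/\eps_d^{\init})^2\bigr)$ with $\mu_m=\lambda_m/\lambda_1$, obtain the curse from the $\kappa^d$ and binomial lower bounds, derive necessity by choosing bad sequences $(\eps_j,d_j)$, and prove sufficiency by a layered bound on the ordered indices combined with Young's inequality for $u^{-1/(2(1+\alpha))}\lesssim\eps^{-1}+d$.

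The one substantive difference is in the upper bound on $B_t(u)$. You use the full product $t!\prod_{s=1}^{t}K(v/s)$, and then have to argue that $\sum_{s}e^{v/(2(1+\alpha)s)}$ is dominated by its first term. The paper (following Papageorgiou--Petras, \autoref{prop:suffcond_scaled}) stops after two layers and bounds the count by $a_d(\eps)\cdot K(v)\cdot K(v/2)^{a_d(\eps)-1}$, which already suffices: since $a_d(\eps)\asymp v$ and $\ln K(v/2)=o\bigl(e^{v/(4(1+\alpha))}\bigr)=o\bigl(\sqrt{\eps^{-1}+d}\bigr)$ while $v=o\bigl(\sqrt{\eps^{-1}+d}\bigr)$, the product term is $o(\eps^{-1}+d)$ without any further work. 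So your refinement, while correct, is not needed and makes the uniformity-in-$t$ step harder than it has to be. The paper also packages the argument more modularly: it first records the representation $n(\eps,d)=1+\sum_{k\le a_d(\eps)}\binom{d}{k}\,|\{\cdots\}|$ with the explicit cut-off $a_d(\eps)=\min\{d,\lceil v/\ln(\lambda_1/\lambda_2)\rceil-1\}$, isolates a small equivalence (\autoref{lemma:n}) turning $\lambda_n\in o(\ln^{-2\beta}n)$ into $\ln n(t^\beta,1)\in o(1/t)$, and then proves separate necessary and sufficient propositions (\autoref{prop:nescond_scaled}, \autoref{prop:suffcond_scaled}) in terms of $a_d(\eps)$ and the one-dimensional complexity; the theorem is reduced to checking these conditions case by case. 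Your direct construction of bad sequences for the necessity and your inline treatment of the $\kappa^{d-t}$ factor when $\eps_d^{\init}\in o(1/d)$ are equivalent to the paper's route through these propositions.
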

Here the parameter $\alpha$ that controls the polynomial growth of the initial error is of particular interest.
In the case where $\lambda_1>\lambda_2$ it directly enters the condition for the characterization of weak tractability.
Moreover, the condition $\epsilon_d^{\mathrm{init}}\in o(1/d)$, as $d\nach\infty$, in the third part of the theorem is quite surprising.
Since for unscaled problems the initial error only can grow or decline exponentially, or it equals one in any dimension, these phenomena can not occur in the classical theory, \ie in the case where $s_d=1$ for all $d\in\N$.

Another approach to overcome the curse of dimensionality is related to problems defined between function spaces.
Here we can make use of some a priori given knowledge about the influence of certain (groups of) variables on the functions in the source space, in order to approximate them efficiently.
To this end, we endow these spaces with weighted norms.
During the last years especially problems on function spaces that yield a Hilbert space structure, equipped with so-called product weights, attracted a lot of attention.
Problems where the source and/or target spaces are allowed to be more general Banach spaces
were studied less frequently within the IBC community.\\
Among other things, in this thesis we consider the uniform approximation problem
\begin{gather*}
		\mathrm{App} = \left(\mathrm{App}_d \colon F_d^\gamma \nach \L_\infty([0,1]^d)\right)_{d\in\N} 
		\quad \text{with} \quad \mathrm{App}_d(f)=f
		\quad \text{for} \quad d\in\N
\end{gather*}
defined on certain classes of smooth functions 
\begin{gather*}
		F_d^\gamma = \left\{f\colon[0,1]^d\nach\R \sep f\in C^\infty([0,1]^d) \text{ with } \norm{f \sep F_d^\gamma}<\infty\right\}
\end{gather*}
which are endowed with the weighted norms
\begin{gather*}
		\norm{f \sep F_d^\gamma} = \sup_{\bm{\alpha}\in\N_0^d} \frac{1}{\gamma_{\bm{\alpha}}} \norm{D^{\bm{\alpha}}f\sep \L_\infty([0,1]^d)}.
\end{gather*}
Here for every $\bm{\alpha}\in\N_0^d$, $d\in\N$, the \emph{product weights} $\gamma_{\bm{\alpha}}=\prod_{j=1}^d (\gamma_{d,j})^{\alpha_j}$ are constructed out of a uniformly bounded sequence $C_\gamma\geq \gamma_{d,1}\geq \ldots \geq \gamma_{d,d}>0$ of so-called generator weights.
It turns out that the complexity of the approximation problem depends on certain summability properties of these generators which also play an important role when dealing with problems on product-weighted Hilbert spaces.
We define the quantities
\begin{align*}
		&p(\gamma)=\inf\left\{\kappa>0\sep \limsup_{d\nach\infty} \sum_{j=1}^d (\gamma_{d,j})^\kappa<\infty\right\}, \quad \text{as well as} \\
		&q(\gamma)=\inf\left\{\kappa>0\sep \limsup_{d\nach\infty} \sum_{j=1}^d (\gamma_{d,j})^\kappa / \ln(d+1)<\infty\right\},
\end{align*}
and prove the following
\begin{thm}\label{Z_en_thm:weighted_PT}
		For the worst case setting \wrt the absolute error criterion we have:
		\begin{itemize}
				\item If the problem $\mathrm{App}$ is polynomially tractable then $q(\gamma) \leq 1$.
				Moreover strong polynomial tractability implies the condition $p(\gamma) \leq 1$.
				\item If $q(\gamma)<1$ or even $p(\gamma)<1$ then $\mathrm{App}$ is polynomially tractable
				or even strongly polynomially tractable, respectively.
		\end{itemize}
\end{thm}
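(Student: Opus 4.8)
The statement consists of a lower bound (polynomial, resp.\ strong polynomial, tractability forces $q(\gamma)\le 1$, resp.\ $p(\gamma)\le 1$) and an upper bound ($q(\gamma)<1$, resp.\ $p(\gamma)<1$, yields the corresponding tractability). I would prove the upper bound by an explicit algorithm and the lower bound by a reduction to a classical Hilbert space result; the lower bound is the harder direction.

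\emph{Upper bound.} For $f$ in the unit ball of $F_d^\gamma$ one has $\abs{D^{\bm\alpha}f(0)}\le\norm{D^{\bm\alpha}f\sep\L_\infty([0,1]^d)}\le\gamma_{\bm\alpha}$, and since $\sum_{\bm\alpha\in\N_0^d}\gamma_{\bm\alpha}/\bm\alpha!=\prod_{j=1}^d e^{\gamma_{d,j}}<\infty$ the Taylor series $f(x)=\sum_{\bm\alpha}D^{\bm\alpha}f(0)\,x^{\bm\alpha}/\bm\alpha!$ converges absolutely on $[0,1]^d$. Hence, for any finite lower set $\Lambda\subseteq\N_0^d$, the linear algorithm $A_\Lambda(f)=\sum_{\bm\alpha\in\Lambda}D^{\bm\alpha}f(0)\,x^{\bm\alpha}/\bm\alpha!$, which uses the $\abs{\Lambda}$ functionals $f\mapsto D^{\bm\alpha}f(0)$ (or, in the standard-information model, polynomial interpolation at a grid), has worst case error at most $\sum_{\bm\alpha\notin\Lambda}\gamma_{\bm\alpha}/\bm\alpha!$. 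The elementary identities I would exploit are the multinomial formula $\sum_{\abs{\bm\alpha}=r}\gamma_{\bm\alpha}/\bm\alpha!=(\sum_{j=1}^d\gamma_{d,j})^r/r!$ and its variant $\sum_{\bm\alpha}\gamma_{\bm\alpha}^{\kappa}/\bm\alpha!=\prod_{j}e^{\gamma_{d,j}^{\kappa}}$. Fixing $\kappa<1$ with $\sum_{j=1}^d\gamma_{d,j}^{\kappa}\le S\ln(d+1)$ (resp.\ $\le S$), monotonicity of the generators makes $\Gamma_d:=\sum_{j}\gamma_{d,j}$ at most a power of $\ln(d+1)$ (resp.\ bounded), the number $b$ of generators $\ge 1$ at most $O(\ln(d+1))$ (resp.\ $O(1)$), and the least $a$ with $\sum_{j>a}\gamma_{d,j}\le\eps/2$ polynomial in $\eps^{-1}$ (and $d$-independent in the strong case). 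I would take $\Lambda=\{\bm\alpha:\ \abs{\bm\alpha_{\mathrm{big}}}<r,\ \gamma_{\bm\alpha_{\mathrm{small}}}\ge\theta\}$, where $\mathrm{big}$ collects the generators $\ge 1$, $r=O(\Gamma_d+\ln\eps^{-1})$ is chosen from $\sum_{\abs{\bm\alpha_{\mathrm{big}}}\ge r}\gamma_{\bm\alpha}/\bm\alpha!\le\eps/4$, and $\theta$ is a power of $\eps$ (over a power of $d+1$ in the polynomial case) chosen from $\sum_{\gamma_{\bm\alpha_{\mathrm{small}}}<\theta}\gamma_{\bm\alpha}/\bm\alpha!\le\theta^{1-\kappa}\prod_{j}e^{\gamma_{d,j}^{\kappa}}\le\eps/4$. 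Then $\abs{\Lambda}\le\binom{b+r}{r}\cdot\#\{\bm\beta:\ \gamma_{\bm\beta}\ge\theta,\ \supp\bm\beta\subseteq\mathrm{small}\}\le 2^{b+r}\cdot\theta^{-\kappa}\prod_{j\in\mathrm{small}}(1-\gamma_{d,j}^{\kappa})^{-1}$, which — using that the small generators are bounded away from $1$, so the last product is $(d+1)^{O(S)}$ — is polynomial in $\eps^{-1}$ and $d$ (and free of $d$ in the strong case). A pure order truncation $\Lambda=\{\abs{\bm\alpha}<r\}$ would instead cost $\eps^{-\Theta(\ln\eps^{-1})}$, so the splitting into an order cut (for the few large generators) and a $\gamma_{\bm\alpha}$-value cut (for the many small ones) is essential.

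\emph{Lower bound.} I would show that polynomial (resp.\ strong polynomial) tractability of $\mathrm{App}$ propagates to the $\L_2$-approximation problem on a companion product-weighted Hilbert space. Let $H_d^\gamma=\bigotimes_{j=1}^d H_1^{(\gamma_{d,j})}$ be the tensor product of the univariate reproducing kernel Hilbert spaces of analytic functions with kernels $K_1^{(\gamma)}(s,t)=c\sum_{m\ge 0}(\gamma^2 st)^m/(m!)^2$ ($c=c(C_\gamma)$ small); a Cauchy--Schwarz estimate gives $\norm{g\sep H_1^{(\gamma)}}\le 1\Rightarrow\abs{g^{(m)}(s)}\le\gamma^m$ on $[0,1]$, hence $\norm{f\sep H_d^\gamma}\le 1\Rightarrow\norm{D^{\bm\alpha}f\sep\L_\infty([0,1]^d)}\le\gamma_{\bm\alpha}$, i.e.\ the unit ball of $H_d^\gamma$ lies inside that of $F_d^\gamma$. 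Together with $\norm{\cdot\sep\L_\infty([0,1]^d)}\ge\norm{\cdot\sep\L_2([0,1]^d)}$ this yields $n(\eps,d;\mathrm{App}_d)\ge n(\eps,d;\mathrm{App}_d^{H,\L_2})$ for the $\L_2$-embedding $\mathrm{App}_d^{H,\L_2}\colon H_d^\gamma\to\L_2([0,1]^d)$. The latter is a tensor product problem whose squared singular values factorize and decay like $\gamma_{d,j}^{2m}/((m!)^2(2m+1))$ in each coordinate, so that the relevant weights are $\asymp\gamma_{d,j}$; by the classical tractability theory for product weights, polynomial tractability of such a problem forces $q(\gamma)\le 1$ and strong polynomial tractability forces $p(\gamma)\le 1$, which is the desired necessity. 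Equivalently, and bypassing the Hilbert space detour, one can argue directly with ridge functions $f(x)=\prod_{l=1}^k g_l\!\big(\sum_{j\in T_l}\gamma_{d,j}(x_j-\tfrac12)\big)$ over a partition $\{1,\dots,d\}=T_1\cup\dots\cup T_k$, each $g_l$ entire of exponential type $\le 1$ with $\norm{g_l^{(m)}\sep\L_\infty(\R)}\le 1$: the chain rule for composition with an affine map gives $\norm{D^{\bm\alpha}f\sep\L_\infty([0,1]^d)}\le\gamma_{\bm\alpha}$, so $f$ lies in the unit ball, while the image class is a $k$-fold tensor product of one-dimensional band-limited approximation problems on intervals of length $\Gamma_{T_l}=\sum_{j\in T_l}\gamma_{d,j}$; prolate-spheroidal/sampling estimates for the factors plus a tensor-product lower bound for information complexity, optimized over partitions, make $n(\eps_0,d)$ grow roughly like $e^{\Gamma_d}$, which under tractability again bounds $\Gamma_d$ and hence $q(\gamma)$, resp.\ $p(\gamma)$.

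\emph{Main obstacle.} The hard part is the lower bound. On the reduction route it is constructing a product RKHS of analytic functions that genuinely embeds into the unit ball of $F_d^\gamma$ and whose $\L_2$-approximation complexity matches exactly the summability exponents $q(\gamma)$ and $p(\gamma)$. On the direct route it is making the tensor-product lower bound effective at a fixed target error: a naive ``large low-norm section'' argument is too weak, because on any subspace spanned by band-limited building blocks the $F_d^\gamma$-norm is controlled only by the $\ell_1$-norm of the coefficients, so the Bernstein widths of the unit ball decay too fast to defeat adaptive algorithms uniformly in the (unknown) tractability exponent $p$; one must instead use only the ``robust'' prolate modes per factor and carefully balance the block sizes. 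On the upper-bound side the only real subtlety is the simultaneous order/value truncation of the Taylor index set together with the counting estimate for $\abs{\Lambda}$.
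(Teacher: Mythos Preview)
Your two directions take routes quite different from the paper's, with opposite outcomes.

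\textbf{Upper bound.} Your Taylor-truncation algorithm is a legitimate alternative. The paper does \emph{not} work with Taylor series at all: it embeds $F_d^\gamma$ into the unanchored Sobolev RKHS $\Hi_d^\gamma$ (embedding constant $\le\prod_j(1+\gamma_{d,j})^{1/2}\le\exp(\tfrac12\sum_j\gamma_{d,j})$) and imports a Kuo--Wasilkowski--Wo\'zniakowski-type algorithm there, namely projection onto the first $n$ tensor cosine basis functions ordered by $\L_\infty$-norm. Your approach is more self-contained; the paper's is modular and covers at once every space sandwiched between $\P_d^\gamma$ and $\Hi_d^\gamma$. One small gap in your sketch: to bound $\prod_{j\in\mathrm{small}}(1-\gamma_{d,j}^\kappa)^{-1}$ polynomially in $d$ you need the small generators bounded away from $1$, so ``big'' must be defined by a threshold strictly below $1$ (e.g.\ $\gamma_{d,j}\ge\tfrac12$), not by $\gamma_{d,j}\ge1$.

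\textbf{Lower bound.} Here you have substantially overcomplicated matters, and neither sketch is a proof: the RKHS route still owes you the singular-value computation for your bespoke kernel and its matching to $p(\gamma),q(\gamma)$; the prolate-spheroidal route is a hand-wave. The paper's argument is completely elementary and uses no Hilbert-space detour. Greedily partition $\{1,\dots,d\}$ into blocks $I_1,\dots,I_s$ with $\sum_{j\in I_k}\gamma_{d,j}\ge2$, so that $s>\tfrac{1}{2+C_\gamma}(\sum_j\gamma_{d,j}-2)$. On the $2^s$-dimensional subspace
\[
V=\mathrm{span}\Bigl\{\prod_{k=1}^s\Bigl(\sum_{j\in I_k}\gamma_{d,j}x_j\Bigr)^{i_k}:\bm i\in\{0,1\}^s\Bigr\}\subset\P_d^\gamma\subset F_d^\gamma
\]
one has $\norm{g\sep F_d^\gamma}\le\norm{g\sep\L_\infty}$: after substituting $z_k=\sum_{j\in I_k}\gamma_{d,j}x_j$ this reduces, coordinate by coordinate, to the one-line fact that an affine map $h(z)=az+b$ on $[0,L]$ with $L\ge2$ satisfies $|a|\le\max\{|b|,|aL+b|\}$. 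The standard Borsuk--Ulam/linear-algebra lemma (a subspace of dimension $>n$ on which $\norm{Sg}\ge a\norm{g}$ forces $e^{\wor}(n)\ge a$) then yields $e^{\wor}(n,d)\ge1$ for all $n<2^s$, hence $n(\eps,d)\ge\tfrac12\cdot 2^{\sum_j\gamma_{d,j}/(2+C_\gamma)}$ for every $\eps<1$, from which $q(\gamma)\le1$ and $p(\gamma)\le1$ drop out immediately. Your ridge-function idea is in the right spirit, but \emph{affine} $g_l$ already suffice; the ``main obstacle'' you describe does not exist.
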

In fact, we show these necessary and sufficient criteria for a whole scale of weighted Banach spaces
that fulfill certain embedding conditions; see \autoref{Thm_Necessary} and \autoref{Theorem_Sufficient} 
for details.
The source space~$F_d^\gamma$ as defined above appears as a special case within this scale.
On the other hand, it generalizes a space considered by Novak und Wo\'{z}niakowski \cite{NW09}.
In addition, we prove that the sufficient conditions $q(\gamma)<1$ and $p(\gamma)<1$
are also necessary for (strong) polynomial tractability of the $\L_\infty$-approximation problem 
defined on a certain unanchored Sobolev space~$\Hi_d^\gamma$; cf. \autoref{thm:tract_Sobolev}.\\
Weak tractability and the curse of dimensionality can be characterized as follows.
\begin{thm}\label{Z_en_thm:weighted_WT}
				For $\mathrm{App}=(\mathrm{App}_d)_{d\in\N}$ the following assertions are equivalent:
        \begin{enumerate}[label=(\roman*), ref=\roman{*}]
                \item The problem is weakly tractable. \label{Cond_WT_en}
                \item The curse of dimensionality is not present.\label{Cond_Curse_en}
                \item For all $\kappa > 0$ we have
                      $\lim_{d\nach \infty}\limits \frac{1}{d} 
                      \sum_{j=1}^d \left( \gamma_{d,j} \right)^\kappa = 0$.
                \item There exists $\kappa \in (0,1)$ such that
											$\lim_{d\nach \infty}\limits \frac{1}{d} 
                      \sum_{j=1}^d \left( \gamma_{d,j} \right)^\kappa = 0$.\label{Cond_Limit_en}
				\end{enumerate}
\end{thm}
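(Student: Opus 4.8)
The plan is to verify the cycle (i)$\Rightarrow$(ii)$\Rightarrow$(iii)$\Rightarrow$(i) together with the elementary equivalence (iii)$\Leftrightarrow$(iv). Here (iii)$\Rightarrow$(iv) and (i)$\Rightarrow$(ii) are trivial (exponential growth of $n(\eps,d;\mathrm{App}_d)$ in $d$ for some fixed $\eps$ is incompatible with the defining requirement $\ln n(\eps,d;\mathrm{App}_d)=o(\eps^{-1}+d)$ of weak tractability), so the real work is (iv)$\Rightarrow$(iii), the lower bound (ii)$\Rightarrow$(iii), and the upper bound (iii)$\Rightarrow$(i). For (iv)$\Rightarrow$(iii), let $\kappa_0\in(0,1)$ be as in (iv). For $0<\kappa\le\kappa_0$, concavity of $t\mapsto t^{\kappa/\kappa_0}$ and Jensen's inequality give
\begin{gather*}
  \frac1d\sum_{j=1}^d(\gamma_{d,j})^\kappa=\frac1d\sum_{j=1}^d\bigl((\gamma_{d,j})^{\kappa_0}\bigr)^{\kappa/\kappa_0}\le\Bigl(\frac1d\sum_{j=1}^d(\gamma_{d,j})^{\kappa_0}\Bigr)^{\kappa/\kappa_0}\nach0\quad(d\nach\infty);
\end{gather*}
for $\kappa>\kappa_0$ one notes $\#\{j\le d\sep\gamma_{d,j}\ge1\}\le\sum_{j=1}^d(\gamma_{d,j})^{\kappa_0}=o(d)$ and splits the sum according to whether $\gamma_{d,j}<1$, using $(\gamma_{d,j})^\kappa\le(\gamma_{d,j})^{\kappa_0}$ on the first part and $(\gamma_{d,j})^\kappa\le C_\gamma^\kappa$ on the $o(d)$ remaining indices.

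For the lower bound (ii)$\Rightarrow$(iii) I argue by contraposition. If (iii) fails, pick $\kappa_0>0$ with $\limsup_d\frac1d\sum_{j=1}^d(\gamma_{d,j})^{\kappa_0}>0$; then there are $c_0,\delta>0$ and an infinite $D\subseteq\N$ with $\#\{j\le d\sep\gamma_{d,j}\ge c_0\}\ge\delta d$ for $d\in D$ -- for otherwise $\frac1d\#\{j\le d\sep\gamma_{d,j}\ge c_0\}\nach0$ for every $c_0>0$, and splitting the sum at level $c_0$ would force $\limsup_d\frac1d\sum_{j=1}^d(\gamma_{d,j})^{\kappa_0}\le c_0^{\kappa_0}$ for all $c_0$, hence $0$, a contradiction. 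Since the generators are non-increasing in $j$, the first $m_d:=\ceil{\delta d}$ of them are $\ge c_0$ for $d\in D$. Extending a function of $x_1,\dots,x_{m_d}$ to $[0,1]^d$ by making it independent of the remaining variables maps the unit ball of the $m_d$-variate class with all generators equal to $c_0$ into the unit ball of $F_d^\gamma$ (because $\prod_{j\le m_d}(\gamma_{d,j})^{\alpha_j}\ge c_0^{\alpha_1+\dots+\alpha_{m_d}}$) and preserves the $\L_\infty$-norm of the image; pulling back information functionals along this extension shows $n(\eps,d;\mathrm{App}_d)\ge n(\eps,m_d;\,\L_\infty\text{-approximation on the constant-}c_0\text{ class})$. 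It therefore suffices that uniform approximation on the $C^\infty$-class with constant generator $c_0>0$ in $m$ variables suffers from the curse (an exponential-in-$m$ lower bound): for $c_0=1$ this is the Novak--Wo\'zniakowski intractability theorem \cite{NW09}, and in general it is the constant-weight specialisation of the lower bound behind the necessity part of \autoref{Z_en_thm:weighted_PT} (cf.\ \autoref{Thm_Necessary}); since $m_d\ge\delta d$, this yields an exponential lower bound in $d$.

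For the upper bound (iii)$\Rightarrow$(i) I use the Smolyak-type sparse-tensor construction from the sufficiency part of \autoref{Z_en_thm:weighted_PT} (cf.\ \autoref{Theorem_Sufficient}): tensorising one-dimensional $C^\infty$-approximations whose refinement level in coordinate $j$ is controlled by $\gamma_{d,j}$ yields, for each $\eps\in(0,1]$, an algorithm of error $\le\eps$ whose cost can be bounded -- for a suitable $\kappa(\eps)\in(0,1)$ and a factor $K(\eps)$ that depends only on $\eps$ (and the fixed parameters of the space) and grows at most polylogarithmically in $\eps^{-1}$ -- by
\begin{gather*}
  n(\eps,d;\mathrm{App}_d)\le\prod_{j=1}^d\bigl(1+K(\eps)(\gamma_{d,j})^{\kappa(\eps)}\bigr),\qquad\text{so}\qquad\ln n(\eps,d;\mathrm{App}_d)\le K(\eps)\sum_{j=1}^d(\gamma_{d,j})^{\kappa(\eps)}.
\end{gather*}
By (iii), which supplies the conclusion for every exponent, the sum on the right is $o(d)$ for each fixed $\eps$; together with the mild growth of $K(\eps)$ and the fact that only $o(d)$ of the generators exceed any fixed positive threshold, one checks that $\ln n(\eps,d;\mathrm{App}_d)/(\eps^{-1}+d)\nach0$ as $\eps^{-1}+d\nach\infty$ (treating separately the regimes where $d$ does and does not stay bounded), which is weak tractability.

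The main obstacle is the lower bound (ii)$\Rightarrow$(iii). After the reduction above it rests on the curse for the constant-generator $C^\infty$-class, and the delicate point is that this is a genuine sup-norm phenomenon: it is not captured by an embedded Hilbert tensor-product subproblem, since for $c_0<1$ the second $L_2$-singular value of the univariate embedding is strictly below $1$, so \autoref{thm:unweightedtensor_abs} does not apply (indeed the corresponding $L_2$-approximation problem is polynomially tractable). The exponential lower bound must be produced directly in $\L_\infty$ -- for instance from a rich family of ridge-type functions $x\mapsto\sin(c_0\langle a,x\rangle)$, $\|a\|_\infty\le1$, by exhibiting, for every choice of fewer than $(1+\eta)^m$ information functionals, an admissible function lying in their common kernel with $\L_\infty$-norm bounded below by a fixed positive constant. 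A secondary, milder difficulty lies on the sufficiency side: one must control the combinatorial Smolyak cost using only the Ces\`aro condition (iii)/(iv) rather than the stronger summability $q(\gamma)<1$ that underlies polynomial tractability, and make the resulting joint limit in $\eps$ and $d$ rigorous.
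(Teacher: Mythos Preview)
Your (iii)$\Leftrightarrow$(iv) argument is fine. The two substantive implications, however, are both handled more directly in the paper, and your upper-bound argument has a genuine gap.

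\textbf{Lower bound (ii)$\Rightarrow$(iii).} The reduction to a constant-generator subproblem is correct but unnecessary, and the ridge-function speculation is off target. The lower bound behind \autoref{Thm_Necessary} --- namely \autoref{Theorem_Polynom}, proved via the explicit polynomial subspace $\P_d^\gamma\hookrightarrow F_d^\gamma$, not via sines or an embedded Hilbert-space problem --- already applies to the full weighted space $F_d^\gamma$ and reads
\[
n^\wor(\eps,d;\mathrm{App}_d)\;>\;\tfrac12\cdot 2^{\,\frac{1}{2+C_\gamma}\sum_{j=1}^d\gamma_{d,j}}\qquad(0<\eps<1).
\]
Hence absence of the curse directly forces $\frac{1}{d}\sum_{j}\gamma_{d,j}\to 0$, i.e.\ the $\kappa=1$ instance of~(iii); the remaining values of $\kappa$ then follow from Jensen (for $0<\kappa\le 1$) and from $\gamma_{d,j}^{\kappa}\le C_\gamma^{\kappa-1}\gamma_{d,j}$ (for $\kappa\ge 1$). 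No extraction of a positive proportion of large generators is needed.

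\textbf{Upper bound (iii)/(iv)$\Rightarrow$(i).} Here there is a real problem. The bound you postulate,
\[
\ln n(\eps,d;\mathrm{App}_d)\le K(\eps)\sum_{j=1}^d(\gamma_{d,j})^{\kappa(\eps)},
\]
couples the $\eps$- and $d$-dependencies multiplicatively, and this is \emph{not} enough under~(iii) alone. For instance with $\gamma_{d,j}=(\ln(d+1))^{-1}$ one has $\sum_j\gamma_{d,j}^{\kappa}=d(\ln(d+1))^{-\kappa}=o(d)$ for every $\kappa>0$; along $\eps_d^{-1}=d$ your ratio is then of order $K(\eps_d)\cdot(\ln d)^{-\kappa(\eps_d)}$, which need not vanish once $K$ grows even logarithmically (recall you require $\kappa(\eps)<1$). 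The paper does \emph{not} use a Smolyak construction here. It embeds $F_d^\gamma\hookrightarrow\Hi_d^\gamma$ and applies the truncated-basis algorithm of \autoref{cor:optAlgoSobolev}, which yields (see~\link{UpperBound_tau})
\[
n(\eps,d;\mathrm{App}_d)\le c_1\,\eps^{-2\tau/(1-\tau)}\exp\!\Big(c_2\sum_{j=1}^d\gamma_{d,j}^{\,t}+c_3\sum_{j=1}^d\gamma_{d,j}^{\,\tau}\Big)
\]
for every fixed $\tau\in(1/2,1)$. The crucial point is that $\tau$ is chosen \emph{once}, depending only on the $\kappa$ supplied by~(iv), so that $\kappa<\min\{t,\tau\}$; then both sums are bounded by $C\sum_j\gamma_{d,j}^{\kappa}=o(d)$, while the $\eps$-contribution to $\ln n$ is just $O(\ln\eps^{-1})$ and enters \emph{additively}. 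Each of the three resulting terms, divided by $\eps^{-1}+d$, tends to zero on its own --- no joint $\eps$--$d$ limit has to be disentangled. This is exactly the ``secondary difficulty'' you flag at the end, and it is not a technicality: with your multiplicative bound it cannot be repaired.
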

This immediately follows from our \autoref{Theorem_Equivalence} in which we discuss a more general situation.
Note that the implication \link{Cond_Curse_en} $\Rightarrow$ \link{Cond_WT_en} is not trivial.
Moreover, the condition~\link{Cond_Limit_en} is typical for problems defined on Hilbert spaces equipped with product weights.

Finally, our third approach to vanquish the curse is based on exploiting certain symmetry properties of the elements in the source space.
For this purpose we again consider tensor product problems $S=(S_d\colon H_d \nach \G_d)_{d\in\N}$ 
between Hilbert spaces. 
But now we restrict them to suitable subspaces which solely consist of (anti)symmetric elements.
We illustrate this concept by considering the special case of problems defined between function spaces.\\
For $d\in\N$ and $I\subseteq\{1,\ldots,d\}$ let $\S_I$ denote the collection of all permutations~$\pi$ of the coordinate set $\{1,\ldots,d\}$ that leave the complement $I^c=\{1,\ldots,d\}\setminus I$ of~$I$ fixed.
Then a real-valued function $f\in H_d=H_1\otimes\ldots\otimes H_1$ on $[0,1]^d$ is called \emph{$I$-symmetric} if
\begin{gather*}
				f(\bm{x}) = f(\bm{\pi(x)})
				\quad \text{for every} \quad \bm{x}\in [0,1]^d \quad \text{and all} \quad \pi\in\S_I.
\end{gather*}
In contrast, $f$ is called \emph{$I$-antisymmetric} if the equality $f(\bm{x}) = (-1)^{\abs{\pi}} f(\bm{\pi(x)})$ holds true for every $\bm{x}$ and $\pi$.
In what follows we denote the corresponding linear subspaces of~$H_d$ that exclusively contain symmetric or antisymmetric functions by $\SI_I(H_d)$ 
and $\AI_I(H_d)$, respectively.
Particularly antisymmetric functions, \ie functions that change their sign when we exchange the variables $x_i$ and $x_j$, $i,j\in I$,
turned out be of some practical interest; see, e.g., \autoref{sect:wave}.
For the restriction of a given tensor product problem~$S=(S_d\colon H_d\nach\G_d)_{d\in\N}$ to the subspaces~$P_{I_d}(H_d)$, $d\in\N$, 
we write~$S_{I}=(S_{d,I_d})_{d\in\N}$.
Here the kind of symmetry $P\in\{\SI,\AI\}$, as well as a sequence $(I_d)_{d\in\N}$ of subsets of the coordinates, is assumed to be fixed.\\
Since for $d\in\N$ the operators $S_{d,I_d}$ can be interpreted as a composition of~$S_d$ with suitable orthogonal projections, there exists a close relation of the singular values of~$S_d$ with the corresponding singular values of the restricted operators~$S_{d,I_d}$.
These numbers essentially determine the minimal worst case error of the problem~$S_I$.
This knowledge furthermore allows the construction of an optimal (linear) algorithm that realizes this error; cf. \autoref{theo:opt_sym_algo}.\\
Consequently, we can conclude assertions that relate the information complexity of~$S_I$ to the squares of the singular values of~$S_1$ and to the number of (anti)symmetry conditions we impose.
For the sake of simplicity we restrict ourselves again to the absolute error criterion and start by discussing the case of symmetric problems; 
see \autoref{thm:tract_sym_abs}.
\begin{thm}[Polynomial tractability, $P=\SI$]\label{Z_en_thm:sym_PT}
			Let $S_1 \colon H_1 \nach \G_1$ denote a compact linear operator between Hilbert spaces
			and let $\lambda=(\lambda_m)_{m\in \N}$ be the sequence of eigenvalues of $W_1={S_1}^{\!\dagger} S_1$
			\wrt a non-increasing ordering.
			Assume $\lambda_2>0$ and for $d>1$ let $\leer \neq I_d \subseteq\{1,\ldots,d\}$ be fixed.
			We consider the restriction $S_I=(S_{d,I_d})_{d\in\N}$ of the tensor product problem
			$S=(S_d\colon H_d\nach\G_d)_{d\in\N}$ to the $I_d$-symmetric subspaces 
			$\SI_{I_d}(H_d)\subset H_d$, $d\in\N$.
			Then $S_I$ is strongly polynomially tractable if and only if $\lambda \in \l_\tau$ 
			for some $\tau\in(0,\infty)$ and
			\begin{itemize}
						\item $\lambda_1<1$, or
						\item $1=\lambda_1>\lambda_2$ and $(d-\#I_d) \in \0(1)$, as $d\nach\infty$.
			\end{itemize}
			Moreover, provided that $\lambda_1 \leq 1$ the problem is polynomially tractable if and only
			if $\lambda \in \l_\tau$ for some $\tau\in(0,\infty)$ and
			\begin{itemize}
						\item $\lambda_1<1$, or
						\item $\lambda_1=1$ and $(d-\#I_d) \in \0(\ln d)$, as $d\nach\infty$.
			\end{itemize}
\end{thm}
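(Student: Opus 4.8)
The plan is to reduce $S_I$ to its sequence of squared singular values, apply the classical characterisation of (strong) polynomial tractability for compact operators between Hilbert spaces in the worst case setting \wrt the absolute error criterion, and then make the resulting conditions explicit by means of the spectral description of $S_{d,I_d}$ furnished by \autoref{theo:opt_sym_algo}. Since $W_d := S_d^{\dagger}S_d = W_1^{\otimes d}$ commutes with every coordinate permutation from $\S_{I_d}$, it commutes with the orthogonal projection onto $\SI_{I_d}(H_d)$, so $S_{d,I_d}^{\dagger}S_{d,I_d}$ is diagonalised by the (normalised) symmetrised tensor basis built from an eigenbasis of $W_1$, and its non-increasingly ordered eigenvalues $(\mu_{d,j})_j$ run exactly through
\begin{gather*}
		\Big\{\ \Big(\prod_{i\notin I_d}\lambda_{n_i}\Big)\prod_{m\in M}\lambda_m\ \Big|\ (n_i)_{i\notin I_d}\in\N^{d-\#I_d},\ M\ \text{a multiset over}\ \N\ \text{with}\ \#M=\#I_d\ \Big\}.
\end{gather*}
By \autoref{theo:opt_sym_algo} the $n$-th minimal worst case error of $S_{d,I_d}$ equals $\sqrt{\mu_{d,n+1}}$, hence $n(\eps,d;S_{d,I_d})=\#\{j\colon\mu_{d,j}>\eps^2\}$. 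Summing $\tau$-th powers over the family above and using the generating identity $\sum_{k\geq0}h_k(x)\,t^k=\prod_n(1-t\,x_n)^{-1}$ for the complete homogeneous symmetric functions $h_k$, we obtain for every $\tau\in(0,\infty)$ the closed form
\begin{gather*}
		\Sigma_d(\tau):=\sum_j\mu_{d,j}^{\tau}=\Big(\sum_{n\in\N}\lambda_n^{\tau}\Big)^{\,d-\#I_d}h_{\#I_d}\!\big((\lambda_n^{\tau})_{n\in\N}\big).
\end{gather*}

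Next I would invoke the standard criterion for the absolute error criterion: $S_I$ is strongly polynomially tractable if and only if $\sup_d\Sigma_d(\tau)<\infty$ for some $\tau$, and --- provided $\sup_d\mu_{d,1}<\infty$, i.e. $\lambda_1\le1$ --- it is polynomially tractable if and only if $\Sigma_d(\tau)\in\0(d^{q})$ as $d\nach\infty$ for some $\tau$ and some $q\ge0$. Specialising either criterion to $d=1$, where there is no symmetry constraint and $\Sigma_1(\tau)=\sum_n\lambda_n^{\tau}$, already forces $\lambda\in\l_\tau$ for some $\tau$, which is part of every one of the listed conditions. From now on I fix such a $\tau$, so that $\sum_{n\ge2}\lambda_n^{\tau}<\infty$ and every larger $\tau$ is admissible as well.

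The heart of the matter is the interplay of the two factors of $\Sigma_d(\tau)$, where $\lambda_2>0$ enters decisively. If $\lambda_1<1$, then for $\tau$ large enough $\sum_n\lambda_n^{\tau}<1$ and $\prod_n(1-\lambda_n^{\tau})^{-1}<\infty$; since $h_k(x)\le\sum_{k'\geq0}h_{k'}(x)=\prod_n(1-x_n)^{-1}$, this gives $\Sigma_d(\tau)\le\prod_n(1-\lambda_n^{\tau})^{-1}$ uniformly in $d$ and in $\#I_d$, so $S_I$ is strongly polynomially tractable with no restriction on $d-\#I_d$. If $\lambda_1=1>\lambda_2$, peeling off the index $1$ yields $h_{\#I_d}((\lambda_n^{\tau})_n)=\sum_{m=0}^{\#I_d}h_m((\lambda_n^{\tau})_{n\ge2})$, which lies between $1$ and $\prod_{n\ge2}(1-\lambda_n^{\tau})^{-1}<\infty$ and is therefore bounded in $\#I_d$, whereas $\big(\sum_n\lambda_n^{\tau}\big)^{d-\#I_d}=(1+\sigma)^{d-\#I_d}$ with $\sigma:=\sum_{n\ge2}\lambda_n^{\tau}\in(0,\infty)$; hence $\Sigma_d(\tau)$ is comparable to $(1+\sigma)^{d-\#I_d}$ up to a factor independent of $d$, so $\sup_d\Sigma_d(\tau)<\infty$ exactly when $d-\#I_d\in\0(1)$ and $\Sigma_d(\tau)\in\0(d^{q})$ exactly when $d-\#I_d\in\0(\ln d)$; and since $\sigma=\sigma(\tau)>0$ for every admissible $\tau$, these equivalences hold for all such $\tau$ at once, as the ``if and only if'' demands. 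If $\lambda_1=\lambda_2=1$, put $r:=\#\{n\colon\lambda_n=1\}\ge2$; the same identity gives $h_{\#I_d}((\lambda_n^{\tau})_n)=\Theta(\#I_d^{\,r-1})$ together with $\sum_n\lambda_n^{\tau}\ge r\ge2$, so $\Sigma_d(\tau)$ can never be bounded --- no strong polynomial tractability, in agreement with the requirement $\lambda_1>\lambda_2$ there --- but is $\0(d^{q})$ precisely when $d-\#I_d\in\0(\ln d)$. Finally, the case $\lambda_1>1$ lies outside the above criterion because $\mu_{d,1}=\lambda_1^{d}\to\infty$; here a direct count of the eigenvalues exceeding a fixed threshold --- using the values $\lambda_1^{d-a}\lambda_2^{a}$, $0\le a\le\#I_d$, realised on the symmetric coordinates and their multiplicities $\binom{d-\#I_d}{a}$ on the remaining ones --- shows $n(\eps,d;S_{d,I_d})$ grows at least linearly in $d$, so $S_I$ cannot be strongly polynomially tractable; the hypothesis $\lambda_1\le1$ in the polynomial-tractability statement is genuinely necessary, since for instance a fully symmetric problem with $\lambda_1>1$ and only finitely many non-zero $\lambda_n$ has merely polynomially-in-$d$ many eigenvalues above any threshold and is therefore polynomially tractable while failing the stated condition.

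I expect the main obstacle to be the joint, two-sided control of the two factors of $\Sigma_d(\tau)$ as $d$ and $\#I_d$ grow together: one must isolate the finitely many eigenvalues equal to $1$, show that they inflate $\Sigma_d(\tau)$ only by a factor polynomial in $\#I_d$, and confirm that this factor is absorbed into $d^{q}$ exactly under $d-\#I_d\in\0(\ln d)$, so that the clean dichotomy $\0(1)$ (strong polynomial tractability) versus $\0(\ln d)$ (polynomial tractability) actually emerges. The rest --- the reduction to the summability criterion, the $d=1$ step giving $\lambda\in\l_\tau$, and the elementary counting when $\lambda_1>1$ --- is routine.
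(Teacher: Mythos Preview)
Your approach is correct and genuinely different from the paper's. The paper proves sufficiency through a bespoke inductive estimate (\autoref{lemma_symNEW}) that bounds $\sum_{k_1\le\ldots\le k_d}\mu_{d,\bm{k}}$ by peeling off the indices with $\mu_m=\mu_1$ one at a time; necessity comes separately from \autoref{prop_general} and a case analysis of $\lambda_1$. You bypass the induction entirely by recognising the symmetric-coordinate sum as the complete homogeneous polynomial $h_{a_d}((\lambda_n^\tau))$ and invoking the generating identity $\sum_k h_k(x)t^k=\prod_n(1-tx_n)^{-1}$. This gives sharp two-sided control of $\Sigma_d(\tau)$ in one stroke and handles all three sub-cases ($\lambda_1<1$; $\lambda_1=1>\lambda_2$; $\lambda_1=\lambda_2=1$) uniformly via the ``peel off the ones'' identity $h_a(1,\ldots,1,y)=\sum_{m\le a}\binom{a-m+r-1}{r-1}h_m(y)$. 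That is cleaner and more informative than the paper's route; in particular your argument makes the $\Theta(a_d^{\,r-1})$ growth in the $\lambda_1=\lambda_2=1$ case explicit, which the paper does not isolate.

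Two points need tightening. First, the ``standard criterion'' you invoke --- strong polynomial tractability $\Leftrightarrow\sup_d\Sigma_d(\tau)<\infty$ --- is not valid for arbitrary Hilbert-space problems: the forward implication from \autoref{Thm:General_Tract_abs} only controls $\sum_{i\ge f(d)}\lambda_{d,i}^\tau$ and leaves the first $\lceil C\rceil$ eigenvalues, hence $\lambda_{d,1}=\lambda_1^d$, unbounded. It becomes correct here only \emph{after} you establish $\lambda_1>1\Rightarrow$ not strongly polynomially tractable; so that step must precede the criterion, not follow it. Second, your $\lambda_1>1$ count is garbled: the phrase ``their multiplicities $\binom{d-\#I_d}{a}$ on the remaining ones'' mixes up the symmetric and free coordinates and the range $0\le a\le\#I_d$ is wrong for that purpose. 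The clean argument (which the paper uses) is to pick $K$ with $\lambda_2\ge\lambda_1^{-K}$ and note that the multisets with $d-l$ copies of index $1$ and $l$ copies of index $2$, for $0\le l\le\lfloor d/(K+1)\rfloor$, already give $\ge 1+\lfloor d/(K+1)\rfloor$ eigenvalues $\ge1$, regardless of $I_d$; this immediately rules out strong polynomial tractability. With those two reorderings your proof is complete.
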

It remains the open problem to find sufficient conditions for polynomial tractability 
in the case $\lambda_1>1$.
However, our results show that the conditions $\lambda \in \l_\tau$ and $(d-\#I_d) \in \0(\ln d)$
are necessary in this situation, too.
In conclusion, we see that imposing sufficiently many additional symmetry assumptions, we can avoid the curse of dimensionality which we are faced with \eg\, in the case $\lambda_1=\lambda_2=1$; see also \autoref{thm:unweightedtensor_abs}.\\
The complexity analysis of antisymmetric problems is more demanding.
On the other hand, it turns out that here even weaker conditions are sufficient to conclude 
polynomial tractability and thus to vanquish the curse.
One of the reasons is the structure of the initial error which is more complicated in this case.
Similar to \autoref{thm_asy_abs} in \autoref{sect:complexity_antisym} we can
summarize the main results on the complexity as follows:
\begin{thm}[Polynomial tractability, $P=\AI$]\label{Z_en_thm:asy_PT}
			Let $S_1 \colon H_1 \nach \G_1$ denote a compact linear operator between Hilbert spaces
			and let $\lambda=(\lambda_m)_{m\in \N}$ be the sequence of eigenvalues of $W_1={S_1}^{\!\dagger} S_1$
			\wrt a non-increasing ordering.
			Assume $\lambda_2>0$ and for $d>1$ let $\leer \neq I_d \subseteq\{1,\ldots,d\}$ be fixed.
			We consider the restriction $S_I=(S_{d,I_d})_{d\in\N}$ of the tensor product problem
			$S=(S_d\colon H_d\nach\G_d)_{d\in\N}$ to the $I_d$-antisymmetric subspaces 
			$\AI_{I_d}(H_d)\subset H_d$, $d\in\N$.
			Then for the case $\lambda_1 < 1$ the following statements are equivalent:
			\begin{itemize}
						\item $S_I$ is strongly polynomially tractable.
						\item $S_I$ is polynomially tractable.
						\item There exists a constant $\tau \in (0,\infty)$ such that $\lambda \in \l_\tau$.
			\end{itemize}
			Moreover, the same equivalences hold true if $\lambda_1\geq 1$ and 
			the number of antisymmetric coordinates~$\#I_d$ grows linearly with the dimension~$d$.
\end{thm}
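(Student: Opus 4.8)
The plan is to reduce tractability of $S_I$ entirely to an eigenvalue count for the Gram operators $W_{d,I_d}={S_{d,I_d}}^{\!\dagger}S_{d,I_d}$, exploiting that passing to $I_d$-antisymmetric elements replaces a tensor power by an exterior power and thereby produces a factorial. First I would record, from the singular value analysis underlying \autoref{theo:opt_sym_algo}, that $W_{d,I_d}$ is unitarily equivalent to $\bigl(\Lambda^{k_d}W_1\bigr)\otimes W_1^{\otimes(d-k_d)}$, with $k_d=\#I_d$ and $\Lambda^{k_d}$ the $k_d$-th exterior power; its eigenvalues are therefore exactly the products $(\lambda_{i_1}\cdots\lambda_{i_{k_d}})\cdot(\lambda_{j_1}\cdots\lambda_{j_{d-k_d}})$ with $i_1<\dots<i_{k_d}$ and arbitrary $j_1,\dots,j_{d-k_d}$. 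The same analysis gives that the $n$-th minimal worst case error of $S_{d,I_d}$ equals $\sqrt{\sigma_{d,n+1}}$, where $(\sigma_{d,m})_{m}$ lists these eigenvalues non-increasingly, so that for the absolute error criterion $n(\eps,d;S_{d,I_d})=\#\{m\colon\sigma_{d,m}>\eps^{2}\}$.

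The engine of the argument is a trace bound. For fixed $\tau>0$ and $A_{\tau}:=\sum_{m}\lambda_{m}^{\tau}$ the product structure yields $\trace(W_{d,I_d}^{\tau})=e_{k_d}(\lambda^{\tau})\,A_{\tau}^{\,d-k_d}$, where $e_{k}(\lambda^{\tau})$ denotes the $k$-th elementary symmetric function of $(\lambda_{m}^{\tau})_{m}$; since each strictly increasing $k$-tuple occurs $k!$ times among all $k$-tuples and all summands are non-negative, $e_{k}(\lambda^{\tau})\le A_{\tau}^{\,k}/k!$, and combining this with $\#\{m\colon\sigma_{d,m}>\eps^{2}\}\cdot\eps^{2\tau}\le\sum_{m}\sigma_{d,m}^{\tau}=\trace(W_{d,I_d}^{\tau})$ I would obtain, whenever $A_{\tau}<\infty$,
\[
	n(\eps,d;S_{d,I_d})\;\le\;\eps^{-2\tau}\,\trace(W_{d,I_d}^{\tau})\;\le\;\eps^{-2\tau}\,\frac{A_{\tau}^{\,d}}{k_d!}\,.
\]
For sufficiency, assume $\lambda\in\l_{\tau_{0}}$ for some $\tau_{0}$. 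If $\lambda_{1}<1$ then $A_{\tau}\le\lambda_{1}^{\tau-\tau_{0}}A_{\tau_{0}}\to0$ as $\tau\to\infty$, so one may fix $\tau\ge\tau_{0}$ with $A_{\tau}\le1$; then $A_{\tau}^{\,d}/k_d!\le1$ for every $d$ and the display gives $n(\eps,d;S_{d,I_d})\le\eps^{-2\tau}$ uniformly in $d$, i.e. strong polynomial tractability. If instead $\lambda_{1}\ge1$ but $\#I_d\ge c\,d$ for some $c>0$ and all large $d$, one fixes any $\tau\ge\tau_{0}$; by Stirling $\ln(A_{\tau}^{\,d}/k_d!)\le d\ln A_{\tau}-k_d(\ln k_d-1)\to-\infty$, so $M:=\sup_{d}A_{\tau}^{\,d}/k_d!<\infty$ and $n(\eps,d;S_{d,I_d})\le M\eps^{-2\tau}$. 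Both times strong, hence also ordinary, polynomial tractability follows. (In the case $\lambda_{1}<1$ one could alternatively argue by monotonicity: restricting to a subspace cannot increase the $\sigma_{d,m}$, and the unrestricted tensor product problem is covered by \autoref{thm:unweightedtensor_abs}.)

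For the converse I would show that polynomial tractability of $S_I$ forces $\lambda\in\l_{\tau}$ for some $\tau$. Assuming $n(\eps,d;S_{d,I_d})\le C\,\eps^{-p}d^{q}$ and specialising to $d=2$: the spectrum of $W_{2,I_{2}}$ contains the subfamily $\{\lambda_{1}\lambda_{m}\colon m\in\N\}$ up to omitting at most one term (regardless of whether $\#I_{2}$ equals $1$ or $2$), and $\lambda_{1}\ge\lambda_{2}>0$, so $\#\{m\colon\lambda_{m}>\delta\}\le1+C\,2^{q}(\lambda_{1}\delta)^{-p/2}$ for all $\delta>0$; hence $\lambda_{m}=\0(m^{-2/p})$ and $\lambda\in\l_{\tau}$ for every $\tau>p/2$ (trivially so if $\lambda$ has finite support). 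Together with the obvious implication from strong to ordinary polynomial tractability this closes all equivalences claimed, in both cases of the theorem.

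I expect the hard part to be the sufficiency direction when $\lambda_{1}\ge1$: here the $d-k_d$ untouched tensor factors make $A_{\tau}^{\,d}$ grow exponentially for every admissible $\tau$, so — unlike for the unrestricted problem, cf. \autoref{thm:unweightedtensor_abs} — tractability cannot come from a small $\l_{\tau}$-norm. One must genuinely harvest the factorial $1/k_d!$ supplied by the exterior powers and verify that linear growth of $\#I_d$ already dominates any exponential $A_{\tau}^{\,d}$, which is exactly the borderline the statement records; the structural input enabling this is the clean factorisation $\trace(W_{d,I_d}^{\tau})=e_{k_d}(\lambda^{\tau})\,A_{\tau}^{\,d-k_d}$, i.e. that the antisymmetrised coordinates contribute an elementary symmetric function rather than a full power sum.
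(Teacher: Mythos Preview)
Your proposal is correct and follows essentially the same route as the paper: the sufficiency direction is exactly the paper's \autoref{prop_suf_asy}, namely bounding $\sum_{\bm{k}\in\nabla_d}\widetilde{\lambda}_{d,\bm{k}}^{\,\tau}$ by $\norm{\lambda\sep\l_\tau}^{\tau d}/a_d!$ (your $e_{k_d}(\lambda^\tau)\le A_\tau^{k_d}/k_d!$ is precisely this estimate in symmetric-function language) and then invoking either $A_\tau<1$ or Stirling; the necessity direction is the paper's \autoref{prop_general}, though the paper reduces to $d=1$ rather than your $d=2$, which is marginally simpler since $S_{1,I_1}=S_1$ with no antisymmetry at all.
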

Clearly, these assertions show that antisymmetric tensor product problems are significantly easier than their
symmetric counterparts which on their part possess a lower information complexity than entire tensor product problems, as long as we impose enough (anti)symmetry conditions.
On the other hand, there exist quite natural examples which show that even fully antisymmetric problems
are not necessarily trivial or polynomially tractable, in general. For details we refer to \autoref{sect:toy_ex}.

Let us briefly explain the structure of the present thesis.
In the first chapter we settle some notational conventions and we define the abstract problem we are faced with in IBC.
Furthermore, here we introduce the used cost model and recall the formal definitions of several complexity categories.

In \autoref{chapt:prop} we discuss special classes of numerical problems, as well as elementary tools
that we need to handle them.
In particular, here we give a detailed introduction to the singular value decomposition (SVD) of
compact operators between Hilbert spaces.
In many cases it builds the basis for the construction of optimal algorithms. 
Hence it is of fundamental importance for the rest of our work.
In addition, we discuss tensor product structures in Hilbert spaces and recall some well-known complexity assertions for problems related to this concept.
Finally, we briefly introduce so-called reproducing kernel Hilbert spaces (RKHSs)
and collect some of their properties.

In the first two sections of the third chapter we derive the characterizations of the different types
of tractability of scaled tensor product problems between Hilbert spaces we presented in \autoref{Z_en_thm:scaled_PT} and \autoref{Z_en_thm:scaled_WT} above.
Moreover, from them we conclude a complete characterization for the normalized error criterion in \autoref{sect:ScaledProbs_normed}.
It turns out that here the scaling factors become irrelevant.
Apart from formulas of the optimal algorithm and its worst case error, we additionally show that these new assertions generalize the known theory in a quite natural way.
We conclude this chapter by the application of the obtained results to two simple examples.

\autoref{chapt:weighted} then deals with problems on function spaces endowed with weighted norms.
Here we explain the concept of weighted spaces in full detail and illustrate it using the example of some unanchored Sobolev~$\Hi_d^\gamma$ space equipped with product weights.
For the uniform approximation problem on this space we present an algorithm~$A_{n,d}^*$ that satisfies suitable upper error bounds.
Together with corresponding lower bounds, which we prove for spaces of low-degree polynomials,
the application of simple embedding arguments then leads us to complexity assertions for a whole scale of product-weighted Banach spaces.
In particular, these assertions cover the results for the space $F_d^\gamma$ stated in \autoref{Z_en_thm:weighted_PT} and \autoref{Z_en_thm:weighted_WT}.
Finally, the last section within this chapter, \autoref{sect:final_remarks}, presents some generalizations of the techniques developed before.
Among other things, here we show how to handle $\L_p$-approximation problems, where $1\leq p<\infty$, defined on suitable spaces.
Moreover, we show that the algorithm~$A_{n,d}^*$ is essentially optimal for $\L_\infty$-approximation
on $\Hi_d^\gamma$.
For the proof we make use of arguments due to Kuo, Wasilkowski and Wo\'{z}niakowski~\cite{KWW08} that relate
the uniform approximation problem in the worst case setting, defined on quite general reproducing kernel Hilbert spaces, to a certain average case $\L_2$-approximation problem.\\
Some of the results presented in this chapter were already published in~\cite{W12}.
However, we were able to partially improve these assertions.
We will explicitly emphasize generalizations and new results at the appropriate points.

Finally, \autoref{chapt:antisym} is devoted to problems with (anti)symmetry conditions.
We start with the definition of (anti)symmetry in Hilbert function spaces.
In particular, we focus our attention to tensor product structures and conclude fundamental properties
of the respective projections and subspaces.
At the end of \autoref{sect:basic_antisym_def} we use these properties in order to generalize
the notion of (anti)symmetry to tensor products of abstract Hilbert spaces.
Afterwards we define (anti)symmetric numerical problems $S_{I}=(S_{d,I_d})_{d\in\N}$ by the restriction of a given tensor product problem $S=(S_d)_{d\in\N}$ to the subspaces of (anti)symmetric elements in the source spaces.
We prove the commutativity of the operators $S_d$ with certain projections and conclude formulas for optimal algorithms and their worst case errors.
This in hand, in \autoref{sect:complexity_antisym} we then discuss the complexity of (anti)symmetric numerical problems.
We distinguish between symmetric and antisymmetric problems, as well as between the absolute and the normalized error criterion.
Here we particularly derive the proofs of \autoref{Z_en_thm:sym_PT} and \autoref{Z_en_thm:asy_PT}.
The chapter is concluded by a section which is devoted to several applications.
On the one hand, we use simple examples to show that the additional knowledge
about (anti)symmetry conditions can dramatically reduce the information complexity.
On the other hand, we also discuss more advanced problems that play a role in computational practice.
To this end, we illustrate the application of this new theory to the approximation problem of so-called wavefunctions that arise in certain models of quantum mechanics and theoretical chemistry.\\
A major part of the results proven in this chapter was published in~\cite{W12b}.
However, at some points we use different proof techniques that allow slight generalizations.

Within every chapter formulas are numbered consecutively.
Moreover, we use a sequential numbering for lemmata, remarks, examples, propositions, and theorems;
\eg\, \autoref{prop:scaled_tensor_eigenpairs}	is followed by \autoref{thm:scaled_PT} and \autoref{lemma:n}.
The symbols $\square$ and $\blacksquare$ are used to indicate the end of remarks and examples, as well as of proofs, respectively.
	\cleardoubleplainpage

	\mainmatter
  \kopffusskomplett
\chapter{Preliminaries}\label{chapt:1}
Apart from introducing some notational conventions, the aim of this first chapter is to define the general objects of interest in \emph{information based complexity} (IBC). 
We give an abstract formulation of the general problem in \autoref{sect:General}.
Afterwards we introduce some classes of algorithms and discuss the used cost model in \autoref{sect:Algos}.
Finally, in \autoref{sect:TractDef}, we recall the notions of tractability, as well as the definition of the curse of dimensionality.

\section{Basic notation}
As usual we denote by $\N$ the natural numbers and $\N_0=\N \cup \{0\}$ are all non-negative integers.
Moreover, $\R$ denotes the real line and $\R^d$ ($d\in\N$) is the collection of all points $\bm{x}=(x_1,\ldots,x_d)$ in the $d$-dimensional Euclidean space.
Given a real number $y>0$ the symbol $\floor{y}$ means the largest $n\in\N_0$ such that $n\leq y$ and 
we define~$\ceil{y}$ to be the smallest number $m\in\N$ with $y\leq m$.
The value of the Riemann zeta function at some $z>1$ is denoted by $\zeta(z)=\sum_{n=1}^\infty n^{-z}$.

If $\bm{k}=(k_1,\ldots,k_d)\in\N_0^d$ is a multi-index then $\abs{\bm{k}}=\sum_{i=1}^d k_i$ stands for its length. 
Furthermore, we use the common notation $\bm{x}^{\bm{k}}=x_1^{k_1}\cdot\ldots\cdot x_d^{k_d}$.
For $\bm\alpha\in\N_0^d$ partial derivatives of $d$-variate functions are denoted by $D^{\bm\alpha}$, \ie
\begin{gather*}
D^{\bm\alpha}f = \frac{\partial^{\abs{\bm\alpha}}f}{\partial x_1^{\alpha_1}\ldots\partial x_d^{\alpha_d}}.
\end{gather*}
Derivatives of univariate functions $g$ are indicated as $g'$, $g''$, \ldots, $g^{(n)}$.
For real numbers $a<b$ half-open intervals are symbolized by $[a,b)$, and $[a,b]^d$ stands for the Cartesian product $\bigtimes_{i=1}^d [a,b]=[a,b]\times\ldots\times [a,b]$.
If $\bm{x}\in\R^d$ belongs to $[a,b]^d$ then the value of the characteristic (or indicator) function $\chi_{[a,b]^d}(\bm{x})$ of this set equals~$1$. Otherwise we define $\chi_{[a,b]^d}(\bm{x})=0$.
Similarly the Kronecker delta function~$\delta_{i,j}$ is one if the two objects $i$ and $j$ coincide
and $\delta_{i,j}=0$ when they differ from each other.

In what follows we assume that the reader has a fundamental 
knowledge in measure theory and probability theory as it can be found, \eg, in the textbooks of Bauer~\cite{B01,B96}.
We write $\uplambda^d$ for the Lebesgue measure in $\R^d$ and use the symbols $\Pr$ and $\E$ for probabilities and expectations, respectively.
We use $\# I$ to denote the cardinality of a finite set~$I$.
As usual the sum over an empty index set~$I$ is to be interpreted as zero whereas empty products equal $1$ by definition.

Throughout the whole thesis we assume that 
the reader is familiar with the basic concepts in 
functional analysis such as, \eg, 
complete normed spaces (Banach spaces), 
weak derivatives or tensor products.
For a comprehensive introduction we refer to 
the textbooks of Triebel~\cite{T92} and Yosida~\cite{Y80}.
The norm in some space~$F$ is denoted by $\norm{\cdot\sep F}$.
We write $B_r(F)=\{f\in F\sep \norm{f\sep F} \leq r\}$ for 
centered, closed balls of radius $r \geq 0$ in normed spaces~$F$.
Moreover, we use $\partial M$ for the boundary and $\inner(M)$ for the interior of a set~$M$.
Consequently $\B(F) = B_1(F) = \inner(B_1(F)) \cup \partial B_1(F)$ denotes the unit ball in~$F$.
For the class of all bounded linear operators between 
normed spaces~$F$ and~$G$ we write $\LO(F,G)$.
The subset of all compact operators is denoted by $\K(F,G)$.
We say a space $F$ is (continuously) embedded into another space $G$ 
with norm $C$ if the operator norm of $\id \colon F \nach G$, $f\mapsto \id(f)=f$, equals $C \in[0,\infty)$.
In this case we write $F\hookrightarrow G$ and $\norm{\id \sep \LO(F,G)}=C$.
We use the symbol $\distr{\cdot}{\cdot}_H$ for the inner product 
in the case of a Hilbert spaces~$H$.
Moreover, we write $M^\bot$ for the orthogonal complement 
of some linear subspace $M\subset H$
and we use $\oplus$ to denote the orthogonal sum 
with respect to $\distr{\cdot}{\cdot}_H$.

If $(\mathcal{X}, \a,\mu)$ is an arbitrary measure space and
$0< p \leq \infty$ then we use the symbol $\L_p(\mathcal{X},\a,\mu)$ for the classical Lebesgue spaces.
Hence, if $p<\infty$ then we deal with the set of (equivalence classes of) $\mu$-measurable functions 
$f\colon \mathcal{X}\nach\R$ for which the norm\footnote{Actually, in the case $0<p<1$ the given formula only provides
a quasi-norm, \ie then we need an additional constant $k>1$ for the triangle inequality.
Since this does not play any role in our applications we do not emphasize this difference in what follows.}
\begin{gather*}
			\norm{f\sep \L_{p}(\mathcal{X},\a,\mu)} =	\left( \int_{\mathcal{X}} \abs{f(x)}^p \,\mathrm{d}\mu(x) \right)^{1/p}
\end{gather*}
is finite. 
Moreover, $\L_\infty(\mathcal{X},\a,\mu)$ is the space (of classes) of $\mu$-essentially bounded functions
on $\mathcal{X}$, equipped with the norm
\begin{gather*}
			\norm{f\sep \L_{\infty}(\mathcal{X},\a,\mu)} =	\esssup_{x\in \mathcal{X}} \abs{f(x)}.
\end{gather*}
As usual two functions are identified if they coincide $\mu$-almost everywhere on~$\mathcal{X}$ 
and we do not distinguish between functions and their equivalence classes.
The following special cases are of particular interest for us.

For a Borel measurable subset $\mathcal{X}=\Omega\subset \R^d$, the Borel sigma algebra $\a=\Sigma=\Sigma(\Omega)$ 
and $\mu=\uplambda^d$ we use the shorthand $\L_p(\Omega) = \L_p(\Omega,\Sigma,\uplambda^d)$.
If in this definition $\mu$ does not equal the Lebesgue measure, but is absolute continuous \wrt
$\uplambda^d$, and if $\rho = \mathrm{d}\mu/\mathrm{d}\uplambda^d$ 
describes a probability density function
that is strictly positive ($\uplambda^d$-{a.e.}) on $\Omega$, then we write $\L_p^\rho(\Omega)$.
On the other hand, for a discrete measure space $(\Gamma,\mathfrak{b},\nu)$ on some set $\Gamma$ 
with $\nu(\{i\})=1$ for each $i\in\Gamma$ we write 
$\l_p(\Gamma) = \L_p(\Gamma,\mathfrak{b},\nu)$ and we abbreviate the notation to $\l_p$ if $\Gamma=\N$.
Keep in mind that in this case the norms simplify to
\begin{gather*}
		\norm{\lambda \sep \l_p} = \begin{cases}
					\left( \sum_{m=1}^\infty\limits \abs{\lambda_m}^p \right)^{1/p}, & \text{if } 0< p < \infty,\\
					\sup_{m\in\N}\limits \abs{\lambda_m}, &\text{if } p=\infty,
		\end{cases}
\end{gather*}
where $\lambda=(\lambda_m)_{m\in\N}$ is any real-valued sequence such that the above norm is finite.

Finally, we make use of the Bachmann-Landau notation of asymptotic growth rates.
That is, for real-valued functions $f$ and $g$ defined on some subset of the real line we write $f(x)\in\0(g(x))$, as $x\nach a$,
if there exists a universal constant $M>0$ such that the estimate
\begin{gather*}
		\abs{f(x)} \leq M \, \abs{g(x)}
\end{gather*}
holds for all $x$ sufficiently close to the point $a$.
If $g$ is non-zero (at least in the neighborhood of $a$) then this definition equivalently reads
\begin{gather*}
		\limsup_{x\nach a} \abs{\frac{f(x)}{g(x)}}<\infty.
\end{gather*}
If we have $f(x)\in\0(g(x))$ and simultaneously $g(x)\in\0(f(x))$, as $x\nach a$, then we write 
$f(x)\in\Theta(g(x))$, $x\nach a$.
Moreover, we say that $f(x)\in o(g(x))$, as $x\nach a$, if for any $\delta>0$ there exists a neighborhood $U$ of $a$ such that
\begin{gather*}
		\abs{f(x)} \leq \delta \, \abs{g(x)}
\end{gather*}
for all $x \in U$.
Again this property can be reformulated for non-vanishing $g$.
In this case we have
\begin{gather*}
		\lim_{x\nach a} \abs{\frac{f(x)}{g(x)}}=0.
\end{gather*}
All these three notations will be used especially for sequences $(f_d)_{d\in\N}$
(interpreted as special classes of functions), where we have $a=\infty$.

\section{General problem}\label{sect:General}
In numerous applications from physics, chemistry, finance, economics, and computer
science we are faced with very high dimensional continuous problems 
which can almost never be solved analytically.
Therefore we search for algorithms which approximate the unknown solutions numerically
to within a threshold $\epsilon>0$.

In general, such a problem is given by a non-trivial \emph{solution operator}
\begin{gather}\label{problemS}
		S \colon \widetilde{\F} \nach \G,
\end{gather}
mapping a \emph{problem element} $f$ 
out of a subset~$\widetilde{\F}$ of some normed space~$\F$ 
onto its \emph{solution}~$S(f)$ in some (other) \emph{target space}~$\G$.
Often, but not always, $\widetilde{\F}$ is assumed to be the 
unit ball~$\B(\F)$ in some Banach space~$\F$
of multivariate functions $f\colon \Omega_d\subset\R^d\nach\R$. 
For the domain of definition $\Omega_d$ usually the unit cube $[0,1]^d$ is taken.
Since the dependence on $d$ will play a crucial role in this thesis
we concentrate on whole sequences $S=(S_d)_{d\in\N}$ of solution operators,
where every
\begin{gather}\label{problemSd}
		S_d \colon \widetilde{\F}_d \nach \G_d, \quad d\in\N,
\end{gather}
is of the form \link{problemS}.

Typically, $\widetilde{\F}_d$ is an infinite dimensional 
subset of the \emph{source space}~$\F_d$ 
and thus we cannot input $f\in\widetilde{\F}_d$ directly into the computer. 
Instead we assume that the input for our algorithms $A$
consists of finitely many cleverly chosen pieces of
information which hopefully describe $f$ as well as possible.
In \autoref{sect:Algos} we define different kinds of 
information operations which lead us to
different classes $\A_d$ of algorithms.
For now assume $A_d\colon \widetilde{\F}_d \nach \G_d$
to be a fixed element in some class $\A_d$.

The \emph{local error} $\Delta_{\mathrm{loc}}(f;A_d,S_d)$ of a given algorithm~$A_d\in\A_d$ applied to 
a problem element $f\in\widetilde{\F}_d$ is defined as the difference
of the exact solution~$S_d(f)$ and the approximate solution~$A_d(f)$, 
measured in the norm of the target space $\G_d$, \ie
\begin{gather*}
			\Delta_{\mathrm{loc}}(f;A_d,S_d) = \norm{S_d(f) - A_d(f) \sep \G_d}.
\end{gather*}
The latter definition in hand, there are several ways to quantify the quality of $A_d$.

In the \emph{worst case setting} this is done in terms of the 
maximal local error of the algorithm among all possible inputs $f\in\widetilde{\F}_d$.
Hence, by 
\begin{gather*}
			\Delta^{\mathrm{wor}} \!\left(A_d; S_d\colon \widetilde{\F}_d\nach\G_d \right) 
			= \sup_{f \in \widetilde{\F}_d}	\Delta_{\mathrm{loc}}(f;A_d,S_d)
\end{gather*}
we define the \emph{worst case error} of the algorithm~$A_d$
for the problem $S_d\colon \widetilde{\F}_d\nach\G_d$.
On the other hand, sometimes it is useful to measure the average performance of
a given algorithm on the input set $\widetilde{\F}_d$.
This corresponds to the so-called \emph{average case setting}.
Here we need to assume in addition that $\widetilde{\F}_d$ is equipped with a probability
measure $\mu_d$. 
The term
\begin{gather*}
			\Delta^{\mathrm{avg}} \!\left(A_d; S_d\colon \widetilde{\F}_d\nach\G_d \right) 
			= \left( \int_{\widetilde{\F}_d} \Delta_{\mathrm{loc}}(f;A_d,S_d)^2 \,\mathrm{d}\mu_d(f) \right)^{1/2}
\end{gather*}
then denotes the \emph{average case error} of $A_d$.\footnote{In fact, $\mu_d$ is defined on the Borel sets of $\widetilde{\F}_d$ and we need to claim $\Delta_{\mathrm{loc}}(\,\cdot\,;A_d,S_d)$ to be a measurable function, but these are only formal issues. See, \eg, \cite[p. 129]{NW08} for further details.}
Since the worst case setting seems to be much more important 
we will mainly deal with worst case errors in what follows.
However, for some problems there exist close relations to the average case setting.
One such example will be presented in \autoref{sect:opt_uniform_algo}.
For the sake of completeness we stress the point that there exist even more settings which are subject to current research. 
To this end, we mention the \emph{probabilistic} and the \emph{randomized setting} 
and refer to \cite[Chapter 3.2]{NW08} for an extensive discussion.

In numerical analysis one major assumption states that information is expensive.
Therefore we are interested in algorithms which solve a given problem within a tolerance~$\epsilon$ while
using as few as possible pieces of information on the inputs.
This property can be captured by the concept of the \emph{$n$th minimal error}
\begin{gather*}
			e^{\mathrm{sett}}\!\left(n,d; S_d\colon \widetilde{\F}_d\nach\G_d \right) 
			= \inf_{A_{n,d}\in\A_d^n} \Delta^{\mathrm{sett}}\!\left(A_{n,d}; S_d\colon \widetilde{\F}_d\nach\G_d \right)
\end{gather*}
for $\mathrm{sett}\in\{\mathrm{wor},\mathrm{avg}\}$, $d\in\N$, and $n\in\N_0$, where the infimum is taken over all algorithms in the class 
\begin{gather*}
			\A_{d}^n=\{A \in \A_d \sep A \text{ uses at most } n \text{ information operations on the input} \}.
\end{gather*}
Consequently, the \emph{initial error}
\begin{gather*}
			\epsilon_d^{\rm{init},\rm{sett}} 
			= e^{\mathrm{sett}}\!\left(0,d; S_d\colon \widetilde{\F}_d\nach\G_d\right),
			\quad d\in\N,
\end{gather*}
describes the smallest error we can achieve 
without using any information on the input in a given setting $\mathrm{sett}\in\{\mathrm{wor},\mathrm{avg}\}$.
We will see in \autoref{sect:Linearity} that 
under mild assumptions this initial error 
can be attained by the zero algorithm, 
\ie by $A_{0,d}\equiv 0 \in \G_d$.

If there is no danger of confusion we abbreviate the above notations and simply write
$\Delta^{\mathrm{sett}}(A_d; S_d)$ and $e^{\mathrm{sett}}(n,d;S_d)$, where $\mathrm{sett}$ is an element of $\{\mathrm{wor},\mathrm{avg}\}$, or even only $\Delta(A_d)$ and $e(n,d)$, respectively.
Moreover, in \autoref{chapt:weighted} and \autoref{chapt:antisym} it seems to be useful to stress especially the source spaces $\F_d$
the problem elements come from rather than the operator $S_d$.
There we slightly abuse notation and write $e^{\mathrm{wor}}(n,d;\F_d)$ instead of
$e^{\mathrm{wor}}(n,d;S_d\colon \B(\F_d)\nach\G_d)$.

The main goal in the classical theory is to find sharp bounds on the $n$th minimal error in terms
of the amount of information operations. 
In fact, there is a huge literature where the existence of constants 
$c_d,C_d>0$ and $p_{d},P_{d}>0$ was proven such that
estimates of the type
\begin{gather*}
			\frac{1}{c_d} \cdot n^{-p_d} \leq e(n,d) \leq C_d \cdot n^{-P_d} \quad \text{for all} \quad n\in\N
\end{gather*}
hold for certain problems $S$ in a given setting.\footnote{Actually, in many cases these estimates hold modulo $\log n$ to some power which usually depends linearly on $d$. For simplicity we omit these factors because they are not crucial for the following argument.}
Back then, the respective researchers did not pay much attention to the involved constants $c_d$ and $C_d$. 
These numbers can be arbitrary large and in some cases their dependence on~$d$ 
is completely unknown.
Instead the attention was focused on the so-called \emph{rate} (or \emph{order}) 
\emph{of convergence}, \ie on proofs which yield $p_d=P_d$.
Often this rate tends to zero as $d$ approaches infinity.
Therefore these bounds are not meaningful at all for large $d$.
Thus, usually the parameter $d$ was assumed to be a fixed (and reasonably small) constant in this approach.
Since we also want to work in huge dimensions a more careful error analysis is needed.

\section{Algorithms and cost model}\label{sect:Algos}
For fixed $n\in\N$ and $d\in\N$ an algorithm $A_{n,d}\in\A_d^n$ is 
modeled as a mapping $\varphi_n \colon \R^n \nach \G_d$ 
and a function $N_n \colon \widetilde{\F}_d \nach \R^n$ such that 
$A_{n,d} = \varphi_n \circ N_n$. 
For the sake of completeness in the case $n=0$ we simply assign a constant value 
$c\in\G_d$ to every element $f\in\widetilde{\F}$, \ie $A_{0,d}\equiv c$,
in order to model an algorithm that does not depend on the input at all.
If $n>0$ then the \emph{information map}~$N_n$ is given by
\begin{gather}\label{non-adapt}
        N_n(f)=\left( L_1(f), L_2(f), \ldots, L_n(f) \right), \qquad f\in \widetilde{\F}_d,
\end{gather}
where $L_j \in \Lambda$. 
Here we distinguish certain classes of 
information operations $\Lambda$. 
In one case we assume that we are allowed to 
compute arbitrary continuous linear functionals on the inputs $f$. 
Then $\Lambda=\Lambda^{\rm all}$ coincides with $\F_d^*$, the dual space of~$\F_d$. 
If we deal with problem operators $S_d$ defined on function spaces $\widetilde{\F}_d$
then often only function evaluations are permitted, \ie
$L_j(f)=f (t^{(j)})$ for a certain fixed 
$t^{(j)}\in\Omega_d$ in the domain of definition of $f$. 
In this case $\Lambda = \Lambda^{\rm std}$ is 
called \textit{standard information}. 
If function evaluation is continuous for all $t\in\Omega_d$ we have 
$\Lambda^{\rm std}\subset \Lambda^{\rm all}$.
In particular this is the case when dealing with problems defined on
reproducing kernel Hilbert spaces; see \autoref{sect:RKHS}. 
If $L_j$ depends continuously on $f$ 
but is not necessarily linear then the respective class is denoted 
by $\Lambda^{\rm cont}$. 
Note that in this case also $N_n$ is continuous 
and we obviously have $\Lambda^{\rm all} \subset \Lambda^{\rm cont}$. 

Furthermore, we distinguish between \textit{adaptive} and \textit{non-adaptive} 
algorithms. 
The latter case is described above in formula \link{non-adapt}, 
where $L_j$ does not depend on the previously computed values 
$L_1(f),\ldots,L_{j-1}(f)$. 
In contrast, we also discuss algorithms of 
the form $A_{n,d}=\varphi_n \circ N_n$ with
\begin{gather}\label{adapt}
        N_n(f)= \left( L_1(f), L_2(f;y_1), \ldots, L_n(f;y_1,\ldots,y_{n-1})\right), \qquad f\in\widetilde{\F}_d,
\end{gather}
where $y_1=L_1(f)$ and $y_j=L_j(f;y_1,\ldots,y_{j-1})$ for $j=2,3,\ldots,n$. 
If $N_n$ is adaptive we restrict ourselves to the case where $L_j$ depends 
linearly on $f$, \eg\, $L_j(\,\cdot\,; y_1,\ldots,y_{j-1}) \in \Lambda^{\rm all}$.
Note that in any case $N_n$ is either continuous, or it is constructed out of linear information operations (which may be combined adaptively).
Moreover, in all cases of information maps, 
the mapping $\varphi_n$ can be chosen 
arbitrarily.

For upper error bounds small classes of algorithms are most important.
The smallest such class under consideration is the family
of linear, non-adaptive algorithms of the form
\begin{gather}\label{LinAlg}
        A_{n,d}(f) = \sum_{j=1}^n L_j(f) \cdot g_j
\end{gather}
with some $g_j \in \G_d$ and $L_j \in \Lambda^{\rm all}$
or even $L_j\in\Lambda^{\rm std}$. 
We denote this set of algorithms by $\A_{d}^{n,\rm lin}(\Lambda)$,
where $\Lambda=\Lambda^{\rm all}$, or $\Lambda=\Lambda^{\rm std}$, respectively.
On the other hand, it is reasonable to prove lower error bounds 
for preferably large classes of algorithms.
The most general families consist of algorithms 
$A_{n,d}=\varphi_n \circ N_n$, where $\varphi_n$ is completely arbitrary and $N_n$ either uses 
non-adaptive continuous or adaptive linear information. 
We denote the respective classes by 
$\A_d^{n, \rm cont}$ and $\A_d^{n,\rm adapt}$.

One of the most fundamental assumptions in IBC is that
we can perform (exact) basic arithmetic operations 
on elements of the target space~$\G_d$, as well as on real numbers, with unit cost.
Formally this means that we work with the \emph{real number model}
in contrast to the \emph{bit number model} 
which is used in some other fields of computational science; see, \eg, \cite[Section 4.1.3]{NW08}.
Moreover, we assume that information operations on the input
are given by certain black box computations which are sometimes called
\emph{oracle calls}.
Typically the computational costs for information operations are much higher
than for simple arithmetic operations since 
the computation of a function value or a linear functional may require billions of 
such operations.
If we assume that every oracle call has a fixed cost $C\gg 1$
then the total cost of computing the output of an algorithm is proportional to
the number of needed information operations.\footnote{There also exist approaches in which the cost of an oracle call depends on the parameters of the problem. These attempts stress the point that the computational effort for function evaluations increases with the numbers of (active) variables. 
See, \eg, \cite{KSWW10} for details.}
Therefore it is reasonable to study not only the 
$n$th minimal error of a given problem but also the inverse quantity which we call
\emph{information complexity}
\begin{align*}
			n^{\mathrm{sett}}_{\mathrm{abs}}\!\left(\epsilon,d; S_d\colon\widetilde{\F}_d\nach\G_d\right) 
			&= \min{ n\in\N_0 \sep \exists A\in\A_d^n \text{ such that } \Delta^{\mathrm{sett}}(A) \leq \epsilon }\\
			&= \min{ n\in\N_0 \sep e^{\rm sett}(n,d) \leq \epsilon },
\end{align*}
where $d\in\N$, $\epsilon>0$ and $\mathrm{sett} \in \{\mathrm{wor},\mathrm{avg}\}$.
That is, we look at the amount of oracle calls needed to compute an $\epsilon$-approximation in dimension $d$.
Hence, due to our assumptions this information complexity 
roughly equals the \emph{total complexity} 
of a given problem and therefore describes its computational hardness.
For a detailed discussion of algorithms and their costs, as well as on the relations of
information complexity and total complexity we refer the reader to Section 4.1 in \cite{NW08}.

Finally we want to mention that the above definition addresses the
\emph{absolute error criterion}.
In contrast we will also consider the \emph{normalized error criterion} where
we search for the minimal number of information operations 
needed to improve the initial error
by some factor $\epsilon'>0$.
We denote the corresponding information complexity by
\begin{gather*}
			n^{\rm sett}_{\mathrm{norm}}\!\left(\epsilon',d; S_d\colon\widetilde{\F}_d\nach\G_d\right) 
			= n^{\rm sett}_{\mathrm{abs}}\!\left(\epsilon' \cdot \epsilon_d^{\rm init},d; S_d\colon\widetilde{\F}_d\nach\G_d\right) ,
			\quad \mathrm{sett} \in \{\mathrm{wor},\mathrm{avg}\}.
\end{gather*}
Obviously both the notions coincide if the problem 
under consideration is well-scaled. 
That is, if $\epsilon_d^{\rm init}=1$.
Otherwise the problem may be significantly harder with respect to the normalized error criterion, \eg\, if $\epsilon_d^{\rm init}$ is exponentially small in $d$.
Of course also the converse situation is conceivable.
However, note that both the information complexities are always non-increasing
in the first argument and we have
\begin{equation}\label{information_complexity_zero}
			n^{\rm sett}_\no\!\left(1,d; S_d\colon\widetilde{\F}_d\nach\G_d\right)
			= n^{\rm sett}_\ab\!\left(\eps_d^{\mathrm{init}},d; S_d\colon\widetilde{\F}_d\nach\G_d\right)
			= 0 
\end{equation}
for all $d\in\N$ due to the definition of the initial error.

Again we will use shorthands such as $n_{\mathrm{abs}}(\eps,d;S_d)$ or even $n(\eps,d)$ to simplify notation.

\section{Notions of tractability}\label{sect:TractDef}
As already indicated we strongly believe that it is not sufficient 
only to study the rate of convergence, \ie the dependence of $n(\epsilon,d)$ on $\epsilon$,
to properly describe the computational hardness of a given problem.
We also need to incorporate the dependence on the parameter~$d$.
Keep in mind that the following definitions equally refer to 
both, the absolute and the normalized, error criteria. 
Therefore we simply write $n(\epsilon,d)$ instead of
$n_\mathrm{abs}(\epsilon,d)$ or $n_\mathrm{norm}(\epsilon,d)$
for the information complexity.

When dealing with multivariate problems we often observe the
so-called \emph{curse of dimensionality} 
which goes back to Bellman in the late 1950s; cf.~\cite{B57}.
Given a concrete setting a problem is said to suffer 
from the curse of dimensionality if the corresponding 
information complexity~$n(\epsilon,d)$ increases 
exponentially with the dimension~$d$.
That is, for at least one $\epsilon > 0$ there exist positive constants 
$C$ and $\gamma$ which are independent of the dimension such that we have
\begin{gather*}
        n(\epsilon,d) \geq C \cdot (1+\gamma)^d
\end{gather*}
for infinitely many $d\in\N$.
More generally, if $n(\epsilon,d)$
depends exponentially on $d$ or~$\epsilon^{-1}$ then we call the problem 
\emph{intractable}\footnote{Formally that means that there exist universal constants $\gamma,C>0$, as well as sequences $(\eps_k)_{k\in\N}$ and $(d_k)_{k\in\N}$ with $\eps_k\in(0,1]$ and $d_k\in\N$ for all $k\in\N$, such that $\eps_k^{-1}+d_k\nach\infty$, as $k\nach\infty$, and $n(\eps_k,d_k)\geq C \cdot (1+\gamma)^{\eps_k^{-1}+d_k}$ for every $k\in\N$. Note that this definition includes the curse as a special case, where $\eps_k \equiv \eps_0$.}. 
Otherwise we have \emph{tractability} which goes back 
to Wo{\'z}niakowski in the early 1990s; see \cite{W94a,W94b}.
At this time a problem was called tractable 
if its complexity depends at most polynomially on~$\epsilon^{-1}$ and~$d$.
Today this is only one case in a whole hierarchy of notions of tractability.
We describe these classes starting with the weakest notion.

If a problem is not intractable then we have \emph{weak tractability}
which can be equivalently expressed by
\begin{gather*}
        \lim_{\epsilon^{-1}+d \nach \infty} 
        \frac{\ln \!\left( n(\epsilon,d) \right)}{\epsilon^{-1}+d} = 0,
\end{gather*}
see \cite{GW08, NW08}.
Here the limit is taken with respect to all two-dimensional sequences 
$((\eps_k, d_k))_{k\in\N} \subset (0,1] \times \N$ such that $\eps_k < \eps_{d_k}^\mathrm{init}$
and $\eps_k^{-1}+d_k\nach\infty$, as $k$ approaches infinity.
In particular, the latter restriction ensures that $n(\eps,d)\geq 1$.
Furthermore, we want to stress the point that weak tractability implies the 
absence of the curse of dimensionality, but in general the converse 
is not true.
Recently a slightly stronger notion called \emph{uniform weak tractability} has been suggested.
We will not follow this line of research and refer to \cite{P13}.

Since there are many ways to measure the lack of 
exponential dependence the abstract notion of 
\emph{generalized} (or $T$-) \emph{tractability}
was introduced; see \cite{GW07,GW09} and \cite[Chapter 8]{NW08}.
Here the essence is to describe the behavior of the information complexity
in terms of a multiple of some power of a so-called 
\emph{tractability function} 
$T$ depending on $\epsilon^{-1}$ and $d$. 
Without going into details we mention that the following 
classes can be seen as special cases in this general framework.

For the sake of completeness we also introduce the quite recently developed notion of
\emph{quasi-polynomial tractability}.
A problem is called quasi-polynomially tractable 
if there are universal constants $C,t>0$ such that
\begin{gather*}
			n(\epsilon,d) \leq C \exp{ t (1 + \ln \epsilon^{-1} ) (1 + \ln d) }
\end{gather*}
for every $\epsilon \in(0,1]$ and $d\in\N$.
Note that for fixed $\epsilon$ or $d$ this upper bound behaves polynomially in the second argument what somehow justifies the name of this class of problems.
For details see \cite{GW11} and \cite{NW12}.

Finally, the most important and until now most studied type of tractability is called
\emph{polynomial tractability}.
We say that a problem is polynomially tractable if there exist absolute
constants $C,p>0$ and $q \geq 0$ such that we can bound the information complexity by
\begin{gather}\label{def_poltract}
			n(\epsilon,d) \leq C \cdot \epsilon^{-p} \cdot d^{\,q} \quad \text{for all} \quad d\in\N, \quad \eps \in(0,1].
\end{gather}
If this last inequality holds with $q=0$, 
\ie if we have no dependence on the dimension at all, 
then the problem is called \textit{strongly polynomially tractable}.
In this case the smallest possible constant $p$ in \link{def_poltract} is denoted by
$p^*$. It is called the \emph{exponent of strong polynomial tractability}.\\
If, in contrast, there do not exist constants $C,p$ and $q$ which fulfill \link{def_poltract}
then the problem is said to be \emph{polynomially intractable}.

Observe that \link{information_complexity_zero} shows that, as long as the absolute error criterion is concerned, 
it is enough to consider $\eps \in \left(0,\min{\eps_d^{\mathrm{init}},1}\right]$
instead of $\eps \in (0,1]$ in all the above definitions.
  \cleardoubleplainpage
\chapter{Properties and tools for special problem classes}\label{chapt:prop}
This chapter deals with basic properties
of certain classes of problems and algorithms.
We state simple consequences obtained from fundamental assumptions 
on the operators under consideration.
Furthermore, we present more or less classical tools used in
the framework of information-based complexity to acquire tractability results
in a quite general context.

In detail, we begin with a simple lower error bound in a very general setting which will be used on several 
occasions later on.
In \autoref{sect:Linearity} we then show that for our purposes it is reasonable to
concentrate mainly on compact problems and linear, non-adaptive algorithms.
Moreover, there we derive a formula for the initial error of the problems we are interested in.
Afterwards, in \autoref{sect:General_HSP}, we turn to the important class of problems defined between Hilbert spaces. 
We recall well-known tools such as the singular value decomposition, conclude optimal algorithms and characterize several types of tractabilities of such problems.
In \autoref{sect:TensorBasics} we restrict ourselves further and assume an additional tensor product
structure which will play an important role throughout the rest of this thesis.
Finally we conclude this chapter with the discussion of so-called reproducing kernel Hilbert spaces.

The main references for the functional analytic background needed in this part,
as well as on the theory of $s$-numbers (or $n$-widths, respectively) are
the monographs of Pinkus~\cite{P85} and Pietsch~\cite{P87,P07}.
For a detailed discussion of applications to tractability questions we refer
again to Novak and Wo{\'z}niakowski~\cite{NW08,NW10,NW12} and to Math{\'e}~\cite{M90}.

\section{Lower bounds on linear subspaces}
For the purpose of this chapter it is enough to study the worst case setting.
In addition, we will only focus on the case where all the problem elements lie in
some centered ball of the respective source space.
In this first section we present a quite general method to obtain
lower bounds on the $n$th minimal error
with respect to a wide class of algorithms.
In contrast to the rest of this thesis (where we will restrict ourselves basically to linear and compact problems)
we present a result that holds for any 
homogeneous operator $S$ between linear normed spaces $\F$ and $\G$ over the field of real numbers.
That is, we first only assume that $S(\alpha\cdot f)=\alpha\cdot S(f)$ 
for every $f\in \F$ and all $\alpha\in\R$.

We start by proving the following (modified) assertion of 
Borsuk and Ulam for linear normed spaces:

\begin{lemma}[Borsuk-Ulam]\label{prop:borsuk}
			Let $V$ be a linear normed space over $\R$ with $0<\dim{V}=s<\infty$ and, 
			moreover, let $N \colon V \nach \R^n$ 
			be a continuous mapping for some $0 \leq n<s$. 
			Then for all $r\geq 0$ there exists an element 
			$f^{*} \in V$ with $\norm{f^{*} \sep V}=r$, such that $N(f^{*})=N(-f^{*})$.
\end{lemma}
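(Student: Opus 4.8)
The plan is to reduce the statement to the classical Borsuk--Ulam theorem on the Euclidean sphere. First I would dispose of the degenerate cases. If $r=0$, then $f^{*}=0=-f^{*}$ trivially satisfies $N(f^{*})=N(-f^{*})$. If $n=0$, then $\R^{n}$ consists of a single point, so $N$ is constant and \emph{any} $f^{*}\in V$ with $\norm{f^{*}\sep V}=r$ works; such an element exists because $\dim V=s\geq 1$. Hence from now on I may assume $r>0$ and $1\leq n\leq s-1$.

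The core step is to transport the problem from $V$ to $\R^{s}$ by an antipode-preserving homeomorphism of spheres. Fix a linear isomorphism $T\colon\R^{s}\nach V$. Since all norms on a finite-dimensional space are equivalent, the function $x\mapsto\norm{T(x)\sep V}$ is continuous and strictly positive on the Euclidean unit sphere $S^{s-1}=\{x\in\R^{s}\sep\abs{x}=1\}$. Therefore
\[
	\varphi\colon S^{s-1}\nach \Sigma_{r}:=\{f\in V\sep\norm{f\sep V}=r\},\qquad
	\varphi(x)=r\,\frac{T(x)}{\norm{T(x)\sep V}},
\]
is a well-defined continuous bijection (each open ray through the origin meets $S^{s-1}$ and $\Sigma_{r}$ exactly once), and, $S^{s-1}$ being compact and $\Sigma_{r}\subset V$ Hausdorff, it is in fact a homeomorphism. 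By construction $\varphi$ is odd, \ie $\varphi(-x)=-\varphi(x)$. Now consider $g:=N\circ\varphi\colon S^{s-1}\nach\R^{n}$ and, since $n\leq s-1$, compose with the zero-padding embedding $\R^{n}\hookrightarrow\R^{s-1}$ to obtain a continuous map $\tilde g\colon S^{s-1}\nach\R^{s-1}$. The classical Borsuk--Ulam theorem provides $x_{0}\in S^{s-1}$ with $\tilde g(x_{0})=\tilde g(-x_{0})$, hence $g(x_{0})=g(-x_{0})$. Setting $f^{*}:=\varphi(x_{0})$ we get $\norm{f^{*}\sep V}=r$ and, by oddness of $\varphi$,
\[
	N(f^{*})=g(x_{0})=g(-x_{0})=N(\varphi(-x_{0}))=N(-\varphi(x_{0}))=N(-f^{*}),
\]
which is the assertion.

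I do not expect a serious obstacle here: everything beyond the invocation of Borsuk--Ulam is soft point-set topology, the only care needed being the equivalence-of-norms facts used to see that $\varphi$ is a homeomorphism and the reduction from $\R^{n}$ (with $n\leq s-1$) to $\R^{s-1}$. If one prefers a self-contained treatment, the place where real work would be required is a direct proof of the Borsuk--Ulam theorem itself (for instance via the Lusternik--Schnirelmann covering theorem or a mod-$2$ degree argument); I would instead simply cite it as a standard result.
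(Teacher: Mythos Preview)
Your proof is correct and follows essentially the same approach as the paper: both reduce to the classical Borsuk--Ulam theorem via a linear isomorphism between $V$ and $\R^{s}$, after disposing of the trivial cases $r=0$ and $n=0$. The only cosmetic difference is that the paper transports the $r$-ball from $V$ to $\R^{s}$ and invokes a version of Borsuk--Ulam valid on boundaries of open bounded symmetric sets (Deimling, Corollary~4.2), whereas you radially normalize to land on the round sphere $S^{s-1}$ and apply the textbook version there; these are interchangeable technical choices.
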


\begin{proof}
Obviously, the cases $n=0$, \ie $N\equiv 0$, and $r=0$ are trivial. 
Hence, let $n\in\N$ and $r>0$.
Since $\dim V = s$ we find an isomorphism $T\colon V\nach\R^s$ 
such that $T$ and $T^{-1}$ are linear and bounded.
Hence, for every $r>0$ the set $\Omega_r=T(\inner(B_r(V)))$ is an open, 
bounded and symmetric subset of $\R^s$ which contains zero.
Moreover, the function $g=N\circ T^{-1}\colon\partial \Omega_r \nach \R^n$ is continuous.
From the theorem of Borsuk-Ulam (cf. Deimling~\cite[Corollary 4.2]{D85}) 
we conclude the existence of some $\bm{x^*}\in\partial\Omega_r$ with $g(\bm{x^*})=g(-\bm{x^*})$.
The claim now follows by taking $f^{*}=T^{-1} \bm{x^{*}}$.
\end{proof}

This result in hand, we can prove a generalization of \cite[Lemma~1]{W12}.
\begin{prop}\label{needed_lemma}
				Suppose $S$ to be a homogeneous operator between linear normed spaces $\F$ and $\G$.
				Further assume that $V \subset \F$ is a linear subspace with dimension $s\in\N$ and 
				that there exists a constant $a \geq 0$ such that
				\begin{gather}\label{eq:NormCondition}
								a \cdot \norm{f \sep \F} \leq \norm{S(f) \sep \G} \quad \text{for all} \quad f\in V.
				\end{gather}
				Then for every $0\leq n<s$, any algorithm $A_n \in \A^{n,\rm cont} \cup \A^{n,\rm adapt}$, and all $r\geq 0$
				\begin{gather}\label{eq:LowerBound_r}
								\Delta^{\wor}(A_n; S\colon B_r(\F)\nach \G) 
								= \sup_{f\in B_r(\F)} \norm{S(f) - A_n(f) \sep \G} \geq a \cdot r.
				\end{gather}
				In particular, the $n$th minimal worst case error 
				(among the unit ball $\B(\F)$ of $\F$) satisfies 
				$e^{\wor}(n; S\colon \B(\F)\nach\G) \geq a$ 
				for all $n<s$.
\end{prop}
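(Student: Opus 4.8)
The plan is to combine the Borsuk--Ulam type result of \autoref{prop:borsuk} with the homogeneity of $S$ and the norm condition \link{eq:NormCondition}. First I would fix $0\le n<s$, an algorithm $A_n=\varphi_n\circ N_n$ in $\A^{n,\rm cont}\cup\A^{n,\rm adapt}$, and $r\ge 0$. The case $r=0$ is trivial, so assume $r>0$. The crucial observation is that the information map $N_n$, restricted to the $s$-dimensional subspace $V$, is a mapping $N_n\colon V\nach\R^n$ that is either continuous (if $N_n\in\A^{n,\rm cont}$) or built from adaptively combined linear functionals; in either case $N_n|_V$ is continuous. Hence \autoref{prop:borsuk} applies and yields an element $f^*\in V$ with $\norm{f^*\sep\F}=r$ (note $\norm{\cdot\sep V}=\norm{\cdot\sep\F}$ on $V$) such that $N_n(f^*)=N_n(-f^*)$.

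Next I would exploit the fact that $A_n$ produces the same output on $f^*$ and $-f^*$: since $A_n(f^*)=\varphi_n(N_n(f^*))=\varphi_n(N_n(-f^*))=A_n(-f^*)$, writing $y=A_n(f^*)$ we have
\begin{gather*}
		2\,\norm{S(f^*)\sep\G} = \norm{S(f^*)-S(-f^*)\sep\G} \leq \norm{S(f^*)-y\sep\G} + \norm{y-S(-f^*)\sep\G},
\end{gather*}
where the equality uses homogeneity of $S$, namely $S(-f^*)=-S(f^*)$. Both terms on the right are local errors at points of $B_r(\F)$ (since $\norm{\pm f^*\sep\F}=r$), so the larger of them is at least $\norm{S(f^*)\sep\G}$. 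By \link{eq:NormCondition} we have $\norm{S(f^*)\sep\G}\ge a\norm{f^*\sep\F}=a\,r$, which gives
\begin{gather*}
		\sup_{f\in B_r(\F)} \norm{S(f)-A_n(f)\sep\G} \geq \max{\norm{S(f^*)-y\sep\G},\;\norm{S(-f^*)-y\sep\G}} \geq a\,r,
\end{gather*}
establishing \link{eq:LowerBound_r}.

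The final claim about the $n$th minimal error follows by specializing to $r=1$: for every admissible $A_n$ we get $\Delta^{\wor}(A_n;S\colon\B(\F)\nach\G)\ge a$, and taking the infimum over all $A_n\in\A^n_d$ (which is contained in $\A^{n,\rm cont}\cup\A^{n,\rm adapt}$ for the relevant classes) yields $e^{\wor}(n;S\colon\B(\F)\nach\G)\ge a$ for all $n<s$. I expect the main (though minor) obstacle to be the bookkeeping needed to argue carefully that $N_n|_V$ is continuous in the adaptive linear case --- one has to unwind the recursive definition \link{adapt} and observe that a finite composition and pairing of continuous (indeed linear) functionals is continuous --- and to confirm that the stated algorithm classes are genuinely covered; everything else is a short application of the triangle inequality and homogeneity.
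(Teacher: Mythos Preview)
Your argument for the continuous class $\A^{n,\rm cont}$ and the final triangle-inequality step are essentially the same as the paper's. The gap is in the adaptive case. You assert that $N_n|_V$ is continuous also when $A_n\in\A^{n,\rm adapt}$, and plan to verify this by ``unwinding the recursive definition \link{adapt}''. But the definition only requires that each $L_j(\,\cdot\,;y_1,\ldots,y_{j-1})\in\Lambda^{\rm all}$ be linear (hence continuous) in $f$ for \emph{fixed} previous values; the dependence of the chosen functional on $(y_1,\ldots,y_{j-1})$ is completely unrestricted. Thus $N_n$ may jump discontinuously as the earlier observations cross a threshold (for instance, apply $L_2=f\mapsto \ell_+(f)$ if $y_1\ge 0$ and $L_2=f\mapsto \ell_-(f)$ otherwise). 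In general $N_n$ is not a composition of continuous maps, so \autoref{prop:borsuk} does not apply, and your proof does not cover $\A^{n,\rm adapt}$.

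The paper handles $\A^{n,\rm adapt}$ by a separate linear-algebra argument rather than Borsuk--Ulam: one seeks a nonzero $g^*\in V$ with $N_n(g^*)=0$, i.e.\ $L_1(g^*)=0$, $L_2(g^*;0)=0$, \ldots, $L_n(g^*;0,\ldots,0)=0$. Since each $L_j(\,\cdot\,;0,\ldots,0)$ is linear on the $s$-dimensional space $V$ and $n<s$, this homogeneous system has a nontrivial solution. Linearity in $f$ along the zero trajectory then gives $N_n(\alpha g^*)=0$ for all $\alpha\in\R$, so with $f^*=r\,g^*/\norm{g^*\sep\F}$ one has $N_n(f^*)=N_n(-f^*)=0$ and $\norm{f^*\sep\F}=r$. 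From there your max/triangle-inequality computation goes through unchanged. You should replace the continuity claim in the adaptive case by this argument.
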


\begin{proof}
It is well-known that for $A_n=\varphi_n \circ N_n \in \A^{n,\rm cont} \cup \A^{n,\rm adapt}$ 
with $n<s$ there exists $f^* \in V$ 
such that $N_n(f^*)=N_n(-f^*)$ and $\norm{f^*\sep \F}=r$. 

Without loss of generality let us again assume $n\in\N$ and $r>0$ to avoid triviality.
Then, for $A_n \in \A^{n,\rm cont}$, the existence of $f^*$ is a simple conclusion of \autoref{prop:borsuk}
since in this case $N_n$ is continuous by definition.
On the other hand, if $A_n\in \A^{n,\rm adapt}$ then the proof can be obtained by arguments from linear algebra.
We follow the lines of the proof of Werschulz and Wo\'zniakowski~\cite[Theorem~3.1]{WW09} and search for a nonzero $g \in V$
such that $N_n(g)=0$, \ie
\begin{align}
				L_1(g) =& 0, \nonumber\\
				L_2(g; 0) =& 0, \nonumber\\
				\vdots& \label{eq:HomSystem} \\
				L_n(g; 0,\ldots, 0) =& 0. \nonumber
\end{align}
Since $\dim V=s$ every $g\in V$ can be represented uniquely 
as a linear combination $g = \sum_{m=1}^s c_m b_m$ of at most $s$ linearly independent 
basis functions $b_m$ of $V$.
Due to the imposed linearity of $L_j(\,\cdot\,;0,\ldots,0)$, $j=1,\ldots,n$, the system \link{eq:HomSystem}
can be reformulated as a system of $n$ homogeneous linear equations in the $s>n$ unknowns $\bm{c}=(c_m)_{m=1}^s\in\R^s$.
Consequently, it possesses a non-trivial solution $\bm{c^*}=(c^*_m)_{m=1}^s$ which implies
the existence of some $g^* \in V\setminus \{0\}$ with $N_n(g^*)=0$.
Since with $L_j$ also $N_n$ is linear, we can easily construct $f^*$ out of $g^*$.

Anyway, every such $f^*$ satisfies $A_n(f^*)=A_n(-f^*)$.
Using the norm properties in the target space $\G$ 
and the homogeneity of $S$ we obtain \link{eq:LowerBound_r}:
\begin{align*}
				\Delta^{\wor}(A_n; S\colon B_r(\F)\nach \G) 
				&\geq \max{\norm{S(\pm f^*)-A_n(\pm f^*) \sep \G}} \\
				&= \max{\norm{S(f^*) \pm A_n(f^*) \sep \G}} \\
				&\geq \frac{1}{2}\, (\norm{ S(f^*) + A_n(f^*) \sep \G} + \norm{ S(f^*) - A_n(f^*) \sep \G}) \\
				&\geq \frac{1}{2}\,  \norm{2 \cdot S(f^*) \sep \G} \geq a \norm{f^* \sep \F} = a \cdot r.
\end{align*}
The remaining implication for the $n$th minimal error finally
follows from the case $r=1$ by taking the infimum over all $A_n\in\A^{n,\rm cont} \cup \A^{n,\rm adapt}$.
\end{proof}

At this point we stress that the case $r \neq 1$ in \link{eq:LowerBound_r} might be useful
only if we deal with non-homogeneous (and thus non-linear) algorithms $A_n$.
Otherwise we clearly have
\begin{gather*}
				\Delta^{\wor}(A_n; S\colon B_r(\F)\nach \G) 
				= r \cdot \Delta^{\wor}(A_n; S\colon \B(\F)\nach \G) 
\end{gather*}
for all $r\geq 0$ provided that $S\colon \F\nach \G$ is homogeneous.
The importance of \link{eq:LowerBound_r} for $r\neq 1$ will be made clear
in \autoref{sect:embeddings} when we deal with embeddings $\P\hookrightarrow\F$.
There we conclude a lower bound for the worst case error of $S$ 
on $\B(\F)$ out of a lower bound on $B_r(\P)$
using $r=\norm{\id \sep \LO(\P,\F)}^{-1}$.

\section{Linearity and compactness}\label{sect:Linearity}
In what follows we will exclusively consider linear continuous problems $S=(S_d)_{d\in\N}$.
That is, we assume every solution operator $S_d$ given by \link{problemSd}
to be the restriction of a bounded linear mapping between some Banach spaces defined over the field of real numbers.\footnote{In fact, for most of the following results completeness is not needed.
Many of them even remain valid (at least up to constants) using only quasi-norms or $p$-norms, 
but for simplicity we restrict ourselves to the case of Banach spaces.
Finally, for the ease of notation, we only consider spaces over $\R$.}
If we assume the set of problem elements $\widetilde{\F}_d$ to be some centered ball $B_r(\F_d)$, $r>0$,
in the source space then conversely every bounded mapping~$S_d$
that acts linearly on this set\footnote{That means, $S_d(\alpha\cdot f + \beta\cdot g)$ equals $\alpha \cdot S_d(f)+\beta \cdot S_d(g)$ for every convex combination $\alpha\cdot f + \beta\cdot g$ of elements $f,g\in\widetilde{\F}_d$.}
can be uniquely extended to a continuous linear operator $\widetilde{S}_d$ on the whole space $\F_d$, \ie
$\widetilde{S}_d\in\LO(\F_d,\G_d)$.
From this point of view $S_d$ and $\widetilde{S}_d$ can be identified with each other
and thus we use the symbol $S_d$ for both of them.

At the first glance the linearity assumption seems to be very restrictive.
On the other hand, both the most important problems, 
namely approximation and integration, are indeed of this type.
Moreover, the linear case is much better understood than the non-linear
such that an overwhelming percentage of work on IBC was done in this setting.
For the sake of completeness we also mention so-called quasilinear problems
and refer to \cite{WW07} and \cite[Chapter~28]{NW12}.

Since we are interested in algorithms which are easy (and cheap) to implement
we pay special attention to the family of linear and 
non-adaptive algorithms $\A_d^{n,\mathrm{lin}}(\Lambda)$; see \link{LinAlg}.
It is well-known that this choice is reasonable for many classes of problems, since 
it can be shown that under mild assumptions optimal algorithms are indeed linear and non-adaptive.
General assertions of this type can be found in 
Traub, Wasilkowski and Wo\'zniakowski~\cite{TWW88}, as well as in Novak and Wo\'zniakowski~\cite[Section~4.2]{NW08}.
We do not present these results here explicitly. 
The reason is that for the problems we are interested in, our assertions already imply the mentioned optimality statements.

Furthermore, we focus on information maps which are linear and continuous, 
\ie $\Lambda \subseteq \Lambda^{\mathrm{all}}$.
Observe that then $A_{n,d}\in\LO(\F_d,\G_d)$ and $\rank(A_{n,d})\leq n$. 
Moreover, for $\widetilde{\F}_d = \B(\F_d)$ we obtain
\begin{gather*}
		\Delta^{\mathrm{wor}}(A_{n,d};S_d)
		= \Delta^{\mathrm{wor}}\!\left(A_{n,d};S_d\colon \widetilde{\F}_d\nach\G_d\right) 
		= \norm{S_d - A_{n,d} \sep \LO(\F_d,\G_d)}.
\end{gather*}

It seems natural to ask when problems of this type are solvable at all.
We say a problem $S=(S_d)_{d\in \N}$ is \emph{solvable} if for any fixed $d\in\N$ there 
exists a sequence of algorithms 
$A_{n,d}\in\A_d^{n,\mathrm{lin}}(\Lambda^{\mathrm{all}})$ such that their 
worst case errors $\Delta^{\mathrm{wor}}(A_{n,d};S_d)$ tend to zero as $n$ approaches infinity.
Hence, $S_d$ needs to be an element of $\overline{\F\mathcal{R}(\F_d, \G_d)}$, the closure of the finite rank operators in $\LO(\F_d,\G_d)$, which
is a subset of $\K(\F_d,\G_d)$.
Therefore solvable problems are necessarily compact such that we can restrict 
ourselves in the following to $S_d\in \K(\F_d,\G_d)$. 
Due to the celebrated result of Enflo~\cite{E73} it is known that the converse is not true in this generality.
Indeed, there are compact problems which are not solvable since there exist 
Banach spaces~$\G_d$ which do not satisfy the so-called approximation property.
However, the following (incomplete) list shows that in the cases we are interested in every compact problem is solvable:
\begin{prop}\label{Prop:Solveable}
		Let $S=(S_d)_{d\in\N}$ be given such that $S_d\in \K(\F_d,\G_d)$ for all $d\in\N$.
		Then $S$ is solvable if for every $d\in\N$ one of the following conditions applies:
		\begin{itemize}
				\item The source space $\F_d$ is a Hilbert space, or
				\item The target space $\G_d$ is a Hilbert space, or
				\item The target space $\G_d$ is $\L_\infty(\mathcal{X},\a,\mu)$ for an arbitrary measure space $(\mathcal{X},\a,\mu)$.
		\end{itemize}
\end{prop}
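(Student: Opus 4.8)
The plan is to use the characterization recorded just above: $S$ is solvable precisely when $S_d\in\overline{\F\mathcal{R}(\F_d,\G_d)}$ for every $d\in\N$, so it suffices to approximate each compact operator $S_d$ in the operator norm of $\LO(\F_d,\G_d)$ by finite rank operators. Abstractly this holds whenever $\F_d$ or $\G_d$ has the (metric) approximation property, which covers all three listed cases; but I would rather give the short self-contained constructions, since the concrete finite rank approximants will be convenient to have at hand later on.

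First I would treat the case that $\G_d$ is a Hilbert space. Since $\overline{S_d(\B(\F_d))}$ is compact it is separable, so by projecting onto the separable closed subspace it spans we may assume $\G_d$ separable and pick an increasing sequence of finite rank orthogonal projections $P_k$ with $P_k\nach\id_{\G_d}$ strongly. Each $P_kS_d$ is then finite rank, and I would upgrade the pointwise convergence $P_kS_d\nach S_d$ to convergence in $\LO(\F_d,\G_d)$ by the usual compactness argument: given $\eps>0$, cover $\overline{S_d(\B(\F_d))}$ by finitely many $\eps$-balls about $g_1,\dots,g_m$, choose $k$ so large that $\norm{P_kg_i-g_i\sep\G_d}<\eps$ for $i=1,\dots,m$, and conclude $\norm{P_kS_df-S_df\sep\G_d}\leq3\eps$ for every $f$ with $\norm{f\sep\F_d}\leq1$ by inserting the nearest $g_i$. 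For the case that $\F_d$ is a Hilbert space I would pass to the adjoint: $S_d^{*}\colon\G_d^{*}\nach\F_d^{*}$ is compact, and identifying $\F_d^{*}$ with $\F_d$ via Riesz it maps into a Hilbert space, so the previous construction applied to $S_d^{*}$ yields finite rank orthogonal projections $P_k$ on $\F_d$ with $\norm{P_kS_d^{*}-S_d^{*}}\nach0$; since $\norm{A}=\norm{A^{*}}$, $(S_dP_k)^{*}=P_k^{*}S_d^{*}$ and $P_k$ is self-adjoint, this gives $\norm{S_dP_k-S_d\sep\LO(\F_d,\G_d)}=\norm{P_kS_d^{*}-S_d^{*}}\nach0$ with $S_dP_k$ finite rank.

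Finally, for $\G_d=\L_\infty(\mathcal{X},\a,\mu)$ I would use that this space is a Banach lattice possessing an order unit, so Kakutani's representation theorem provides an isometric isomorphism onto $C(K)$ for a suitable compact Hausdorff space $K$. For a compact operator $T=S_d\colon\F_d\nach C(K)$ the set $\{Tf\sep\norm{f\sep\F_d}\leq1\}$ is relatively compact in $C(K)$, hence equicontinuous by the Arzel\`a--Ascoli theorem; so for $\eps>0$ there are finitely many open sets $U_1,\dots,U_N$ covering $K$ and points $t_j\in U_j$ such that $\abs{(Tf)(s)-(Tf)(t_j)}\leq\eps$ for all $s\in U_j$ and all $f$ with $\norm{f\sep\F_d}\leq1$. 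Choosing a continuous partition of unity $\phi_1,\dots,\phi_N$ subordinate to $U_1,\dots,U_N$ and setting $T_\eps f=\sum_{j=1}^{N}(Tf)(t_j)\,\phi_j$ produces a finite rank operator (each $f\mapsto(Tf)(t_j)$ is bounded linear, and $T_\eps$ has range in $\spann{\phi_1,\dots,\phi_N}$) with $\norm{T_\eps f-Tf\sep C(K)}\leq\eps$, which is the required approximation.

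The step I expect to be the main obstacle is the $\L_\infty$ case: one must be willing to invoke the $C(K)$-representation of an abstract $\L_\infty$-space (equivalently, to prove directly that $\L_\infty(\mathcal{X},\a,\mu)$ has the metric approximation property) and then run the Arzel\`a--Ascoli and partition-of-unity construction carefully. The two Hilbert cases are routine once the strong-to-norm upgrade via compactness is set up; the only mild point there is the reduction to a separable target or source subspace, which is harmless because relatively compact sets are separable.
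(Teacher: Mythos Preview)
Your argument is correct in all three cases, but it follows a different path from the paper. The paper's proof is a two-line reduction to the theory of $s$-numbers: it identifies $e^{\wor}(n,d;S_d)$ with the approximation numbers $\delta_n(S_d)$ and then quotes the classical equalities $\delta_n=c_n$ (Gelfand numbers) when the source is Hilbert and $\delta_n=d_n$ (Kolmogorov numbers) when the target is Hilbert or, more generally, has the metric extension property --- which $\L_\infty(\mathcal{X},\a,\mu)$ does. Compactness then forces $c_n,d_n\nach0$ and hence $\delta_n\nach0$.

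Your route trades this $n$-width machinery for explicit constructions: orthogonal projections plus a strong-to-norm upgrade for the Hilbert target, a duality trick for the Hilbert source, and Kakutani's $C(K)$-representation combined with Arzel\`a--Ascoli and a partition of unity for $\L_\infty$. This is perfectly sound (Arzel\`a--Ascoli does hold on arbitrary compact Hausdorff $K$, so non-separability of $\L_\infty$ causes no trouble) and has the advantage of producing concrete finite rank approximants without importing the $s$-number framework. The paper's version, on the other hand, is shorter, pinpoints the structural reason behind the $\L_\infty$ case (injectivity rather than the $M$-space representation), and dovetails with the $n$-width language used elsewhere in the thesis. Both proofs invoke one nontrivial external fact about $\L_\infty$ --- you use Kakutani, the paper uses the extension property --- so neither is strictly more elementary in that case.
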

\begin{proof}
Let $d\in\N$.
Given all the above restrictions we note that if we consider the class $\A_d^{n,\lin}(\Lambda^{\mathrm{all}})$ then the numbers
$e^{\mathrm{wor}}(n,d; S_d)$, $n\in\N_0$, per definition equal the \emph{linear $n$-widths} (or \emph{approximation numbers}) $\delta_n(S_d)$ as defined in \cite[Definition~7.3]{P85}.
Up to an index shift these numbers form an $s$-scale\footnote{Note that due to historical reasons there is some notational danger 
concerning $s$-numbers versus $n$-widths. See, \eg, \cite[p. 336]{P07} for details.} in the sense of Pietsch~\cite[Section 6.2]{P07}.
Other important $s$-scales are the \emph{Gelfand numbers}~$c_n(S_d)$ 
and the \emph{Kolmogorov numbers}~$d_n(S_d)$.
Without going into details we mention that for any compact operator~$S_d\in \K(\F_d,\G_d)$ 
both these numbers tend to zero as $n\nach\infty$; see Propositions 7.4 and 7.1 in \cite{P85}.
Hence, to prove solvability it suffices to show that $\delta_n(S_d) \leq \max{c_n(S_d),d_n(S_d)}$ for all $n\in\N$.
Indeed, if $\F_d$ is a Hilbert space then we have $\delta_n(S_d) = c_n(S_d)$.
Furthermore, $\delta_n(S_d) = d_n(S_d)$ if $\G_d$ is a Hilbert space; see, \eg, \cite[p. 33]{P85}.
Finally Proposition~8.13 in~\cite{P85} shows that the second last equality remains valid if the target space~$\G_d$ enjoys the so-called \emph{(metric) extension property}.
It is known that in particular $\L_\infty(\mathcal{X},\a,\mu)$ has this property; see, \eg, K{\"o}nig~\cite[1.c.2]{K86}.
\end{proof}

We want to stress that in \autoref{Prop:Solveable} we do not need to assume the Hilbert spaces to be separable.

Let us conclude this section with a proposition which shows that the zero algorithm 
$A_0 \equiv 0$ is the optimal choice among all
approximations to a given operator $S\in\LO(\F,\G)$ 
that do not use any information on the input $f\in\F$.
Here $\F$ and $\G$ can be arbitrary normed spaces.

\begin{prop}\label{prop:init_error}
			For $S\in\LO(\F,\G)$ and $\A_0 = 0 \in \LO(\F,\G)$ we have
			\begin{gather*}
					e^\wor(0;S\colon \B(\F)\nach\G)
					=	\Delta^\wor(A_0;S\colon\B(\F)\nach\G)
					= \norm{S\sep \LO(\F,\G)}.
			\end{gather*}
			Consequently, the zero algorithm is optimal for $S$
			within the class $\A^{0,\mathrm{cont}} \cup \A^{0,\mathrm{adapt}}$ and
			the initial worst case error $\eps^{\init,\wor}$ of $S$ is given by the its operator norm.
\end{prop}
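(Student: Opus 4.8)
The plan is to prove the two equalities separately and then read off the corollaries. First I would establish the right-hand equality $\Delta^\wor(A_0;S\colon\B(\F)\nach\G) = \norm{S\sep \LO(\F,\G)}$. Since $A_0\equiv 0$, we have $\Delta_{\mathrm{loc}}(f;A_0,S) = \norm{S(f)-0\sep\G} = \norm{S(f)\sep\G}$ for every $f\in\B(\F)$. Taking the supremum over the unit ball gives
\begin{gather*}
		\Delta^\wor(A_0;S\colon\B(\F)\nach\G) = \sup_{\norm{f\sep\F}\leq 1}\norm{S(f)\sep\G} = \norm{S\sep\LO(\F,\G)},
\end{gather*}
which is just the definition of the operator norm.

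Next I would handle the left-hand equality $e^\wor(0;S\colon \B(\F)\nach\G) = \norm{S\sep\LO(\F,\G)}$. Recall that $e^\wor(0;S\colon\B(\F)\nach\G) = \inf_{A_0\in\A^0}\Delta^\wor(A_0;S\colon\B(\F)\nach\G)$, where the class $\A^0$ of algorithms using no information consists precisely of the constant maps $A_0\equiv c$ for $c\in\G$. Thus it suffices to show that $\sup_{\norm{f\sep\F}\leq 1}\norm{S(f)-c\sep\G} \geq \norm{S\sep\LO(\F,\G)}$ for every fixed $c\in\G$; combined with the upper bound from the case $c=0$ computed above, this yields the claimed infimum. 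This lower bound is an instance of the symmetrization argument already used in \autoref{needed_lemma}: since $\B(\F)$ is symmetric, both $f$ and $-f$ lie in the unit ball whenever $f$ does, and by homogeneity of $S$,
\begin{align*}
		\sup_{\norm{f\sep\F}\leq 1}\norm{S(f)-c\sep\G}
		&\geq \frac{1}{2}\left(\norm{S(f)-c\sep\G} + \norm{S(-f)-c\sep\G}\right) \\
		&= \frac{1}{2}\left(\norm{S(f)-c\sep\G} + \norm{-S(f)-c\sep\G}\right) \\
		&\geq \frac{1}{2}\norm{2S(f)\sep\G} = \norm{S(f)\sep\G}
\end{align*}
for every $f$ with $\norm{f\sep\F}\leq 1$; taking the supremum over such $f$ gives $\geq\norm{S\sep\LO(\F,\G)}$. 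Alternatively, one can simply invoke \autoref{needed_lemma} with $n=0$, $V$ any one-dimensional subspace on which $a\cdot\norm{f\sep\F}\leq\norm{S(f)\sep\G}$ holds with $a$ close to $\norm{S\sep\LO(\F,\G)}$, though the direct computation above is cleaner and avoids the degenerate case $S=0$.

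Finally the two corollaries are immediate. The chain of equalities shows that the zero algorithm attains the infimum defining $e^\wor(0;\cdot)$, hence is optimal within $\A^{0,\mathrm{cont}}\cup\A^{0,\mathrm{adapt}}$ (every algorithm in this class using zero information is constant, and we have shown no constant beats $c=0$); and by definition $\eps^{\init,\wor} = e^\wor(0;S\colon\B(\F)\nach\G) = \norm{S\sep\LO(\F,\G)}$. There is no real obstacle here — the only subtlety worth a remark is that the symmetrization step genuinely uses that the problem-element set is the \emph{centered} unit ball, so that the argument would fail for asymmetric $\widetilde{\F}$; this is exactly the hypothesis $\widetilde{\F}=\B(\F)$ already in force in this section.
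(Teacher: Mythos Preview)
Your proof is correct and essentially identical to the paper's: both compute $\Delta^\wor(A_0)=\norm{S}$ directly from the definition and then use the symmetrization trick on constant algorithms $A\equiv c$ (you average $\norm{S(f)-c}$ and $\norm{-S(f)-c}$, the paper takes their maximum, which amounts to the same triangle-inequality step). Your closing remark about the need for a centered ball also mirrors the paper's comment immediately after the proof.
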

\begin{proof}
Obviously the second equality is true by the definition of $\Delta^\wor$.
Moreover, the linear algorithm $A_0 \equiv 0$
is included in every class of algorithms we defined in \autoref{sect:Algos}.
This particularly implies $e^\wor(0;S) \leq	\Delta^\wor(A_0)= \norm{S\sep \LO(\F,\G)}$.

To show the converse inequality, 
recall that every algorithm $A$ that does not use any
information on the input necessarily takes the form $A(f) \equiv g$
for some element $g\in\G$.
A calculation similar to that in the proof of \autoref{needed_lemma} yields that
\begin{gather*}
		\norm{S(f) \sep \G} \leq \max{\norm{S(f)-g \sep \G}, \norm{S(-f)-g \sep \G}}
\end{gather*}
holds for every $f\in\F$.
Taking the supremum over $f\in\B(\F)$ now shows that $\norm{S\sep\LO(\F,\G)} \leq \Delta^\wor(A)$ 
which implies the desired result since $A$ was chosen arbitrary.
\end{proof}

We note in passing that the last step in the latter proof
crucially depends on the fact that the unit ball $\widetilde{\F}=\B(\F)$
of the source space $\F$ is symmetric in the sense that $f\in\widetilde{\F}$ implies $-f\in\widetilde{\F}$.


\section{General Hilbert space problems}\label{sect:General_HSP}
In this section we describe the \emph{singular value decomposition} (SVD)
which turns out to be the main tool when dealing with problems where
both the source and the target spaces are Hilbert spaces.
We prove well-known formulas for optimal linear algorithms 
using continuous linear functionals and
calculate their worst case errors.
Afterwards, we use the obtained assertions to give characterizations 
for (strong) polynomial tractability for these problems.
\subsection{Singular value decomposition}\label{sect:SVD}
Given any compact operator $T\in \K(\F,\G)$ acting between two 
arbitrary real Hilbert spaces~$\F$ and $\G$ we define its \emph{adjoint operator} 
$T^\dagger \colon \G \nach \F$ in the usual way by
\begin{equation}\label{def_adj}
			\distr{Tf}{g}_\G = \distr{f}{T^\dagger g}_\F, \quad \text{for all} \quad f\in \F, g\in \G.
\end{equation}
Of course, $T^\dagger$ is always unique and well-defined.
For details we refer the reader to Yosida~\cite[VII.2]{Y80}.
If $\F=\G$ and $T^\dagger = T$, then we say that $T$ is \emph{self-adjoint}.
Due to Schauder's Theorem we know that $T^\dagger \in \K(\G,\F)$ 
if and only if $T \in \K(\F,\G)$; see, e.g. \cite[p. 31]{P85}.
Hence, it is easily seen that also
\begin{equation*}
			W = T^\dagger T \colon \F \nach \F
\end{equation*}
defines a compact operator.
Moreover, $W$ is obviously self-adjoint and positive, \ie 
$\distr{Wf}{f}_{\F} \geq 0$ for every $f\in \F$.
It is a well-known fact that therefore all the eigenvalues 
$\lambda_m=\lambda_m(W)$ of $W$
are necessarily real and furthermore non-negative.
Following Pinkus~\cite[p. 64]{P85} we denote the
sum of the algebraic multiplicities of the non-zero eigenvalues of $W$
by $v=v(W)$.
Note that the theory of Riesz-Schauder 
provides that there are at most countably many non-zero eigenvalues. 
They are uniformly bounded, each of them has a finite multiplicity and there
are no accumulation points but (possibly) zero. 
See, e.g., Theorem 2 in \cite[X.5]{Y80}.
Observe further that in any case $v \leq \dim \F \in \N \cup\{\infty\}$.
Let us denote these eigenvalues in a non-increasing ordering 
subscripted by indices from the set $\M = \{m \in \N \sep m < v+1\}$,
\begin{equation}\label{univariate_eigenvalues}
			\lambda_1 \geq \lambda_2 \geq \ldots \geq \lambda_m \geq \ldots > 0.
\end{equation}
Note that without loss of generality we will always assume the existence of at least
one non-trivial eigenvalue, \ie we explicitly exclude the operator $T\equiv 0$ which ensures that $\M \neq \leer$.
We denote the corresponding (mutually orthonormal) eigenvectors 
of $W$ by $\phi_m$, $m\in\M$, and refer to $\{(\lambda_m,\phi_m) \sep m\in\M\}$
as the set of \emph{non-trivial eigenpairs} of $W$.
Consequently, for $i,j \in\M$ we have by \link{def_adj}
\begin{equation}\label{T_orth}
			\distr{T\phi_i}{T\phi_j}_\G 
			= \distr{\phi_i}{T^\dagger T\phi_j}_\F 
			= \distr{\phi_i}{W\phi_j}_\F 
			= \distr{\phi_i}{\lambda_j \,\phi_j}_\F 
			= \delta_{i,j}\cdot \lambda_j. 
\end{equation}
If we extend the possibly finite eigenvalue sequence $(\lambda_m)_{m=1}^v$ by taking 
$\lambda_m=0$ for all $m>v$
then, clearly, $\lambda=(\lambda_m)_{m\in\N}$ forms a null sequence.
Again following Pinkus, we call the square root $\sigma=\sigma(T)$ of $\lambda=\lambda(W)$,
\begin{equation*}
			\sigma_m = \sqrt{\lambda_m}, \quad m\in\N,
\end{equation*}
sequence of the \emph{singular values} of $T$.
The importance of this bunch of definitions comes from the following assertion.

\begin{theorem}[Singular value decomposition]
		Let $\F$ and $\G$ be arbitrary Hilbert spaces and $T\in \K(\F,\G)$. 
		Then, with the above notations,
		\begin{equation}\label{SVD}
				T = \sum_{m=1}^{v} \distr{\,\cdot\,}{\phi_m}_\F \, T\phi_m.
		\end{equation}
\end{theorem}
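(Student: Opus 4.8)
The plan is to apply the spectral theorem for compact self-adjoint operators to the auxiliary operator $W = T^\dagger T$ and then to transport the resulting eigenexpansion through $T$.

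First I would record the elementary identity $\ker W = \ker T$: the inclusion $\ker T \subseteq \ker W$ is immediate from $W = T^\dagger T$, and conversely $Wf = 0$ forces
\[
		0 = \distr{Wf}{f}_\F = \distr{T^\dagger T f}{f}_\F = \distr{Tf}{Tf}_\G = \norm{Tf \sep \G}^2 ,
\]
hence $Tf = 0$.

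Next, since $W$ is compact, self-adjoint and positive, the spectral theorem for compact self-adjoint operators guarantees that the normalized eigenvectors $\{\phi_m \sep m\in\M\}$ belonging to the non-zero eigenvalues span a dense subspace of $\overline{\operatorname{range} W}$; since, in addition, each $\phi_m = \lambda_m^{-1} W\phi_m$ lies in $\operatorname{range} W$, the system $\{\phi_m \sep m\in\M\}$ is in fact an orthonormal basis of $\overline{\operatorname{range} W}$. As $W$ is self-adjoint we have $\overline{\operatorname{range} W} = (\ker W)^\bot$, and no separability assumption on $\F$ is needed here because $\overline{\operatorname{range} W}$ is automatically separable. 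Consequently every $f\in\F$ admits the orthogonal decomposition
\[
		f = f_0 + \sum_{m\in\M} \distr{f}{\phi_m}_\F \, \phi_m , \qquad f_0 \in \ker W = \ker T ,
\]
the series converging in $\F$ by Parseval's identity.

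Finally I would apply $T$ to this expansion. Using $Tf_0 = 0$ together with the continuity of $T$ (which permits interchanging $T$ with the norm-convergent series), one obtains
\[
		Tf = \sum_{m\in\M} \distr{f}{\phi_m}_\F \, T\phi_m ,
\]
the series converging in $\G$; its $\G$-convergence is in any case transparent from \link{T_orth}, which exhibits $\{T\phi_m \sep m\in\M\}$ as an orthogonal system with $\norm{T\phi_m \sep \G}^2 = \lambda_m$. Since $\M = \{m\in\N \sep m < v+1\}$, this sum is precisely $\sum_{m=1}^{v} \distr{\,\cdot\,}{\phi_m}_\F \, T\phi_m$ evaluated at $f$, and as $f\in\F$ was arbitrary the operator identity \link{SVD} follows. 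The only genuine obstacle is the appeal to the spectral theorem — that is, the completeness of the eigensystem of $W$ on $(\ker W)^\bot$; once that is granted, the remainder is routine bookkeeping with orthogonal series.
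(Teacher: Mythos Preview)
Your argument is correct and complete. The key steps --- $\ker W = \ker T$, the spectral decomposition of $W$ on $(\ker W)^\bot$, and pushing the expansion through the bounded operator $T$ --- are all sound, and you are right that separability of $\F$ plays no role since $(\ker W)^\bot = \overline{\operatorname{range} W}$ is automatically separable.

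The paper takes a different route: it does not argue directly but cites K\"onig's proof, which is based on the \emph{polar decomposition} $T = U\,|T|$ with $|T| = \sqrt{W}$ and $U$ a partial isometry, together with the spectral theorem applied to the positive compact operator $|T|$. That produces the classical form $Tf = \sum_m \sigma_m \distr{f}{\phi_m}_\F \psi_m$ with an auxiliary orthonormal system $(\psi_m)$ in $\G$; the paper then checks a posteriori that $\sigma_m \psi_m = T\phi_m$ by substituting $f = \phi_k$. Your approach is more economical: by working with $W$ itself rather than $|T|$ and using $\ker W = \ker T$, you bypass both the square root and the partial isometry, and the identity $T\phi_m$ appears immediately rather than as a separate verification. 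The cost is only that you invoke the spectral theorem for compact self-adjoint operators as a black box --- but so, ultimately, does the polar-decomposition proof.
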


\begin{proof}
A detailed proof can be found in the monograph of K\"{o}nig~\cite[1.b.3]{K86}.
It is mainly based on the so-called \emph{polar decomposition} 
of linear continuous operators
and the theory of Riesz-Schauder.
Actually, the proof deals with complex Hilbert 
spaces but it literally transfers to the real case.
Moreover, only the existence of an orthonormal sequence 
$(\psi_m)_{m=1}^v$ in $\G$ is shown such 
that the pointwise equality
\begin{equation*}
			Tf = \sum_{m=1}^{v} \sigma_m \, \distr{f}{\phi_m}_\F \, \psi_m, \quad f\in \F,
\end{equation*}
holds true.
However, setting $f=\phi_k$ for $k\in\M$ 
together with the mutual orthonormality of $(\phi_m)_{m=1}^v$ 
immediately implies $\sigma_k \psi_k= T\phi_k$ for any $k$.
The claimed identity in $\K(\F,\G)$ finally follows 
from Bessel's inequality.
\end{proof}

\begin{rem}\label{rem:sep}
Note that again the Hilbert spaces $\F$ and $\G$ do not need to be separable.
Nevertheless the image of $\F$ under $T$ is indeed separable, because it is spanned
by at most countable many elements $T\phi_m\in\G$.
Since the elements of the set $\Phi=\{\phi_m \in \F \sep m\in\M\}$ 
are mutually orthonormal we can extend $\Phi$ to an orthonormal basis (ONB) $E$ of $\F$.
Then \link{SVD} shows that $\ker T = \Phi^\bot$.
Remember that we are only interested in the approximation of the image of $T$.
Hence, we can without loss of generality assume that $E=\Phi$.
In other words, even though $\F$ may be non-separable in general
we can restrict ourselves to the separable case in what follows.
We only need to replace $\F$ by $\overline{\Phi}$,
the closure of the orthonormal eigenelements of 
$W=T^\dagger T$ under $\distr{\cdot}{\cdot}_\F$.
\hfill$\square$
\end{rem}

\subsection{Optimal algorithm}\label{sect:opt_Hilbert_Algo}
Observe that by \link{SVD} we obtained a representation of any operator $T\in \K(\F,\G)$
as the limit of related finite rank operators.
Therefore we are able to construct $n$th optimal linear algorithms which only use information
from~$\Lambda^{\mathrm{all}}$. This is stated in the following corollary 
which can be found (slightly modified) as Corollary 4.12 in~\cite{NW08}.

\begin{cor}\label{Cor:OptAlgo}
		For $d\in\N$ assume $\F_d$ and $\G_d$ to be arbitrary Hilbert spaces.
		Further let $S=(S_d)_{d\in\N}$ denote a compact problem 
		acting between these spaces, \ie $S_d \in \K(\F_d,\G_d)$ for every $d$.
		Then for all $d\in \N$ and $n\in\N_0$ the algorithm 
		$A_{n,d}^{*}\in \A_d^{n,\mathrm{lin}}(\Lambda^{\mathrm{all}})$ given by
		\begin{equation*}
					A_{n,d}^* \colon \F_d \nach \G_d, \qquad f \mapsto A_{n,d}^*(f)=\sum_{m=1}^{\min{n,v(W_d)}} \distr{f}{\phi_{d,m}}_{\F_d} \cdot S_d \phi_{d,m},
		\end{equation*}
		for $S_d$ is optimal in the class $\A_d^{n,\rm cont} \cup \A_d^{n,\rm adapt}$
		and we have
		\begin{equation}\label{OptWorstCaseError}
					e^{\mathrm{wor}}(n,d;S_d) 
					= \Delta^{\mathrm{wor}}(A_{n,d}^*;S_d) = \sigma_{d,n+1} = \sqrt{\lambda_{d,n+1}}.
		\end{equation}
		Here for every $d\in\N$ the singular values $(\sigma_{d,m})_{m\in\N}$, 
		as well as the eigenvectors $(\phi_{d,m})_{m=1}^{v(W_d)}$, 
		are constructed out of $W_d={S_d}^{\!\dagger} S_d$ as explained above.
\end{cor}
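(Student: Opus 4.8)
The plan is to establish matching upper and lower bounds for $e^{\mathrm{wor}}(n,d;S_d)$ and to check that $A_{n,d}^{*}$ attains the upper one. Throughout fix $d\in\N$ and abbreviate $v=v(W_d)$, $\lambda_m=\lambda_{d,m}$, $\phi_m=\phi_{d,m}$. Recall from \link{SVD} (applied pointwise) that $S_d f=\sum_{m=1}^{v}\distr{f}{\phi_m}_{\F_d}\,S_d\phi_m$ for every $f\in\F_d$. If $v\leq n$ there is nothing to prove: then $\lambda_{n+1}=0$, the sum defining $A_{n,d}^{*}$ already runs up to $v$, so $A_{n,d}^{*}=S_d$ and both errors vanish. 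Assume therefore $v>n$, which forces $\lambda_{n+1}>0$.

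For the upper bound I would take $f\in\B(\F_d)$ and use \link{SVD} and linearity to write
\begin{gather*}
 S_d f-A_{n,d}^{*}(f)=\sum_{m=n+1}^{v}\distr{f}{\phi_m}_{\F_d}\,S_d\phi_m .
\end{gather*}
By the orthogonality relation \link{T_orth}, $\distr{S_d\phi_i}{S_d\phi_j}_{\G_d}=\delta_{i,j}\lambda_j$, the squared target norm of the right-hand side equals $\sum_{m=n+1}^{v}\abs{\distr{f}{\phi_m}_{\F_d}}^{2}\lambda_m$, which by the non-increasing ordering \link{univariate_eigenvalues} is at most $\lambda_{n+1}\sum_{m\in\M}\abs{\distr{f}{\phi_m}_{\F_d}}^{2}\leq\lambda_{n+1}\norm{f\sep\F_d}^{2}\leq\lambda_{n+1}$, the middle step being Bessel's inequality for the orthonormal system $(\phi_m)$. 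Taking the supremum over $f\in\B(\F_d)$ yields $\Delta^{\mathrm{wor}}(A_{n,d}^{*};S_d)\leq\sqrt{\lambda_{n+1}}=\sigma_{d,n+1}$, and the test element $f=\phi_{n+1}$ shows that this is in fact an equality.

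For the lower bound I would apply \autoref{needed_lemma} to the linear subspace $V=\spann{\phi_1,\ldots,\phi_{n+1}}\subset\F_d$, which has dimension $s=n+1$ since the $\phi_m$ are orthonormal. For $f=\sum_{m=1}^{n+1}c_m\phi_m\in V$ the relation \link{T_orth} gives $\norm{S_d f\sep\G_d}^{2}=\sum_{m=1}^{n+1}\abs{c_m}^{2}\lambda_m\geq\lambda_{n+1}\sum_{m=1}^{n+1}\abs{c_m}^{2}=\lambda_{n+1}\norm{f\sep\F_d}^{2}$, so the norm condition \link{eq:NormCondition} holds with $a=\sigma_{d,n+1}$. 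As $0\leq n<s$, \autoref{needed_lemma} delivers $\Delta^{\mathrm{wor}}(A_{n,d};S_d)\geq\sigma_{d,n+1}$ for every $A_{n,d}\in\A_d^{n,\rm cont}\cup\A_d^{n,\rm adapt}$, hence $e^{\mathrm{wor}}(n,d;S_d)\geq\sigma_{d,n+1}$. Since $A_{n,d}^{*}\in\A_d^{n,\mathrm{lin}}(\Lambda^{\mathrm{all}})\subseteq\A_d^{n,\rm cont}$ uses $n$ information operations and attains exactly this value, it is optimal in $\A_d^{n,\rm cont}\cup\A_d^{n,\rm adapt}$, and \link{OptWorstCaseError} follows.

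The supporting facts are already in place: convergence of the SVD series and of $A_{n,d}^{*}$, and the fact that the parts of $f$ lying in $\ker S_d$ do not matter, come from the singular value decomposition theorem (see also \autoref{rem:sep}), while the genuinely non-trivial content of the lower bound — passing from the convenient subspace $V$ to arbitrary continuous or adaptive algorithms — is exactly what \autoref{needed_lemma} provides. I expect the only point requiring a little care to be the case distinction $v\leq n$ versus $v>n$, and within the latter the observation that $n+1\leq v$ guarantees $\lambda_{n+1}>0$ and $\dim V=n+1$; after that the corollary is just the combination of \link{SVD}, \link{T_orth}, and \autoref{needed_lemma}.
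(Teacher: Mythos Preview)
Your proof is correct and follows essentially the same route as the paper: the upper bound via \link{SVD}, \link{T_orth}, and Bessel's inequality (with $f=\phi_{n+1}$ for sharpness), and the lower bound by applying \autoref{needed_lemma} to $V=\spann{\phi_1,\ldots,\phi_{n+1}}$ with $a=\sigma_{d,n+1}$. The case distinction and all key steps match the paper's argument.
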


\begin{proof}
Recall that $\Delta^{\mathrm{wor}}(A_{n,d}^*;S_d)$ equals 
$\norm{S_d-A_{n,d}^* \sep \LO(\F_d,\G_d)}$ for any fixed $d\in\N$ and $n\in\N_0$. 
Without loss of generality we can assume $n < v=v(W_d)$ since
otherwise $A_{n,d}^*=S_d$ due to \link{SVD}. 
This would imply \link{OptWorstCaseError} because of $\sigma_{d,m}=0$ for all $m>v$.

Let $M\in\N$ with $n+1\leq M \leq v$ and $f\in\B(\F_d)$ be arbitrarily fixed. 
Then, due to \link{T_orth}, the non-increasing ordering of $(\lambda_{m})_{m=1}^v$ and Bessel's inequality,
\begin{align*}
		\norm{\sum_{m=n+1}^M \distr{f}{\phi_{d,m}}_{\F_d} \, S_d \phi_{d,m}\sep \G_d}^2 
		&= \sum_{m=n+1}^M \distr{f}{\phi_{d,m}}_{\F_d}^2 \, \lambda_{d,m} 
		\leq \lambda_{d,n+1} \sum_{m=1}^v \distr{f}{\phi_{d,m}}_{\F_d}^2 \\
		&\leq \lambda_{d,n+1} \norm{f \sep \F_d}^2 \leq \sigma_{d,n+1}^2.
\end{align*}
In particular, the choice $f=\phi_{d,n+1}$ shows that the latter estimates are sharp.
Anyway, we obtain $\norm{S_d - A_{n,d}^* \sep \LO(\F_d,\G_d)} \leq \sigma_{d,n+1}$ which proves
\begin{equation*}
		e^{\mathrm{wor}}(n,d; S_d) \leq \Delta^{\mathrm{wor}}(A_{n,d}^*; S_d) \leq \sigma_{d,n+1}.
\end{equation*}

To show the converse, \ie $e^{\mathrm{wor}}(n,d; S_d) \geq \sigma_{d,n+1}$ for $n\in\N_0$ and $d\in\N$, 
we use Parseval's identity on $V=\spann{\phi_{d,m} \sep m \leq n+1} \subset \F_d$
together with the linearity of $S_d$ to obtain 
\begin{gather*}
		\norm{S_d f \sep \G_d}^2 = \sum_{m=1}^{n+1} \distr{f}{\phi_{d,m}}_{\F_d}^2 \lambda_{d,m}
												\geq \lambda_{d,n+1} \sum_{m=1}^{n+1} \distr{f}{\phi_{d,m}}_{\F_d}^2
												= \sigma_{d,n+1}^2 \norm{f \sep \F_d}^2
\end{gather*}
for all $f\in V$.
The claim now follows from the application of \autoref{needed_lemma} with $a=\sigma_{d,n+1}$.
Moreover, \autoref{needed_lemma} also shows that we cannot reduce the error by taking algorithms
$A_{n,d}\in (\A_d^{n,\rm cont} \cup \A_d^{n,\rm adapt}) \setminus \A_d^{n,\lin}$.
%
\end{proof}

Note that \link{OptWorstCaseError} 
together with \autoref{prop:init_error}
particularly implies that for $d\in\N$ 
the initial worst case error of $S_d$ is given by
\begin{gather*}
		\eps_d^{\init,\wor} = \norm{S_d \sep \LO(\F_d,\G_d)} = \sigma_{d,1} = \sqrt{\lambda_{d,1}}.
\end{gather*}

\subsection{Polynomial tractability}
As an immediate consequence of \link{OptWorstCaseError} we can calculate 
the information complexity of Hilbert space problems 
in the worst case setting 
(with respect to the class $\A_d^{n,\lin}(\Lambda^{\mathrm{all}})$) 
for every $d\in\N$ and $\eps>0$ by
\begin{equation}\label{n_wor}
		n^\wor_\ab(\eps,d)
		= \min{n\in\N_0\sep \sigma_{d,n+1}\leq \eps}
		= \# \left\{ n \in \N \sep \lambda_{d,n} > \eps^2 \right\}
\end{equation}
for the absolute and by
\begin{equation}\label{n_wor_norm}
		n^\wor_\no(\eps,d) = \# \left\{ n \in \N \sep \lambda_{d,n}/\lambda_{d,1} > \eps^2 \right\},
\end{equation}
for the normalized error criterion, respectively.
This observation leads to the following refinement of Theorem 5.1 in
Novak and Wo{\'z}niakowski~\cite{NW08}
which also can be found in~\cite{W11}.
It gives necessary and sufficient conditions 
for (strong) polynomial tractability in terms of summability
properties of the sequences $(\lambda_{d,m})_{m\in\N}$.

\begin{theorem}\label{Thm:General_Tract_abs}
			Assume $S$ to be a problem as in \autoref{Cor:OptAlgo} 
			and consider the absolute error criterion in the worst case setting.
			\begin{itemize}
						\item If $S$ is polynomially tractable 
								with the constants $C,p>0$ and $q\geq 0$ 
								then for all $\tau > p/2$ we have
								\begin{equation}\label{sup_condition}
										C_\tau=\sup_{d\in\N} \frac{1}{d^{r}} \left( \sum_{i=f(d)}^\infty \lambda_{d,i}^\tau \right)^{1/\tau} < \infty,
								\end{equation}
								where $r=2q/p$ and $f\colon \N \nach \N$ with 
								$f(d)=\ceil{(1+C) \, d^{q}}$.
								In this case $C_\tau \leq C^{2/p}\cdot \zeta(2\tau/p)^{1/\tau}$.
						\item If \link{sup_condition} is satisfied for some parameters $r \geq 0$, $\tau >0$ 
								and a	function $f\colon\N \nach \N$ such that 
								$f(d)=\ceil{C\,\left(\min{\eps_d^{\rm init},1}\right)^{-p}\, d^{q}}$, 
								where $C>0$ and $p,q \geq 0$, then the problem $S$ is polynomially tractable.
								In detail, we have the bound 
								$n_\ab^\wor(\eps,d)\leq (C+C_\tau^\tau)\, \eps^{-\max{p,2\tau}}\, d^{\max{q,r\tau}}$
								for any $\eps\in\left(0,1\right]$ 
								and every $d\in\N$.								
			\end{itemize}
\end{theorem}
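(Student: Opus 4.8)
The plan is to exploit the exact formula \link{n_wor} for the information complexity, namely $n^\wor_\ab(\eps,d) = \#\{n\in\N \sep \lambda_{d,n}>\eps^2\}$, which reduces everything to counting large eigenvalues. For the first (necessity) part, I would start from the assumed polynomial bound $n(\eps,d)\leq C\eps^{-p}d^q$ and translate it into a decay estimate on the sorted eigenvalues: if more than $C\eps^{-p}d^q$ indices $i$ satisfied $\lambda_{d,i}>\eps^2$, we would contradict \link{n_wor}. Concretely, for $i\geq f(d)=\ceil{(1+C)d^q}$ one gets $\lambda_{d,i}\leq \big(C d^q/(i-1)\big)^{2/p}$ or a similar bound (choosing $\eps$ so that $i-1 = \lceil C\eps^{-p}d^q\rceil$ forces $\lambda_{d,i}\leq\eps^2$). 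Then I would sum the $\tau$-th powers: $\sum_{i=f(d)}^\infty \lambda_{d,i}^\tau \leq (C d^q)^{2\tau/p}\sum_{i\geq f(d)}(i-1)^{-2\tau/p}$, and since $\tau>p/2$ the exponent $2\tau/p>1$ makes the tail sum converge and be bounded by $\zeta(2\tau/p)$ (using $f(d)\geq 2$, so the index shift lands in the range where the zeta comparison is clean). Dividing by $d^{r\tau}$ with $r=2q/p$ exactly cancels the $d^{q\cdot 2\tau/p}$ factor, yielding the uniform bound $C_\tau\leq C^{2/p}\zeta(2\tau/p)^{1/\tau}$.

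For the second (sufficiency) part, I would run the argument in reverse. Assume \link{sup_condition} holds for the given $f(d)=\ceil{C(\min{\eps_d^{\rm init},1})^{-p}d^q}$. Fix $\eps\in(0,1]$ and $d\in\N$; by the remark following \link{def_poltract} it suffices to treat $\eps\leq\min{\eps_d^{\rm init},1}$. I want to bound $\#\{i\sep \lambda_{d,i}>\eps^2\}$. Split this count at the threshold $f(d)$: there are at most $f(d)-1 < C\eps^{-p}d^q + 1 \leq (C+1)\eps^{-p}d^q$ indices below $f(d)$ (here I use $\eps\leq 1$ and $\eps\leq\eps_d^{\rm init}$ so that $\eps^{-p}\geq (\min\{\eps_d^{\rm init},1\})^{-p}$, absorbing the ceiling). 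For indices $i\geq f(d)$ with $\lambda_{d,i}>\eps^2$, a Markov-type / layer-cake estimate gives $\#\{i\geq f(d)\sep \lambda_{d,i}>\eps^2\}\cdot \eps^{2\tau} \leq \sum_{i\geq f(d)}\lambda_{d,i}^\tau \leq C_\tau^\tau d^{r\tau}$, hence this count is at most $C_\tau^\tau\,\eps^{-2\tau}d^{r\tau}$. Adding the two contributions and bounding each factor by the larger exponent gives $n_\ab^\wor(\eps,d)\leq (C+C_\tau^\tau)\eps^{-\max{p,2\tau}}d^{\max{q,r\tau}}$, which is exactly polynomial tractability with the stated explicit constants.

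The main obstacle I anticipate is bookkeeping around the ceiling functions and the shift in the summation index — making sure that $f(d)\geq 2$ (so that comparing $\sum_{i\geq f(d)}(i-1)^{-2\tau/p}$ to $\zeta(2\tau/p)$ is legitimate) and that the choice of $\eps$ in the necessity direction is admissible, i.e.\ lies in $(0,\min{\eps_d^{\rm init},1}]$ where the hypothesis $n(\eps,d)\leq C\eps^{-p}d^q$ is genuinely informative. One has to pick, for each target index $i\geq f(d)$, the value $\eps=\eps(i,d)$ with $\eps^{-p}=(i-1)/(Cd^q)$ and check that this $\eps$ is $\leq 1$ precisely because $i\geq f(d)=\ceil{(1+C)d^q}$ forces $i-1\geq Cd^q$; a symmetric care is needed on the $\eps_d^{\rm init}$ side. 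Everything else is a routine application of the eigenvalue-counting formula plus the elementary inequality that a sum of $\tau$-th powers controls the number of terms exceeding a threshold. I would also note in passing that the same reasoning, with $\lambda_{d,i}$ replaced by $\lambda_{d,i}/\lambda_{d,1}$ and \link{n_wor_norm} in place of \link{n_wor}, handles the normalized criterion, though that is presumably relegated to a remark.
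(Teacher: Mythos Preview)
Your proposal is correct and follows the paper's proof essentially verbatim for the necessity direction; for sufficiency the paper phrases the same idea as a pointwise decay bound $\lambda_{d,n+1}\le C_\tau d^r\,(n+2-f(d))^{-1/\tau}$ (obtained from monotonicity and $\sum_{i=f(d)}^n\lambda_{d,i}^\tau\ge(n-f(d)+1)\lambda_{d,n}^\tau$) and then solves for $n$, which is equivalent to your Markov-type count and yields the identical final estimate. One small bookkeeping point: since $\lceil x\rceil-1<x$, you get $f(d)-1<C(\min\{\eps_d^{\rm init},1\})^{-p}d^q\le C\eps^{-p}d^q$ directly for $\eps\le\min\{\eps_d^{\rm init},1\}$, so the stray ``$+1$'' (and the $(C+1)$ you wrote) is unnecessary and the final constant is indeed $C+C_\tau^\tau$.
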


\begin{proof}
If the problem is polynomially tractable 
then there exist constants $C,p>0$ and $q\geq0$ such that
for all $d\in \N$ and $\eps \in \left( 0,1 \right]$
\begin{equation*}
				n(\eps,d) = n_\ab^\wor(\eps,d) \leq C \cdot \epsilon^{-p} \cdot d^q.
\end{equation*}
Formula \link{n_wor} and the non-increasing ordering of $(\lambda_{d,i})_{i\in\N}$
therefore imply
\begin{equation*}
				\lambda_{d, \floor{C\epsilon^{-p}d^q}+1} \leq \lambda_{d, n(\eps,d)+1} \leq \epsilon^2, \quad \eps \in (0,1].
\end{equation*}
If we set $i=\floor{C\, \epsilon^{-p} \, d^q}+1$ and vary 
$\epsilon \in \left(0,1\right]$ 
then $i$ takes the values 
$\floor{C\, d^q}+1$, 
$\floor{C\, d^q}+2$, and so forth. 
On the other hand, we have $i \leq C\epsilon^{-p}d^q+1$ 
which is equivalent to $\epsilon^2 \leq (Cd^q/(i-1))^{2/p}$ if $i\geq 2$.
For all $i \geq f(d)=\ceil{(1+C)\,d^q}$ we indeed have $i \geq 2$ and, consequently,
\begin{equation*}
				\lambda_{d,i} \leq \lambda_{d, n(\epsilon,d)+1} \leq \epsilon^2 \leq \left( \frac{Cd^q}{i-1} \right)^{2/p}.
\end{equation*}
Choosing $\tau > p/2 > 0$ we conclude
\begin{equation*}
				\sum_{i = f(d)}^\infty \lambda_{d,i}^\tau 
				\leq \sum_{i = f(d)}^\infty \left( \frac{Cd^q}{i-1} \right)^{2\tau/p} 
				= (Cd^q)^{2\tau/p} \sum_{i = f(d)-1}^\infty \frac{1}{i^{2\tau/p}} 
				\leq \left(C^{2/p} d^{2q/p}\right)^\tau \cdot \zeta\left(\frac{2\tau}{p}\right).
\end{equation*}
for every $d\in\N$.
In other words, we have shown \link{sup_condition} with $r = 2q/p$, as well as the estimate on $C_\tau$.

Conversely, assume now that for some $r \geq 0$ 
and $\tau>0$ estimate~\link{sup_condition} holds true with
\begin{equation*}
				f(d)
				=\ceil{C \left(\min{\eps_d^{\mathrm{init}},1}\right)^{-p} d^{q}},
				\quad \text{where} \quad C>0 \quad \text{and} \quad p,q \geq 0.
\end{equation*}
That is, we assume $0<C_\tau<\infty$.
For $n \geq f(d)$ the ordering of $(\lambda_{d,i})_{i\in\N}$ implies 
$\sum_{i=f(d)}^n \lambda_{d,i}^\tau \geq \lambda_{d,n}^\tau \cdot (n - f(d)+1)$. 
Hence, for every $d\in\N$ and $n\geq f(d)$
\begin{equation*}
				\lambda_{d,n} \cdot (n - f(d)+1)^{1/\tau} \leq \left( \sum_{i=f(d)}^n \lambda_{d,i}^\tau \right)^{1/\tau} \leq \left( \sum_{i=f(d)}^\infty \lambda_{d,i}^\tau \right)^{1/\tau} \leq C_\tau \, d^r,
\end{equation*}
or, respectively, $\lambda_{d,n+1} \leq C_\tau \, d^r \cdot ((n+1) - f(d)+1)^{-1/\tau}$, for all $n\geq f(d)-1$. 
Note that for $\epsilon \in (0, \min{\epsilon_d^{\rm init},1}]$ we have 
$C_\tau \, d^r \cdot ((n+1) - f(d)+1)^{-1/\tau} \leq \epsilon^2$
if and only if
\begin{equation*}
				n \geq n^* = \ceil{\left( \frac{C_\tau \, d^r}{\eps^2} \right)^\tau }+ f(d)-2.
\end{equation*}
In particular, it is $\lambda_{d,n+1} \leq \eps^2$ at least for $n\geq \max{n^*,f(d)-1}$.
In other words, for every $d\in\N$ and all $\eps \in (0, \min{\eps_d^{\mathrm{init}},1}]$ it is
\begin{align*}
				n_\ab^\wor(\eps,d) 
				&\leq \max{n^*,f(d)-1} 
				\leq f(d)-1 + \left( \frac{C_\tau \, d^r}{\epsilon^2} \right)^\tau \\
				&\leq C \, \left(\min{\eps_d^{\mathrm{init}},1}\right)^{-p}\, d^{q} + C_\tau^\tau\, \eps^{-2\tau} \, d^{r\tau}\\
				&\leq (C+C_\tau^\tau) \, \eps^{-\max{p,2\tau}} \, d^{\max{q, r\tau}}.
\end{align*} 
Thus, the problem is polynomially tractable since $n_\ab^\wor(\eps,d) = 0$ for $\eps \geq \eps_d^{\mathrm{init}}$.
\end{proof}

Let us add some comments on this result.
\autoref{Thm:General_Tract_abs} clearly provides a characterization
for (strong) polynomial tractability.
In comparison to Theorem~5.1 in~\cite{NW08} 
our result yields the essential advantage that the given estimates 
incorporate the initial error $\eps_d^{\mathrm{init}}$.
Hence if~$\eps_d^{\mathrm{init}}$ is sufficiently small 
then we can conclude polynomial tractability 
while ignoring a larger set of eigenvalues in the
summation \link{sup_condition}. 

Observe that the first statement does not cover 
any assertion about the initial error itself, since $f(d)\geq2$. 
Thus it might happen that we have (strong) polynomial tractability 
\wrt the absolute error criterion, though the largest eigenvalue 
$\lambda_{d,1}=(\eps_d^{\rm init})^2$ 
tends faster to infinity than any polynomial.
To give an example, for $d\in\N$ we consider the sequences 
$(\lambda_{d,m})_{m\in\N}$ defined by
\begin{equation*}
				\lambda_{d,1}=e^{2d} 
				\quad \text{and} \quad \lambda_{d,m}=\frac{1}{m} 
				\quad \text{for} \quad m\geq 2.
\end{equation*}
Here, obviously, the initial error grows 
exponentially fast to infinity, but nevertheless the
second point of \autoref{Thm:General_Tract_abs} shows that 
$S$ is strongly polynomially tractable
since \link{sup_condition} holds with $r=p=q=0$, 
and $C=\tau=2$.\\

Next we present an analogue of \autoref{Thm:General_Tract_abs}
for the normalized error criterion.
Again a slightly modified statement can be found in \cite[Theorem 5.2]{NW08}.
\begin{theorem}\label{Thm:General_Tract_norm}
			Assume $S$ to be a problem as in \autoref{Cor:OptAlgo} 
			and consider the normalized error criterion in the worst case setting.
			\begin{itemize}
						\item If $S$ is polynomially tractable 
								with the constants $C,p>0$ and $q\geq 0$ 
								then for all $\tau > p/2$ we have
								\begin{equation}\label{sup_condition_norm}
										C_\tau = \sup_{d\in\N} \frac{1}{d^{r}} 
														   \left( \sum_{i=f(d)}^\infty \left( \frac{\lambda_{d,i}}{\lambda_{d,1}} \right)^\tau \right)^{1/\tau} 
													 < \infty,
								\end{equation}
								where $r=2q/p$ and $f\colon \N \nach \N$ with $f(d)\equiv 1$.
								In this case the bound $C_\tau \leq 2^{1/\tau} (1+C)^{2/p}\, \zeta(2\tau/p)^{1/\tau}$
								holds for any such $\tau$.
						\item If \link{sup_condition} is satisfied for some parameters $r \geq 0$, $\tau >0$ 
								and a	function $f\colon\N \nach \N$ such that 
								$f(d)=\ceil{C\,d^{q}}$, 
								where $C>0$ and $q \geq 0$, then the problem $S$ is polynomially tractable.
								If so, then 
								$n_\no^\wor(\eps,d)\leq (C+C_\tau^\tau)\, \eps^{-2\tau}\, d^{\max{q,r\tau}}$
								for any $\eps\in\left(0,1\right]$ 
								and every $d\in\N$.								
			\end{itemize}
\end{theorem}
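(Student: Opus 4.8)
The plan is to mirror the proof of \autoref{Thm:General_Tract_abs} almost verbatim, replacing the eigenvalues $\lambda_{d,i}$ by the \emph{normalized} eigenvalues $\lambda_{d,i}/\lambda_{d,1}$ throughout, and exploiting the fact that in the normalized criterion the relevant quantity is $n_\no^\wor(\eps,d)=\#\{n\in\N\sep \lambda_{d,n}/\lambda_{d,1}>\eps^2\}$ by \link{n_wor_norm}. The key simplification is that the normalized eigenvalues are automatically bounded by $1$ (their first term equals $1$), so there is no issue with an exploding initial error and one may take $f(d)\equiv 1$ in the necessity part.

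For the \textbf{first bullet} (necessity), I would proceed exactly as before: polynomial tractability gives $n_\no^\wor(\eps,d)\le C\eps^{-p}d^q$ for all $d\in\N$, $\eps\in(0,1]$, and \link{n_wor_norm} together with the non-increasing ordering yields $\lambda_{d,\floor{C\eps^{-p}d^q}+1}/\lambda_{d,1}\le\eps^2$. Setting $i=\floor{C\eps^{-p}d^q}+1$ and letting $\eps$ range over $(0,1]$, the index $i$ runs through all integers $\ge\floor{Cd^q}+1$, and one gets $\lambda_{d,i}/\lambda_{d,1}\le(Cd^q/(i-1))^{2/p}$ whenever $i\ge2$. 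Since now $f(d)\equiv1$, the summation in \link{sup_condition_norm} starts at $i=1$; the $i=1$ term contributes exactly $1$, and the tail $i\ge2$ is bounded as in the absolute case by $(C'd^{q})^{2\tau/p}\zeta(2\tau/p)$ with $C'$ absorbing a factor reflecting the split between the $i=1$ term and the rest. Carefully tracking constants — in particular bounding $1 \le C'd^{2q/p}\zeta(2\tau/p)$ suitably and combining with the tail — produces the stated bound $C_\tau\le 2^{1/\tau}(1+C)^{2/p}\zeta(2\tau/p)^{1/\tau}$; the factor $2^{1/\tau}$ is precisely what accounts for including the $i=1$ term.

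For the \textbf{second bullet} (sufficiency), I would again copy the argument from \autoref{Thm:General_Tract_abs}: from \link{sup_condition_norm} holding with $f(d)=\ceil{Cd^q}$ one deduces, via the ordering, that $\lambda_{d,n}/\lambda_{d,1}\cdot(n-f(d)+1)^{1/\tau}\le C_\tau d^r$ for $n\ge f(d)$, hence $\lambda_{d,n+1}/\lambda_{d,1}\le\eps^2$ as soon as $n\ge n^*=\ceil{(C_\tau d^r/\eps^2)^\tau}+f(d)-2$. Then $n_\no^\wor(\eps,d)\le\max\{n^*,f(d)-1\}\le Cd^q+C_\tau^\tau\eps^{-2\tau}d^{r\tau}\le(C+C_\tau^\tau)\eps^{-2\tau}d^{\max\{q,r\tau\}}$ for $\eps\in(0,1]$, which is the claimed bound; note that here no $\min\{\eps_d^\init,1\}$ correction appears because $\eps_d^\init=1$ for the normalized criterion, so the exponent of $\eps^{-1}$ is simply $2\tau$ rather than $\max\{p,2\tau\}$.

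The \textbf{main obstacle} — though a mild one — is bookkeeping the constants in the necessity direction: because the summation index now starts at $1$ rather than at $f(d)=\ceil{(1+C)d^q}$, one must absorb the unit contribution of the normalized top eigenvalue into the final constant, which is the origin of the extra factor $2^{1/\tau}$ and the replacement of $C^{2/p}$ by $(1+C)^{2/p}$. Everything else is a routine transcription of the absolute-error proof, using \autoref{Cor:OptAlgo} and formula \link{n_wor_norm} in place of \link{n_wor}.
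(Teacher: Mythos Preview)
Your approach is essentially the paper's: reduce to the absolute-error argument by replacing $\lambda_{d,i}$ with $\lambda_{d,i}/\lambda_{d,1}$, and exploit that the normalized initial error is $1$. The sufficiency direction is a clean transcription and matches the paper exactly.

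There is one imprecision in your necessity argument. You say the $i=1$ term contributes $1$ and ``the tail $i\ge2$ is bounded as in the absolute case,'' attributing the factor $2^{1/\tau}$ to the single term $i=1$. But the tractability bound $\lambda_{d,i}/\lambda_{d,1}\le (Cd^q/(i-1))^{2/p}$ is only available for $i\ge\floor{Cd^q}+1$; it says nothing about $2\le i<\ceil{(1+C)d^q}$. The paper's proof splits the full sum at $\ceil{(1+C)d^q}$: the \emph{head} $\sum_{i=1}^{\ceil{(1+C)d^q}-1}(\lambda_{d,i}/\lambda_{d,1})^\tau$ is bounded trivially by $(1+C)d^q\le (1+C)^{2\tau/p}\zeta(2\tau/p)\,d^{r\tau}$ (using $\lambda_{d,i}/\lambda_{d,1}\le1$, $2\tau/p>1$, $\zeta>1$), and the \emph{tail} from $\ceil{(1+C)d^q}$ onward is bounded by the same quantity via the tractability estimate. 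The factor $2^{1/\tau}$ arises because both pieces are dominated by a common bound, not because of the single term $i=1$. Once you correct the split point, your argument coincides with the paper's.
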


\begin{proof}
Due to the strong relation between the absolute and the normalized error criterion,
\ie $n_\no^\wor(\eps,d)=n_\ab^\wor(\eps\cdot\sqrt{\lambda_{d,1}},d)$ for $\eps\in (0,1]$ and $d\in\N$, 
we note that \autoref{Thm:General_Tract_norm} can be shown using
essentially the same arguments an in the proof for \autoref{Thm:General_Tract_abs}.
Indeed, if we replace $\lambda_{d,i}$ by $\lambda_{d,i}/\lambda_{d,1}$ for $i\in\N$
we obtain a scaled problem $T$ with initial error $\eps_d^{\mathrm{init}}=1$.
Now the information complexity of $T$ (\wrt the absolute error criterion) 
equals the information complexity of $S$ \wrt normalized errors.\footnote{For details we refer to the proof of \autoref{thm:unweightedtensor_norm}.}
Following the lines of the proof of \autoref{Thm:General_Tract_abs} 
this shows the second point of \autoref{Thm:General_Tract_norm}, where we set $p=0$.
Moreover, we conclude for any $\tau > p/2$ and $d\in\N$
\begin{equation*}
		\sum_{i=\ceil{(1+C)d^q}}^\infty \left( \frac{\lambda_{d,i}}{\lambda_{d,1}} \right)^\tau
		\leq \left( C^{2/p} d^{2q/p} \right)^\tau \, \zeta\left(\frac{2\tau}{p}\right) 
		\leq (1+C)^{2\tau/p} \, \zeta\left(\frac{2\tau}{p}\right) \, d^{q\cdot 2\tau/p},
\end{equation*}
provided that $S$ is polynomially tractable with the constants $C,p>0$ and $q\geq 0$.
Furthermore, for any $d\in\N$ we have 
\begin{equation*}
		\sum_{i=1}^{\ceil{(1+C)d^q}-1} \left( \frac{\lambda_{d,i}}{\lambda_{d,1}} \right)^\tau 
		\leq \ceil{(1+C)d^q}-1 
		\leq (1+C)\, d^q
		\leq (1+C)^{2\tau/p} \, \zeta\left(\frac{2\tau}{p}\right) \, d^{q\cdot 2\tau/p}
\end{equation*}
since $\lambda_{d,i} \leq \lambda_{d,1}$, $2\tau/p>1$ and $\zeta(2\tau/p)>1$.
Consequently, setting $r=2q/p$ and combining both the previous estimates leads to
\begin{equation*}
		\frac{1}{d^r} \left( \sum_{i=1}^{\infty} \left( \frac{\lambda_{d,i}}{\lambda_{d,1}} \right)^\tau \right)^{1/\tau}
		\leq 2^{1/\tau} \, (1+C)^{2/p} \, \zeta\left(\frac{2\tau}{p}\right)^{1/\tau} 
		\quad \text{for} \quad d\in\N
\end{equation*}
which shows \link{sup_condition_norm}, as well as the claimed bound on $C_\tau$.
\end{proof}

Obviously \autoref{Thm:General_Tract_norm} again provides a 
characterization of (strong) polynomially tractability 
of a given compact Hilbert space problem $S=(S_d)_{d\in\N}$ in terms
of summability properties of the eigenvalue sequence 
$(\lambda_{d,i})_{i\in\N}$ of $W_d={S_d}^{\!\dagger} S_d$.

\section{Tensor product problems}\label{sect:TensorBasics}
In the former section we investigated tractability
properties of compact Hilbert space problems $S=(S_d)_{d\in\N}$
without assuming any relation between subsequent problem operators $S_d$.
Next we want to consider problems $S$ where every 
$S_d$ is generated out of one single (univariate)
operator $S_1$ via a $d$-fold tensor product construction.

\subsection{Definition and simple properties}\label{subsect:def_tensor_prob}
We need to recall the concept of tensor product Hilbert spaces first.
To this end, we use the approach given in Chapter 2.6 of Kadison and Ringrose~\cite{KR83}.
For a comprehensive introduction to more general tensor products in functional analysis 
we refer to the first chapter of Light and Cheney~\cite{LC85} and to Section 1.3 in Hansen~\cite{H10}.

Without going too much into details, we note that given a finite number of 
arbitrary Hilbert spaces~$H^{(k)}$ with inner products $\distr{\cdot}{\cdot}_{H^{(k)}}$, $k=1,\ldots,d$, 
the tensor product space 
\begin{equation*}
			H_d = \bigotimes_{k=1}^d H^{(k)} = H^{(1)} \otimes \ldots \otimes H^{(d)}
\end{equation*}
can be identified\footnote{Note that this association is unique up to some isometric isomorphism.} 
with the closure of the algebraic tensor product $H_{d,0}$, 
with respect to a (reasonable cross) norm
which is induced by a certain inner product $\distr{\cdot}{\cdot}_{H_{d,0}}$.
Keep in mind that the algebraic tensor product is defined 
as the quotient of the \emph{free vector space}, 
\ie the set of all finite linear combinations of formal
objects $f=\bigotimes_{k=1}^d f_k$ with $f_k\in H^{(k)}$, which we call
\emph{simple} (or \emph{pure}) \emph{tensors}, by a suitable linear subspace.\footnote{To abbreviate the notation we do not distinguish between simple tensors and their equivalence classes in what follows.}
Moreover, the mentioned inner product on the algebraic tensor product $H_{d,0}$ is defined by
\begin{equation*}
			\distr{\bigotimes_{k=1}^d f_k}{\bigotimes_{k=1}^d g_k}_{H_{d,0}} = \prod_{k=1}^d \distr{f_k}{g_k}_{H^{(k)}} \quad \text{for} \quad f_k, g_k\in H^{(k)}.
\end{equation*}
By means of continuous (multi-) linear extension this functional 
uniquely determines the inner product 
$\distr{\cdot}{\cdot}_{H_d}$ on $H_d$.
As usual we denote the corresponding norm by~$\norm{ \cdot \sep H_d }$.

Due to the tensor product structure, many useful properties 
such as completeness and separability 
of the underlying spaces $H^{(k)}$ are transferred directly to $H_d$
provided that all the $H^{(k)}$ share them.
In particular, it is well-known how to construct an orthonormal basis 
of the tensor product space given an ONB 
\begin{gather*}
			E^{(k)} = \left\{e_i^{(k)} \in H^{(k)} \sep i \in \I^{(k)} \right\}
\end{gather*}
in each $H^{(k)}$, $k=1,\ldots,d$.
Here every $\I^{(k)}$ denotes a (possibly non-countable) abstract index set.
Then the set of all $d$-fold simple tensors given by
\begin{gather*}
		E_d = \left\{ e_{d,\bm{j}} 
		= \bigotimes_{k=1}^d e_{j_k}^{(k)} \sep \bm{j}=(j_1,\ldots,j_d)\in\I_d 
		= \bigtimes_{k=1}^d \I^{(k)} \right\}
\end{gather*}
builds the desired ONB in $H_d$; see~\cite[Theorem 2.6.4]{KR83}.

For the applications we have in mind we will focus our attention on the special case
where all the building blocks $H^{(k)}$, $k=1,\ldots,d$, of $H_d$
coincide. 
In what follows we therefore assume that 
$H^{(k)} \equiv H_1$ for some Hilbert space $H_1$.
The respective ONB of $H_1$ will be denoted by 
$E_1=\left\{e_i \in H_1 \sep i \in \I_1\right\}$.
Then the latter formula for~$E_d$ simplifies to
\begin{gather}\label{tensor_ONB}
		E_d = \left\{ e_{d,\bm{j}} = \bigotimes_{k=1}^d e_{j_k} \sep \bm{j}=(j_1,\ldots,j_d) \in \I_d = (\I_1)^d \right\}.
\end{gather}

We are ready to introduce the tensor product problem operators $S_d$, $d\geq 1$, 
we are interested in.
Thus let $S_1 \colon \F_1 \nach \G_1$ 
be a compact linear operator between arbitrary Hilbert spaces $\F_1$ and $\G_1$.
For $d\geq 2$ we assume $\F_d=H_d$ to be the $d$-fold tensor product space 
of $H^{(k)} = H_1 = \F_1$, $k=1,\ldots,d$, as explained above.
Analogously, we construct the space $\G_d=\bigotimes_{k=1}^d \G_1$ out of $d$ copies of $\G_1$.
Now Proposition~2.6.12 of~\cite{KR83} yields that there exists 
a uniquely defined linear operator $S_d =\bigotimes_{k=1}^d S_1 \colon \F_d \nach \G_d$ such that
\begin{gather*}
		S_d\left(\bigotimes_{k=1}^d f_k\right) = \bigotimes_{k=1}^d S_1 f_k, \quad f_k \in \F_1,
\end{gather*}
and we have $\norm{ S_d \sep \LO(\F_d,\G_d)}=\norm{S_1 \sep \LO(\F_1,\G_1)}^d<\infty$ for any fixed $d\in\N$.
In detail, we define the bounded linear operator 
$\widetilde{S}_d\colon E_d\nach\G_d$ such that for all $\bm{j}\in\I_d$ we
have $\widetilde{S}_d (e_{d,\bm{j}}) = \widetilde{S}_d (\bigotimes_{k=1}^d e_{j_k}) = \bigotimes_{k=1}^d S_1 (e_{j_k})\in\G_d$. 
Then $S_d$ is assumed to be the uniquely
defined linear, continuous extension of $\widetilde{S}_d$ from $E_d$ to $\F_d$.
Due to the compactness of~$S_1$ it is easy to check that 
the problem operator~$S_d$ is not only bounded but even compact.
Moreover, a linear extension argument shows that the adjoint 
operator~${S_d}^{\!\dagger}$ is given by the $d$-fold tensor product of ${S_1}^{\!\dagger}$, 
\ie ${S_d}^{\!\dagger} = \bigotimes_{k=1}^d \left( {S_1}^{\!\dagger} \right)$, and hence
\begin{gather}\label{eq:tensor_W}
		W_d = {S_d}^{\!\dagger} S_d 
		= \left( \bigotimes_{k=1}^d \left({S_1}^{\!\dagger}\right) \right) \left( \bigotimes_{k=1}^d {S_1} \right) 
		= \bigotimes_{k=1}^d \left( {S_1}^{\!\dagger} {S_1}\right)
		= \bigotimes_{k=1}^d W_1;
\end{gather}
cf. \cite[p. 146]{KR83}.

\subsection{Eigenpairs and the optimal algorithm}\label{sect:Eigenpairs}
From \autoref{sect:General_HSP} we know that for $d\in\N$ the optimal algorithm, 
as well as the (information) complexity,
crucially depends on the singular value decomposition of $S_d$. 
Hence, we have to calculate the eigenpairs 
$(\lambda_{d,i}, \phi_{d,i})$ of the tensor product operator $W_d$ obtained in \link{eq:tensor_W}.
We follow the arguments presented in \cite[Section 5.2]{NW08} 
and claim that these eigenpairs are given by (tensor) products 
of the non-trivial eigenpairs $(\lambda_m,\phi_m)$, $m\in\M_1=\{m\in\N\sep m<v(W_1)+1\}$, 
of the univariate operator $W_1={S_1}^{\!\dagger} S_1$; see~\link{univariate_eigenvalues}.
This is the subject of the following assertion.

\begin{prop}\label{prop:tensor_eigenpairs}
For $d\in\N$ the non-trivial eigenpairs of the operator $W_d={S_d}^{\!\dagger} {S_d}$ are given by
$\left\{\left(\widetilde{\lambda}_{d,\bm{m}}, \widetilde{\phi}_{d,\bm{m}}\right) \sep \bm{m}=(m_1,\ldots,m_d) \in \M_d=(\M_1)^d\right\}$,
where
\begin{equation}\label{tensor_eigenpairs}
		\widetilde{\lambda}_{d,\bm{m}} = \prod_{k=1}^d \lambda_{m_k} 
		\quad \text{and} \quad \widetilde{\phi}_{d,\bm{m}} = \bigotimes_{k=1}^d \phi_{m_k}.
\end{equation}
\end{prop}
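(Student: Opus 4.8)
The plan is to verify directly that the stated pairs are eigenpairs of $W_d$ by means of the tensor identity \link{eq:tensor_W}, and then to rule out further non-trivial eigenpairs by exhibiting an orthonormal basis of $\F_d=H_d$ that fully diagonalizes $W_d$. First I would recall the spectral theorem for the compact, self-adjoint, positive operator $W_1={S_1}^{\!\dagger}S_1$: its eigenvectors $\phi_m$, $m\in\M_1$, together with an orthonormal basis of $\ker W_1$, form an orthonormal basis $E_1=\{e_i\sep i\in\I_1\}$ of $H_1$ (the part indexed by $\ker W_1$ may be empty or uncountable, which is harmless). Feeding this concrete basis into the construction \link{tensor_ONB} yields an orthonormal basis $E_d$ of $H_d$ consisting of all simple tensors $\bigotimes_{k=1}^d e_{j_k}$. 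Using \link{eq:tensor_W} together with the defining property of tensor products of operators and multilinearity, each such basis element obeys
\begin{gather*}
		W_d\!\left(\bigotimes_{k=1}^d e_{j_k}\right)
		= \left(\bigotimes_{k=1}^d W_1\right)\!\left(\bigotimes_{k=1}^d e_{j_k}\right)
		= \bigotimes_{k=1}^d \left(W_1 e_{j_k}\right)
		= \left(\prod_{k=1}^d \mu_{j_k}\right)\bigotimes_{k=1}^d e_{j_k},
\end{gather*}
where $\mu_i\geq 0$ denotes the eigenvalue of $W_1$ attached to $e_i$ (so $\mu_i=\lambda_m$ if $e_i=\phi_m$ and $\mu_i=0$ if $e_i\in\ker W_1$). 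Hence $W_d$ is diagonal in the orthonormal basis $E_d$, so its complete eigenstructure can be read off from $E_d$.

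Next I would isolate the non-trivial part. The eigenvalue $\prod_{k=1}^d\mu_{j_k}$ is strictly positive precisely when every factor is, i.e.\ precisely when $e_{j_k}=\phi_{m_k}$ for some $m_k\in\M_1$ and all $k$; this gives exactly the family $\widetilde{\phi}_{d,\bm{m}}=\bigotimes_{k=1}^d\phi_{m_k}$ with eigenvalue $\widetilde{\lambda}_{d,\bm{m}}=\prod_{k=1}^d\lambda_{m_k}>0$, indexed by $\bm{m}\in\M_d=(\M_1)^d$, which is \link{tensor_eigenpairs}. Mutual orthonormality of this family follows from $\distr{\widetilde{\phi}_{d,\bm{m}}}{\widetilde{\phi}_{d,\bm{n}}}_{H_d}=\prod_{k=1}^d\distr{\phi_{m_k}}{\phi_{n_k}}_{H_1}=\prod_{k=1}^d\delta_{m_k,n_k}=\delta_{\bm{m},\bm{n}}$, and every remaining element of $E_d$ lies in $\ker W_d$. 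Finally, since $\lambda=(\lambda_m)_{m\in\N}$ is a non-increasing null sequence bounded by $\lambda_1$, any value $c>0$ forces $\lambda_{m_k}\geq c\,\lambda_1^{-(d-1)}$ for every $k$, hence only finitely many $\bm{m}\in\M_d$ satisfy $\prod_{k=1}^d\lambda_{m_k}=c$; so the non-zero eigenvalues of $W_d$ have finite multiplicity and the list above is a genuine enumeration of the non-trivial eigenpairs.

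The only genuinely non-routine ingredient is the completeness step — that \emph{all} non-trivial eigenpairs of $W_d$ are accounted for. It hinges on invoking the spectral theorem for $W_1$, which supplies an eigenbasis of the whole space $H_1$; the tensor basis $E_d$ of \link{tensor_ONB} then consists of eigenvectors of $W_d$, so $W_d$ is explicitly diagonalized and its eigenvalue multiplicities are unambiguous. As in \autoref{rem:sep}, one may afterwards discard the $\ker W_d$-part of $E_d$, since it is irrelevant for approximating the image of $S_d$. Everything else is a short, routine computation with simple tensors.
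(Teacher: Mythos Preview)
Your proof is correct and follows essentially the same approach as the paper: both extend the eigenvectors $\phi_m$ of $W_1$ to a full orthonormal basis $E_1$ of $H_1$, form the tensor basis $E_d$, and verify that the non-$\M_d$ basis elements are annihilated by $W_d$ (equivalently $S_d$). The only difference is organizational: you diagonalize $W_d$ in $E_d$ directly and read off the complete eigenstructure, whereas the paper first checks that the $\widetilde{\phi}_{d,\bm{m}}$ are eigenvectors and then rules out a hypothetical additional eigenpair $(\mu,\eta)$ by expanding $\eta$ in $E_d$ and showing $S_d\eta=0$; your presentation is slightly cleaner, and the finite-multiplicity remark you add is harmless (and automatic from compactness).
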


\begin{proof}
Obviously, all the $\widetilde{\phi}_{d,\bm{m}}$'s are mutually orthonormal in $\F_d$, \ie
\begin{gather*}
		\distr{\widetilde{\phi}_{d,\bm{i}}}{\widetilde{\phi}_{d,\bm{j}}}_{\F_d} 
		= \prod_{k=1}^d \distr{\phi_{i_k}}{\phi_{j_k}}_{\F_1} 
		= \prod_{k=1}^d \delta_{i_k,j_k}
		= \delta_{\bm{i},\bm{j}}, \quad \bm{i},\bm{j}\in\M_d.
\end{gather*}
Furthermore,
\begin{align*}
		W_d \widetilde{\phi}_{d,\bm{m}}
		&= \left(\bigotimes_{k=1}^d W_1 \right) \left( \bigotimes_{k=1}^d \phi_{m_k} \right)
		= \bigotimes_{k=1}^d \left( W_1 \phi_{m_k} \right) \\
		&= \bigotimes_{k=1}^d \left( \lambda_{m_k} \cdot \phi_{m_k} \right)
		= \prod_{k=1}^d \lambda_{m_k} \cdot \bigotimes_{k=1}^d \phi_{m_k}
		= \widetilde{\lambda}_{d,\bm{m}} \cdot \widetilde{\phi}_{d,\bm{m}}
\end{align*}
shows that $\widetilde{\phi}_{d,\bm{m}}$, $\bm{m}\in\M_d$, 
is indeed an eigenelement with respect to the strictly positive
eigenvalue~$\widetilde{\lambda}_{d,\bm{m}}$ of $W_d$.

Assume for a moment there exists an eigenpair $(\mu,\eta)$ of $W_d$ with $\mu \neq 0$
which cannot be represented by \link{tensor_eigenpairs}.
Then, due to the assertions in the former section, 
$\eta$ is orthogonal to every other eigenelement $\widetilde{\phi}_{d,\bm{m}}$, $\bm{m}\in\M_d$.
Remember that $\Phi_1=\{\phi_m \in\F_1 \sep m\in\M_1\}$ can be extended to 
an orthonormal basis $E_1=\{e_m \sep m \in \I_1 \}$ of $\F_1$ (see \autoref{rem:sep})
which can be used to construct an ONB
$E_d=\{e_{d,\bm{j}} \sep \bm{j} \in \I_d=(\I_1)^d \}$ of $\F_d$ given by \link{tensor_ONB}.
Therefore $\eta$ can be represented as
\begin{align*}
		\eta 
		&= \sum_{\bm{j}\in\I_d} \distr{\eta}{e_{d,\bm{j}}}_{\F_d} e_{d,\bm{j}} 
		= \sum_{\bm{j}\in\M_d} \distr{\eta}{\widetilde{\phi}_{d,\bm{j}}}_{\F_d} \widetilde{\phi}_{d,\bm{j}} + \sum_{\bm{j}\in \I_d\setminus \M_d} \distr{\eta}{e_{d,\bm{j}}}_{\F_d} e_{d,\bm{j}} \\
		&= \sum_{\bm{j}\in \I_d\setminus \M_d} \distr{\eta}{e_{d,\bm{j}}}_{\F_d} e_{d,\bm{j}},
\end{align*}
where each of these sums consists of at most countably many non-vanishing summands and converges unconditionally.
Now the boundedness of $S_d$ implies
\begin{gather*}
		S_d \eta 
		= \sum_{\bm{j}\in \I_d\setminus \M_d} \distr{\eta}{e_{d,\bm{j}}}_{\F_d} S_d e_{d,\bm{j}} 
		= 0,
\end{gather*}
since each of the tensor products 
$S_d e_{d,\bm{j}} = \bigotimes_{k=1}^d (S_1 e_{j_k})$, $\bm{j}\in\I_d \setminus \M_d$, 
includes at least one factor $S_1 e_{j_k}$ with $j_k \notin \M_1$.
These factors need to vanish because the set $\{S_1 e_m = S_1 \phi_m \sep m\in\M_1\}$ 
builds an ONB of the image of $S_1$ in $\G_1$.
Hence, $W_d \eta = {S_d}^{\!\dagger} (S_d \eta) = 0$ which contradicts our assumption.
In other words, \link{tensor_eigenpairs} completely describes the eigenpairs of $W_d$ as claimed.
\end{proof}

Again the latter proof justifies 
the restriction to separable spaces $\F_1$ (and hence also~$\F_d$) in what follows,
see \autoref{rem:sep}.
Thus we can assume that the set of univariate eigenelements $\Phi_1$ already builds an ONB in $\F_1$,
\ie that $\Phi_1=E_1$, and
consequently $\Phi_d=\{\widetilde{\phi}_{d,\bm{m}} \sep \bm{m}\in\M_d\}$ builds an ONB in $\F_d$.

To unify our notation we rearrange the obtained eigenpairs 
according to a non-increasing ordering of the eigenvalues.
To this end, note that $\# \M_d = (\# \M_1)^d$, \ie we have $v(W_d)=v(W_1)^d$
strictly positive eigenvalues in dimension $d$.
Therefore we define a sequence of bijections $\psi=\psi_d \colon \{ i \in \N \sep i < v(W_1)^d+1\} \nach \M_d $
such that
\begin{gather*}
		\lambda_{d,i} = \widetilde{\lambda}_{d,\psi(i)} \geq \widetilde{\lambda}_{d,\psi(i+1)} \quad \text{for all} \quad 1\leq i < v(W_1)^d+1.
\end{gather*}
Consequently the corresponding eigenelements are denoted by $\phi_{d,i}=\widetilde{\phi}_{d,\psi(i)}$.
Similar to the definitions in \autoref{sect:SVD} we extend the (possibly finite) sequence of eigenvalues by
$\lambda_{d,i}=0$ for $i>v(W_1)^d$.
Observe that the largest eigenvalue in dimension $d$ is given by
\begin{gather*}
		\lambda_{d,1}=\lambda_1^d 
\end{gather*}
and thus the initial error is $\eps_d^\mathrm{init} = \lambda_1^{d/2}$.

\autoref{prop:tensor_eigenpairs} in hand, the optimal algorithm $A_{n,d}^*$ for
linear tensor product problems $S=(S_d)_{d\in\N}$ is stated in \autoref{Cor:OptAlgo}.
For $d\in\N$ and $n\in\N_0$ it reads
\begin{equation}\label{eq:opt_tensor_algo}
		A_{n,d}^* \colon \F_d \nach \G_d, 
		\qquad f \mapsto A_{n,d}^*(f)=\sum_{i=1}^{\min{n,v(W_d)}} \distr{f}{\phi_{d,i}}_{\F_d} \cdot S_d \phi_{d,i},
\end{equation}
and its worst case error can be expressed in terms of the sequence $\lambda=(\lambda_m)_{m\in\N}$.
More precisely, we have $e^{\mathrm{wor}}(n,d;S_d) = \Delta^{\mathrm{wor}}(A_{n,d}^*;S_d) = \sqrt{\lambda_{d,n+1}}$.

We are ready to characterize tractability of such problems in the next subsection.

\subsection{Complexity}\label{sect:tensor_complexity}
We begin by analyzing the information complexity with respect to
the absolute error criterion.
Let $S_1\colon \F_1 \nach \G_1$ 
denote a compact linear operator between
arbitrary Hilbert spaces $\F_1$ and $\G_1$ and let $S=(S_d)_{d\in\N}$
be the sequence of $d$-fold tensor product problems 
defined in \autoref{subsect:def_tensor_prob}.
As before the non-increasing sequence of 
non-negative eigenvalues of the univariate operator
$W_1={S_1}^{\!\dagger} S_1$ is denoted by $\lambda=(\lambda_m)_{m\in\N}$.
At this point we stress that it is reasonable to assume that $\lambda_2>0$.
Otherwise for every $d\in\N$ there would be only at most one 
non-vanishing $d$-dimensional eigenvalue of
$W_d={S_d}^{\!\dagger} S_d$. 
Hence the problem $S_d$ would be trivial since then 
$n_{\mathrm{abs}}^{\mathrm{wor}}(\epsilon,d) \leq 1$ for all $\epsilon>0$.
Note that $\lambda_2>0$ also implies $\lambda_1>0$ such that
$S_1$ and $S_d$ are not the zero operator.

We proceed by presenting an assertion which is mainly based on 
Theorem 5.5 in Novak and Wo{\'z}niakowski~\cite{NW08}.
The sufficient condition for weak tractability later was given by
Papageorgiou and Petras~\cite{PP09}.
Although the results of these authors only refer to linear tensor product problems
defined between Hilbert \emph{function} spaces they remain valid even in our more general setting.

\begin{theorem}\label{thm:unweightedtensor_abs}
Consider the problem $S=(S_d)_{d\in\N}$ as described before. 
We study the absolute error criterion in the worst case setting.
\begin{itemize}
		\item Let $\lambda_1>1$. Then $S$ suffers from the curse of dimensionality.
		\item Let $\lambda_1=1$. Then
				\begin{itemize}
						\item $S$ is polynomially intractable. 
									In particular, if $\lambda_2=1$ then $S$ suffers from the curse of dimensionality.
						\item $S$ is weakly tractable if and only if 
											$\lambda_2<1$ and $\lambda_n \in o(\ln^{-2} n)$, 
											as $n\nach \infty$.\footnote{To avoid possible confusions, here and in what follows, $\ln^{\alpha} n$ means $[\ln(n)]^{\alpha}$ where $\alpha\in\R$.}
				\end{itemize}
		\item Let $\lambda_1<1$. Then
				\begin{itemize}
						\item $S$ never suffers from the curse.
						\item $S$ is weakly tractable if and only if $\lambda_n \in o(\ln^{-2} n)$, as $n\nach \infty$.
						\item	$S$ is polynomially tractable if and only if it is strongly polynomially tractable.
									Moreover, this holds if and only if there exists some $\tau\in(0,\infty)$ such that
									$\lambda \in \l_{\tau}$ and the exponent of strong polynomial tractability is given by
									\begin{gather*}
												p^* = \inf\left\{2\tau \sep \sum_{m=1}^\infty \lambda_m^{\tau}\leq 1\right\}.
									\end{gather*}
				\end{itemize}
\end{itemize}
\end{theorem}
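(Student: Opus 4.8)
The plan is to establish the three-way equivalence and then to identify the exponent, using only two facts already at our disposal: the characterisation of polynomial tractability of compact Hilbert space problems in \autoref{Thm:General_Tract_abs}, and the description of the $d$-dimensional spectrum in \autoref{prop:tensor_eigenpairs}. By the latter the non-trivial eigenvalues of $W_d$ are exactly the products $\prod_{k=1}^d\lambda_{m_k}$, $\bm{m}\in(\M_1)^d$; distributing the product over the sum therefore gives, for every $\tau>0$ and $d\in\N$,
\begin{equation*}
			\sum_{i\geq1}\lambda_{d,i}^\tau=\Bigl(\sum_{m\geq1}\lambda_m^\tau\Bigr)^{\!d}=\theta_\tau^{\,d},\qquad \theta_\tau:=\sum_{m=1}^\infty\lambda_m^\tau\in(0,\infty].
\end{equation*}
Since $\lambda_1<1$ we have $\lambda_{d,1}=\lambda_1^d\to0$ and, whenever $\theta_\tau<\infty$, also $\theta_s\leq\lambda_1^{\,s-\tau}\theta_\tau\to0$ as $s\to\infty$; these are the only analytic inputs needed.

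Strong polynomial tractability trivially implies polynomial tractability, so it remains to prove ``polynomially tractable $\Rightarrow$ $\lambda\in\ell_\tau$ for some $\tau$'' and the converse. For the first I would specialise the first item of \autoref{Thm:General_Tract_abs} to $d=1$: polynomial tractability with constants $C,p,q$ forces $\sum_{i\geq f(1)}\lambda_{1,i}^\tau<\infty$ for every $\tau>p/2$, and since $\lambda_{1,i}=\lambda_i$ and only finitely many indices are discarded, this is $\lambda\in\ell_\tau$. For the converse, choose $\tau^{*}$ with $\theta_{\tau^{*}}\leq1$ (possible because $\theta_s\to0$); the displayed identity then gives $\sum_{i\geq1}\lambda_{d,i}^{\tau^{*}}=\theta_{\tau^{*}}^{\,d}\leq1$ for all $d$, which is precisely condition \link{sup_condition} with $r=0$ and $f\equiv1$. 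The second item of \autoref{Thm:General_Tract_abs}, applied with these data (so $C=1$, $p=q=0$), yields a bound $n_\ab^\wor(\eps,d)\leq c\,\eps^{-2\tau^{*}}$ with $c$ independent of $d$; hence strong polynomial tractability. Running the same argument for an arbitrary $\tau$ with $\theta_\tau\leq1$ shows that $2\tau$ is then an admissible exponent of strong polynomial tractability.

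To pin down $p^{*}$ put $\tau_0:=\inf\{\tau>0\sep\theta_\tau\leq1\}$. One checks $\tau_0\in(0,\infty)$: the set $\{\tau>0\sep\theta_\tau\leq1\}$ is an interval of the form $(\tau_0,\infty)$ or $[\tau_0,\infty)$ because $\tau\mapsto\theta_\tau$ is non-increasing (each $\lambda_m\leq\lambda_1<1$), it is non-empty by the previous step, and it excludes a neighbourhood of $0$ since $\theta_\tau\uparrow v(W_1)\geq2$ as $\tau\to0^{+}$. From the last remark of the previous paragraph, $2\tau$ is an admissible exponent for every $\tau>\tau_0$, so $p^{*}\leq2\tau_0$. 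For the reverse inequality fix any $p>p^{*}$, so that $n_\ab^\wor(\eps,d)\leq C_p\,\eps^{-p}$ for all $\eps\in(0,1]$, $d\in\N$, and apply the first item of \autoref{Thm:General_Tract_abs} with $q=r=0$ (so $f$ is a constant): for every $\tau>p/2$ the tails $\sum_{i\geq f(1)}\lambda_{d,i}^\tau$ are bounded uniformly in $d$, and adding back the at most $f(1)-1$ omitted terms, each $\leq\lambda_{d,1}^\tau=\lambda_1^{d\tau}\to0$, shows that $\theta_\tau^{\,d}=\sum_{i\geq1}\lambda_{d,i}^\tau$ stays bounded as $d\to\infty$; this is impossible unless $\theta_\tau\leq1$. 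Thus $\theta_\tau\leq1$ for every $\tau>p/2$, and letting $\tau\downarrow p/2$ (monotone convergence, valid since $\lambda_m\leq1$) gives $\theta_{p/2}\leq1$, i.e. $\tau_0\leq p/2$. As $p>p^{*}$ was arbitrary, $\tau_0\leq p^{*}/2$, and combining the two bounds yields $p^{*}=2\tau_0=\inf\{2\tau\sep\theta_\tau\leq1\}$, the asserted formula.

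The delicate point is this last step: ``strongly polynomially tractable with exponent $p^{*}$'' only provides a bound $n_\ab^\wor(\eps,d)\leq C_p\,\eps^{-p}$ for $p$ strictly above $p^{*}$, so one must force $\theta_\tau\leq1$ for every such $p$ and every $\tau>p/2$ and then transport the conclusion to the boundary value $\tau=p^{*}/2$ via the monotonicity of $\tau\mapsto\theta_\tau$. Everything else is bookkeeping around the identity $\sum_i\lambda_{d,i}^\tau=\theta_\tau^{\,d}$ and the two halves of \autoref{Thm:General_Tract_abs}; no new ideas beyond those are required.
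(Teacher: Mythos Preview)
Your proposal addresses only the last sub-bullet of the $\lambda_1<1$ case (the equivalence of polynomial tractability, strong polynomial tractability, and $\lambda\in\ell_\tau$, together with the formula for $p^*$). The theorem as stated has several further assertions---the curse of dimensionality for $\lambda_1>1$ and for $\lambda_1=\lambda_2=1$, polynomial intractability for $\lambda_1=1$, the weak tractability characterisations in both the $\lambda_1=1$ and $\lambda_1<1$ regimes, and the absence of the curse when $\lambda_1<1$---none of which follow from \autoref{Thm:General_Tract_abs} and the product identity $\sum_i\lambda_{d,i}^\tau=\theta_\tau^{\,d}$ alone. The weak tractability statements in particular require a different and more delicate combinatorial analysis of $n(\eps,d)$ (the paper derives them via \autoref{thm:scaled_wt}, whose proof rests on \autoref{prop:nescond_scaled} and \autoref{prop:suffcond_scaled}); the curse claims need the lower bound $n(\eps,d)\geq\sum_{k\leq a_d(\eps)}\binom{d}{k}$ coming from the representation \link{rep_n}. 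So as a proof of the full theorem your proposal has a genuine gap: most of the content is simply not treated.

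For the part you do treat, your argument is correct and is essentially the same mechanism the paper uses. The paper does not give a direct proof here but deduces everything from the scaled generalisations \autoref{thm:scaled_PT} and \autoref{thm:scaled_wt}, specialising to $s_d\equiv1$ in \autoref{ex:unweighted_tensorproducts}. If one unfolds the proof of \autoref{thm:scaled_PT} in that special case, one finds precisely your two moves: the identity $\sum_i\lambda_{d,i}^\tau=\theta_\tau^{\,d}$ combined with the two halves of \autoref{Thm:General_Tract_abs}, and the monotonicity of $\tau\mapsto\theta_\tau$ to pin down $p^*$. Your direct route avoids the detour through scaling factors and is slightly cleaner for this single assertion; the paper's route pays off because it simultaneously yields the broader \autoref{thm:scaled_PT}, from which the remaining (in)tractability statements for $\lambda_1\geq1$ also drop out.
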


For the sake of completeness we mention that Theorem 5.5 in \cite{NW08} includes some additional
lower bounds on the information complexity in the case $\lambda_1\geq 1$.
For polynomial (in)tractability the main idea of the proof is to apply \autoref{Thm:General_Tract_abs}
and to use the product structure of the involved sequences $(\lambda_{d,i})_{i\in\N}$ which are essentially given by
\autoref{prop:tensor_eigenpairs}.
We will not provide an explicit proof here.
Instead the interested reader is referred to \autoref{ex:unweighted_tensorproducts} 
in \autoref{chapt:ScaledNorms} where we conclude 
all assertions stated in \autoref{thm:unweightedtensor_abs} 
out of a generalized result for \emph{scaled} tensor product problems.
To conclude these more general assertions we will exactly follow the mentioned proof sketch.

Many authors in IBC use phrases like ``(unweighted) tensor product problems are intractable".
In this regard they refer to the following Theorem for the normalized error criterion
which is essentially based on Theorem 5.6 of~\cite{NW08}, as well as on~\cite{PP09}.
From our point of view it is not more than a simple consequence of the assertions
for absolute errors.

\begin{theorem}\label{thm:unweightedtensor_norm}
Consider the problem $S=(S_d)_{d\in\N}$ as described above. 
We study the normalized error criterion in the worst case setting.
\begin{itemize}
		\item Let $\lambda_1=\lambda_2$. Then $S$ suffers from the curse of dimensionality.
		\item Let $\lambda_1>\lambda_2$. Then
				\begin{itemize}
						\item $S$ is weakly tractable if and only if 
											$\lambda_n \in o(\ln^{-2} n)$, as $n\nach \infty$.
						\item $S$ is polynomially intractable.
				\end{itemize}
\end{itemize}
\end{theorem}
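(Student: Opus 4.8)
The plan is to reduce the entire statement to the already-understood absolute error criterion by exploiting the tensor product structure of the eigenvalues. The starting point is the elementary relation $n_\no^\wor(\eps,d;S_d) = n_\ab^\wor(\eps\sqrt{\lambda_{d,1}},d;S_d)$ for $\eps\in(0,1]$ and $d\in\N$, which in view of \link{n_wor} and \link{n_wor_norm} can be rewritten as
\begin{gather*}
		n_\no^\wor(\eps,d;S_d) = \#\left\{n\in\N \sep \lambda_{d,n}/\lambda_{d,1} > \eps^2\right\}.
\end{gather*}
Hence the whole theorem is a statement about the non-increasingly ordered sequences $(\lambda_{d,n}/\lambda_{d,1})_{n\in\N}$, $d\in\N$, and my task is to recognize these as the eigenvalue sequences of an auxiliary tensor product problem.

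To this end I would use \autoref{prop:tensor_eigenpairs}: every $d$-dimensional eigenvalue of $W_d$ has the form $\prod_{k=1}^d \lambda_{m_k}$ with $m_k\in\M_1$, and $\lambda_{d,1}=\lambda_1^d$. Therefore
\begin{gather*}
		\frac{1}{\lambda_{d,1}}\prod_{k=1}^d \lambda_{m_k} = \prod_{k=1}^d \frac{\lambda_{m_k}}{\lambda_1} = \prod_{k=1}^d \mu_{m_k}, \qquad \text{where} \quad \mu_m := \frac{\lambda_m}{\lambda_1}.
\end{gather*}
Let $\widetilde S=(\widetilde S_d)_{d\in\N}$ be the tensor product problem generated by $\widetilde S_1 := \lambda_1^{-1/2}\,S_1$; then $\widetilde W_1 = \lambda_1^{-1}W_1$ has eigenvalue sequence $\mu=(\mu_m)_{m\in\N}$. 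Since $x\mapsto x/\lambda_{d,1}$ is strictly increasing, the non-increasing rearrangement commutes with this rescaling, so the rearranged eigenvalues of $\widetilde W_d$ satisfy $\widetilde\lambda_{d,n}=\lambda_{d,n}/\lambda_{d,1}$ for all $n,d$. Consequently $n_\no^\wor(\eps,d;S_d)=n_\ab^\wor(\eps,d;\widetilde S_d)$ for all $\eps\in(0,1]$ and $d\in\N$; in particular $\widetilde S$ is well-scaled, $\widetilde\eps_d^{\init}=\mu_1^{d/2}=1$, and the two problems have identical information complexity functions, hence lie in exactly the same tractability classes.

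It then remains to invoke \autoref{thm:unweightedtensor_abs} for $\widetilde S$. Here $\mu_1=1$ and $\mu_2=\lambda_2/\lambda_1\in(0,1]$ (using the standing assumption $\lambda_2>0$), so we are in the case $\lambda_1=1$ of that theorem. If $\lambda_1=\lambda_2$ then $\mu_2=1$, and \autoref{thm:unweightedtensor_abs} gives the curse of dimensionality for $\widetilde S$, hence for $S$ under the normalized criterion. If $\lambda_1>\lambda_2$ then $\mu_2<1$; the same theorem shows $\widetilde S$ is polynomially intractable, and weakly tractable if and only if $\mu_n\in o(\ln^{-2}n)$, as $n\nach\infty$. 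Since $\lambda_1$ is a fixed positive constant, $\mu_n\in o(\ln^{-2}n)$ is equivalent to $\lambda_n\in o(\ln^{-2}n)$, which is the asserted condition.

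The closest thing to an obstacle is the bookkeeping in this reduction, not any genuine analysis: one must check that dividing every $d$-dimensional eigenvalue by $\lambda_{d,1}=\lambda_1^d$ really yields the eigenvalue family of an honest tensor product problem (rather than merely an abstract rescaled sequence), and that the non-increasing rearrangement is unaffected. Both facts are immediate from the multiplicativity in \link{tensor_eigenpairs}, so no new estimates are needed — all of the substantive work already resides in \autoref{thm:unweightedtensor_abs}.
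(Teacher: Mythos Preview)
Your proposal is correct and follows essentially the same approach as the paper: you define the rescaled operator $\widetilde S_1=\lambda_1^{-1/2}S_1$ (the paper calls it $T_1$), identify the eigenvalues of the associated tensor product problem as $\lambda_{d,n}/\lambda_{d,1}$ via \autoref{prop:tensor_eigenpairs}, deduce $n_\no^\wor(\eps,d;S_d)=n_\ab^\wor(\eps,d;\widetilde S_d)$, and then read off all three assertions from the $\lambda_1=1$ case of \autoref{thm:unweightedtensor_abs}. The paper's proof is the same argument with the same ingredients.
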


Since the subsequent proof technique is typical in this field of research, 
we include the proof of \autoref{thm:unweightedtensor_norm} in full detail.
\begin{proof}
Assume we had already proven \autoref{thm:unweightedtensor_abs}.
Given the problem $S=(S_d)_{d\in\N}$, constructed out of $S_1\colon\F_1\nach\G_1$,
as well as the associated sequence $(\lambda_m)_{m\in\N}$,
we define a new operator $T_1\colon\F_1\nach\G_1$ by 
$f\mapsto T_1 f = 1/\sqrt{\lambda_1} \cdot S_1 f$.
Clearly, $T_1$ is a linear and compact mapping between Hilbert spaces
and 
\begin{gather*}
		\distr{T_1 f}{g}_{\G_1} 
		= \frac{1}{\sqrt{\lambda_1}} \distr{S_1 f}{g}_{\G_1}
		= \frac{1}{\sqrt{\lambda_1}} \distr{f}{{S_1}^{\!\dagger} g}_{\F_1}
		= \distr{f}{\left( \frac{1}{\sqrt{\lambda_1}} {S_1}^{\!\dagger}\right) g}_{\F_1}
\end{gather*}
for $f\in\F_1$ and $g\in\G_1$.
Hence, ${T_1}^{\!\dagger} = 1/\sqrt{\lambda_1} \cdot {S_1}^{\!\dagger}$ and
the (extended) eigenvalue sequence of $V_1=W_1(T)={T_1}^{\!\dagger} T_1 = 1/\lambda_1 \cdot {S_1}^{\!\dagger} S_1 = 1/\lambda_1 \cdot W_1(S)$
is given by $\mu=(\mu_m)_{m\in\N}$, where $\mu_m = \lambda_m / \lambda_1$ for $m\in\N$.
For details, see also the arguments used in \autoref{sect:Scaled_basics}.
Anyway, the mapping $T_1$ in hand, we can construct the tensor product problem
$T=(T_d)_{d\in\N}$ by the usual procedure.
Now \link{tensor_eigenpairs} in \autoref{prop:tensor_eigenpairs} shows that the corresponding
eigenvalues of $V_d=W_d(T)={T_d}^\dagger T_d$ are given by 
\begin{gather*}
		\widetilde{\mu}_{d,\bm{m}} 
		= \prod_{k=1}^d \mu_{m_k}  
		= \frac{1}{\lambda_1^d} \prod_{k=1}^d \mu_{m_k} 
		= \frac{1}{\lambda_{d,1}} \widetilde{\lambda}_{d,\bm{m}}, \quad \bm{m}=(m_1,\ldots,m_d)\in\M_d,
\end{gather*}
such that $\mu_{d,i} = 1/\lambda_{d,1} \cdot \lambda_{d,i}$ for $i\in\N$.
This yields that the information complexity of $S$ \wrt the normalized error criterion coincides with the
absolute information complexity of the (scaled) problem $T$,
\ie
\begin{align*}
		n_\no(\eps',d; S_d)
		&= \# \{n\in\N_0 \sep \lambda_{d,n}/\lambda_{d,1} > (\eps')^2\}
		= \# \{n\in\N_0 \sep \mu_{d,n} > (\eps')^2\}\\
		&= n_\ab(\epsilon',d; T_d)
\end{align*}
for all $\eps'\in(0,1]$ and each $d\in\N$.
Since $\mu_1=1\geq\mu_2>0$ we are allowed to apply \autoref{thm:unweightedtensor_abs} for $T$.
Finally the observations that $\mu_2 = 1$ if and only if $\lambda_1=\lambda_2$,
as well as that $\mu_n\in o(\ln^{-2} n)$ (as $n\nach\infty$) if and only if $\lambda=(\lambda_n)_{n\in\N}$ belongs to this class, 
complete the proof.
\end{proof}

\section{Reproducing kernel Hilbert spaces}\label{sect:RKHS}
When we deal with problems defined on Hilbert \emph{function} spaces $H$
a special kind of Hilbert spaces is of particular interest.
The reason is that in practice often only function evaluations
rather than information obtained by arbitrary linear functionals are permitted.
In order to compare the power of these classes of information operations 
($\Lambda^{\mathrm{std}}$ vs. $\Lambda^\all$) from a theoretical point of view,
it seems to be useful to investigate conditions which ensure
that point evaluation functionals
\begin{gather*}
		L_y \colon H \nach \R, \quad f\mapsto L_y(f)=f(y),
\end{gather*}
for all $y$ in the domain of definition $\Omega$ of $f\in H$, belong to the class $\Lambda^\all$.
Clearly~$L_y$ is always linear such that it is enough to ask whether it is also continuous (or bounded, respectively) in $f$.
It turns out that, as long as we restrict ourselves to Hilbert spaces,
this property can be characterized by the existence of a so-called
\emph{reproducing kernel} $K$. If so, then the space $H$ is referred to as a
\emph{reproducing kernel Hilbert space} (\emph{RKHS} for short) and we write $H = \Hi(K)$.
In the present section we collect some basic properties of this concept which we will need later on in \autoref{sect:weightedSobolev}.
The presentation given here is based on the famous paper of Aronszajn~\cite{A50},
as well as the textbook of Wahba~\cite{W90}.\footnote{For the ease of notation (and in contrast to the mentioned references) we restrict ourselves to spaces over $\R$. Once more the theory can be transferred almost literally to $\C$.}
Standard examples for RKHSs such as Korobov spaces 
and Sobolev spaces of dominating mixed smoothness
can be found in \cite[Appendix A]{NW08}.

\subsection{Definition and properties}
A (real) Hilbert space $H$ of functions $f\colon \Omega \nach \R$, 
equipped with inner product $\distr{\cdot}{\cdot}_{H}$, is said to be a reproducing kernel Hilbert space if there exists a function
\begin{gather*}
		K \colon \Omega \times \Omega \nach \R
\end{gather*}
such that
\begin{itemize}
		\item for all fixed $y\in\Omega$ the function $K_y = K(\cdot,y)$ belongs to $H$, and
		\item for every $f \in H$ and all $y\in\Omega$ it is
					\begin{gather}\label{eq:reproducing_prop}
								L_y(f) = f(y) = \distr{f}{K_y}_{H} = \distr{f}{K(\cdot,y)}_{H}.
					\end{gather}		
\end{itemize}
The second point \link{eq:reproducing_prop} is known as the \emph{reproducing property}.
Together with the first point it obviously implies the boundedness of point evaluations on $H=\Hi(K)$.
The converse, \ie the existence (and uniqueness) of the reproducing kernel $K$, 
is a simple consequence of the Riesz representation theorem; see \cite[p.~90]{T92} or \cite[III.6]{Y80}.
Unfortunately the proof of this theorem is non-constructive and therefore it does not
provide an explicit method to find the \emph{representer} $K_y = K(\cdot,y)$ of $L_y$.
In fact, given a specific RKHS $\Hi(K)$ it seems to be a challenging problem to deduce a closed form of
its reproducing kernel $K$. 
However, as long as we restrict ourselves to separable RKHSs, 
it is easy to prove that $K$ is given by
\begin{gather}\label{eq:kernel_sum_basis}
			K(x,y) = \sum_{m\in\I} e_m(x) \, e_m(y), 
			\quad x,y\in\Omega,
\end{gather}
where $\{e_m \colon \Omega \nach \R \sep m\in\I \}$ denotes an arbitrary 
orthonormal basis of $\Hi(K)$.
Furthermore we know that every reproducing kernel $K$ is positive definite.
That is, for all $n\in\N$ and any sequence $\bm{x}=(x_m)_{m=1}^n \in \Omega^n$ the quadratic form
\begin{gather}\label{eq:quad_form}
			Q_{K;\bm{x}}(\xi_1,\ldots,\xi_n) 
			= \sum_{i,j=1}^n K(x_i,x_j) \, \xi_i \, \xi_j, 
			\quad \bm{\xi}=(\xi_m)_{m=1}^n \in \R^n,
\end{gather}
is a non-negative function of $\bm{\xi}$.
In particular,
\begin{gather*}
			K(x,x) \geq 0
			\quad
			\text{and}
			\quad
			K(x,y) = K(y,x)
			\quad
			\text{for all}
			\quad x,y\in \Omega.
\end{gather*}
Conversely, Moore showed that every positive definite function $K$ in the above sense
uniquely determines a RKHS $H$ admitting $K$ as its reproducing kernel; see~\cite{A50}.
Again it turned out to be a hard problem to conclude a suitable representation
of~$H$ (and its inner product) for a given function $K$.

Besides further fascinating properties, we want to focus our attention on
products of kernel functions.
To this end, for $d\in\N$ let $K^{(k)}$, $k=1,\ldots,d$, denote a finite number of reproducing kernels defined on the sets $\Omega^{(k)} \times \Omega^{(k)}$, respectively.
Then we may consider the tensor product
\begin{gather*}
		K_d=\bigotimes_{k=1}^d K^{(k)} \colon \left(\bigtimes_{k=1}^d \Omega^{(k)} \right) \times \left(\bigtimes_{k=1}^d \Omega^{(k)} \right) \nach \R,
		\quad K_d(\bm{x},\bm{y}) = \prod_{k=1}^d K^{(k)}(x_k,y_k),
\end{gather*}
where we set $\bm{x}=(x_1,\ldots,x_d)$ and $\bm{y}=(y_1,\ldots,y_d)$ with $x_k,y_k\in\Omega^{(k)}$.
On the other hand, each kernel $K^{(k)}$ induces a uniquely defined RKHS $\Hi(K^{(k)})$ which in turn
implies the existence of one (and only one) tensor product space $H_d = \bigotimes_{k=1}^d \Hi(K^{(k)})$
using the arguments presented in \autoref{sect:TensorBasics}.
Now it can be checked that $H_d$ itself is a RKHS and its kernel is given by $K_d$, \ie 
\begin{gather}\label{eq:tensor_RKHS}
		H_d = \Hi(K_d) = \bigotimes_{k=1}^d \Hi(K^{(k)}).
\end{gather}
The proof of this assertion can be obtained inductively by adding one factor in every step.
Then it remains to show that the resulting quadratic forms \link{eq:quad_form} are non-negative again which
can be done using a classical result due to Schur.

Note that the whole theory works for arbitrary point sets $\Omega$
which turned out to be useful in the context of so-called \emph{support vector machines}
which are instances of the more general class of \emph{kernel methods}.
However, in IBC special choices such as $\Omega=\Omega_1=[0,1]$ (or $\Omega=\R$) are of particular interest.
For multivariate problems the standard choice is $\Omega=\Omega_d=\Omega_1^d$ which perfectly fits to the tensor product
construction explained before.
In this respect the univariate kernels $K^{(k)}$, $k=1,\ldots,d$, are often taken as weighted instances $K_1^{\gamma_{d,k}}$
of some underlying kernel~$K_1$. 
A prominent example is given by $K_1^{\gamma_{d,k}}(x,y)=1+\gamma_{d,k}\min{x,y}$ which leads to
an anchored Sobolev space $\widetilde{\Hi}_d^\gamma$ related to the Wiener sheet measure; see, e.g.,
\cite[Section~8]{KWW08} or \cite{W12}.
Another example of this type will be discussed in detail within \autoref{sect:weightedSobolev}.

Finally we mention that the concept of RKHSs was generalized recently to
the class of so-called \emph{reproducing kernel Banach spaces} (\emph{RKBSs}).
For a brief introduction to this topic we refer to Zhang and Zhang~\cite{ZZ13}.

\subsection{Examples: Integration and approximation problems}
Let us conclude the presentation with some examples which show that
the knowledge about the existence of a reproducing kernel can be exploited
to obtain complexity assertions for the classical problems of integration and approximation.
\begin{example}[Worst case error of QMC rules]
For $d\in\N$ suppose $\Hi(K_d)$ to be a RKHS of real-valued functions $f$ defined on 
some Borel measurable subset $\Omega_d$ of~$\R^d$.
Consider the solution operator of the integration problem
\begin{gather*}
			\mathrm{Int}_d^{\rho_d} \colon \B(\Hi(K_d)) \nach \R,
			\quad
			f\mapsto \mathrm{Int}_d^{\rho_d} f = \int_{\Omega_d} f(\bm{x}) \, \rho_d(\bm{x}) \dlambda^d(\bm{x}),
\end{gather*}
where $\rho_d$ denotes a probability density function on $\Omega_d$.
Let us additionally assume that the function
\begin{gather*}
			h_d = \int_{\Omega_d} K_d(\cdot,\bm{x}) \, \rho_d(\bm{x}) \dlambda^d(\bm{x})
\end{gather*}
is well-defined and belongs to $\Hi(K_d)$.
Then it is easy to see that $h_d$ is the representer of the linear functional $\mathrm{Int}_d^{\rho_d}$,
\ie that $\mathrm{Int}_d^{\rho_d}f=\distr{f}{h_d}_{\Hi(K_d)}<\infty$ for all $f\in\Hi(K_d)$.
Since allowing arbitrary linear functionals to approximate
the value of the integral would make the problem trivial
we consider cubature rules of the form
\begin{gather*}
			A_{n,d}f = \sum_{i=1}^n a_i \, f\!\left(\bm{x}^{(i)}\right), \quad n\in\N_0,
\end{gather*}
defined by a priori chosen sample points $\bm{x}^{(i)}\in\Omega_d$
and some weights $a_i\in\R$, $i=1,\ldots,n$.
Due to \link{eq:reproducing_prop} also the linear operator $A_{n,d}$ possesses a representer
in the space $\Hi(K_d)$.
Consequently its worst case error can be computed exactly in terms of the reproducing kernel
and the parameters $(a_i)_{i=1}^n$ and $(\bm{x}^{(i)})_{i=1}^n$:
\begin{align*}
			&\Delta^\wor(A_{n,d}; \mathrm{Int}_d^{\rho_d} \colon \B(\Hi(K_d))\nach\R)^2 \\
			&\quad = \sup_{f\in\B(\Hi(K_d))} \abs{\left(\mathrm{Int}_d^{\rho_d}-A_{n,d}\right)(f)}^2 
			= \norm{h_d - \sum_{i=1}^n a_i \, K_d\!\left(\cdot, \bm{x}^{(i)}\right) \sep \Hi(K_d)}^2\\
			&\quad = \int_{\Omega_d^2} K_d(\bm{x},\bm{y})\, \rho_d(\bm{x}) \, \rho_d(\bm{y}) \dlambda^{2d}(\bm{x},\bm{y})
			-2 \sum_{i=1}^n a_i\int_{\Omega_d} K_d\!\left(\bm{x},\bm{x}^{(i)}\right) \, \rho_d(\bm{x}) \dlambda^{d}(\bm{x})\\
			&\quad\qquad+ \sum_{i,j=1}^n a_i\, a_j \, K_d\!\left(\bm{x}^{(i)},\bm{x}^{(j)}\right).
\end{align*}
Choosing special weights $a_i$ (such as $a_i \equiv 1/n$), as well as
specific sample points~$\bm{x}^{(i)}$ (e.g. from a so-called \emph{integration lattice}),
we end up with well-studied classes of cubature rules which are known as
\emph{quasi-Monte Carlo} (QMC) methods and \emph{lattice rules}, respectively.
The common feature of these integration schemes is that their complexity analysis
is mainly based on the presented worst case error formula and thus on the properties of the reproducing kernel $K_d$.
Moreover, the latter expression for $\Delta^\wor(A_{n,d})$ plays an important role in 
\emph{discrepancy theory}.

Various kinds of integration problems are studied in Novak and Wo{\'z}niakowski \cite{NW10}.
For the recent state of the art in discrepancy theory and QMC methods we refer the reader to the monograph of Dick and Pillichshammer~\cite{DP10}, as well as to
the survey article of Dick, Kuo and Sloan~\cite{DKS13} and the references therein.
An introduction to lattice rules can also be found in the textbook of Sloan and Joe~\cite{SJ94}.~\hfill$\square$
\end{example}

Our second example shows the relation of reproducing kernels
and the singular values for certain approximation operators.
\begin{example}[Weighted $\L_2$-approximation]\label{ex:L_2-approx}
For $d\in\N$ let $\Hi(K_d)$ be a separable and infinite-dimensional 
RKHS which is compactly embedded into $\L_2^{\rho_d}(\Omega_d)$.
Here $\rho_d$ again denotes some probability density on $\Omega_d\subseteq\R^d$.
Then we may study the approximation problem
\begin{gather}\label{eq:def_APP}
			\mathrm{App}_d^{\rho_d} \colon \B(\Hi(K_d))\nach \L_2^{\rho_d}(\Omega_d), 
			\quad f\mapsto \mathrm{App}_d^{\rho_d}f = f,
\end{gather}
in the worst case setting.
Since both source and target space are Hilbert spaces 
we can use the theory developed in \autoref{sect:General_HSP}
to conclude complexity results with respect to the class $\Lambda^\all$.
Therefore we need to analyze the eigenvalues of the compact operator
$W_d^{\rho_d} = \left(\mathrm{App}_d^{\rho_d}\right)^\dagger \mathrm{App}_d^{\rho_d}$.
Using the reproducing property~\link{eq:reproducing_prop} and the symmetry of $K_d$ we conclude
\begin{align*}
			\left(W_d^{\rho_d} f\right)(\bm{x})
			&= \distr{\left(\mathrm{App}_d^{\rho_d}\right)^\dagger \mathrm{App}_d^{\rho_d} f}{K_d(\cdot,\bm{x})}_{\Hi(K_d)}
			= \distr{\mathrm{App}_d^{\rho_d} f}{\mathrm{App}_d^{\rho_d} K_d(\cdot,\bm{x})}_{\L_2^{\rho_d}(\Omega_d)} \\
			&= \int_{\Omega_d} f(\bm{y}) \, K_d(\bm{x},\bm{y}) \, \rho_d(\bm{y}) \dlambda^d(\bm{y})
\end{align*}
for all $f\in\Hi(K_d)$ and any $\bm{x}\in\Omega_d$.
Hence, $W_d^{\rho_d}$ takes the form of a weighted integral operator against the kernel $K_d(\cdot,\bm{y})$
and its non-trivial eigenpairs 
$\{(\lambda_{d,\rho_d,i},\phi_{d,\rho_d,i})\sep i\in \M_d\}$ can be found by solving integral equations.
Formula \link{eq:kernel_sum_basis} yields that
\begin{gather*}
			K_d(\bm{x},\bm{x}) = \sum_{i\in\M_d} \phi_{d,\rho_d,i}(\bm{x})^2 < \infty
\end{gather*}
for every $\bm{x}\in\Omega_d$ because we know that $\{\phi_{d,\rho_d,i}\sep i\in \M_d\}$
forms an ONB in $\Hi(K_d)$.
Since $\phi_{d,\rho_d,i}\in \L_2^{\rho_d}(\Omega_d)$ and $\norm{\phi_{d,\rho_d,i} \sep \L_2^{\rho_d}(\Omega_d)}^2 =\lambda_{d,\rho_d,i}$ for all $i\in\M_d$, 
it easily follows that
\begin{gather}\label{eq:trace}
			\trace{W_d^{\rho_d}} 
			= \sum_{i\in\M_d} \lambda_{d,\rho_d,i}
			= \sum_{i\in\M_d} \norm{\phi_{d,\rho_d,i} \sep \L_2^{\rho_d}(\Omega_d)}^2
			= \int_{\Omega_d} K_d(\bm{x},\bm{x}) \, \rho_d(\bm{x}) \dlambda^d(\bm{x}).
\end{gather}
Note that this trace may be finite or infinite 
depending on the values of $K_d$ on the diagonal
$\{(\bm{x},\bm{x}) \sep \bm{x}\in\Omega_d\}$.
It turns out that an infinite trace implies
that there is, in general, no (non-trivial) relation of the power of $\Lambda^{\all}$ and $\Lambda^{\mathrm{std}}$
for the given approximation problem.
In contrast, it is known that for finite traces there exist close relations
of these classes of information operations.
In particular, it is possible to conclude bounds on the rate of convergence
for $\Lambda^{\mathrm{std}}$ out of corresponding bounds for $\Lambda^{\all}$.
For details we refer to \cite[Chapter 26]{NW12}.

Finally we note that the finite trace property of $W_d^{\rho_d}$
immediately implies $\lambda_{d,\rho_d,i}\in \0(i^{-1})$, as $i\nach\infty$.
Hence, if we deal with linear information then we can conclude 
$e^\wor(n,d; \mathrm{App}_d^{\rho_d}) \in \0(n^{-1/2})$, $n\nach\infty$,
directly out of an integrability property of the kernel $K_d$.
\hfill$\square$
\end{example}

In the last example we present a useful relation of
reproducing kernels and average case approximation problems.
\begin{example}[Average case approximation]\label{ex:average_case_approx}
For $d\in\N$ assume $\rho_d$ to be some probability density function on 
$\Omega_d=[0,1]^d$ and let 
$K_d\colon \Omega_d\times\Omega_d\nach\R$ denote a reproducing kernel
such that the mapping $\bm{x}\mapsto K_d(\bm{x},\bm{x})$ belongs to $\L_1^{\rho_d}(\Omega_d)$.
That is, suppose \link{eq:trace} to be finite.
Furthermore, let $\F_d$ denote a separable Banach space of real-valued
functions on $\Omega_d$ which is continuously embedded into $\L_2^{\rho_d}(\Omega_d)$
and for which function evaluations are continuous.
We equip $\F_d$ with a zero-mean Gaussian measure~$\mu_d$ 
such that its correlation operator $C_{\mu_d} \colon \F_d^* \nach \F_d$ 
applied to point evaluation functionals $L_{\bm{x}}$ can be expressed in terms of $K_d$:
\begin{gather*}
			K_d(\bm{x},\bm{y}) 
			= L_{\bm{x}} (C_{\mu_d}L_{\bm{y}})
			= \int_{\F_d} f(\bm{x})\, f(\bm{y}) \,\mathrm{d} \mu_d(f)
			\quad \text{for all} \quad \bm{x},\bm{y}\in\Omega_d.
\end{gather*}
We stress the point that this is always possible for a suitable choice of $\F_d$ and that our
assumptions imply a continuous embedding of the RKHS $\Hi(K_d)$ (induced by $K_d$) into $\F_d$. Consequently also $\mathrm{App}_d^{\rho_d}\colon \Hi(K_d)\nach \L_2^{\rho_d}(\Omega_d)$ is bounded, \ie continuous; see \link{eq:def_APP}.
For details and concrete examples 
the reader is referred to \cite[Appendix B]{NW08}, \cite[Section 13.2]{NW10}, and \cite[Section 24.1]{NW12}.

As in the previous example we want to look for good approximations 
$A_{n,d}f$ to input functions $f$ in the norm of $\L_2^{\rho_d}(\Omega_d)$.
This time we measure the average performance of the algorithm $A_{n,d}$ with respect to $\mu_d$, \ie
we try to minimize
\begin{gather*}
			\Delta^{\mathrm{avg}} \left( A_{n,d}; \id_d^{\rho_d} \colon \F_d \nach \L_2^{\rho_d}(\Omega_d) \right)
			= \left( \int_{\F_d} \norm{f-A_{n,d}f \sep \L_2^{\rho_d}(\Omega_d)}^2 \,\mathrm{d}\mu_d(f) \right)^{1/2}.
\end{gather*}
Observe that $\nu_d=\mu_d \circ \left(\id_d^{\rho_d}\right)^{-1}$ defines a
Gaussian measure on the subset $\id_d^{\rho_d}(\F_d)$ of $\L_2^{\rho_d}(\Omega_d)$.
Now it can be checked that the corresponding covariance operator 
$C_{\nu_d}^{\rho_d} \colon \id_d^{\rho_d}(\F_d) \nach \L_2^{\rho_d}(\Omega_d)$ of the measure~$\nu_d$ is given by
\begin{gather*}
			f \mapsto (C_{\nu_d}^{\rho_d} f)(\cdot) 
			= \int_{\Omega_d} f(\bm{y}) \, K_d(\cdot,\bm{y}) \, \rho_d(\bm{y}) \dlambda^{d}(\bm{y}).
\end{gather*}
This operator is self-adjoint, compact and has a finite trace due to the integrability assumption on $K_d$.
Consequently, there exists a countable set of non-trivial eigenpairs $(\lambda_{d,\rho_d,i}, \eta_{d,\rho_d,i})$
where the eigenfunctions $\eta_{d,\rho_d,i}$ are mutually orthogonal (and normalized) with respect to the
$\L_2^{\rho_d}(\Omega_d)$-norm; see also Hickernell and Wo{\'z}niakowski~\cite{HW00}.

Once more it turns out that the optimal algorithm $A_{n,d}^*$ in this setting is
given by the orthogonal projection of the input function onto the subspace
spanned by the eigenfunctions $\eta_{d,\rho_d,i}$ which correspond to the $n$ largest
eigenvalues $\lambda_{d,\rho_d,i}$.
In contrast to the worst case setting the $n$th minimal average case error is
\begin{gather*}
			e^{\mathrm{avg}}(n,d; \id_d^{\rho_d} \colon \F_d \nach \L_2^{\rho_d}(\Omega_d)) = \left( \sum_{i=n+1}^\infty \lambda_{d,\rho_d,i} \right)^{1/2}, 
			\quad n\in\N_0,\quad d\in\N,
\end{gather*}
if we assume a non-increasing ordering of the sequence $(\lambda_{d,\rho_d,i})_{i=1}^\infty$.\footnote{For the ease of notation we moreover assumed here that all the eigenvalues are strictly positive.}
Based on the latter error formula it is possible to obtain characterizations of several types
of tractability similar to the assertions given in \autoref{sect:tensor_complexity}; 
see, e.g., \cite[Chapter 6]{NW08}.

We complete the discussion with the observation that the sets of
(non-trivial) eigenpairs $(\lambda_{d,\rho_d,i},\eta_{d,\rho_d,i})$ of the operators $C_{\nu_d}^{\rho_d}$ as defined above and $W_d^{\rho_d}$ from \autoref{ex:L_2-approx}
coincide, since $C_{\nu_d}^{\rho_d}$ only takes values in $\Hi(K_d)$. 
To be precise, we note that
$K_d(\bm{x},\bm{y}) = K_d(\bm{y},\bm{x})$ equals $(\mathrm{App}_{d}^{\rho_d} K_d(\cdot,\bm{x}))(\bm{y})$ for each fixed $\bm{x}$ and $\uplambda^d$-almost every $\bm{y}\in\Omega_d$. Hence the chain of equations
\begin{align*}
		(C_{\nu_d}^{\rho_d}f)(\bm{x}) 
		&= \int_{\Omega_d} f(\bm{y}) \, (\mathrm{App}_{d}^{\rho_d} K_d(\cdot,\bm{x}))(\bm{y})\,  \rho_d(\bm{y}) \dlambda^d(\bm{y})
		= \distr{f}{\mathrm{App}_{d}^{\rho_d} K_d(\cdot,\bm{x})}_{\L_2^{\rho_d}(\Omega_d)}\\
		&= \distr{(\mathrm{App}_{d}^{\rho_d})^\dagger f}{K_d(\cdot,\bm{x})}_{\Hi(K_d)}
		= \left((\mathrm{App}_{d}^{\rho_d})^\dagger f\right)\!(\bm{x})
\end{align*}
holds true for every $f\in\id_{d}^{\rho_d}(\F_d)\subset \L_2^{\rho_d}(\Omega_d)$ and $\uplambda^d$-almost all $\bm{x}\in\Omega_d$.\footnote{Observe that $C_{\nu_d}^{\rho_d}f\in \L_2^{\rho_d}(\Omega_d)$, \ie it is uniquely defined on $\Omega_d$ up to a set of measure zero.}
Clearly $(\mathrm{App}_{d}^{\rho_d})^\dagger$ maps into $\Hi(K_d)$ per definition.
Thus, for every eigenfunction $\eta \in \id_d^{\rho_d}(\F)$ of $C_{\nu_d}^{\rho_d}$, \ie 
\begin{gather}\label{eq:ef_C}
			\lambda \, \eta 
			= C_{\nu_d}^{\rho_d} \eta 
			= (\mathrm{App}_{d}^{\rho_d})^\dagger \eta,
			\quad \uplambda^d\text{-a.e. on } \Omega_d,
\end{gather}
we can find a representer $\overline{\eta}\in\Hi(K_d)$ with $\overline{\eta}=\eta$ in the sense of $\L_2^{\rho_d}(\Omega_d)$,
such that the equalities in~\link{eq:ef_C} hold pointwise on the whole set $\Omega_d$
and therefore also in the norm of $\Hi(K_d)$.
Now it is easy to check that $(\lambda,\overline{\eta})$ indeed is an eigenpair of~$W_d^{\rho_d}=(\mathrm{App}_{d}^{\rho_d})^\dagger \mathrm{App}_{d}^{\rho_d}$, normalized \wrt the $\L_2^{\rho_d}(\Omega_d)$-norm.
Conversely every eigenpair $(\lambda,\overline{\eta})$ of the operator~$W_d^{\rho_d}$
obviously fulfills $\lambda \, \overline{\eta} = C_{\nu_d}^{\rho_d} \overline{\eta}$
interpreted in~$\L_2^{\rho_d}(\Omega_d)$.

In conclusion we see that the knowledge of these eigenpairs implies complexity assertions for both approximation problems in the respective (quite different) settings.
\hfill$\square$
\end{example}

  \cleardoubleplainpage
\chapter{Problems on Hilbert spaces with scaled norms}\label{chapt:ScaledNorms}
The present chapter deals with a generalization of tensor
product problems $S=(S_d)_{d\in\N}$ between Hilbert spaces
in the sense of \autoref{sect:TensorBasics}.
We introduce additional scaling factors $s_d$ 
to the norm of the source spaces $\F_d$
and analyze their influence on the squared singular values $\lambda_{d,s_d,i}$ 
of the new problem operators $S_{d,s_d}$.
Using the techniques from \autoref{sect:Eigenpairs} we conclude 
optimal algorithms for these modified problems at the end of \autoref{sect:Scaled_basics}.
Afterwards, in \autoref{sect:ScaledComplexity}, we investigate tractability properties
of this class of problems \wrt the worst case setting.
Finally we present some applications of the obtained results in \autoref{sect:ScaledProbs_Ex}.

\section{Definitions, eigenpairs and the optimal algorithm}\label{sect:Scaled_basics}
Let $H_1$ and $\G_1$ be arbitrary Hilbert spaces with inner products 
$\distr{\cdot}{\cdot}_{H_1}$ and $\distr{\cdot}{\cdot}_{\G_1}$, respectively.
Further assume $S_1 \in \K(H_1,\G_1)$ to be a compact linear operator between these spaces.
Following the constructions given in \autoref{subsect:def_tensor_prob}
for any $d\in\N$ there exist uniquely defined $d$-fold tensor product spaces of $H_1$
and $\G_1$.
Let us denote these spaces by $H_d = H_1 \otimes\ldots\otimes H_1$ and $\G_d=\G_1\otimes\ldots\otimes\G_1$,
respectively.
Finally we define $S=(S_d)_{d\in\N}$ to be the sequence of multivariate tensor product operators
constructed out of $S_1$.

In contrast to \autoref{sect:TensorBasics} we now adapt the source spaces
of our multivariate problem
by introducing an additional positive \emph{sequence of scaling factors} $s=(s_d)_{d\in\N}$.
That is, for every $d\in\N$ we define $\F_d$ 
to be Hilbert space $H_d$ equipped with the inner product
\begin{equation}\label{eq:scaled_innerprod}
		\distr{\cdot}{\cdot}_{\F_d} 
		= \frac{1}{s_d} \distr{\cdot}{\cdot}_{H_d}, 
		\quad \text{where} \quad s_d>0.
\end{equation}
Obviously $\F_d$ algebraically coincides with $H_d$ whereas the norms (induced by the respective inner products) are equivalent.
Accordingly, the operators $S_d$ are still well-defined for any $d\in\N$ when we replace
$H_d$ by $\F_d$.
On the other hand the approximability properties of $S$ crucially depend on the used norms
since we need to consider the whole unit ball $\widetilde{\F}_d=\B(\F_d)$ when dealing with the worst case setting.
So let us denote the modified problem by $S_{(s)}=(S_{d,s_d}\colon \F_d \nach\G_d)_{d\in\N}$.

From \autoref{sect:General_HSP} we know that for the $n$th optimal
algorithms for $S_{(s)}$ we need to study the eigenpairs of $W_{d,s_d} = {S_{d,s_d}}^{\!\!\dagger} S_{d,s_d}$.
Although $S_{d,s_d}$ equals $S_d$ (as a mapping) we can not claim
that $W_{d,s_d} = W_d$ since ${S_{d,s_d}}^{\!\!\dagger}$ does not necessarily
coincide with ${S_d}^\dagger$.
Nevertheless, there exists a strong relation.
The following proposition extends \autoref{prop:tensor_eigenpairs} to the case
of scaled problems in the mentioned sense.
Keep in mind that the eigenpairs of the univariate (unscaled) operator
$W_1={S_1}^{\!\dagger} S_1$
are given by 
$\{(\lambda_m,e_m) \sep m\in\M_1\}$, where $\M_1=\{m\in\N \sep m<v(W_1)+1\}$ and $0 < \lambda_{m+1}\leq \lambda_m$ for all $m<v(W_1)$.
\begin{prop}\label{prop:scaled_tensor_eigenpairs}
				For $d\in\N$ the non-trivial eigenpairs of the operator 
				$W_{d,s_d} = {S_{d,s_d}}^{\!\!\dagger} {S_{d,s_d}}$ are given by
				$\left\{\left(\widetilde{\lambda}_{d,s_d,\bm{m}}, \widetilde{\phi}_{d,s_d,\bm{m}}\right) \sep \bm{m} \in \M_d = (\M_1)^d\right\}$, where
				\begin{equation}\label{scaled_tensor_eigenpairs}
							\widetilde{\lambda}_{d,s_d,\bm{m}} 
							= s_d \, \widetilde{\lambda}_{d,\bm{m}}
							= s_d \, \prod_{k=1}^d \lambda_{m_k} 
							\quad \text{and} \quad 
							\widetilde{\phi}_{d,s_d,\bm{m}} 
							= \sqrt{s_d} \, \widetilde{\phi}_{d,\bm{m}}
							= \sqrt{s_d} \, \bigotimes_{k=1}^d \phi_{m_k}.
				\end{equation}
\end{prop}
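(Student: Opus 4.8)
The plan is to reduce everything to \autoref{prop:tensor_eigenpairs} by first expressing the adjoint $S_{d,s_d}^{\dagger}$, and hence the operator $W_{d,s_d}$, in terms of their unscaled counterparts. The only new feature is the rescaled inner product \link{eq:scaled_innerprod} on the source space, and I expect it to pass transparently through the eigenstructure up to an explicit scalar.

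First I would fix $d\in\N$ and compute the adjoint of $S_{d,s_d}$. Since $S_{d,s_d}$ and $S_d$ coincide as linear maps between the underlying vector spaces and differ only through the inner product on the domain, for all $f\in\F_d$ and $g\in\G_d$ one has, on the one hand,
\begin{gather*}
		\distr{S_{d,s_d}f}{g}_{\G_d} = \distr{f}{S_{d,s_d}^{\dagger}g}_{\F_d} = \frac{1}{s_d}\distr{f}{S_{d,s_d}^{\dagger}g}_{H_d}
\end{gather*}
by \link{eq:scaled_innerprod}, and on the other hand $\distr{S_d f}{g}_{\G_d} = \distr{f}{S_d^{\dagger}g}_{H_d}$. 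Comparing these and using that $f\in\F_d=H_d$ was arbitrary yields $S_{d,s_d}^{\dagger} = s_d\, S_d^{\dagger}$, and therefore $W_{d,s_d} = S_{d,s_d}^{\dagger}S_{d,s_d} = s_d\, S_d^{\dagger}S_d = s_d\, W_d$ as operators on the common underlying space.

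Next I would invoke \autoref{prop:tensor_eigenpairs}. Because $W_{d,s_d}$ is just the positive scalar multiple $s_d\,W_d$, the two operators have the same eigenvectors, and the non-trivial spectrum of $W_{d,s_d}$ is obtained from that of $W_d$ by multiplication with $s_d$; in particular the family $\{\widetilde{\phi}_{d,\bm{m}} \sep \bm{m}\in\M_d\}$ from \link{tensor_eigenpairs} still exhausts the non-trivial eigenvectors, now with eigenvalues $s_d\,\widetilde{\lambda}_{d,\bm{m}} = s_d\prod_{k=1}^d\lambda_{m_k}$. Finally I would take care of the normalization: the $\widetilde{\phi}_{d,\bm{m}}$ are orthonormal with respect to $\distr{\cdot}{\cdot}_{H_d}$, so $\norm{\widetilde{\phi}_{d,\bm{m}}\sep\F_d}^2 = \tfrac{1}{s_d}$, and rescaling by $\sqrt{s_d}$ produces the $\distr{\cdot}{\cdot}_{\F_d}$-orthonormal eigenvectors $\widetilde{\phi}_{d,s_d,\bm{m}} = \sqrt{s_d}\,\widetilde{\phi}_{d,\bm{m}}$ (scaling an eigenvector does not alter its eigenvalue). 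Together this is exactly \link{scaled_tensor_eigenpairs}. There is no real obstacle here; the one point demanding attention is bookkeeping of which inner product orthonormality refers to at each step, since the $\sqrt{s_d}$ in the eigenvectors and the $s_d$ in the eigenvalues come precisely from that distinction.
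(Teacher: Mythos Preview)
Your proof is correct and follows essentially the same route as the paper: compute $S_{d,s_d}^{\dagger}=s_d\,S_d^{\dagger}$ from the scaled inner product, deduce $W_{d,s_d}=s_d\,W_d$, and then read off the eigenpairs from \autoref{prop:tensor_eigenpairs} together with the $\sqrt{s_d}$ renormalization. The paper adds a brief remark that no further eigenpairs can exist (referring back to the completeness argument in \autoref{prop:tensor_eigenpairs}), but your observation that a nonzero scalar multiple has exactly the same eigenvectors already covers this.
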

\begin{proof}
Since $S_{d,s_d}f = S_d f$ for every $f\in \F_d$ (or $H_d$, respectively)
we have
\begin{equation*}
			\distr{S_{d,s_d}f}{g}_{\G_d}
			= \distr{S_d f}{g}_{\G_d}
			= \distr{f}{{S_d}^{\!\dagger} g}_{H_d}
			= s_d \cdot \distr{f}{{S_d}^{\!\dagger} g}_{\F_d}
			= \distr{f}{\left( s_d \cdot {S_d}^{\!\dagger} \right) g}_{\F_d}
\end{equation*}
for all $f\in\F_d$ and $g\in\G_d$.
Thus, \link{def_adj} and the uniqueness of the adjoint operator\footnote{Note that, clearly, $S_{d,s_d}$ is compact if and only if $S_{d}$ is compact.} 
yield that ${S_{d,s_d}}^{\!\!\dagger} = s_d \cdot {S_d}^{\!\dagger}$ holds pointwise
and, consequently, $W_{d,s_d}$ equals $s_d \cdot W_d$ as a mapping.
Hence, from \autoref{prop:tensor_eigenpairs} and the linearity of $W_d$ 
we conclude that \link{scaled_tensor_eigenpairs}
indeed are eigenpairs of $W_{d,s_d}$.
Due to the factor $\sqrt{s_d}$ and the relation~\link{eq:scaled_innerprod} the eigenelements $\widetilde{\phi}_{d,s_d,\bm{m}}$ are properly normalized in $\F_d$.

It remains to show that there cannot exist eigenpairs other than \link{scaled_tensor_eigenpairs}.
This can be seen using arguments similar to them in the second part of the proof of \autoref{prop:tensor_eigenpairs}.
To this end, note that due to \link{eq:scaled_innerprod} the inner product in $\F_d$ 
equals zero if and only if the elements under consideration are orthogonal in $H_d$.
\end{proof}

\autoref{prop:scaled_tensor_eigenpairs} in hand, the rest of this section is straightforward.
Namely, we can use the bijections $\psi=\psi_d$ from \autoref{sect:Eigenpairs}
to define the non-increasing sequences $(\lambda_{d,s_d,i})_{i\in\N}$ by
\begin{equation*}
		\lambda_{d,s_d,i} 
		= \begin{cases}
					\widetilde{\lambda}_{d,s_d,\psi(i)}, & 1 \leq i < v(W_1)^d + 1,\\
					0, & \text{otherwise},
			\end{cases}
\end{equation*}
for every $d\in\N$.
The corresponding reordered eigenelements are denoted by~$\phi_{d,s_d,i}$, $i< v(W_1)^d+1$.
Moreover, we again use \autoref{Cor:OptAlgo} to see that for any $d\in\N$ 
the $n$th optimal algorithm for $S_{d,s_d}$, $n\in\N_0$, is given by
\begin{equation*}
			A_{n,d,s_d}^* \colon \F_d \nach \G_d, 
			\qquad f \mapsto A_{n,d,s_d}^*(f) 
					= \sum_{i=1}^{\min{n,v(W_1)^d}} \distr{f}{\phi_{d,s_d,i}}_{\F_d} \cdot S_{d,s_d} \phi_{d,s_d,i}. 
\end{equation*}
It realizes the $n$th minimal worst case error in dimension $d$ which equals
\begin{equation}\label{eq:scaled_worstcaseerror}
			e^{\mathrm{wor}}(n,d;S_{d,s_d}) 
			= \Delta^{\mathrm{wor}}(A_{n,d,s_d}^*;S_{d,s_d}) 
			= \sqrt{\lambda_{d,s_d,n+1}}.
\end{equation}
In particular, the case $n=0$, \ie the initial error
\begin{gather*}
			\epsilon_d^{\mathrm{init}} = \sqrt{\lambda_{d,s_d,1}} = \sqrt{s_d \cdot \lambda_1^d},
\end{gather*}
will play an important role in what follows.

\section{Complexity}\label{sect:ScaledComplexity}
Similar to \autoref{sect:TensorBasics} we proceed with the
analysis of the information complexity of scaled tensor product problems
$S_{(s)}=(S_{d,s_d})_{d\in\N}$ in the worst case setting.
We first take a look at necessary and sufficient conditions for 
(strong) polynomial tractability with respect to absolute errors.
Afterwards, in \autoref{sect:ScaledProbs_wt}, we complete these assertions
and investigate respective conditions for weak tractability and the curse of dimensionality.
Finally we will see in \autoref{sect:ScaledProbs_normed}
that the obtained improvements due to scaling are completely ruled out 
when we turn to the normalized error criterion.

As usual $\lambda=(\lambda_m)_{m\in\N}$ denotes the (extended)
sequence of squared singular values of the underlying operator $S_1 \colon H_1\nach \G_1$.
To avoid triviality we assume that $\lambda_2>0$ throughout the rest of this section.
The reason for this assumption is explicitly stated in \autoref{sect:tensor_complexity}.

\subsection{Polynomial tractability}\label{sect:ScaledProbs_pt}
The next statement is originally based on Theorem~3.1 of Wo{\'z}niakowski~\cite{W94b}
which provided the underlying idea for~\cite[Theorem 5.5]{NW08}.
We extend the results stated there to the case of scaled problems.
\begin{theorem}\label{thm:scaled_PT}
		Let $S_{(s)}=(S_{d,s_d})_{d\in\N}$ denote a scaled tensor product problem in the sense of \autoref{sect:Scaled_basics}. 
		Assume $\lambda_2>0$ and consider the worst case setting \wrt the absolute error criterion.
		Then the following assertions are equivalent:
		\begin{enumerate}[label=(\Roman{*}), ref=\Roman{*}]
				\item \label{Cond_SPT} $S_{(s)}$ is strongly polynomially tractable.
				\item \label{Cond_PT} $S_{(s)}$ is polynomially tractable.
				\item \label{Cond_sup} There exists $\tau\in(0,\infty)$ such that 
							$\lambda\in\l_\tau$ and $\sup_{d\in\N} s_d \norm{\lambda \sep \l_\tau}^d < \infty.$
				\item \label{Cond_limsup} There exists $\rho\in(0,\infty)$ such that 
							$\lambda\in\l_\rho$ and $\limsup_{d\nach\infty} s_d^{1/d} < \frac{1}{\lambda_1}.$
		\end{enumerate}
		If one of these (and hence all) conditions applies then the exponent of strong polynomial tractability is given by
		\begin{gather*}
				p^* = \inf\{2\tau \sep \tau \text{ fulfills condition \link{Cond_sup}} \}.
		\end{gather*}
\end{theorem}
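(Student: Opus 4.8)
The plan is to establish the cycle of implications $\link{Cond_SPT} \Rightarrow \link{Cond_PT} \Rightarrow \link{Cond_sup} \Rightarrow \link{Cond_limsup} \Rightarrow \link{Cond_SPT}$, using the explicit formula for the squared singular values obtained in \autoref{prop:scaled_tensor_eigenpairs} and the characterization in \autoref{Thm:General_Tract_abs}. The implication $\link{Cond_SPT}\Rightarrow\link{Cond_PT}$ is trivial. For $\link{Cond_PT}\Rightarrow\link{Cond_sup}$, I would invoke the first part of \autoref{Thm:General_Tract_abs}: polynomial tractability with constants $C,p,q$ gives, for every $\tau>p/2$,
\begin{gather*}
		\sup_{d\in\N} \frac{1}{d^{r}} \left( \sum_{i=f(d)}^\infty \lambda_{d,s_d,i}^\tau \right)^{1/\tau} < \infty,
		\qquad r=2q/p,\ f(d)=\ceil{(1+C)d^q}.
\end{gather*}
Now I would use that $\lambda_{d,s_d,i}=s_d\,\widetilde\lambda_{d,\bm m}$ runs through $s_d\prod_{k=1}^d\lambda_{m_k}$ over all $\bm m\in(\M_1)^d$, so the full sum $\sum_{i\geq1}\lambda_{d,s_d,i}^\tau$ equals $s_d^\tau\bigl(\sum_{m}\lambda_m^\tau\bigr)^d = s_d^\tau\norm{\lambda\sep\l_\tau}^{d\tau}$ (first this forces $\lambda\in\l_\tau$, since otherwise the $d=1$ sum diverges). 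The tail sum from $f(d)$ differs from the full sum by at most the $f(d)-1$ largest terms, each bounded by $\lambda_{d,s_d,1}^\tau=(s_d\lambda_1^d)^\tau$; since $f(d)-1\in\0(d^q)$ this removed part is $\0(d^q (s_d\lambda_1^d)^\tau)$. Combining, $s_d^\tau\norm{\lambda\sep\l_\tau}^{d\tau}\leq C_\tau^\tau d^{r\tau} + \0(d^q s_d^\tau\lambda_1^{d\tau})$. Here the point is $\norm{\lambda\sep\l_\tau}\geq(\lambda_1^\tau+\lambda_2^\tau)^{1/\tau}>\lambda_1$ strictly because $\lambda_2>0$; dividing through by $s_d^\tau\lambda_1^{d\tau}$ the second term is $\0(d^q)$ while the left side grows like $(\norm{\lambda\sep\l_\tau}/\lambda_1)^{d\tau}$, which is exponential — so for large $d$ it dominates, and the inequality then reads $s_d^\tau\norm{\lambda\sep\l_\tau}^{d\tau}\leq 2C_\tau^\tau d^{r\tau}$, i.e. $s_d\norm{\lambda\sep\l_\tau}^{d}\leq 2^{1/\tau}C_\tau d^{r}$; a further application of the same exponential-beats-polynomial argument (now with base $\norm{\lambda\sep\l_\tau}\geq\lambda_1>0$... more carefully, pass to a slightly smaller $\tau'$ with $\norm{\lambda\sep\l_{\tau'}}$ still exceeding the relevant threshold, or argue directly) upgrades this to boundedness, giving \link{Cond_sup}.

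For $\link{Cond_sup}\Rightarrow\link{Cond_limsup}$: given $\tau$ with $\lambda\in\l_\tau$ and $M:=\sup_d s_d\norm{\lambda\sep\l_\tau}^d<\infty$, set $\rho=\tau$; then $s_d^{1/d}\leq M^{1/d}/\norm{\lambda\sep\l_\tau}\to 1/\norm{\lambda\sep\l_\tau}$, and since $\norm{\lambda\sep\l_\tau}=(\sum_m\lambda_m^\tau)^{1/\tau}>\lambda_1$ strictly (again because $\lambda_2>0$), we get $\limsup_d s_d^{1/d}\leq 1/\norm{\lambda\sep\l_\tau}<1/\lambda_1$. For $\link{Cond_limsup}\Rightarrow\link{Cond_SPT}$: suppose $\lambda\in\l_\rho$ and $\limsup_d s_d^{1/d}<1/\lambda_1$. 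Pick $q_0$ with $\limsup_d s_d^{1/d}<q_0<1/\lambda_1$, so $s_d\leq c\,q_0^d$ for all $d$ with some constant $c$. Choose $\tau\geq\rho$ large enough that $\norm{\lambda\sep\l_\tau}\leq\norm{\lambda\sep\l_\rho}$ is close enough to $\lambda_1$ that $q_0\norm{\lambda\sep\l_\tau}<1$ (possible since $\norm{\lambda\sep\l_\tau}\to\sup_m\lambda_m=\lambda_1$ as $\tau\to\infty$ by the standard $\l_\tau\to\l_\infty$ limit, and $\lambda\in\l_\tau$ for all such $\tau\geq\rho$). Then $s_d\norm{\lambda\sep\l_\tau}^d\leq c\,(q_0\norm{\lambda\sep\l_\tau})^d$ is bounded, which is exactly condition \link{Cond_sup} for this $\tau$; but then the full sum $\sum_{i\geq1}\lambda_{d,s_d,i}^\tau = s_d^\tau\norm{\lambda\sep\l_\tau}^{d\tau}$ is bounded in $d$, so \autoref{Thm:General_Tract_abs} (second part, with $r=q=0$, $p=0$, $f\equiv1$) yields strong polynomial tractability.

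Finally, for the exponent formula $p^*=\inf\{2\tau\sep\tau\text{ fulfills }\link{Cond_sup}\}$: the upper bound $p^*\leq 2\tau$ for each admissible $\tau$ follows from the quantitative estimate in the second part of \autoref{Thm:General_Tract_abs} applied with that $\tau$ (giving $n_\ab^\wor(\eps,d)\leq(C_\tau^\tau+\text{const})\eps^{-2\tau}$); the lower bound follows by running the $\link{Cond_PT}\Rightarrow\link{Cond_sup}$ argument above with the optimal exponent: if $S_{(s)}$ is strongly polynomially tractable with exponent arbitrarily close to $p^*$, that argument shows every $\tau>p^*/2$ satisfies \link{Cond_sup}, hence $\inf\{2\tau\sep\link{Cond_sup}\}\leq p^*$. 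The main obstacle is the $\link{Cond_PT}\Rightarrow\link{Cond_sup}$ step: one must carefully control the finite collection of large eigenvalues removed by the shift $f(d)$ and make the exponential-versus-polynomial comparison fully rigorous, and one must track constants precisely enough to extract the sharp exponent — this is where the hypothesis $\lambda_2>0$ is essential, since it is exactly what makes $\norm{\lambda\sep\l_\tau}$ strictly larger than $\lambda_1$ and drives the exponential growth.
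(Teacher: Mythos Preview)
Your proposal is correct and follows essentially the same approach as the paper: both rely on \autoref{Thm:General_Tract_abs}, the product identity $\sum_{i\geq 1}\lambda_{d,s_d,i}^\tau = s_d^\tau\norm{\lambda\sep\l_\tau}^{d\tau}$, and the strict inequality $\norm{\lambda\sep\l_\tau}>\lambda_1$ coming from $\lambda_2>0$. The only cosmetic differences are that the paper closes the equivalence via \link{Cond_sup}$\Rightarrow$\link{Cond_SPT} directly (rather than routing through \link{Cond_limsup}), and proves \link{Cond_sup}$\Rightarrow$\link{Cond_limsup} by contradiction instead of your direct $d$th-root estimate; your ``pass to a slightly smaller $\tau'$'' fix in the \link{Cond_PT}$\Rightarrow$\link{Cond_sup} step is exactly the paper's device of first fixing $\rho>p/2$ and then showing \link{Cond_sup} for every $\tau>\rho$ via the ratio $\norm{\lambda\sep\l_\rho}/\norm{\lambda\sep\l_\tau}>1$.
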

\begin{proof}
\emph{Step 1.}
Since \link{Cond_SPT} clearly implies \link{Cond_PT} we start by proving ``\link{Cond_PT} $\Rightarrow$ \link{Cond_sup}''.
Therefore let $S_{(s)}$ be polynomially tractable with the constants $C,p>0$ and $q\geq 0$.
Then \autoref{Thm:General_Tract_abs} yields that for all $\rho>p/2$,
\begin{gather*}
				0 < C_\rho 
				= \sup_{d\in\N} \frac{1}{d^{2q/p}} \left(\sum_{i=\ceil{(1+C)\,d^q}}^\infty \left( \lambda_{d,s_d,i} \right)^\rho \right)^{1/\rho} < \infty.
\end{gather*}
Because of $\lambda_{d,s_d,i}=s_d \,\lambda_{d,i}$ for any $d,i\in\N$ due to \link{scaled_tensor_eigenpairs}, this particularly implies 
that $s_1 \left( \sum_{m=\ceil{1+C}}^\infty \lambda_m^\rho \right)^{1/\rho}\leq C_\rho$ is finite and hence $\lambda=(\lambda_m)_{m\in\N} \in \l_\rho$.
Moreover, we have
\begin{gather*}
				\sum_{i=1}^\infty \left( \lambda_{d,i} \right)^\rho 
				= \norm{\lambda \sep \l_\rho}^{\rho\, d}, 
				\quad \text{as well as} \quad 
				\sum_{i=1}^{\ceil{(1+C)\,d^q}-1} \left( \lambda_{d,i}\right)^\rho 
				\leq \lambda_1^{\rho \,d}\, (1+C) \,d^q,
\end{gather*}
and therefore
\begin{gather}\label{scaled_est}
			  \norm{\lambda \sep \l_\rho}^{\rho \, d} - \lambda_1^{\rho\, d} \,(1+C)\, d^q
			  \leq \left( \frac{C_\rho\, d^{2q/p}}{s_d} \right)^\rho
				\quad \text{for all} \quad d\in\N.
\end{gather}
Now let $\tau > \rho$ and assume that \link{Cond_sup} is violated for this $\tau$. 
Then $\sup_{d\in\N} s_d \norm{\lambda \sep \l_\tau}^d$ is infinite 
since $\lambda \in \l_\rho$ and $\l_\rho\hookrightarrow \l_\tau$ with 
$\norm{\lambda \sep \l_\tau}<\norm{\lambda \sep \l_\rho}$.
That means, for any $C_0\in(0,\infty)$ there necessarily exists 
a sequence $(d_k)_{k\in\N}\subset \N$ such that for every $k\in\N$
\begin{gather*}
				C_0 \leq s_{d_k} \norm{\lambda \sep \l_\tau}^{d_k} = s_{d_k} \norm{\lambda \sep \l_\rho}^{d_k} / (t_{\rho,\tau})^{d_k},
\end{gather*}
where we set $t_{\rho,\tau}=\norm{\lambda \sep \l_\rho}/\norm{\lambda \sep \l_\tau}>1$.
Hence, at least for all $k$ larger than a certain $k_0\in\N$,
we conclude that
$C_0 \leq s_{d_k} \norm{\lambda \sep \l_\rho}^{d_k} / d_k^{2q/p}$.
In particular, we can choose $C_0>C_\rho$ such that $0<C_1=C_\rho/C_0<1$.
Therefore \link{scaled_est} implies
\begin{gather*}
				\norm{\lambda \sep \l_\rho}^{\rho \,d_k} - \lambda_1^{\rho \,d_k} \, (1+C) \, d_k^q 
				\leq C_1^\rho \, \norm{\lambda \sep \l_\rho}^{\rho \,d_k}, \quad k\geq k_0,
\end{gather*}
which leads to
\begin{gather*}
				\left( 1+ \left(\frac{\lambda_2}{\lambda_1}\right)^{\rho} \right)^{d_k} 
				\leq \left( \sum_{m=1}^\infty \left(\frac{\lambda_m}{\lambda_1}\right)^{\rho} \right)^{d_k} 
				= \frac{\norm{\lambda \sep \l_\rho}^{\rho \,d_k}}{\lambda_1^{\rho \,d_k}} \leq C_2\, d_k^q
\end{gather*}
for all $k\geq k_0$ and some $C_2=(1+C)/(1-C_1^\rho)>0$. 
Since $\lambda_2>0$ and $\rho>0$ this is a contradiction and thus we have condition \link{Cond_sup} for every $\tau > p/2$.
Note that this also shows that
\begin{gather*}
				\inf\{ 2\tau \sep \tau \text{ fulfills condition \link{Cond_sup}}\} 
				\leq p^*.
\end{gather*}
		
\emph{Step 2.}
Next we show that \link{Cond_SPT} follows from \link{Cond_sup}.
Let $\tau>0$ be given such that \link{Cond_sup} holds true and set $p=q=r=0$, as well as $C=1$.
Then, with $f(d) = \ceil{C \left( \min{s_d\,\lambda_1^d,1} \right)^{-p/2} d^q}=1$, we have
\begin{gather*}
				C_\tau = \sup_{d\in\N} \frac{1}{d^r} \left( \sum_{i=f(d)}^\infty \left(\lambda_{d,s_d,i}\right)^\tau \right)^{1/\tau} = \sup_{d\in\N} s_d \norm{\lambda \sep \l_\tau}^d < \infty.
\end{gather*}
Once more we apply \autoref{Thm:General_Tract_abs} to obtain
\begin{gather*}
				n_{\mathrm{abs}}^{\mathrm{wor}}(\epsilon,d; S_{d,s_d}) 
				\leq (1+C_\tau^\tau)\, \epsilon^{-2\tau} 
				\quad \text{for all} \quad \epsilon(0,1] \quad \text{and every} \quad d\in\N.
\end{gather*}
Thus $S_{(s)}$ is strongly polynomially tractable and
\begin{gather*}
				p^* \leq \inf\left\{ 2\tau \sep \tau \text{ fulfills condition \link{Cond_sup}}\right\}.
\end{gather*}
		
\emph{Step 3.}
The implication ``\link{Cond_sup} $\Rightarrow$ \link{Cond_limsup}'' can be seen as follows.
Assume \link{Cond_sup} to be valid for some $0<\tau<\infty$ and set $\rho=\tau$. 
Then, clearly, $\lambda\in\l_\rho$.
If we now assume \link{Cond_limsup} to be violated then 
for any $\delta>0$ there needs to exist
a sequence $(d_k)_{k\in\N}\subset\N$ such that for all $k$
\begin{gather*}
				s_{d_k}^{1/d_k} \geq 1/ (\lambda_1+\delta).
\end{gather*}
Hence, 
$s_{d_k} \norm{\lambda \sep \l_\tau}^{d_k} \geq \left( \norm{\lambda \sep \l_\tau}/ (\lambda_1+\delta) \right)^{d_k}$
tends to infinity (as $k\nach\infty$) if we take~$\delta$ 
small enough so that $\lambda_1+\delta < \norm{\lambda \sep \l_\tau}$.
Since $\tau \in (0,\infty)$ and 
\begin{gather*}
			\lambda_1 < (\lambda_1^\tau + \lambda_2^\tau)^{1/\tau}\leq\norm{\lambda \sep \l_\tau}<\infty
\end{gather*}
there needs to be some $\alpha_\tau\in(0,\infty)$ 
such that $\norm{\lambda \sep \l_\tau}=\lambda_1+\alpha_\tau$.
Choosing e.g. $\delta = \alpha_\tau / 2$ gives the needed contradiction.
		
\emph{Step 4.}
Finally we have to show that conversely \link{Cond_limsup} also implies \link{Cond_sup}.
Therefore assume that we have \link{Cond_limsup} for some $\rho\in(0,\infty)$. 
Then there exist constants $d_0\in\N$ and $\delta>0$ such that
\begin{gather*}
				s_d^{1/d} \leq 1 / (\lambda_1+\delta) 
				\quad \text{for all} \quad d\geq d_0.
\end{gather*}
Furthermore note that the function $N(\tau)=\norm{\lambda \sep \l_\tau}$ 
is strictly decreasing and continuous on the interval $[\rho,\infty]$ 
and that $N(\rho)>\lambda_1=N(\infty)$ because of the ordering of $\lambda=(\lambda_m)_{m\in\N}$.
Hence there necessarily exists some $\tau\in[\rho,\infty)$ 
such that $N(\tau)\leq \lambda_1 + \delta/2$, say.
Thus, $\lambda\in\l_\tau$ and for every $d\geq d_0$ we obtain
\begin{gather*}
				s_d \norm{\lambda \sep \l_\tau}^d 
				\leq \left( \frac{\lambda_1 + \delta/2}{\lambda_1 + \delta} \right)^d 
				\leq 1.
\end{gather*}
Since the term on the left is also finite for any $d=1,\ldots,d_0$ this completes the proof.
\end{proof}

Observe that \autoref{thm:scaled_PT} is not very surprising.
Indeed, the second assertion in condition \link{Cond_limsup}
is equivalent to the fact that the $d$th root of the initial error $\epsilon_d^{\mathrm{init}}$
is asymptotically strictly less than $1$.
Hence if $S_1\colon H_1\nach\G_1$ (and thus also the sequence $\lambda$) is given
then we need to select scaling factors $s_d$ such that $\epsilon_d^{\mathrm{init}}\nach 0$, $d\nach\infty$,
in order to obtain polynomial tractability.
More advanced illustrations will be given in \autoref{sect:ScaledProbs_Ex}.

\subsection{Weak tractability and the curse}\label{sect:ScaledProbs_wt}
To formulate necessary and sufficient conditions for weak tractability 
\wrt the worst case setting and the absolute error criterion
we need some additional notation.
Therefore let $S_{(s)}=(S_{d,s_d})_{d\in\N}$ denote a scaled tensor product problem between Hilbert spaces 
as explained in \autoref{sect:Scaled_basics} and assume $\lambda_2 > 0$.
Then for fixed $d \in \N$ and $0<\epsilon<\epsilon_{d}^{\mathrm{init}}=s_d^{1/2}\lambda_1^{d/2}$ 
formula \link{eq:scaled_worstcaseerror} implies
\begin{align}
		n(\epsilon,d)
		&= \min{n\in\N_0 \sep \lambda_{d,s_d,n+1} \leq \epsilon^2} 
		=\#\left\{\bm{j}\in\N^d \sep s_d \cdot \lambda_{j_1} \cdot \ldots \cdot \lambda_{j_d}>\epsilon^2\right\} \nonumber\\
		&=\# \left\{ \bm{j}\in\N^d \sep \frac{\lambda_{j_1}}{\lambda_1} \cdot \ldots \cdot \frac{\lambda_{j_d}}{\lambda_1} > \left( \frac{\epsilon}{\epsilon_d^{\mathrm{init}}} \right)^2 \right\}.\label{def_n}
\end{align}
By counting the number of indices equal to one we conclude that
\begin{gather*}
		n(\epsilon,d)
		= 1+ \sum_{k=1}^d \binom{d}{k} \cdot 
							\# \left\{\bm{j}=(j_1,\ldots,j_k)\in (\N\setminus\{1\})^k \sep \prod_{l=1}^k \frac{\lambda_{j_l}}{\lambda_1} > \left( \frac{\epsilon}{\epsilon_d^{\mathrm{init}}}\right)^2 \right\}.
\end{gather*}
Now we distinguish two cases.
First assume that $\lambda_1=\lambda_2$.
Then, obviously, each the sets in the latter equality contains at least one element.
Otherwise, in the case $\lambda_1>\lambda_2$, some of these $k$-dimensional sets might be empty if $k$
is larger than some~$k_d(\epsilon)$. 
The reason is that $\lambda_1>\lambda_2\geq\lambda_m$ for $m \geq 2$ implies that 
every factor in $\prod_{l=1}^k \lambda_{j_l}/\lambda_1$ is strictly smaller than $1$.
In detail,
$\left( \epsilon / \epsilon_d^{\mathrm{init}}\right)^2 \geq \left(\lambda_2/\lambda_1 \right)^k$ is equivalent to
\begin{gather*}
			k > k_d(\epsilon) = \ceil{ \frac{1}{\ln(\lambda_1/\lambda_2)} \cdot \ln \!\left( \epsilon_d^{\mathrm{init}} / \epsilon\right)^2}-1.
\end{gather*}
Hence, denoting $a_d(\epsilon)=\min{d,k_d(\epsilon)}$ we have
\begin{gather}\label{rep_n}
			n(\epsilon,d) = 1+ \sum_{k=1}^{a_d(\epsilon)} \binom{d}{k} \cdot 
							\# \left\{\bm{j}\in (\N\setminus\{1\})^k \sep \prod_{l=1}^k \frac{\lambda_{j_l}}{\lambda_1} > \left( \frac{\epsilon}{\epsilon_d^{\mathrm{init}}}\right)^2 \right\}
\end{gather}
for $d\in\N$ and $0<\epsilon<\epsilon_d^{\mathrm{init}}$.
If $\lambda_1=\lambda_2$ then the same equality remains true 
when we formally set $k_d(\epsilon)=\infty$, \ie $a_d(\epsilon)=d$.
Moreover, for $d\in\N$ we have $a_d(\epsilon)=0$ if and only if $\epsilon \geq \left(\lambda_2/\lambda_1 \right)^{1/2} \epsilon_d^{\mathrm{init}}$.
If so, then we obtain $n(\epsilon,d)=1$ as long as $\epsilon<\epsilon_d^{\mathrm{init}}$ and $n(\epsilon,d)=0$ otherwise.

Finally the following statement relates 
the decay properties of the univariate sequence of the squared singular values $\lambda$ 
with the growth behavior of the information complexity $n(\eps,d)=n^\wor_{\ab}(\eps,d;S_{d,s_d})$. 
It generalizes an assertion given in
Novak and Wo{\'z}niakowski~\cite[p. 178]{NW08}.
\begin{lemma}\label{lemma:n}
			Let $S_{(s)}$ and $\lambda=(\lambda_m)_{m\in\N}$ be given as before.
			Then, for all $\beta\geq 1$,
			\begin{gather*}
						\lambda_n\in o \!\left( \ln^{-2\beta} n \right), \text{ as } n\nach\infty, 
						\quad \text{if and only if} \quad 
						\ln n(t^{\beta},1) \in o \left( 1/t \right), \text{ as } t \nach 0.
		\end{gather*}
\end{lemma}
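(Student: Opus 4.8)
The plan is to reduce the statement to an elementary duality between a non-increasing null sequence and its counting function. First I would record what the left-hand side actually is: by \autoref{prop:scaled_tensor_eigenpairs} the squared singular values of $S_{1,s_1}$ are $s_1\lambda_m$, $m\in\N$, so that together with \link{eq:scaled_worstcaseerror} one has $n(\epsilon,1)=n^\wor_\ab(\epsilon,1;S_{1,s_1})=\#\{m\in\N\sep s_1\lambda_m>\epsilon^2\}$ for $\epsilon>0$. Introducing the counting function $N(x):=\#\{m\in\N\sep\lambda_m>x\}$ — which is finite for every $x>0$ because $\lambda=(\lambda_m)_{m\in\N}$ is a non-increasing null sequence — and the trivial equivalence $N(x)\geq n\Leftrightarrow\lambda_n>x$ (valid for $n\in\N$, $x>0$, by monotonicity of $\lambda$), the substitutions $\epsilon=t^\beta$ and $x=t^{2\beta}/s_1$ turn $n(t^\beta,1)$ into $N(x)$ and $1/t$ into a fixed positive multiple of $x^{-1/(2\beta)}$. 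Since such a constant is irrelevant for a relation of the form ``$o(\cdot)$'', the lemma becomes equivalent to the purely combinatorial assertion
\[
			\lambda_n\in o\!\left(\ln^{-2\beta}n\right)\ (n\nach\infty)\qquad\Longleftrightarrow\qquad \ln N(x)\in o\!\left(x^{-1/(2\beta)}\right)\ (x\nach0^+).
\]

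For ``$\Rightarrow$'' I would rewrite the hypothesis in the equivalent form: for every $\delta>0$ there is $n_0\in\N$ with $\ln n\leq\delta\,\lambda_n^{-1/(2\beta)}$ for all $n\geq n_0$ with $\lambda_n>0$. Fixing $\delta$ and noting that $N(x)\geq n_0$ for all small enough $x$ (because $\lambda_n\nach0$), I would apply this inequality with $n=N(x)$; since the indices counted by $N(x)$ are exactly $1,\dots,N(x)$ we have $\lambda_{N(x)}>x$, hence $\ln N(x)\leq\delta\,\lambda_{N(x)}^{-1/(2\beta)}<\delta\,x^{-1/(2\beta)}$. As $\delta>0$ is arbitrary, $x^{1/(2\beta)}\ln N(x)\nach0$. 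For ``$\Leftarrow$'' I would argue the converse: by monotonicity either $\lambda_n=0$ for all large $n$, in which case there is nothing to prove, or $\lambda_n>0$ for every $n$; in the latter case fix $\delta>0$, pick $x_0$ with $\ln N(x)\leq\delta\,x^{-1/(2\beta)}$ on $(0,x_0)$, and for $n$ large enough that $\lambda_n<x_0$ and any $x\in(0,\lambda_n)$ use $N(x)\geq n$ to obtain $\ln n\leq\delta\,x^{-1/(2\beta)}$; letting $x\uparrow\lambda_n$ gives $\ln n\leq\delta\,\lambda_n^{-1/(2\beta)}$, i.e. $\lambda_n\leq\delta^{2\beta}\ln^{-2\beta}n$, and then $\delta\downarrow0$ finishes.

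I do not expect a genuine obstacle here. The only points requiring care are the correct interleaving of the two nested quantifier blocks (``for all $\delta$'' against ``for all sufficiently small $x$'', respectively ``for all sufficiently large $n$'') and the monotone change of variables, together with the harmless case split according to whether $\lambda$ has finitely or infinitely many non-zero entries. I would also remark that the assumption $\beta\geq1$ is never used — the argument is valid for every $\beta>0$ — and that $\lambda_2>0$ is needed only insofar as it forces $\lambda_1>0$, which guarantees $n(t^\beta,1)\geq1$ for small $t$ so that $\ln n(t^\beta,1)$ is well defined.
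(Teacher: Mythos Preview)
Your proposal is correct and rests on the same idea as the paper --- the elementary duality between the non-increasing sequence $(\lambda_m)$ and its counting function $n(\epsilon,1)$ --- but the packaging differs. The paper does not split into two implications: it fixes $t$ small, sets $n=n(t^\beta,1)$, uses the two-sided bound $s_1\lambda_{n+1}\le t^{2\beta}<s_1\lambda_n$ together with $\ln^{2\beta}n\ge 4^{-\beta}\ln^{2\beta}(n+1)$, and obtains in one stroke the sandwich
\[
\frac{s_1}{4^\beta}\cdot\frac{\lambda_{n+1}}{\ln^{-2\beta}(n+1)}\ \le\ \left(\frac{\ln n(t^\beta,1)}{t^{-1}}\right)^{2\beta}\ <\ s_1\cdot\frac{\lambda_n}{\ln^{-2\beta}n},
\]
from which both directions follow by letting $t\to0$ (and hence $n\to\infty$). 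Your version instead abstracts to the counting function $N(x)$, changes variables once, and then runs two separate $\delta$-arguments; this is a bit longer but more explicit about the edge case where $\lambda$ has finite support, and your remark that the hypothesis $\beta\ge1$ is nowhere used is a genuine (if minor) addition.
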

\begin{proof}
Assume $\beta\geq1$ to be fixed and let $t \in (0, (s_1 \lambda_2)^{1/(2\beta)})$.
Then \link{def_n} yields that for $d=1$
\begin{gather*}
				n = n(t^{\beta},1) 
				= \min{n\in\N_0 \sep s_1 \cdot \lambda_{n+1} \leq t^{2\beta}} \geq 2.
\end{gather*}
Thus we have 
$s_1 \cdot \lambda_{n(t^{\beta},1)+1} \leq t^{2\beta} < s_1 \cdot \lambda_{n(t^{\beta},1)}$
and $\ln^{2\beta} n \geq 1/4^{\beta} \cdot \ln^{2\beta}(n+1)$.
Combining both these estimates we conclude
\begin{gather*}
				\frac{s_1}{4^\beta} \cdot \frac{\lambda_{n(t^{\beta},1)+1}}{\ln^{-2\beta}(n(t^\beta,1)+1)} 
				\leq \left( \frac{\ln n(t^\beta,1)}{t^{-1}} \right)^{2\beta} 
				< s_1 \cdot \frac{\lambda_{n(t^{\beta},1)}}{\ln^{-2\beta} n(t^\beta,1) }.
\end{gather*}
Since the one-dimensional information complexity $n(\epsilon,1)$ is an increasing function in $1/\epsilon$
taking the limit for $t\nach0$ proves the claim.
\end{proof}

Now we are well-prepared to present necessary conditions for weak tractability
based on the representation of the information complexity given in \link{rep_n}.
\begin{prop}\label{prop:nescond_scaled}
		Weak tractability of $S_{(s)}$ implies 
		\begin{gather}\label{cond1}
				\lim_{\epsilon^{-1}+d\nach\infty} \frac{\ln \sum_{k=0}^{a_d(\epsilon)} \binom{d}{k}}{\epsilon^{-1}+d}=0
				\qquad \text{and} \qquad
				\lim_{\epsilon^{-1}+d\nach\infty} \frac{\ln n\!\left( \epsilon_{1}^{\mathrm{init}} \cdot \epsilon / \epsilon_{d}^{\mathrm{init}}, 1 \right)}{\epsilon^{-1}+d}=0.
		\end{gather}
		If so, then $\lambda_n \in o(\ln^{-2} n )$, as $n\nach\infty$.
		Furthermore, we have 
		\begin{gather}\label{InitErrorBound}
					\ln\!\left( \epsilon_d^{\mathrm{init}} \right) \in o(d), \quad \text{as} \quad d\nach\infty,
		\end{gather}
		since otherwise $S_{(s)}$ suffers from the curse of dimensionality.
		If, in addition, $\lambda_1=\lambda_2$ then we need to claim 
		$\lim_{d\nach\infty} \epsilon_d^{\mathrm{init}}=0$ to avoid the curse.
		Moreover, in this case weak tractability even yields
		\begin{gather}\label{cond2}
				\epsilon_d^{\mathrm{init}} \in o(1/d), \quad \text{as} \quad d\nach\infty.
		\end{gather}
\end{prop}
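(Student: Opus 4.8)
The strategy is to read off necessary conditions for weak tractability directly from the counting formula \link{rep_n} by exhibiting suitable sequences $((\eps_k,d_k))_{k\in\N}$ along which the ratio $\ln n(\eps,d)/(\eps^{-1}+d)$ would fail to vanish unless the claimed conditions hold. First I would establish \link{cond1}: since by \link{rep_n} we have the lower bound
\begin{gather*}
			n(\eps,d) \geq \sum_{k=0}^{a_d(\eps)} \binom{d}{k}
\end{gather*}
(because each set in the sum is nonempty for $k\leq a_d(\eps)$, containing e.g. the constant index vector $(2,\ldots,2)$ when $k\leq k_d(\eps)$), fixing $\eps$ small enough and letting $d\nach\infty$ forces $\ln\sum_{k\leq a_d(\eps)}\binom{d}{k} \in o(d)$; a symmetric argument with $d=1$ fixed and $\eps\nach 0$, using that $n(\eps,d)\geq n(\eps_1^{\init}\eps/\eps_d^{\init},1)$ — which follows by comparing \link{def_n} for general $d$ against $d=1$, keeping only index vectors of the form $(j,1,\ldots,1)$ — gives the second limit in \link{cond1}.

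Next, the implication ``\link{cond1} $\Rightarrow \lambda_n\in o(\ln^{-2}n)$'' is essentially \autoref{lemma:n} with $\beta=1$: the second limit in \link{cond1}, restricted to sequences with $d=d_k$ bounded (say $d_k\equiv 1$ along a subsequence, or more carefully choosing $d_k$ growing slowly enough that $\eps_d^{\init}$ contributes negligibly), yields $\ln n(t,1)\in o(1/t)$ as $t\nach 0$, whence \autoref{lemma:n} gives the decay. The subtlety I would watch for here is that the argument of $n(\cdot,1)$ in \link{cond1} is $\eps_1^{\init}\eps/\eps_d^{\init}$, not $\eps$ itself, so I must arrange the chosen two-dimensional sequence so that this argument tends to zero at a controlled rate; taking $d$ fixed makes $\eps_d^{\init}$ a constant and reduces this cleanly to the hypothesis of \autoref{lemma:n}.

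For \link{InitErrorBound}, suppose $\ln(\eps_d^{\init})\notin o(d)$; then along some subsequence $d_k$ we have $\ln(\eps_{d_k}^{\init})\geq c\, d_k$ for a constant $c>0$, so $\eps_{d_k}^{\init}$ grows exponentially. Picking any fixed $\eps<\eps_{d_k}^{\init}$ (for $k$ large) and using \link{rep_n} — in particular keeping only the $k=1$ term, $\binom{d}{1}\cdot\#\{j\neq 1: \lambda_j/\lambda_1 > (\eps/\eps_d^{\init})^2\}$, whose cardinality factor is large once $(\eps/\eps_d^{\init})^2$ is tiny since $\lambda_2>0$ — shows $n(\eps,d_k)$ grows exponentially in $d_k$, i.e. the curse. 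When additionally $\lambda_1=\lambda_2$, the bound $n(\eps,d)\geq \sum_{k=0}^d\binom{d}{k}\cdot 1 = 2^d$ holds for all $\eps<\eps_d^{\init}$ (since $a_d(\eps)=d$ and each set contains $(2,\ldots,2)$), so unless $\eps_d^{\init}\nach 0$ there is a fixed $\eps>0$ with $\eps<\eps_d^{\init}$ for infinitely many $d$, giving $n(\eps,d)\geq 2^d$ and hence the curse; this forces $\lim_d\eps_d^{\init}=0$.

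Finally, for \link{cond2} in the case $\lambda_1=\lambda_2$: with $\eps_d^{\init}\nach 0$, choose $\eps=\eps_d\sim \sqrt{\lambda_2/\lambda_1}\,\eps_d^{\init} = \eps_d^{\init}$ scaled just below the threshold so that $a_d(\eps_d)=d$ but $\eps_d^{-1}\sim (\eps_d^{\init})^{-1}$; then $n(\eps_d,d)\geq 2^d$, so weak tractability demands
\begin{gather*}
			\lim_{d\nach\infty} \frac{d\ln 2}{(\eps_d^{\init})^{-1}+d} = 0,
\end{gather*}
which is possible only if $(\eps_d^{\init})^{-1}/d\nach\infty$, i.e. $\eps_d^{\init}\in o(1/d)$. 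The main obstacle I anticipate is the bookkeeping in this last step: one must verify that the chosen $\eps_d$ genuinely stays in the admissible range $(0,\eps_d^{\init})$ where $n(\eps_d,d)\geq 2^d$ while simultaneously having $\eps_d^{-1}$ comparable to $(\eps_d^{\init})^{-1}$, so that the denominator $\eps_d^{-1}+d$ is dominated by the $(\eps_d^{\init})^{-1}$ term precisely when $\eps_d^{\init}$ fails to be $o(1/d)$; getting the constants to line up here — rather than any deep idea — is where the care is needed.
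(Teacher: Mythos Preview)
Your overall architecture matches the paper's, but there is a genuine gap in your argument for \link{InitErrorBound}. You propose to derive the curse (when $\ln(\eps_d^{\init})\notin o(d)$) from the $k=1$ term of \link{rep_n}, namely $n(\eps,d_k)\geq d_k\cdot\#\{j\geq 2:\lambda_j/\lambda_1>(\eps/\eps_{d_k}^{\init})^2\}$. But this bound is \emph{not} exponential in $d_k$ in general: the cardinality factor equals $n(\eps_1^{\init}\eps/\eps_{d_k}^{\init},1)-1$, and if the univariate eigenvalues decay rapidly (say $\lambda_m=e^{-m}$) then $n(t,1)$ grows only like $\ln(1/t)$, so with $\eps_{d_k}^{\init}\sim e^{cd_k}$ you get $n(\eps,d_k)\gtrsim d_k\cdot d_k=d_k^2$, which is polynomial and does \emph{not} establish the curse. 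The phrase ``large once $(\eps/\eps_d^{\init})^2$ is tiny since $\lambda_2>0$'' does not supply the needed exponential rate.

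The paper closes this gap differently: from $\eps_{d_l}^{\init}\geq e^{cd_l}$ it observes that $k_{d_l}(\eps_0)=\lceil 2\ln(\eps_{d_l}^{\init}/\eps_0)/\ln(\lambda_1/\lambda_2)\rceil-1$ is at least $\lfloor\alpha d_l\rfloor$ for some $\alpha\in(0,1/2)$ (when $\lambda_1>\lambda_2$; if $\lambda_1=\lambda_2$ then $a_d(\eps)=d$ outright). Hence $a_{d_l}(\eps_0)\geq\lfloor\alpha d_l\rfloor$, and then the \emph{single} binomial coefficient $\binom{d_l}{\lfloor\alpha d_l\rfloor}$ in the lower bound $n(\eps_0,d_l)\geq\sum_{k=0}^{a_{d_l}(\eps_0)}\binom{d_l}{k}$ already grows exponentially in $d_l$ by Stirling. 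So the exponential blow-up comes from the binomial sum, not from the univariate complexity. Your other steps (the two limits in \link{cond1}, the deduction of $\lambda_n\in o(\ln^{-2}n)$ via \autoref{lemma:n} at $d=1$, and the $o(1/d)$ conclusion for $\lambda_1=\lambda_2$) are in line with the paper, modulo some loose phrasing about which sequences realise the limit.
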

\begin{proof}
\emph{Step 1}.
We start by proving the necessity of the first limit condition in \link{cond1}
and study its consequences.
To this end, recall that due to the definition of $a_d(\epsilon)$ 
we know that all the sets in \link{rep_n} contain at least one element.
Consequently, for general $\lambda_1 \geq \lambda_2$ we have
\begin{gather}\label{eq:lowerbound_n}
				n(\epsilon,d) \geq 1+ \sum_{k=1}^{a_d(\epsilon)} \binom{d}{k} = \sum_{k=0}^{a_d(\epsilon)} \binom{d}{k}
				\quad \text{for} \quad d\in\N \quad \text{and} \quad 0<\epsilon<\epsilon_d^{\mathrm{init}}.
\end{gather}
Now assume the existence of some subsequence $(d_l)_{l\in\N}\subset\N$ 
such that the initial error~$\epsilon_{d_l}^{\mathrm{init}}$ grows
at least exponentially in $d_l$ for $l$ tending to infinity.
That is, we assume the condition~\link{InitErrorBound} to be violated.
Moreover, consider 
$\epsilon=\epsilon_0 \in (0, \inf\{\epsilon_{d_l}^{\mathrm{init}} \sep l\in\N\})$
to be fixed.
Then for any $l\in\N$ and some $\alpha\in(0,1/2)$ the term 
$a_{d_l}(\eps_0)$ is bounded from below by $\floor{\alpha\, d_l}$.
Accordingly, \link{eq:lowerbound_n} implies $n(\epsilon_0,d_l)\geq \binom{d_l}{\floor{\alpha\,d_l}}$
for all $l\in\N$.
Using similar calculations as in~\cite[p. 178]{NW08}
we see that this lower bound grows exponentially in $d_l$.
This proves the curse of dimensionality for the scaled problem~$S_{(s)}$
and thus it contradicts weak tractability.

If we assume in addition that $\lambda_1=\lambda_2$ then, as already noticed,
$a_d(\epsilon)$ equals $d$ because of $k_d(\epsilon)=\infty$. Thus we obtain
$\sum_{k=0}^{a_d(\epsilon)} \binom{d}{k}= 2^d$ in this case.
Therefore the existence of a sequence $(d_l)_{l\in\N}$ such that
$\epsilon_{d_l}^{\mathrm{init}}$ is larger than some $C>0$ for all $l\in\N$
would again imply the curse of dimensionality 
since then we could fix $\epsilon=\epsilon_0=C/2$, say.
Moreover $a_d(\epsilon)=d$ shows that the first part of \link{cond1} 
equivalently reads
\begin{gather}\label{lim_cond}
				\lim_{\epsilon^{-1}+d\nach\infty} \frac{d}{\epsilon^{-1}+d}=0.
\end{gather}
Observe that in any case the term $d/(\epsilon^{-1}+d)$ is equivalent to 
the minimum of $1$ and  $\epsilon\,d$ (up to some absolute constants).
Hence, \link{lim_cond} holds true if and only if
\begin{gather*}
				\lim_{\epsilon^{-1}+d\nach\infty} \epsilon\,d = 0
\end{gather*}
which in turn is equivalent to $\epsilon_d^{\mathrm{init}} \in o(1/d)$ for $d\nach\infty$.
To see this last equivalence, remember that due to \autoref{sect:TractDef} 
the domain of the sequences $((\epsilon_k,d_k))_{k\in\N}$ for the limit $\epsilon^{-1}+d\nach\infty$
is restricted per definition to those for which $\epsilon_k<\epsilon_{d_k}^{\mathrm{init}}$.

\emph{Step 2}.
We turn to the proof of the second point in \link{cond1}.
Again we distinguish the cases $\lambda_1=\lambda_2$ and $\lambda_1>\lambda_2$.
For the latter case keep in mind that
$a_d(\epsilon) \geq 1$ if and only if $\epsilon < \epsilon_d^{\mathrm{init}}(\lambda_2/\lambda_1)^{1/2}$.
If so, then \link{rep_n} shows that
\begin{align*}
				n(\epsilon,d) 
				&\geq 1 + \binom{d}{1} \cdot \# \left\{ j \geq 2 \sep \frac{\lambda_j}{\lambda_1} > \left(\frac{\epsilon}{\epsilon_d^{\mathrm{init}}}\right)^2 \right\} \\
				&= 1 + d \cdot \# \left\{ j \geq 2 \sep s_1 \lambda_j > \left(\epsilon_1^{\mathrm{init}} \cdot \epsilon / \epsilon_d^{\mathrm{init}}\right)^2 \right\}
				\geq n(\epsilon_1^{\mathrm{init}} \cdot \epsilon / \epsilon_d^{\mathrm{init}},1).
\end{align*}
On the other hand, if 
$\epsilon \in \left[\epsilon_d^{\mathrm{init}}(\lambda_2/\lambda_1)^{1/2}, \epsilon_d^{\mathrm{init}}\right)$ then
$n(\epsilon_1^{\mathrm{init}} \cdot \epsilon / \epsilon_d^{\mathrm{init}},1)$
is no larger than $n(\epsilon_1^{\mathrm{init}} (\lambda_2/\lambda_1)^{1/2},1)$ 
which is an absolute, positive constant.
Thus, as claimed in \link{cond1}, we conclude
\begin{equation}\label{est_lnn}
				0 \leq
				\frac{\ln n(\epsilon_1^{\mathrm{init}} \cdot \epsilon / \epsilon_d^{\mathrm{init}},1)}{\epsilon^{-1}+d}
				\leq \max{\frac{\ln n(\epsilon ,d)}{\epsilon^{-1}+d}, \frac{\ln n(\epsilon_1^{\mathrm{init}} (\lambda_2/\lambda_1)^{1/2},1)}{\epsilon^{-1}+d}} \nach 0 
\end{equation}
for $\epsilon^{-1}+d$ tending to infinity in the above sense.
In the case $\lambda_1=\lambda_2$ we have $a_d(\epsilon)=d$ which is trivially bounded from below by $1$
for any $\epsilon \in (0,\epsilon_d^{\mathrm{init}})$.
The assertion now follows using the same arguments as in the first part of the previous case.
To complete the proof it finally remains to show that $\lambda_n \in o(\ln^{-2} n)$, as $n\nach\infty$.
Let us consider the case $d=d_k \equiv 1$ in \link{est_lnn}.
Then we obtain
\begin{gather*}
		0 
		\leq \frac{\ln n(\epsilon,1)}{\epsilon^{-1}} 
		\leq 2 \cdot \frac{\ln n(\epsilon_1^{\mathrm{init}} \cdot \epsilon / \epsilon_1^{\mathrm{init}},1)}{\epsilon^{-1}+1} \nach 0, 
		\quad \text{as} \quad \epsilon^{-1}\nach\infty.
\end{gather*}
In other words, weak tractability yields $\ln n(\epsilon,1) \in o(\epsilon^{-1})$
which is equivalent to the claimed assertion due to \autoref{lemma:n}.
\end{proof}

Let us add some comments on the latter necessary conditions.
\begin{rem}\label{rem:neccond_wt}
First of all note that from \link{cond1} we concluded \link{InitErrorBound} 
which is equivalent to the fact that
$\limsup_{d\nach \infty} s_d^{1/d} \leq 1/\lambda_1$.
Aside from that \link{cond1} also implies another condition which we will need later on;
namely
\begin{gather*}
			\lim_{\epsilon^{-1}+d\nach\infty} \frac{\ln n\!\left( \epsilon_{1}^{\mathrm{init}} \cdot \left(\epsilon / \epsilon_{d}^{\mathrm{init}}\right)^{1/2}, 1 \right)}{\epsilon^{-1}+d}=0.
\end{gather*}
Conclusively we stress that the second point of \link{cond1} 
already indicates a certain trade-off between the decay of 
the sequence $(\lambda_n)_{n\in\N}$ and the growth of the initial error~$\epsilon_d^{\mathrm{init}}$.
Indeed, if $(\lambda_n)_{n\in\N}$ decreases almost logarithmically then $n(t,1)$ increases subexponentially as $t$ tends to zero.
Consequently \link{cond1} can be fulfilled only if $\epsilon_d^{\mathrm{init}}$ is polynomially bounded in $d$.
On the other hand, if the (squares of the) singular values tend to zero like the inverse of some polynomial, say, then 
$n(t,1)$ grows polynomially in $1/t$ and hence it is enough to assume that the initial error is subexponentially bounded in $d$ to fulfill \link{cond1}.
\hfill$\square$
\end{rem}

We complement the necessary conditions in \autoref{prop:nescond_scaled} 
by the following sufficient conditions for weak tractability
of scaled tensor product problems $S_{(s)} = (S_{d,s_d})_{d\in\N}$.
For the proof we essentially follow the arguments of 
Papageorgiou and Petras~\cite{PP09} for the unscaled case
which are based on estimates from Wo{\'z}niakowski~\cite{W94b}.
\begin{prop}\label{prop:suffcond_scaled}
		Let $S_{(s)}$ and $a_d(\epsilon)$ be defined as before and assume that $\lambda_2 > 0$.
		If the condition \link{cond1} from \autoref{prop:nescond_scaled} holds true and if we have
		\begin{gather}\label{cond3}
				\lim_{\epsilon^{-1}+d\nach\infty} \frac{a_d(\epsilon) \cdot \ln n\!\left( \epsilon_{1}^{\mathrm{init}} \cdot \left(\epsilon / \epsilon_{d}^{\mathrm{init}}\right)^{1/2}, 1 \right)}{\epsilon^{-1}+d}=0
		\end{gather}
		then $S_{(s)}$ is weakly tractable.
\end{prop}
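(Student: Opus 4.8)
The plan is to bound the information complexity $n(\epsilon,d)$ from above using the representation \link{rep_n} and show that $\ln n(\epsilon,d)/(\epsilon^{-1}+d)\nach 0$ along every admissible sequence $((\epsilon_k,d_k))_{k\in\N}$. First I would split the binomial sum in \link{rep_n} into the combinatorial factor $\sum_{k=0}^{a_d(\epsilon)}\binom{d}{k}$ and the ``inner'' counting factors. For the combinatorial part, condition \link{cond1} (first limit) already takes care of it: $\ln\!\left(1+\sum_{k=1}^{a_d(\epsilon)}\binom{d}{k}\right)\in o(\epsilon^{-1}+d)$ directly.

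Second, I would estimate each inner set. For $1\le k\le a_d(\epsilon)$,
\begin{gather*}
		\#\!\left\{\bm{j}\in(\N\setminus\{1\})^k \sep \prod_{l=1}^k \frac{\lambda_{j_l}}{\lambda_1} > \left(\frac{\epsilon}{\epsilon_d^{\mathrm{init}}}\right)^2\right\}
		\leq \prod_{l=1}^k \#\!\left\{j\geq 2 \sep \frac{\lambda_j}{\lambda_1} > \left(\frac{\epsilon}{\epsilon_d^{\mathrm{init}}}\right)^{2/k}\right\},
\end{gather*}
because if a product of $k$ numbers in $(0,1)$ exceeds a threshold $t$, then each factor must exceed $t^{1/k}$ — actually it suffices that at least one coordinate-wise necessary condition holds, so I would instead use the cruder but cleaner bound that each of the $k$ coordinates lies in $\{j\geq 2 : \lambda_j/\lambda_1 > (\epsilon/\epsilon_d^{\mathrm{init}})^{2}\}$ is false in general; the right move is: since every factor is $<1$, a single factor being too small kills the product, hence each factor must be $>(\epsilon/\epsilon_d^{\mathrm{init}})^{2}$... no — I would use the sharp bound that \emph{each} factor satisfies $\lambda_{j_l}/\lambda_1 > (\epsilon/\epsilon_d^{\mathrm{init}})^{2/k}$ only when all others equal $1$, which they cannot. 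The correct and clean estimate is: each factor must be $>(\epsilon/\epsilon_d^{\mathrm{init}})^2$ (since the remaining $k-1$ factors are each $<1$), giving
\begin{gather*}
		\#\!\left\{\bm{j}\in(\N\setminus\{1\})^k \sep \prod_{l=1}^k \frac{\lambda_{j_l}}{\lambda_1} > \left(\frac{\epsilon}{\epsilon_d^{\mathrm{init}}}\right)^2\right\}
		\leq \left(\#\!\left\{j\geq 2 \sep s_1\lambda_j > \left(\epsilon_1^{\mathrm{init}}\cdot\epsilon/\epsilon_d^{\mathrm{init}}\right)^2\right\}\right)^k
		= n\!\left(\epsilon_1^{\mathrm{init}}\cdot\epsilon/\epsilon_d^{\mathrm{init}},1\right)^k.
\end{gather*}
A sharper version, needed to match \link{cond3} with its square root, uses $\prod_{l}\lambda_{j_l}/\lambda_1>(\epsilon/\epsilon_d^{\mathrm{init}})^2$ forcing at least one factor $\le$ the geometric-mean-type bound $(\epsilon/\epsilon_d^{\mathrm{init}})^{2/k}$; combining with the pairing trick from Wo\'zniakowski~\cite{W94b} / Papageorgiou--Petras~\cite{PP09} one reduces the exponent so that the relevant univariate complexity is evaluated at $\epsilon_1^{\mathrm{init}}(\epsilon/\epsilon_d^{\mathrm{init}})^{1/2}$ rather than $\epsilon_1^{\mathrm{init}}\epsilon/\epsilon_d^{\mathrm{init}}$. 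I would follow their argument: bound the number of ``large'' coordinates by pairing small factors, so that at most one coordinate can be very small and the rest satisfy a square-root-relaxed threshold. This yields
\begin{gather*}
		n(\epsilon,d) \leq \left(\sum_{k=0}^{a_d(\epsilon)}\binom{d}{k}\right)\cdot n\!\left(\epsilon_1^{\mathrm{init}}\cdot(\epsilon/\epsilon_d^{\mathrm{init}})^{1/2},1\right)^{a_d(\epsilon)} + 1,
\end{gather*}
up to harmless constants.

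Third, take logarithms:
\begin{gather*}
		\frac{\ln n(\epsilon,d)}{\epsilon^{-1}+d}
		\lesssim \frac{\ln\!\left(\sum_{k=0}^{a_d(\epsilon)}\binom{d}{k}\right)}{\epsilon^{-1}+d}
		+ \frac{a_d(\epsilon)\cdot \ln n\!\left(\epsilon_1^{\mathrm{init}}\cdot(\epsilon/\epsilon_d^{\mathrm{init}})^{1/2},1\right)}{\epsilon^{-1}+d}.
\end{gather*}
The first term vanishes by the first limit in \link{cond1}; the second vanishes by the hypothesis \link{cond3}. Hence $\ln n(\epsilon,d)\in o(\epsilon^{-1}+d)$, which is exactly weak tractability. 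I would also need to handle the trivial regime $\epsilon\geq\epsilon_d^{\mathrm{init}}$ (where $n=0$) and the boundary regime where $a_d(\epsilon)=0$ (where $n(\epsilon,d)\leq 1$), both of which are immediate.

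\textbf{Main obstacle.} The delicate point is the combinatorial reduction that replaces the exact multivariate count in \link{rep_n} by a product of univariate counts \emph{with the right threshold} $\epsilon_1^{\mathrm{init}}(\epsilon/\epsilon_d^{\mathrm{init}})^{1/2}$ — getting the square root rather than a first power, which is what makes \link{cond3} (rather than a stronger unprovable condition) sufficient. This is precisely where the pairing estimate of Wo\'zniakowski~\cite{W94b}, as adapted by Papageorgiou and Petras~\cite{PP09}, is essential, and porting it to the scaled setting requires carefully tracking how $s_d$ enters through $\epsilon_d^{\mathrm{init}}=\sqrt{s_d\lambda_1^d}$ — but since $s_d$ only appears via the ratio $\epsilon/\epsilon_d^{\mathrm{init}}$ (all eigenvalues scale by the same $s_d$), the adaptation should go through with only notational changes. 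The remaining steps — splitting the sum, taking logs, invoking \link{cond1} and \link{cond3} — are routine.
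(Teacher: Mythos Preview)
Your overall strategy matches the paper's exactly: start from \link{rep_n}, embed each $k$-dimensional inner set into the $a_d(\epsilon)$-dimensional one, apply the Papageorgiou--Petras ordered-index bound (the $i$th largest index is at most $n(\epsilon_1^{\mathrm{init}}(\epsilon/\epsilon_d^{\mathrm{init}})^{1/i},1)$), take logs, and invoke the hypotheses. But your displayed upper bound and subsequent log-decomposition are missing a term, and as written the argument does not close.

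The PP09 observation you quote correctly---``at most one coordinate can be very small and the rest satisfy a square-root-relaxed threshold''---does \emph{not} yield $n_2^{a_d(\epsilon)}$, where $n_2=n(\epsilon_1^{\mathrm{init}}(\epsilon/\epsilon_d^{\mathrm{init}})^{1/2},1)$. The single ``very small'' coordinate ranges up to $n_1=n(\epsilon_1^{\mathrm{init}}\cdot\epsilon/\epsilon_d^{\mathrm{init}},1)$, not $n_2$; together with the choice of its position this produces the factor $a_d(\epsilon)\cdot n_1$ in front of $n_2^{a_d(\epsilon)-1}$. Concretely, already the $k=1$ term in \link{rep_n} equals $d\cdot(n_1-1)$, and since $n_1\ge n_2$ (because $\epsilon/\epsilon_d^{\mathrm{init}}<(\epsilon/\epsilon_d^{\mathrm{init}})^{1/2}$ and $n(\cdot,1)$ is non-increasing) this term is in general not dominated by $\bigl(\sum_k\binom{d}{k}\bigr)\,n_2^{a_d(\epsilon)}$. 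So the factor $n_1$ is not a ``harmless constant''; it depends on both $\epsilon$ and $d$.

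The fix is immediate and is what the paper does: carry the extra factors, so that
\[
n(\epsilon,d)\le d\cdot n_1\cdot n_2^{\,a_d(\epsilon)}\cdot\sum_{k=0}^{a_d(\epsilon)}\binom{d}{k},
\]
and after taking logs you get four fractions. The terms $\ln d/(\epsilon^{-1}+d)$ is trivial, $\ln\bigl(\sum_k\binom{d}{k}\bigr)/(\epsilon^{-1}+d)$ is the first limit in \link{cond1}, $a_d(\epsilon)\ln n_2/(\epsilon^{-1}+d)$ is \link{cond3}, and $\ln n_1/(\epsilon^{-1}+d)$ is handled by the \emph{second} limit in \link{cond1}---which is part of the hypothesis but never invoked in your write-up. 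Once you reinstate $n_1$ and use that second limit, the proof is complete and identical to the paper's.
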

\begin{proof}
Given $d\in\N$ and 
$\epsilon \in (0, \left( \lambda_2/\lambda_1\right)^{1/2}\epsilon_d^{\mathrm{init}})$ 
consider the representation~\link{rep_n} and keep in mind that for larger $\epsilon$ 
the information complexity $n(\epsilon,d)$ is trivially bounded
by $1$ because then $a_d(\epsilon)=0$.
For every $k\in\{1,\ldots,a_d(\epsilon)\}$ we have
\begin{gather*}
				\#\left\{ \bm{j}\in\left(\N\setminus\{1\}\right)^k \sep \prod_{l=1}^k \frac{\lambda_{j_l}}{\lambda_{1}} > \left( \frac{\epsilon}{\epsilon_d^{\mathrm{init}}} \right)^2 \right\}
				\leq \# \left\{ \bm{j} \in \N^{a_d(\epsilon)} \sep \prod_{l=1}^{a_d(\epsilon)} \frac{\lambda_{j_l}}{\lambda_{1}} > \left( \frac{\epsilon}{\epsilon_d^{\mathrm{init}}} \right)^2 \right\}
\end{gather*}
since $\lambda_m/\lambda_1\leq 1$	for all $m\in\N$.
Hence we concentrate on all the multi-indices 
$\bm{j}=(j_1,\ldots,j_{a_d(\epsilon)})$ that fulfill
\begin{gather}\label{eq1}
				\prod_{l=1}^{a_d(\epsilon)} \frac{\lambda_{j_l}}{\lambda_{1}} > \left( \frac{\epsilon}{\epsilon_d^{\mathrm{init}}} \right)^2.
\end{gather}
Clearly the largest possible index $j_{\mathrm{max}}^{(1)}$ which can appear in those $\bm{j}\in\N^{a_d(\epsilon)}$
is bounded because the sequence $(\lambda_{n})_{n\in\N}$ tends to zero as $n\nach\infty$.
Indeed, using the arguments given in \cite{PP09} we conclude that 
\begin{gather*}
				j^{(1)}_{\mathrm{max}} 
				\leq \min{n\in\N_0 \sep s_1 \lambda_{n+1} \leq \left( \epsilon_1^{\mathrm{init}} \frac{\epsilon}{\epsilon_d^{\mathrm{init}}} \right)^2 } 
				= n\! \left( \epsilon_1^{\mathrm{init}} \frac{\epsilon}{\epsilon_d^{\mathrm{init}}}, 1 \right).
\end{gather*}
More generally, in \cite{PP09} it was noticed that, using the same reasoning, 
we can bound the $i$th largest index $j^{(i)}_{\mathrm{max}}$ in \link{eq1} by
\begin{gather*}
				j^{(i)}_{\mathrm{max}} 
				\leq n \!\left( \epsilon_1^{\mathrm{init}} \cdot \left( \epsilon / \epsilon_d^{\mathrm{init}}\right)^{1/i}, 1 \right).
\end{gather*}
We use this estimate for $i=1$ and $2$ to conclude the upper bound
\begin{gather*}
				a_d(\epsilon) \cdot n \!\left( \epsilon_1^{\mathrm{init}} \cdot \epsilon / \epsilon_d^{\mathrm{init}}, 1 \right) 
								\cdot n \!\left( \epsilon_1^{\mathrm{init}} \cdot \left( \epsilon / \epsilon_d^{\mathrm{init}}\right)^{1/2}, 1 \right)^{a_d(\epsilon)-1}
\end{gather*}
for $\# \left\{ \bm{j} \in \N^{a_d(\epsilon)} \sep \bm{j} \text{ fulfills } \link{eq1}	\right\}$.
Note that due to $\epsilon<\epsilon_d^{\mathrm{init}}$ both the univariate complexities 
in the latter bound need to be at least $1$.
Therefore we can extend the estimate by adding an additional factor
$n \!\left( \epsilon_1^{\mathrm{init}} \cdot \left( \epsilon / \epsilon_d^{\mathrm{init}}\right)^{1/2}, 1 \right)$
and replacing~$a_d(\epsilon)$ by~$d$.
In summary we have 
\begin{gather}\label{bound_n}
				n(\epsilon,d) 
				\leq d \cdot n \!\left( \epsilon_1^{\mathrm{init}} \cdot \epsilon / \epsilon_d^{\mathrm{init}}, 1 \right) 
										\cdot n \!\left( \epsilon_1^{\mathrm{init}} \cdot \left( \epsilon / \epsilon_d^{\mathrm{init}}\right)^{1/2}, 1 \right)^{a_d(\epsilon)} \cdot \sum_{k=0}^{a_d(\epsilon)} \binom{d}{k}
\end{gather}
for each $d\in\N$ and all 
$\epsilon\in(0,\left(\lambda_2/\lambda_1\right)^{1/2} \epsilon_d^{\mathrm{init}})$.
Because of 
$n(\epsilon,d)=1$ if $\epsilon$ belongs to $[ \left(\lambda_2/\lambda_1\right)^{1/2} \epsilon_d^{\mathrm{init}}, \epsilon_d^{\mathrm{init}} )$,
the estimate~\link{bound_n} remains valid for every $\epsilon\in(0,\epsilon_d^{\mathrm{init}})$.
Proceeding as in \cite{PP09} we take the logarithm 
and divide by $\epsilon^{-1}+d$	to conclude
\begin{align*}
				\frac{\ln n(\epsilon,d)}{\epsilon^{-1}+d} 
				&\leq \frac{\ln(d)}{\epsilon^{-1}+d} 
								+ \frac{\ln \!\left[ n \left( \epsilon_1^{\mathrm{init}} \cdot \epsilon / \epsilon_d^{\mathrm{init}} , 1 \right) \right]}{\epsilon^{-1}+d} \\
				& \qquad \qquad	+ \frac{a_d(\epsilon) \cdot \ln\!\left[ n \left( \epsilon_1^{\mathrm{init}} \cdot \left( \epsilon/ \epsilon_d^{\mathrm{init}}\right)^{1/2},1 \right) \right] }{\epsilon^{-1}+d}
								+ \frac{\sum_{k=0}^{a_d(\epsilon)} \binom{d}{k}}{\epsilon^{-1}+d}.
\end{align*}
For weak tractability it suffices to show that each of these fractions
tends to zero as $\epsilon^{-1}+d$ approaches infinity.
Obviously, for the first one this is true without any further conditions.
For the second and fourth fraction the assertion follows from~\link{cond1}.
Finally the third fraction tends to zero due to 
the additional condition~\link{cond3} we imposed for this proposition.
\end{proof}

To illustrate the obtained results the following theorem considers several cases
for the behavior of the initial error $\epsilon_d^{\mathrm{init}}$.
\begin{theorem}\label{thm:scaled_wt}
		Let $S_{(s)} = (S_{d,s_d})_{d\in\N}$ denote a scaled tensor product problem
		in the sense of \autoref{sect:Scaled_basics}. 
		Assume that $\lambda_2>0$ and consider the worst case setting 
		\wrt the absolute error criterion.
		\begin{itemize}
				\item Let $\ln \!\left(\epsilon_d^{\mathrm{init}}\right)\notin o(d)$, as $d\nach\infty$.\\ 
							Then $S_{(s)}$ suffers from the curse of dimensionality.
				\item Let $\epsilon_d^{\mathrm{init}} \in \Theta(d^\alpha)$, as $d\nach\infty$, 
							for some $\alpha\geq 0$.
							\begin{itemize}
									\item If $\lambda_1=\lambda_2$ then $S_{(s)}$ suffers from the curse of dimensionality.
									\item In the case $\lambda_1>\lambda_2$ the problem $S_{(s)}$ is weakly tractable if and only if 
									\begin{gather}\label{eq:wt3}
												\lambda_n \in o\!\left(\ln^{-2(1+\alpha)} n \right), \text{ as } n\nach\infty. 
									\end{gather}
							\end{itemize}
				\item Let $\epsilon_d^{\mathrm{init}} \nach 0$, as $d$ approaches infinity.\\ 
							Then we never have the curse of dimensionality.
							Moreover, $S_{(s)}$ is weakly tractable if and only if 
							\begin{enumerate}[label=(\roman{*}), ref=\roman{*}]
										\item $\lambda_1=\lambda_2$ and $\lambda_n \in o\!\left(\ln^{-2} n\right)$, as $n\nach\infty$,
												  and $\epsilon_d^{\mathrm{init}}\in o(1/d)$, as $d\nach\infty$, or
										\item $\lambda_1>\lambda_2$ and $\lambda_n \in o\!\left(\ln^{-2} n\right)$, as $n\nach\infty$.
							\end{enumerate}
		\end{itemize}
\end{theorem}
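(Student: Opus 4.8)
The plan is to read off all the assertions directly from the necessary conditions of \autoref{prop:nescond_scaled} and the sufficient conditions of \autoref{prop:suffcond_scaled}, using \autoref{lemma:n} to translate between the decay rate of $\lambda=(\lambda_m)_{m\in\N}$ and the growth of the one-dimensional information complexity $n(\cdot,1)$, and using the explicit forms of the cut-offs $a_d(\epsilon)$ and $k_d(\epsilon)$ recalled just before those propositions. Two of the cases are immediate. If $\ln(\epsilon_d^{\mathrm{init}})\notin o(d)$ then condition \link{InitErrorBound} fails, and Step~1 of the proof of \autoref{prop:nescond_scaled} shows that then $S_{(s)}$ already suffers from the curse of dimensionality; this settles the first bullet. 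In the second bullet with $\lambda_1=\lambda_2$, the hypothesis $\epsilon_d^{\mathrm{init}}\in\Theta(d^\alpha)$ means $\epsilon_d^{\mathrm{init}}$ is bounded away from $0$, so the curse follows from the last sentence of \autoref{prop:nescond_scaled}.

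For the case $\lambda_1>\lambda_2$ of the second bullet I would prove necessity by feeding the second limit in \link{cond1} the admissible family $\epsilon=\epsilon_d^{\mathrm{init}}d^{-(1+\alpha)}$, $d\nach\infty$ (so that $\epsilon<\epsilon_d^{\mathrm{init}}$ for $d\ge2$). Since $\epsilon_d^{\mathrm{init}}\in\Theta(d^\alpha)$ we get $\epsilon^{-1}+d=\Theta(d)$, so \link{cond1} forces $\ln n\!\big(\epsilon_1^{\mathrm{init}}d^{-(1+\alpha)},1\big)\in o(d)$; by monotonicity of $n(\cdot,1)$ and a rescaling of the argument this is the same as $\ln n(t^{1+\alpha},1)\in o(1/t)$ as $t\nach0$, and \autoref{lemma:n} with $\beta=1+\alpha\ge1$ yields exactly \link{eq:wt3}. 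For sufficiency I would assume \link{eq:wt3}, so that \autoref{lemma:n} gives $\ln n(s,1)\in o(s^{-1/(1+\alpha)})$ as $s\nach0$. Because $\lambda_1>\lambda_2$ and $\epsilon_d^{\mathrm{init}}\in\Theta(d^\alpha)$, the cut-off obeys $a_d(\epsilon)\le k_d(\epsilon)=O(\ln d+\ln(1/\epsilon))$, hence $\ln\sum_{k=0}^{a_d(\epsilon)}\binom dk=O\!\big((\ln d+\ln(1/\epsilon))\ln d\big)$ (and $\le d\ln2$ in the remaining regime $a_d(\epsilon)=d$, which forces $\epsilon^{-1}$ to be exponentially large in $d$). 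Inserting these bounds, together with the $o$-estimate for $\ln n(\cdot,1)$, into the four fractions controlled by \link{cond1} and \link{cond3}, and using the elementary inequalities $x^{1/(1+\alpha)}y^{\alpha/(1+\alpha)}\le x+y$ and $(\ln x)/\sqrt x\nach0$, one checks that each fraction tends to $0$; \autoref{prop:suffcond_scaled} then gives weak tractability. In every fraction the range of $\epsilon$ for which the relevant argument of $n(\cdot,1)$ stays bounded away from $0$ is split off and treated trivially, since there $\ln n(\cdot,1)=O(1)$ while $\epsilon^{-1}+d\nach\infty$.

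For the third bullet, note first that if $\epsilon_d^{\mathrm{init}}\nach0$ then for each fixed $\epsilon>0$ one has $\epsilon\ge\epsilon_d^{\mathrm{init}}$, hence $n(\epsilon,d)=0$, for all large $d$, so no curse is possible. Regarding weak tractability, necessity of $\lambda_n\in o(\ln^{-2}n)$ is again a conclusion of \autoref{prop:nescond_scaled}, and in subcase~(i) necessity of $\epsilon_d^{\mathrm{init}}\in o(1/d)$ is condition \link{cond2} there (applicable since $\lambda_1=\lambda_2$). Sufficiency is once more a verification of \link{cond1} and \link{cond3} through \autoref{prop:suffcond_scaled}. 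In subcase~(ii), $\lambda_1>\lambda_2$ together with $\epsilon_d^{\mathrm{init}}$ bounded gives $a_d(\epsilon)\le k_d(\epsilon)=O(\ln(1/\epsilon))$, so with $\ln n(\cdot,1)$ estimated via \autoref{lemma:n} ($\beta=1$) all four fractions vanish by the same elementary computations as above. In subcase~(i), $a_d(\epsilon)=d$, so the first fraction is $\tfrac{d\ln2}{\epsilon^{-1}+d}$, which tends to $0$ precisely because $\epsilon_d^{\mathrm{init}}\in o(1/d)$ (the equivalence established in Step~1 of the proof of \autoref{prop:nescond_scaled}); using $\epsilon^{-1}+d\ge2\sqrt{\epsilon^{-1}d}$ together with \autoref{lemma:n}, the \link{cond3} fraction is bounded by a term of order $(\epsilon_d^{\mathrm{init}}d)^{1/2}$, which vanishes by $\epsilon_d^{\mathrm{init}}\in o(1/d)$ and $\lambda_n\in o(\ln^{-2}n)$; the other two fractions are handled as in subcase~(ii).

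The analysis is elementary but delicate in its book-keeping, and that is where the main obstacle lies: in each of the limiting regimes one must separate the $\epsilon$-range where the pertinent argument of $n(\cdot,1)$ tends to $0$ (so that \autoref{lemma:n} applies) from the range where it is bounded away from $0$ (so that $n(\cdot,1)$ is a constant), and one must ensure that the auxiliary factors $a_d(\epsilon)$, $\ln d$ and $\ln(1/\epsilon)$ are always dominated by $\epsilon^{-1}+d$. The balance $\epsilon\sim\epsilon_d^{\mathrm{init}}d^{-(1+\alpha)}$ used for necessity, and the inequality $x^{1/(1+\alpha)}y^{\alpha/(1+\alpha)}\le x+y$ used for sufficiency, are precisely what make the exponent $1+\alpha$ enter the condition \link{eq:wt3}.
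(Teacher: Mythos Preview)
Your proposal is correct and follows essentially the same route as the paper: necessity from \autoref{prop:nescond_scaled}, sufficiency by verifying \link{cond1} and \link{cond3} for \autoref{prop:suffcond_scaled}, with \autoref{lemma:n} providing the translation between the decay of $\lambda$ and the growth of $n(\cdot,1)$. The only cosmetic differences are your choice of test family $\epsilon=\epsilon_d^{\mathrm{init}}d^{-(1+\alpha)}$ (the paper uses $\epsilon=1/d$), and your explicit regime split $a_d(\epsilon)=d$ versus $a_d(\epsilon)<d$, which the paper avoids by first proving $\epsilon_d^{\mathrm{init}}/\epsilon\le C(\epsilon^{-1}+d)^{1+\alpha}$ via Young's inequality and then using $a_d(\epsilon)\le k_d(\epsilon)\in\0(\ln(\epsilon^{-1}+d))$ uniformly; your inequality $x^{1/(1+\alpha)}y^{\alpha/(1+\alpha)}\le x+y$ is exactly that Young step in a different guise.
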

\begin{proof}
\emph{Step 1}.
In this first step we handle the assertions concerning the curse of dimensionality.
From the proof of \autoref{prop:nescond_scaled} we know that $S_{(s)}$ suffers from the curse
if either $\ln (\epsilon_d^{\mathrm{init}}) \notin o(d)$, or if $\lambda_1=\lambda_2$
and $\lim_{d\nach \infty} \epsilon_d^\mathrm{init} \neq 0$.
Of course the latter condition is fulfilled particularly if the initial error grows polynomially with the dimension $d$, \ie if $\epsilon_d^{\mathrm{init}} \in \Theta(d^\alpha)$ for some $\alpha \geq 0$.
Furthermore the fact that we cannot have the curse of dimensionality 
as long as $\epsilon_d^{\mathrm{init}}$
tends to zero is clear from the definition.
		
\emph{Step 2.}
Next we show that weak tractability implies \link{eq:wt3}.
Therefore we note that $\epsilon_d^{\mathrm{init}}\in\Theta(d^\alpha)$ implies the existence of some $c>0$
such that we have $\epsilon_d^{\mathrm{init}} \geq c \, d^{\alpha}$ for all $d\in\N$.
Moreover we see that there is some $d_0\in\N$ such that $1/c < d^{1+\alpha}$ for every $d$ larger than $d_0$.
Setting $\epsilon = 1/d$ now yields
\begin{gather*}
				\frac{\epsilon}{\epsilon_d^{\mathrm{init}}} 
				\leq \frac{1}{c} \cdot \frac{1}{d^{1+\alpha}} 
				< 1 
				\quad \text{for all} \quad d\geq d_0
\end{gather*}
and $\epsilon^{-1}+d = 2d \nach \infty$, as $d\nach \infty$.
Hence, the sequence $((1/d,d))_{d\geq d_0}$ is admissible 
for the second limit condition of \link{cond1} in \autoref{prop:nescond_scaled}.
On the other hand, we have
\begin{gather*}
				\frac{\ln n(\epsilon_1^{\mathrm{init}} \cdot \epsilon / \epsilon_d^{\mathrm{init}}, 1)}{\epsilon^{-1}+d} 
				\geq \frac{c'}{2} \cdot \frac{\ln n( \left( c'/d \right)^{1+\alpha},1)}{(c'/d)^{-1}} 
				\geq 0
\end{gather*}
where we set $c' = (\epsilon_1 / c)^{1/(1+\alpha)}$.
Thus weak tractability implies $\ln n(t^{1+\alpha},1) \in o \left( 1/t \right)$ for $t \nach 0$.
Now the assertion follows from \autoref{lemma:n}.
		
\emph{Step 3.}
For the case of polynomial initial errors it remains to prove the converse implication, 
namely that \link{eq:wt3} is also sufficient for weak tractability 
provided that $\lambda_1>\lambda_2$.
To this end, we first show that for all $d\in\N$, 
every $\epsilon \in(0,\epsilon_d^{\mathrm{init}})$ and for some $C>0$,
\begin{gather}\label{eq:young}
				\epsilon_d^{\mathrm{init}} / \epsilon \leq C \cdot (\epsilon^{-1} + d)^{1+\alpha}.
\end{gather}
To see this, we notice that $\epsilon_d^{\mathrm{init}}\in\Theta(d^\alpha)$ 
implies the existence of some $C>0$ such that
$\epsilon_d^{\mathrm{init}} \leq C\, d^{\alpha}$ for all $d\in\N$.
If $\alpha=0$ then \link{eq:young} is obvious.
For the case $\alpha > 0$ we apply Young's inequality\footnote{Recall that Young's inequality 
states that $a,b\geq 0$ and $p,q>1$ with $\frac{1}{p}+\frac{1}{q}=1$ yields that 
$ab\leq \frac{1}{p}\, a^p + \frac{1}{q} \, b^q$. 
We use this assertion for $a=d^{\alpha}$, $b=\epsilon^{-1}$ and $p=1+1/\alpha$, $q=1+\alpha$.} 
and obtain 
$\epsilon_d^{\mathrm{init}} / \epsilon \leq C ( d^{1+\alpha} + (\epsilon^{-1})^{1+\alpha})$.
Now the inequality~\link{eq:young} follows from the relation 
$\norm{\cdot \sep \l_{1+\alpha}} \leq \norm{\cdot \sep \l_{1}}$, $\alpha\geq 0$, 
for (two-dimensional) sequence spaces.

We want to conclude weak tractability from \autoref{prop:suffcond_scaled}.
Hence we have to check the limit conditions stated in \link{cond1} and \link{cond3}.
In what follows we abbreviate the notation and set 
\begin{gather*}
				t=t(\epsilon,d)=\epsilon^{-1}+d.
\end{gather*}
Given \link{eq:young}, as well as the definition of $a_d(\epsilon)$ 
in front of formula \link{rep_n}, it is easy to see that 
\begin{gather*}
				a_d(\epsilon) \in \0(\ln(t)), \text{ as } t\nach \infty.
\end{gather*}
In particular, we have $a_d(\epsilon) < \floor{\floor{t}/2}$, if $t$ is sufficiently large.
Moreover note that $d<t$ implies $\binom{d}{k} \leq \binom{\floor{t}}{k}$ 
for all $k\in\{0,1,\ldots,a_d(\epsilon)\}$ such that 
$\binom{\floor{t}}{a_d(\epsilon)}$ is an upper bound 
for each of those binomial coefficients $\binom{d}{k}$.
Consequently,
\begin{align*}
				\ln \sum_{k=0}^{a_d(\epsilon)} \binom{d}{k} 
				&\leq \ln \!\left( (a_d+1)\cdot\binom{\floor{t}}{a_d(\epsilon)}\right) 
				\leq \ln \!\left( 2\, a_d(\epsilon) \, (e \floor{t})^{a_d(\epsilon)} \right)\\
				&\leq \ln 2 + \ln(a_d(\epsilon)) + a_d(\epsilon) \cdot (1+\ln(t)) \\
				& \in \0(\ln^2(t)) \subseteq o(t),
		\end{align*}
for $t\nach\infty$.
In other words, the first part of condition \link{cond1} is fulfilled.
Also the second limit condition in \link{cond1} can be shown easily using \link{eq:young}.
Indeed, due to the assumption in~\link{eq:wt3} 
(or its equivalent reformulation due to \autoref{lemma:n}, respectively) 
we conclude that
\begin{gather*}
				\frac{\ln n(\epsilon_1^{\mathrm{init}} \cdot \epsilon/\epsilon_d^{\mathrm{init}},1)}{\epsilon^{-1}+d}
				\leq \frac{\ln n(\epsilon_1^{\mathrm{init}} \, C \, (\epsilon^{-1}+d)^{-(1+\alpha)},1)}{\epsilon^{-1}+d}
				= \frac{1}{C'} \cdot \frac{\ln n( (C'/ t)^{1+\alpha},1)}{(C'/t)^{-1}} 
\end{gather*}
tends to zero as $\epsilon^{-1}+d$ (and therefore also $t/C'$) approaches infinity.
Finally,
\begin{align*}
				a_d(\epsilon)\cdot \ln n\!\left(\epsilon_1^{\mathrm{init}} \cdot \left( \epsilon/\epsilon_d^{\mathrm{init}} \right)^{1/2},1\right)
				&\leq a_d(\epsilon) \cdot \ln n\!\left(\left( \frac{C''}{\epsilon^{-1}+d} \right)^{(1+\alpha)/2},1\right) \\
				&\in \0(\ln t) \cdot \0(t^{1/2}) \subseteq o(t)
\end{align*}
for $t=\epsilon^{-1}+d$ tending to infinity. 
Hence we have shown \link{cond3}.
Now the application of \autoref{prop:suffcond_scaled} 
completes the proof for the case of polynomial initial errors.
		
\emph{Step 4.}
In this last step we consider the case of initial errors 
which tend to zero for $d$ tending to infinity.
We already know from \autoref{prop:nescond_scaled} that 
$\lambda_n \in o(\ln^{-2} n )$, $n\nach\infty$, 
(or equivalently $\ln n(t,1)\in o(1/t)$, as $t\nach 0$) is necessary
for weak tractability, independent of the relation of the two largest 
(squares of the) singular values $\lambda_1$ and $\lambda_2$ to each other.
Moreover \autoref{prop:nescond_scaled} states that 
$\epsilon_d^{\mathrm{init}}\in o(1/d)$, $d\nach\infty$, is a necessary
condition	when we assume $\lambda_1=\lambda_2$, in addition.
It remains to show that these conditions are also sufficient 
for weak tractability in the particular situations.

If $\lambda_1>\lambda_2$ then we can exactly 
follow the lines of Step 3 with $\alpha=0$ in order to conclude the assertion.
Hence we are left with the case $\lambda_1=\lambda_2$.
Similar to the previous step we want to apply \autoref{prop:suffcond_scaled} 
and thus we need to check the conditions in \link{cond1} and \link{cond3}.
Setting
\begin{gather}\label{def_u}
				u = u(\epsilon,d) = \frac{d}{\epsilon^{-1}+d}
\end{gather}
we note that (due to \link{lim_cond} in the proof of \autoref{prop:nescond_scaled}) 
$u$ tends to zero if $\epsilon^{-1}+d\nach \infty$. 
This follows from $\epsilon_d^{\mathrm{init}}\in o(1/d)$, as $d\nach\infty$, and, on the other hand, it implies
the first condition in~\link{cond1} because
$\sum_{k=0}^{a_d(\epsilon)} \binom{d}{k}$ equals $2^d$.
Since, in particular, $\epsilon_d^{\mathrm{init}} \leq C$ for some $C>0$, we have $C'=C / \epsilon_1^{\mathrm{init}}>0$ 
and thus we obtain
\begin{gather*}
				\frac{\ln n(\epsilon_1^{\mathrm{init}} \cdot \epsilon/\epsilon_d^{\mathrm{init}},1)}{\epsilon^{-1}+d}
				\leq \frac{\ln n(\epsilon_1^{\mathrm{init}}/C \cdot \epsilon,1)}{\epsilon^{-1}+d}
				\leq C' \cdot \frac{\ln n( (C' \cdot (\epsilon^{-1}+d))^{-1},1)}{C' \cdot (\epsilon^{-1}+d)}
				\nach 0,
\end{gather*}
as $\epsilon^{-1}+d\nach\infty$ due to $\ln n(t,1)\in o(1/t)$, $t\nach 0$.
In other words, we have shown the second condition in~\link{cond1}.
To see that also \link{cond3} holds true we once more use 
$\epsilon_d^{\mathrm{init}}\in o(1/d) \subseteq \0(1/d)$ as well as Young's inequality to conclude
\begin{gather*}
				\left( \frac{\epsilon_d^{\mathrm{init}}}{\epsilon} \right)^{1/2} 
				\leq C_1 \cdot \frac{1}{d} \cdot d^{1/2} \cdot \left( \frac{1}{\epsilon} \right)^{1/2} 
				\leq \frac{C_1}{2} \cdot \frac{\epsilon^{-1}+d}{d}
\end{gather*}
with some $C_1>0$.
Hence, using \link{def_u} we have 
$\epsilon_1^{\mathrm{init}} \cdot (\epsilon / \epsilon_d^{\mathrm{init}})^{1/2} \geq C_2 \, u$
and therefore
\begin{gather*}
				\frac{d \cdot \ln n(\epsilon_1^{\mathrm{init}} \cdot (\epsilon/\epsilon_d^{\mathrm{init}})^{1/2},1)}{\epsilon^{-1}+d}
				\leq \frac{1}{C_2} \cdot \left(C_2\, u\right) \cdot \ln n(C_2\, u,1) \nach 0 \quad \text{if } \epsilon^{-1}+d\nach\infty,
\end{gather*}
because then $C_2\,u = C_2\, u(\epsilon,d)$ tends to zero.
Since $a_d(\epsilon)=d$ this yields \link{cond3} and we are allowed to 
conclude weak tractability from \autoref{prop:suffcond_scaled}.
\end{proof}

Before we turn to normalized errors we want to stress the point that
\autoref{thm:scaled_wt} contains at least two surprising results.
At first, we can have weak tractability even if the initial error
of $S_{(s)}$ grows with increasing dimension.
Hence, although the performance of the zero 
algorithm gets steadily worse for $d\nach\infty$ 
we are not necessarily faced with the curse of dimensionality.
In contrast, remember that we need decreasing initial errors in order to conclude
polynomial tractability.
Secondly, it seems to be quite surprising that also in the case $\lambda_1=\lambda_2$
we can break the curse by imposing only moderate additional conditions on the scaling sequence~$s$.
Indeed, it is enough to guarantee that $\epsilon_d^{\mathrm{init}}=\sqrt{s_d \lambda_1^d}\in o(1/d)$
for $d\nach\infty$.

\subsection{Normalized errors}\label{sect:ScaledProbs_normed}
We complete our studies of the complexity of scaled problems 
$S_{(s)}=(S_{d,s_d})_{d\in\N}$ by investigating tractability
properties with respect to the normalized error criterion.
This can be done by analyzing the information complexity
of a related problem \wrt absolute errors.

Let $\lambda=(\lambda_n)_{n\in\N}$ and $s=(s_d)_{d\in\N}$ be fixed
and define a tensor product problem
$T = \left(T_d \colon H_d \nach \G_d \right)_{d\in\N}$ 
out of the building blocks $T_1=(1/\sqrt{\lambda_1} \, S_1)\colon H_1\nach\G_1$
as described in the proof of \autoref{thm:unweightedtensor_norm}.
Then the extended sequence of squared singular values of $T_d$, 
based on the univariate sequence $\mu=(\mu_m)_{m\in\N}=(\lambda_m/\lambda_1)_{m\in\N}$, 
reads
\begin{gather*} 
			(\mu_{d,1,i})_{i\in\N} 
			= (\mu_{d,i})_{i\in\N} 
			= \left( \frac{\lambda_{d,i}}{\lambda_{d,1}} \right)_{i\in\N}
			= \left( \frac{\lambda_{d,s_d,i}}{s_d \, \lambda_{d,1}} \right)_{i\in\N}
			= \left( \frac{\lambda_{d,s_d,i}}{(\eps_{d}^{\mathrm{init}})^2} \right)_{i\in\N}.
\end{gather*}
Here the second subscript in $\mu_{d,1,i}$ indicates that $T$ 
can be seen as a trivially scaled tensor product problem.
Furthermore, $\eps_{d}^{\mathrm{init}}=\sqrt{s_d\,\lambda_1^d}$ denotes
the initial error of~$S_{d,s_d}$.
Thus, from \link{def_n} applied to $T$ and $S_{(s)}$ we conclude
\begin{align}
			n_{\mathrm{abs}}^\wor(\eps,d; T_d)
			&= \min{n\in\N_0 \sep \mu_{d,1,n+1} \leq \eps^2} \label{eq:scaled_norm}\\
			&= \min{n\in\N_0 \sep \lambda_{d,s_d,n+1} \leq (\eps \cdot \eps_{d}^{\mathrm{init}})^2}
			= n_{\mathrm{abs}}^\wor(\eps \cdot \eps_{d}^{\mathrm{init}},d; S_{d,s_d}). \nonumber
\end{align}
By definition this also equals $n_{\no}^\wor(\eps,d;S_{d,s_d})$, \ie
the information complexity of~$S_{d,s_d}$ \wrt to the normalized error criterion.
This relation in hand, we can use our results from the previous subsections
to prove the following assertion.
\begin{theorem}\label{thm:scaled_no}
		Let $S_{(s)} = (S_{d,s_d})_{d\in\N}$ denote a scaled tensor product problem
		in the sense of \autoref{sect:Scaled_basics}.
		Assume $\lambda_2>0$ and consider the worst case setting \wrt the normalized error criterion.
		\begin{itemize}
					\item Let $\lambda_1=\lambda_2$. Then $S_{(s)}$ suffers from the curse of dimensionality.
					\item Let $\lambda_1>\lambda_2$. Then $S_{(s)}$ is not polynomially tractable.
								Moreover, in this case~$S_{(s)}$ is weakly tractable if and only if $\lambda_n \in o(\ln^{-2} n)$, as $n\nach\infty$.
		\end{itemize}
\end{theorem}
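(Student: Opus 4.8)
The plan is to deduce the statement from \autoref{thm:unweightedtensor_abs} by means of the identity already recorded in \link{eq:scaled_norm}. Recall that $n^{\wor}_{\no}(\eps,d;S_{d,s_d}) = n^{\wor}_{\ab}(\eps,d;T_d)$ for all $\eps\in(0,1]$ and $d\in\N$, where $T=(T_d)_{d\in\N}$ is the (unscaled) tensor product problem generated by the univariate operator $T_1=(1/\sqrt{\lambda_1})\,S_1$, whose associated non-increasingly ordered sequence of squared singular values is $\mu=(\mu_m)_{m\in\N}$ with $\mu_m=\lambda_m/\lambda_1$. Since every notion of tractability and the curse of dimensionality depend only on the map $(\eps,d)\mapsto n(\eps,d)$, the problem $S_{(s)}$ with respect to the normalized error criterion lies in exactly the same tractability class --- and shows the same curse behavior --- as the problem $T$ with respect to the absolute error criterion. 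I would moreover observe that the restriction $\eps<\eps^{\mathrm{init}}_d$ which enters the definition of weak tractability is harmless here: by \link{information_complexity_zero} the relevant threshold for $S_{(s)}$ under the normalized criterion is $\eps=1$, while the absolute initial error of $T$ equals $\mu_1^{d/2}=1$ for every $d$, so the admissible two-dimensional sequences $((\eps_k,d_k))_{k\in\N}$ are the same for both problems.

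Next I would invoke \autoref{thm:unweightedtensor_abs} for $T$. By construction $\mu_1=1$, and since $\lambda_2>0$ we have $\mu_2=\lambda_2/\lambda_1>0$; hence $T$ is covered by the case ``$\lambda_1=1$'' of that theorem (with $\mu$ in the role of the univariate sequence). It follows that $T$ is polynomially intractable with respect to the absolute error criterion, that $T$ suffers from the curse of dimensionality whenever $\mu_2=1$, and that $T$ is weakly tractable if and only if $\mu_2<1$ and $\mu_n\in o(\ln^{-2}n)$, as $n\nach\infty$.

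Finally I would rephrase these conditions in terms of the original sequence $\lambda$. Clearly $\mu_2=1$ is equivalent to $\lambda_1=\lambda_2$, and $\mu_2<1$ to $\lambda_1>\lambda_2$; furthermore $\mu_n\in o(\ln^{-2}n)$ holds if and only if $\lambda_n\in o(\ln^{-2}n)$, because $\mu_n$ and $\lambda_n$ differ only by the fixed positive factor $\lambda_1$. Combined with the reduction of the first paragraph, this gives: if $\lambda_1=\lambda_2$ then $S_{(s)}$ suffers from the curse of dimensionality under the normalized error criterion; and if $\lambda_1>\lambda_2$ then $S_{(s)}$ is not polynomially tractable under the normalized error criterion and is weakly tractable if and only if $\lambda_n\in o(\ln^{-2}n)$, as $n\nach\infty$ --- which is precisely the assertion.

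The proof involves no genuine obstacle; the only point deserving care is the bookkeeping in the first paragraph, namely checking that ``weak tractability of $S_{(s)}$ under normalized errors'' and ``weak tractability of $T$ under absolute errors'' really are the same statement, i.e. that the domains of the defining limits coincide. In particular, none of the delicate weak-tractability arguments of \autoref{prop:nescond_scaled} and \autoref{prop:suffcond_scaled} are needed here, since after the reduction the problem under consideration is an honest unscaled tensor product problem and the scaling factors $s_d$ have become irrelevant.
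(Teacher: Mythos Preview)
Your argument is correct and follows the same reduction as the paper: pass to the auxiliary unscaled problem $T$ via \link{eq:scaled_norm}, analyze $T$ under the absolute error criterion, and translate the conditions on $\mu$ back to $\lambda$. The only difference is cosmetic: you invoke \autoref{thm:unweightedtensor_abs} (the unscaled result, case $\lambda_1=1$) for $T$, whereas the paper instead applies its freshly proved \autoref{thm:scaled_PT} and \autoref{thm:scaled_wt} to $T$ viewed as a trivially scaled problem with $s_d\equiv 1$ and $\eps_d^{\mathrm{init}}\equiv 1$ (i.e.\ $\alpha=0$). Both citations yield exactly the same conclusions; the paper's choice keeps the chapter self-contained (since in its logical order \autoref{thm:unweightedtensor_abs} is only derived afterwards in \autoref{ex:unweighted_tensorproducts}), while your choice is slightly more direct if one accepts \autoref{thm:unweightedtensor_abs} as an already known external result.
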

\begin{proof}
Note that for all $d\in\N$ the initial error of $T_d$ is $1$
since $\mu_1$, as well as the scaling parameters, equal $1$.
Thus, obviously, condition~\link{Cond_limsup} in \autoref{thm:scaled_PT} is violated
and therefore~$T$ is polynomially intractable \wrt to the absolute error criterion.
Moreover, the second point of \autoref{thm:scaled_wt} with $\alpha=0$ shows 
that~$T$ suffers from the curse of dimensionality if $\mu_1=\mu_2$.
Otherwise, \ie if $\mu_1>\mu_2$, the problem $T$ is weakly tractable
if and only if $\mu_n \in o(\ln^{-2} n)$, as $n\nach\infty$.
Since we set $\mu_m=\lambda_m/\lambda_1$, $m\in\N$, 
all these conditions on $\mu=(\mu_m)_{m\in\N}$ are fulfilled
if and only if the corresponding assertions holds true for the sequence $\lambda=(\lambda_m)_{m\in\N}$.
Equation~\link{eq:scaled_norm} finally shows that every complexity assertion 
for $T$ \wrt absolute errors
is equivalent to the corresponding statement for $S_{(s)}$ 
and the normalized error criterion.
This simple observation completes the proof.
\end{proof}

In conclusion the scaling sequence $s=(s_d)_{d\in\N}$ 
does not have any influence on the complexity of $S_{(s)}$, as long as we consider normalized errors.
So the advantages of scaling are completely ruled out in this setting.

\section{Examples}\label{sect:ScaledProbs_Ex}
In this last part of \autoref{chapt:ScaledNorms} we briefly discuss
two applications of the complexity results obtained in
the previous section.
We start by proving that our assertions reproduce
the known facts for unscaled tensor product problems studied in
\autoref{thm:unweightedtensor_abs} and \autoref{thm:unweightedtensor_norm}.

\begin{example}[Unscaled problems]\label{ex:unweighted_tensorproducts}
Let $S_{(s)}=(S_{d,s_d})_{d\in\N}$ denote a tensor product problem
between Hilbert spaces in the sense of \autoref{sect:Scaled_basics}
where all the scaling factors $s_d$ equal $1$.
As usual we assume $\lambda_2>0$ and consider the worst case setting.
Then for every $d\in\N$ the operators $S_{d,s_d}$ coincide with $S_d$
as defined in \autoref{subsect:def_tensor_prob}.
Since we already saw that for the normalized error criterion the conditions stated in \autoref{thm:scaled_no}
exactly match the assertions of \autoref{thm:unweightedtensor_norm},
it remains to consider the absolute error criterion.
Here $\eps_d^{\mathrm{init}}$ is given by $\lambda_1^{d/2}$.
Hence there are three scenarios for the behavior of the initial error depending 
on the largest squared singular value $\lambda_1$ of the underlying operator $S_1$.

From \autoref{thm:scaled_PT} we know that 
strong polynomial tractability and polynomial tractability are equivalent; 
see \link{Cond_SPT} and \link{Cond_PT}, respectively.
Moreover, condition~\link{Cond_limsup} shows that this holds if and only if $\lambda_1<1$
and $\lambda=(\lambda_m)_{m\in\N}\in\l_\tau$ for some $\tau\in(0,\infty)$.
In this case the exponent of strong polynomial tractability is given by
\begin{gather*}
		p^* 
		= \inf \left\{2\tau \sep \sup_{d\in\N} \norm{\lambda \sep \l_\tau}^d < \infty \right\}
		= \inf \left\{2\tau \sep \left( \sum_{m=1}^\infty (\lambda_m)^\tau \right)^{1/\tau} \leq 1 \right\}.
\end{gather*}
In turn, $\lambda_1 \geq 1$ yields polynomial intractability.
More precisely, if $\lambda_1>1$ then the initial error grows exponentially in $d$ and $S=S_{(s)}$
suffers from the curse of dimensionality due to the first point of \autoref{thm:scaled_wt}.
Setting $\alpha=0$ the second point of the latter theorem describes 
the case of constant initial errors which corresponds to the case $\lambda_1=1$ in the unscaled situation.
In detail, if $\lambda_2=\lambda_1=1$ then we are faced with the curse again. 
In contrast, if $\lambda_2<1$ then we have weak tractability if and only if
\begin{gather}\label{eq:decay}
			\lambda_n \in o(\ln^{-2} n), \quad \text{as} \quad n\nach\infty.
\end{gather}
Finally the initial error tends to zero exponentially fast if $\lambda_1<1$. 
The last point of \autoref{thm:scaled_wt} thus shows that in this case
the curse of dimensionality is not possible and that
\link{eq:decay} is necessary and sufficient for weak tractability.

Altogether these results exactly match the conditions stated in \autoref{thm:unweightedtensor_abs}.
Hence, scaled tensor product problems indeed yield a generalization.
\hfill$\square$
\end{example}

So let us turn to a more advanced application.
To this end, recall the definition of 
$S_{d,s_d}\colon\F_d\nach\G_d$ in \autoref{sect:Scaled_basics}.
There we constructed the source spaces $\F_d$ by scaling the norm in the
tensor product space $H_d=H_1\otimes\ldots\otimes H_1$.
Alternatively we can think of $\F_d$ as the successively taken tensor product of
some building blocks~$H^{(k)}$, $k=1,\ldots,d$, in the sense of \autoref{subsect:def_tensor_prob}, 
where we define $H^{(k)}$ to be the univariate space~$H_1$ scaled by some factor $s^{(k)}>0$. 
That is, let
\begin{gather*}
			\distr{\cdot}{\cdot}_{H^{(k)}} = \frac{1}{s^{(k)}} \, \distr{\cdot}{\cdot}_{H_1}.
\end{gather*}
Then the scaling factor $s_d$ in dimension $d$ 
is given by $\prod_{k=1}^d s^{(k)}>0$.
The following example illustrates how the behavior of the 
\emph{generator sequence} $(s^{(k)})_{k\in\N}$
effects the complexity of $S_{(s)}$.
\begin{example}
Because scaling has no influence on assertions for 
normalized errors we restrict ourselves
to the absolute error criterion in what follows.
For simplicity we further assume that $\lambda_1>\lambda_2>0$ 
and that the generator sequence is non-increasing, \ie 
\begin{gather*}
		s^{(1)} \geq s^{(2)} \geq \ldots \geq s^{(k)} \geq \ldots > 0, \quad k\in\N.
\end{gather*}
Then \autoref{thm:scaled_PT} states that
$S_{(s)}$ is strongly polynomially tractable if and only if 
the geometric mean of the first $d$ elements $s^{(k)}$
is asymptotically strictly smaller than $1/\lambda_1$, 
provided that $\lambda\in\l_\tau$ for some $\tau>0$.
This holds iff at most finitely many of these generators are bounded from below by $1/\lambda_1$. 
Moreover, from \autoref{rem:neccond_wt} we know that we need
\begin{gather*}
				\limsup_{d\nach \infty} s_d^{1/d} 
				= \limsup_{d\nach \infty} \left(\prod_{k=1}^ds^{(k)} \right)^{1/d} \leq \frac{1}{\lambda_1}
\end{gather*}
in order to obtain weak tractability.
Therefore let the generators be given by
\begin{gather*}
				s^{(k)}=\frac{1}{\lambda_1} \cdot (1+\delta_k), \qquad k\in\N,
\end{gather*} 
with a non-increasing null sequence $(\delta_k)_{k\in\N}$ and note that then
some elementary calculations yield
\begin{gather*}
				\exp{\frac{c}{2} \cdot \sum_{k=1}^d \delta_k} 
				\leq \epsilon_d^{\mathrm{init}} 
				\leq \exp{ \frac{1}{2} \cdot  \sum_{k=1}^d \delta_k}, \qquad d\in\N,
\end{gather*}
where $c=\ln(1+\delta_1)/\delta_1 \leq 1$.
Furthermore this observation shows that 
$\epsilon_d^{\mathrm{init}}\in\Theta(d^\alpha)$, as $d\nach\infty$, for some $\alpha\geq 0$
implies that
\begin{gather*}
				L=\lim_{d\nach\infty} \frac{1}{\ln d} \sum_{k=1}^d \delta_k \in 2\, \alpha \cdot \left[ 1, \frac{1}{c}\right].
\end{gather*}
Conversely, from the existence of $L$ it follows 
that for any $\delta>0$ there is some $d_0=d_0(\delta)$ such that
\begin{gather*}
				\epsilon_d^{\mathrm{init}} \in [d^{\alpha_1},d^{\alpha_2}] \quad \text{for all} \quad d\geq d_0,
\end{gather*}
where $\alpha_1 = c \cdot L/2 - \delta$ and $\alpha_2 = L/2 + \delta$.
Hence, if $L$ is sufficiently small then the initial error 
$\epsilon_d^{\mathrm{init}}$ behaves like a polynomial
of small degree and thus a quite slow decay of the sequence 
$(\lambda_{n})_{n\in\N}$ is enough to conclude weak tractability
using \autoref{thm:scaled_wt}.
\hfill$\square$
\end{example}

  \cleardoubleplainpage
\chapter{Problems on function spaces with weighted norms}\label{chapt:weighted}
In \cite{NW09} it is shown that the approximation problem defined 
on $C^\infty([0,1]^d)$ is intractable. In fact, Novak and 
Wo\'zniakowski considered the linear space $F_d$ of all real-valued 
infinitely differentiable functions $f$ defined on the unit cube 
$[0,1]^d$ in $d$ dimensions for which the norm
\begin{gather}\label{normFd}
        \norm{f \sep F_d} = \sup_{\bm\alpha \in \N_0^d} \norm{D^{\bm\alpha} f \sep \L_\infty([0,1]^d)}
\end{gather}
of $f \in F_d$ is finite.
In this case the (uniform) approximation problem is given by the sequence of solution operators
$S=(S_d)_{d\in\N}$,
\begin{gather}\label{eq:uniform_app}
		S_d = \id_d \colon \widetilde{F}_d \nach \L_{\infty}([0,1]^d), 
		\quad
		f \mapsto \id_d(f)=f,
		\quad d\in\N,
\end{gather}
defined on the unit ball $\widetilde{F}_d=\B(F_d)$ of $F_d$.
The authors studied this problem in the worst case setting using algorithms from the classes
$\A_d^{n, \mathrm{cont}}$ and $\A_d^{n,\mathrm{adapt}}$ as defined in \autoref{sect:Algos}.

The \textit{initial error} of this problem is given by 
$\epsilon_d^{\init} = e^\wor(0,d; \id_d)=1$, the norm of the embedding 
$F_d \hookrightarrow \L_\infty$, since $A_{0,d}\equiv 0$ is a valid 
choice of an algorithm which does not use any information of $f$; see \autoref{prop:init_error}. 
This means that the problem is well-scaled such that there is no
difference in studying the absolute or the normalized error criterion.

Now \cite[Theorem 1]{NW09} yields that the $n$th minimal worst case error of 
$\L_\infty$-approximation defined on $F_d$
satisfies
\begin{gather}\label{eq:NW09_bound}
        e^{\wor}(n,d; \id_d)=1 \quad \text{for all} \quad n=0,1,\ldots, 2^{\floor{d/2}}-1.
\end{gather}
Therefore, for all $d\in \N$ and every $\epsilon \in (0,1)$, 
the information complexity is bounded from below by
\begin{gather*}
        n^{\wor}(\epsilon,d; \id_d) \geq 2^{\floor{d/2}}.
\end{gather*}
Hence the problem suffers from the curse of dimensionality; 
in particular it is intractable.
One possibility to avoid this exponential dependence on $d$, 
\ie to break the curse, is to shrink the function space $F_d$
by introducing \emph{weights}.

In the present chapter we follow this idea.
We show that turning to spaces equipped with \emph{product weights}
can dramatically improve the tractability behavior of certain problems such as uniform approximation.
In \autoref{sect:concept} we formally introduce the concept of weighted spaces
by considering the examples of weighted Banach spaces of smooth functions and of
weighted reproducing kernel Hilbert spaces.
Uniform approximation in the latter class of spaces then is studied in \autoref{sect:uniformRKHS}.
Afterwards, in \autoref{sect:LinftyApprox}, we show how to use the obtained upper error bounds
for the $\L_\infty$-approximation problem defined on scales of smooth functions.
Moreover we prove corresponding lower bounds on the information complexity which enable us
to give necessary and sufficient conditions for several kinds of tractabilities
in terms of the used weights.
Most of the results stated in this chapter are published in the article~\cite{W12}.

\section{The concept of weighted spaces}\label{sect:concept}
The idea to introduce weights directly into the norm of the 
function space appeared for the first time in a paper of 
Sloan and Wo\'zniakowski in 1998; see \cite{SW98}. They studied 
the integration problem defined over some Sobolev Hilbert space, 
equipped with so-called \textit{product weights}, to explain 
the overwhelming success of QMC integration rules. Thenceforth 
weighted problems attracted a lot of attention. 

For example it turned out that tractability of approximation of 
linear compact operators between Hilbert spaces can be fully characterized 
in terms of the weights and the singular values of the operators
if we use information operations from the class~$\Lambda^{\all}$.
The proof of this kind of assertions is once again based on the
singular value decomposition; see \autoref{sect:General_HSP}.
One such result is given in \autoref{sect:final_remarks} below.

But first let us illustrate the concept of weighted spaces
by modifying the space~$F_d$ we introduced before.

\subsection{Weighted Banach spaces of smooth functions}\label{sect:weightedSmooth}
A closer look at the norm given in \link{normFd} yields that for 
$f\in \B(F_d)$ we have
\begin{gather}\label{unitball}
        \norm{D^{\bm\alpha} f \sep \L_\infty([0,1]^d)} \leq 1 
        \quad \text{ for all } \quad {\bm\alpha} \in \N_0^d. 
\end{gather}
Hence every derivative is equally important. 
In order to shrink the space, for each ${\bm\alpha} \in \N_0^d$ 
we replace the right-hand side of inequality \link{unitball} 
by a non-negative weight~$\gamma_{\bm\alpha}$. 
For $\bm\alpha$ with $\abs{\bm\alpha}=1$ this means that we control the 
importance of every single variable.
So, the norm in the weighted space $F_d^\gamma$
is now given by
\begin{gather}\label{eq:weightednorm}
        \norm{f \sep F_d^\gamma} = \sup_{\bm\alpha \in \N_0} 
        \frac{1}{\gamma_{\bm\alpha}} \norm{D^{\bm\alpha} f \sep \L_\infty([0,1]^d)},
\end{gather}
where we demand $D^{\bm\alpha} f$ to be equal to zero if $\gamma_{\bm\alpha}=0$. 
It is clear from the construction that we indeed shrink the space if all $\gamma_{\bm\alpha}$
are chosen strictly less than one.

Since this approach is quite general we restrict ourselves 
to so-called \emph{product weights} (with uniformly bounded generators) in what follows.
Thus we assume that 
for every $d\in\N$ there exists an ordered and uniformly bounded sequence
\begin{gather*}
        C_\gamma \geq \gamma_{d,1} \geq \gamma_{d,2} \geq \ldots \geq \gamma_{d,d} \geq 0.
\end{gather*}
Then for $d\in \N$ the product weight sequence
$\gamma=\left( \gamma_{\bm\alpha} \right)_{\bm\alpha \in \N_0^d}$ is given by
\begin{gather}\label{ProdWeights}
        \gamma_{\bm\alpha} 
        = \prod_{j=1}^d \left( \gamma_{d,j} \right)^{\alpha_j}, 
        \quad \bm\alpha \in \N_0^d.
\end{gather}
Note that the dependence of $x_j$ on $f$ is now controlled by the 
so-called \textit{generator weight}~$\gamma_{d,j}$. 
Since $\gamma_{d,j}=0$ for some $j\in \{1,\ldots,d\}$ implies that $f$ 
does not depend on $x_j,\ldots,x_d$ we assume that $\gamma_{d,d}>0$ 
in the rest of this chapter.
Moreover observe that the ordering of $\gamma_{d,j}$ is without loss of generality. 
Later on we will see that tractability of our problem 
will only depend on summability properties of the generator weights.

Among other things, we show in \autoref{sect:conclusions} that for the $\L_\infty$-approximation 
problem defined on the Banach spaces $F_d^\gamma$ with the norm given above 
and generator weights $\gamma_{d,j}\equiv \gamma^{(j)}\in\Theta \left(j^{-\beta} \right)$ we have
\begin{itemize}
        \item intractability for $\beta = 0$,
        \item weak tractability but no polynomial tractability 
              for $0 < \beta < 1$,
        \item strong polynomial tractability if $1 < \beta$.
\end{itemize}
Furthermore, we prove that for $\beta = 1$ the problem is 
not strongly polynomially tractable.

\subsection{Weighted Hilbert spaces and weighted RKHS}\label{sect:weightedSobolev}
Let us briefly discuss the idea of weighted norms in the case
of Hilbert (function) spaces, before we turn to weighted RKHSs.
Our approach is based on a generalization of the so-called ANOVA\footnote{\textbf{an}alysis \textbf{o}f \textbf{va}riance.}
decomposition of $d$-variate functions $f$, where~$d$ is an arbitrary large integer.
For the ease of presentation we follow the lines of \cite[Section 5.3.1]{NW08}. 
Thus, we focus our attention on Hilbert function spaces constructed out of tensor products
and equipped with some assumptions that can be significantly relaxed.
For further information on more general settings the interested reader 
is referred to \cite{KSWW10b} and the references therein.

Given a $d$-fold tensor product space $H_d=H_1\otimes\ldots\otimes H_1$, $d\in\N$, as well as
an orthonormal basis $\{e_i \sep i\in\N\}$ of the underlying 
univariate Hilbert space\footnote{We assume $H_1$ to be separable and infinite-dimensional to keep the notation as short as possible.} $H_1$ that contains the constant function $e_1 \equiv 1$, it is easy to see that every $f\in H_d$
can be represented as
\begin{gather*}
		f = \sum_{\fu \subseteq \{1,\ldots,d\}} f_{\fu}.
\end{gather*}
In this decomposition the (formally $d$-variate) functions $f_\fu$ solely depend on the
variables $x_j$ with index $j\in\fu$.
The main advantage of this kind of representation is that for fixed $f$ the collection of all $f_\fu$, $\fu\subseteq\{1,\ldots,d\}$, can be taken mutually orthogonal \wrt the inner product $\distr{\cdot}{\cdot}_{H_d}$ in $H_d$.
Therefore the norm of $f\in H_d$ can be expressed by
\begin{gather*}
		\norm{f \sep H_d}^2 
		= \sum_{\fu\subseteq\{1,\ldots,d\}} \norm{f_{\fu} \sep H_d}^2 
		= \sum_{\fu\subseteq\{1,\ldots,d\}} \norm{f_{\fu,1} \sep H_{\abs{\fu}}}^2,
\end{gather*}
where $f_{\fu,1}$ equals $f_\fu$ interpreted as an element 
of the $\abs{\fu}$-fold tensor product space~$H_{\abs{\fu}}$ of the closed subspace
\begin{gather*}
		H_1' = \left\{h \in H_1 \sep \distr{h}{e_1}_{H_1}=0\right\} \subset H_1
\end{gather*}		
with itself.
That is, in the unweighted situation the contribution of each $f_\fu$ to the norm of $f\in H_d$ is the same.

Now suppose that we have some additional, a priori knowledge about the
importance of some (groups of) variables in dimension $d$.
This can be modeled by assigning positive\footnote{Also zero weights are possible but for reasons of simplification we do not discuss this more complicated situation in the present brief introduction to weighted Hilbert spaces.}
weights $\gamma_{d,\fu}$ to each of the $2^d$ subsets $\fu$ of $\{1,\ldots,d\}$.
We denote the collection of these weights by $\gamma_{(d)} = \{\gamma_{d,\fu} \sep \fu \subseteq \{1,\ldots,d\} \}$.
Then it can be verified that
\begin{gather}\label{eq:weighted_innerprod}
		\distr{f}{g}_{\gamma_{(d)}} 
		= \sum_{\fu \subseteq \{1,\ldots,d\}} \frac{1}{\gamma_{d,\fu}} \distr{f_{\fu}}{g_{\fu}}_{H_d}
		= \sum_{\fu \subseteq \{1,\ldots,d\}} \frac{1}{\gamma_{d,\fu}} \distr{f_{\fu,1}}{g_{\fu,1}}_{H_{\abs{\fu}}}
\end{gather}
defines an inner product on the tensor product space $H_d$ which implies an equivalent norm depending on $\gamma_{(d)}$.
The Hilbert space $H_d$ endowed with this new inner product will be denoted by $H_d^{\gamma_{(d)}}$.
At this point we need to stress the fact that for general weights $\gamma_{(d)}$ these spaces are no longer
tensor product spaces, although their construction is based on $H_d=H_1\otimes\ldots\otimes H_1$
and $H_{\abs{\fu}}$, respectively.
To overcome this problem we restrict ourselves to the case of \emph{product weights} in the following.
Thus we assume
\begin{gather}\label{eq:prod_weights_u}
		\gamma_{d,\fu} = \prod_{k\in\fu} \gamma_{d,k}
\end{gather}
for some positive $\gamma_{d,k}$, $k=1,\ldots,d$, and every $\fu\subseteq\{1,\ldots,d\}$.
Then it can be checked that indeed $H_d^{\gamma_{(d)}}$ is again a tensor product space.
For the study of other types of weights such as \emph{finite-order}, \emph{finite-diameter},
\emph{order-dependent} or the recently developed \emph{POD\footnote{\textbf{p}roduct and \textbf{o}rder-\textbf{d}ependent.} weights} we refer to Novak and Wo{\'z}niakowski~\cite[Section 5.3.2]{NW08} and to Kuo, Schwab and Sloan~\cite{KSS11}.

In the last decade it turned out that weighted norms provide a powerful tool to
vanquish the curse of dimensionality that we are often faced with.
Since the $H_d^{\gamma_{(d)}}$'s are still Hilbert spaces 
the complexity analysis of weighted problems $S^{\gamma_{(d)}}=(S_d^{\gamma_{(d)}} \colon \B(H_d^{\gamma_{(d)}}) \nach \G_d)_{d\in\N}$
again is based on the singular value decomposition presented in \autoref{sect:SVD}; at least in
the cases where the target spaces $\G_d$ are also Hilbert spaces.
Fortunately, the introduced weights enter the spectrum of the operator 
$W_d^{\gamma_{(d)}}=\left(S_d^{\gamma_{(d)}}\right)^\dagger S_d^{\gamma_{(d)}}$ in a straightforward way.
Therefore in many cases tractability properties of $S$ can be fully characterized in terms
of the singular values and the introduced weights.\\

For our purposes weighted Hilbert spaces that possess a reproducing kernel are of particular interest.
Typical examples of such weighted RKHSs are the following unanchored Sobolev spaces endowed with product weights 
which will play an important role in our further argumentation; 
see also Sloan and Wo{\'z}niakowski~\cite{SW02}.
Instead of applying the presented approach which is based on decompositions 
we use the common procedure and define them directly.

\begin{example}[Unanchored Sobolev spaces $\Hi_d^\gamma$]\label{ex:Unanchored_Sobolev}
As usual we start with the definition for $d=1$ and $\gamma>0$.
Then the space $\Hi_1^\gamma$ is nothing but the Sobolev space of all absolutely continuous
real-valued functions $f$ defined on the unit interval~$[0,1]$ whose 
first derivative\footnote{in the weak or distributional sense} 
$f'$ belongs to the space $\L_2([0,1])$.
The difference to the classical Sobolev space is the inner product which 
here depends on the parameter~$\gamma$:
\begin{align}
		\distr{f}{g}_{\Hi_1^\gamma} 
		&= \distr{f}{g}_{\L_2([0,1])} + \gamma^{-1} \distr{f'}{g'}_{\L_2([0,1])} \label{eq:uni_prod}\\
		&= \int_0^1 f(x) \, g(x) \, \dlambda^1(x) + \gamma^{-1} \int_0^1 f'(x) \, g'(x) \, \dlambda^1(x),
		\qquad f,g\in\Hi_1^\gamma.\nonumber
\end{align}
For the sake of completeness we define the space $\Hi_1^0$ as the limit of
$\Hi_1^\gamma$ for $\gamma \nach 0$.
Consequently the derivatives of $f\in\Hi_1^0$ need to vanish $\uplambda^1$-almost everywhere
on~$[0,1]$ which implies that the space $\Hi_1^0$ only consists of constant functions.
This coincides with the common convention $0/0=0$.

Note that the univariate space $\Hi_1^\gamma$ algebraically coincides with
its anchored analogue $\widetilde{\Hi}_1^\gamma$ 
where the term $\distr{f}{g}_{\L_2([0,1])}$ in \link{eq:uni_prod} 
is replaced by $f(a)\cdot g(a)$
for some \emph{anchor point} $a\in[0,1]$. 
For details we refer to~\cite{SW02} and~\cite{W12}.
Finally we mention that for positive parameters $\gamma$ all these definitions imply
equivalent norms on the classical Sobolev space $W_2^1([0,1])$.

Once more the $d$-variate spaces $\Hi_d^\gamma$ for $d>1$ are defined by a tensor 
product construction similar to \autoref{subsect:def_tensor_prob}.
We set $\Hi_d^\gamma = \bigotimes_{k=1}^d \Hi_1^{\gamma_{d,k}}$,
where now $\gamma$ denotes
a (subset of a) product weight sequence $(\gamma_{\bm{\alpha}})_{\bm{\alpha}\in\{0,1\}^d}$ 
induced by some generator weights $\gamma_{d,k}$, $k=1,\ldots,d$; see \link{ProdWeights}.
Remember that at the beginning of this chapter we assumed $\gamma_{d,d}>0$ for all $d\in\N$. 
That is, we avoid to take the trivial spaces $\Hi_1^0$ as factors in the definition of $\Hi_d^\gamma$.

How does the inner product of $\Hi_d^\gamma$ looks like?
Following the lines of \autoref{subsect:def_tensor_prob} it is uniquely determined by the coordinate-wise
inner products of the factors of simple tensors $f=\bigotimes_{k=1}^d f_k$ and $g=\bigotimes_{k=1}^d g_k$,
where $f_k,g_k\in \Hi_1^{\gamma_{d,k}}$ for $k=1,\ldots,d$.
Consequently,
\begin{align*}
			\distr{f}{g}_{\Hi_d^{\gamma}} 
			&= \prod_{k=1}^d \distr{f_k}{g_k}_{\Hi_1^{\gamma_{d,k}}}
			= \prod_{k=1}^d \left( \distr{f_k}{g_k}_{\L_2([0,1])} + \frac{1}{\gamma_{d,k}} \, \distr{f_k'}{g_k'}_{\L_2([0,1])} \right) \\
			&= \sum_{\fu \subseteq \{1,\ldots,d\}} \prod_{k\in \fu} \frac{1}{\gamma_{d,k}} \cdot \prod_{k\in \fu} \distr{f_k'}{g_k'}_{\L_2([0,1])} \cdot \prod_{j\in \{1,\ldots,d \}\setminus\fu} \distr{f_j}{g_j}_{\L_2([0,1])}\\
			&= \sum_{\fu \subseteq \{1,\ldots,d\}} \prod_{k\in \fu} \frac{1}{\gamma_{d,k}} \cdot 
					\int_{[0,1]^d} \prod_{k\in \fu} f_k'(x_k) \, g_k'(x_k) \prod_{j\in \{1,\ldots,d \}\setminus\fu} f_j(x_j) \, g_j(x_j) \dlambda^d(\bm{x}) \\
			&= \sum_{\fu \subseteq \{1,\ldots,d\}} \frac{1}{\gamma_{d,\fu}} \cdot 
					\int_{[0,1]^d} \frac{\partial^{\abs{\fu}} f}{\partial x_\fu}(\bm{x}) \, \frac{\partial^{\abs{\fu}} g}{\partial x_\fu}(\bm{x}) \dlambda^d(\bm{x}),
\end{align*}
where we used \link{eq:prod_weights_u} and the shorthand notation $\partial^{\abs{\fu}}/(\partial x_\fu)$ 
for $\prod_{k\in\fu} \partial/(\partial x_k)$.
Note that this representation resembles \link{eq:weighted_innerprod} from the general approach 
to weighted Hilbert spaces introduced at the beginning of this subsection.
For our purposes it is more convenient to rewrite the subsets 
$\fu \subseteq \{1,\ldots,d\}$ in terms of multi-indices $\bm{\alpha}=(\alpha_1,\ldots,\alpha_d)\in\{0,1\}^d$.
In detail, we set $\alpha_k=1$ if and only if $k\in\fu$ and $\alpha_k=0$ otherwise.
Then we can express the norm of any $f\in \Hi_d^\gamma$ by
\begin{gather}\label{eq:norm_Sobol}
		\norm{f \sep \Hi_d^\gamma}^2 
		= \sum_{\bm{\alpha} \in \{0,1\}^d} \frac{1}{\gamma_{\bm{\alpha}}} \cdot 
					\int_{[0,1]^d} \abs{D^{\bm{\alpha}}f(\bm{x})}^2 \dlambda^d(\bm{x})
\end{gather}
since then $\gamma_\fu=\gamma_{\bm{\alpha}}$.
The inner products of the multivariate anchored spaces, $\widetilde{\Hi}_d^\gamma$, can be found by a similar reasoning;
see \cite[p. 67]{W12} for the final result.

It is known (cf. Micchelli and Wahba~\cite{MW81}) that the univariate spaces 
$\Hi_1^\gamma$ are reproducing kernel Hilbert spaces for any $\gamma>0$.
Consequently, this property is transferred to the multivariate tensor product space.
To stress this fact we write $\Hi(K_d^\gamma)$ for $\Hi_d^\gamma$ in what follows.
Equation~(5) in~\cite{WW09} now states that the 
reproducing kernel $K_d^\gamma\colon [0,1]^d\times[0,1]^d\nach\R$ 
in dimension $d\geq 1$ is given by\footnote{Here $\sinh$ and $\cosh$ denote the hyperbolic sine and cosine functions, respectively.}
\begin{align*}
				&K_d^\gamma(\bm{x},\bm{y}) \\
				&\quad = \prod_{k=1}^d \frac{\sqrt{\gamma_{d,k}}}{\sinh\!\left(\sqrt{\gamma_{d,k}}\right)} \, \cosh\!\left(\sqrt{\gamma_{d,k}} \, (1-\max{x_k,y_k})\right) \, \cosh\!\left(\sqrt{\gamma_{d,k}}\, \min{x_k,y_k}\right),
\end{align*}
$\bm{x},\bm{y}\in[0,1]^d$.
For $d=1$ this kernel formula follows from Thomas-Agnan~\cite[Corollary~2]{TA96} whereas the higher-dimensional generalization
for product weights $\gamma$ results from the tensor product structure; see \link{eq:tensor_RKHS} in \autoref{sect:RKHS}.
In particular we note that $K_d^\gamma$ is continuous (and thus also bounded) 
along its diagonal 
\begin{gather*}
			\left\{(\bm{x},\bm{y})\in[0,1]^{2d} \sep \bm{x}=\bm{y}\right\}.
\end{gather*}

Moreover, from \cite[Lemma 4.1]{WW09} we know that for $\gamma>0$ the set
\begin{gather*}
				E_1(\gamma) = \left\{ e_{1,\gamma,i} \colon [0,1]\nach\R \sep i\in\N \right\}
\end{gather*}
with $e_{1,\gamma,1} \equiv 1$ and
\begin{gather*}
				e_{1,\gamma,i}(x) 
				= \cos(\pi (i-1) x) \cdot \sqrt{\frac{2\gamma}{\gamma + \pi^2(i-1)^2}}, 
				\qquad x\in[0,1], \quad i \geq 2,
\end{gather*}
builds an orthonormal basis in the univariate space $\Hi(K_1^\gamma)$.
Applying the arguments from \autoref{subsect:def_tensor_prob} this
leads to an ONB $E_d(\gamma)$ of $\Hi(K_d^\gamma)=\bigotimes_{k=1}^d \Hi(K_1^{\gamma_{d,k}})$ that consists of tensor product functions 
\begin{gather}\label{eq:basis_sobolev}
				\widetilde{e}_{d,\gamma,\bm{m}} 
				= \bigotimes_{k=1}^d e_{1,\gamma_{d,k},m_k}, \qquad \bm{m}=(m_1,\ldots,m_d)\in\N^d.
\end{gather}
For a direct proof of this result we refer to \cite[Appendix A.2.1]{NW08}\footnote{Note the missing factor $1/2$ in \cite[p. 351, line 5]{NW08}.}
and to~\cite[Lemma 4.2]{WW09}.
Actually, these proofs show a little bit more;
namely that the functions~$\widetilde{e}_{d,\gamma,\bm{m}}$ together with 
\begin{gather}\label{eq:L2_eigenvalues}
				\widetilde{\lambda}_{d,\gamma,\bm{m}} 
				= \prod_{k=1}^d \lambda_{1,\gamma_{d,k},m_k} 
				= \prod_{k=1}^d \frac{\gamma_{d,k}}{\gamma_{d,k} + \pi^2 (m_k-1)^2},
				\qquad \bm{m}\in\N^d,
\end{gather}
describe the full set of eigenpairs $\{(\widetilde{\lambda}_{d,\gamma,\bm{m}}, \widetilde{e}_{d,\gamma,\bm{m}}) \,|\, \bm{m}\in\N^d\}$ of the 
operator $W_d^\gamma = \left( S_d^\gamma \right)^\dagger S_d^\gamma$
where $S_d^\gamma \colon \Hi_d^\gamma \hookrightarrow \L_2([0,1]^d)$
denotes the solution operator of the $\L_2$-approximation problem on 
$\Hi_d^\gamma=\Hi(K_d^\gamma)$.
\hfill$\square$
\end{example}

\section{Uniform approximation in reproducing kernel Hilbert spaces}\label{sect:uniformRKHS}
The main result of this section is based on a paper of Kuo, Wasilkowski and Wo{\'z}niakowski~\cite{KWW08}.
In contrast to the presentation given in \cite{W12} we decided to apply this result
to the case of the unanchored Sobolev Space introduced in \autoref{sect:weightedSobolev}
instead of the anchored analogue studied in~\cite{KWW08}.
This opens up the opportunity to explain the underlying ideas 
without literally repeating the proof given in~\cite{KWW08} while obtaining a result
which is (according to our knowledge) not published elsewhere so far.

We start with an upper error bound which remains valid for 
any reproducing kernel Hilbert space $\Hi(K_d)$ of real-valued functions $f$ on $[0,1]^d$ with
\begin{gather}\label{eq:BoundedKernel}
		\esssup_{\bm{x}\in[0,1]^d} K_d(\bm{x},\bm{x})<\infty.
\end{gather}
This condition guarantees that $\Hi(K_d)$ is continuously embedded into $\L_\infty([0,1]^d)$
since the reproducing property \link{eq:reproducing_prop}, together with the Hahn-Banach theorem (cf.~\cite[IV.6 Cor.2]{Y80}), yields
that $\norm{\id_d \sep \LO(\Hi(K_d),\L_\infty([0,1]^d))}$ is given by
\begin{align*}
		\sup_{f\in\B(\Hi(K_d))} \norm{f \sep \L_\infty([0,1]^d)} 
		&= \esssup_{\bm{x}\in [0,1]^d} \sup_{f\in\B(\Hi(K_d))} \abs{f(\bm{x})} \\
		&= \esssup_{\bm{x}\in [0,1]^d} \sup_{f\in\B(\Hi(K_d))} \abs{\distr{f}{K_d(\cdot,\bm{x})}_{\Hi(K_d)}} \\
		&= \esssup_{\bm{x}\in [0,1]^d}  K_d(\bm{x},\bm{x})^{1/2}.
\end{align*}

Now the mentioned upper bound reads as follows:
\begin{prop}\label{prop:L_inftyAlgo}
			For $d\in\N$ consider a RKHS $\Hi(K_d)$, where $K_d$ fulfills~\link{eq:BoundedKernel}, \ie $\Hi(K_d)\hookrightarrow\L_\infty([0,1]^d)$.
			Furthermore, suppose $\Xi=\{ \xi_j \colon [0,1]^d\nach\R \sep j\in\N\}$ 
			to be some orthonormal basis of $\Hi(K_d)$ and let $n\in\N_0$.
			Then the algorithm $A_{n,d}^{\Xi}\in\A_d^{n,\lin}(\Lambda^{\all})$, given by
			\begin{gather*}
					f\mapsto A_{n,d}^{\Xi} f = \sum_{j=1}^n \distr{f}{\xi_j}_{\Hi(K_d)}\xi_j(\cdot),
			\end{gather*}
			for uniform approximation on $\Hi(K_d)$ fulfills
			\begin{gather}\label{eq:L_inftyErrorBound}
					\Delta^{\wor}(A^{\Xi}_{n,d}; \id_d \colon \B(\Hi(K_d)) \nach \L_\infty([0,1]^d)) 
					\leq \norm{ \sum_{j=n+1}^\infty \xi_j(\cdot)^2 \sep \L_{\infty}([0,1]^d) }^{1/2}.
			\end{gather}
\end{prop}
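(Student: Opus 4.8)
The plan is to observe that, since $\Xi=\{\xi_j\sep j\in\N\}$ is an orthonormal basis of $\Hi(K_d)$, the algorithm $A_{n,d}^{\Xi}$ is nothing but the orthogonal projection onto $\spann{\xi_1,\ldots,\xi_n}$, so the residual $f-A_{n,d}^{\Xi}f$ is the tail of the Fourier expansion of $f$. First I would check that $A_{n,d}^{\Xi}$ is a legitimate element of $\A_d^{n,\lin}(\Lambda^{\all})$: each $L_j\colon f\mapsto\distr{f}{\xi_j}_{\Hi(K_d)}$ is a continuous linear functional on $\Hi(K_d)$, hence lies in $\Lambda^{\all}$, and the coefficients $g_j=\xi_j$ belong to $\L_\infty([0,1]^d)$ by the continuous embedding $\Hi(K_d)\hookrightarrow\L_\infty([0,1]^d)$ that \eqref{eq:BoundedKernel} forces (this embedding is established just before the statement). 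Next, for a fixed $\bm{x}\in[0,1]^d$, the reproducing property \eqref{eq:reproducing_prop} applied to $\xi_j$ gives $\distr{K_d(\cdot,\bm{x})}{\xi_j}_{\Hi(K_d)}=\xi_j(\bm{x})$, so $K_d(\cdot,\bm{x})=\sum_{j\in\N}\xi_j(\bm{x})\,\xi_j$ in $\Hi(K_d)$ — this is \eqref{eq:kernel_sum_basis} read through the basis $\Xi$, and the series converges since $\sum_{j\in\N}\xi_j(\bm{x})^2=K_d(\bm{x},\bm{x})<\infty$. Combining this with the reproducing property for $f$ yields the pointwise identity
\begin{gather*}
		f(\bm{x})-A_{n,d}^{\Xi}f(\bm{x})
		= \distr{f}{K_d(\cdot,\bm{x})}_{\Hi(K_d)} - \sum_{j=1}^n \distr{f}{\xi_j}_{\Hi(K_d)}\,\xi_j(\bm{x})
		= \sum_{j=n+1}^\infty \distr{f}{\xi_j}_{\Hi(K_d)}\,\xi_j(\bm{x}).
\end{gather*}

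Then I would apply the Cauchy--Schwarz inequality in $\l_2$ to this residual, pointwise in $\bm{x}$,
\begin{gather*}
		\abs{f(\bm{x})-A_{n,d}^{\Xi}f(\bm{x})}
		\leq \left(\sum_{j=n+1}^\infty \distr{f}{\xi_j}_{\Hi(K_d)}^2\right)^{\!1/2} \left(\sum_{j=n+1}^\infty \xi_j(\bm{x})^2\right)^{\!1/2},
\end{gather*}
and bound the first factor by $\norm{f\sep\Hi(K_d)}\leq 1$ via Parseval's identity. For $f\in\B(\Hi(K_d))$ this leaves $\abs{f(\bm{x})-A_{n,d}^{\Xi}f(\bm{x})}\leq\bigl(\sum_{j>n}\xi_j(\bm{x})^2\bigr)^{1/2}$ for every $\bm{x}$; since $\sum_{j>n}\xi_j(\bm{x})^2\leq\sum_{j\in\N}\xi_j(\bm{x})^2=K_d(\bm{x},\bm{x})$ is essentially bounded by \eqref{eq:BoundedKernel}, I may pass to the essential supremum over $\bm{x}\in[0,1]^d$ and then to the supremum over $f\in\B(\Hi(K_d))$ — which is harmless because the resulting bound no longer involves $f$ — to obtain exactly \eqref{eq:L_inftyErrorBound}.

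I do not expect a genuine obstacle in this proposition; the only points requiring a little care are the justification of the pointwise basis expansion of $f$ (supplied by the reproducing property together with the finiteness $K_d(\bm{x},\bm{x})<\infty$) and the verification that the right-hand side of \eqref{eq:L_inftyErrorBound} is indeed a well-defined $\L_\infty$-quantity (supplied by \eqref{eq:BoundedKernel}). The substantive difficulty lies downstream: controlling $\norm{\sum_{j>n}\xi_j(\cdot)^2\sep\L_\infty([0,1]^d)}$ for the concrete unanchored Sobolev kernels $K_d^\gamma$ of \autoref{ex:Unanchored_Sobolev} and choosing the basis $\Xi$ (built from the eigenfunctions underlying \eqref{eq:L2_eigenvalues} and the product-weight structure) so that this tail decays fast enough in $n$ uniformly — or with mild dependence — in $d$, which is where the tractability analysis of the following section will take over.
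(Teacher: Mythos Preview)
Your proof is correct and follows essentially the same route as the paper: express the residual as the tail $\sum_{j>n}\distr{f}{\xi_j}\xi_j(\bm{x})$, apply Cauchy--Schwarz (you do it in $\ell_2$, the paper equivalently in $\Hi(K_d)$ via Parseval), bound the coefficient factor by $\norm{f\sep\Hi(K_d)}$, and take suprema. Your additional checks that $A_{n,d}^{\Xi}\in\A_d^{n,\lin}(\Lambda^{\all})$ and that the right-hand side of \eqref{eq:L_inftyErrorBound} is a well-defined $\L_\infty$-quantity are appropriate and do not diverge from the paper's argument.
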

\begin{proof}
Since $\Xi$ builds an ONB we may represent any
$f\in\Hi(K_d)$ by its basis expansion, $f=\sum_{j=1}^\infty \distr{f}{\xi_j}_{\Hi(K_d)}\, \xi_j$.
Therefore Parseval's identity implies
\begin{align*}
		\abs{f(\bm{x}) - A^{\Xi}_{n,d}f(\bm{x})}
		&= \abs{(f - A^{\Xi}_{n,d}f)(\bm{x})}
		= \abs{\sum_{j=n+1}^\infty \distr{f}{\xi_j}_{\Hi(K_d)}\, \xi_j(\bm{x})}\\
		&=\abs{\distr{f}{\sum_{j=n+1}^\infty \xi_j(\bm{x})\,\xi_j}_{\Hi(K_d)}}
\end{align*}
which can be estimated from above using the inequality of Cauchy and Schwarz.
Thus we obtain
\begin{align}
		\abs{f(\bm{x}) - A^{\Xi}_{n,d}f(\bm{x})}
		&\leq \norm{f \sep \Hi(K_d)} \cdot \norm{\sum_{j=n+1}^\infty \xi_j(\bm{x})\,\xi_j \sep \Hi(K_d)} \label{pw_est}\\
		&= \norm{f \sep \Hi(K_d)} \cdot \left(\sum_{j=n+1}^\infty \xi_j(\bm{x})^2 \right)^{1/2} \nonumber
\end{align}
for every $f\in\Hi(K_d)$ and all fixed $\bm{x}\in[0,1]^d$.
Taking the (essential) supremum with respect to $\bm{x}$ in the $d$-dimensional unit cube and
the supremum over all $f\in\B(\Hi(K_d))$ gives the desired result.
\end{proof}

We note in passing that we can easily prove more than we stated in the latter assertion.
In what follows we only need the given upper error bound
such that we restrict ourselves to some brief comments on further results in the next remark.

\begin{rem}
For fixed $\bm{x}\in[0,1]^d$ we see that the function $f^*=C\cdot \sum_{j=n+1}^\infty \xi_j(\bm{x})\,\xi_j$ with $C>0$ gives equality in \link{pw_est}. 
Of course, we can choose the constant $C$ such that 
$\norm{f^* \sep\Hi(K_d)}=1$ provided that $\bm{x}$ is not a common root of $\xi_j$ for all $j > n$.
Hence, the upper bound in \link{eq:L_inftyErrorBound} is sharp.

Moreover, \cite[Theorem 2]{KWW08} shows that the $n$th minimal worst case error for $\L_\infty$-approximation on $\Hi(K_d)$ is given by
\begin{gather*}
			e^{\wor}(n,d; \id_d \colon \B(\Hi(K_d)) \nach \L_\infty([0,1]^d)) 
			= \inf_{\Xi=\{\xi_j \sep j\in\N \}} \norm{ \sum_{j=n+1}^\infty \xi_j(\cdot)^2 \sep \L_{\infty}([0,1]^d) }^{1/2},
\end{gather*}
where the infimum is taken \wrt all orthonormal bases $\Xi\subset\Hi(K_d)$.
Thus, any clever choice of the basis $\Xi$ in \autoref{prop:L_inftyAlgo} leads to algorithms $A^{\Xi}_{n,d}$ with almost optimal worst case errors.
\hfill$\square$
\end{rem}

Next we apply \autoref{prop:L_inftyAlgo} to the weighted unanchored Sobolev spaces $\Hi_d^\gamma$ 
introduced in  \autoref{sect:weightedSobolev} using the basis $\Xi=E_d(\gamma)$ given in \link{eq:basis_sobolev}.
Since the ordering of the basis functions $\xi \in \Xi$ is essential for our application 
we rearrange them non-increasingly with respect to their $\L_\infty$-norm:
\begin{gather}\label{eq:OrderedBasis}
		\norm{\xi_j \sep \L_{\infty}([0,1]^d)} 
		\geq \norm{\xi_{j+1} \sep \L_{\infty}([0,1]^d)}
		\quad \text{for all} \quad j\in\N.
\end{gather}
We obtain an estimate which resembles the corresponding result for the anchored case studied in \cite[Proposition~2]{W12}.
\begin{cor}\label{cor:optAlgoSobolev}
			For $n\in\N_0$ and $d\in\N$ there exists an algorithm $A_{n,d}^*\in\A_d^{n,\lin}(\Lambda^{\all})$ for uniform approximation on $\Hi^\gamma_d$ such that for every $\tau \in (1/2,1)$ 
			\begin{gather*}
						\Delta^\wor(A_{n,d}^*; \id_d\colon \B(\Hi_d^\gamma)\nach \L_{\infty}([0,1]^d) )
				< a_\tau \exp{b_\tau \, \sum_{k=1}^d (\gamma_{d,k})^\tau} \cdot n^{-(1-\tau)/(2\tau)},
			\end{gather*}
			where the constants $a_\tau, b_{\tau}>0$ are independent of $\gamma$, $n$, and $d$.
\end{cor}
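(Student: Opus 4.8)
The plan is to apply \autoref{prop:L_inftyAlgo} with the ONB $\Xi=E_d(\gamma)$ from \link{eq:basis_sobolev}, after sorting the basis functions according to \link{eq:OrderedBasis}. First I would record the $\L_\infty$-norms of the univariate basis functions $e_{1,\gamma,i}$: since $\norm{e_{1,\gamma,1}\sep\L_\infty([0,1])}=1$ and, for $i\geq 2$,
\begin{gather*}
		\norm{e_{1,\gamma,i}\sep\L_\infty([0,1])}^2
		= \frac{2\gamma}{\gamma+\pi^2(i-1)^2}
		= 2\,\lambda_{1,\gamma,i},
\end{gather*}
which is exactly (twice) the $\L_2$-approximation eigenvalue from \link{eq:L2_eigenvalues}. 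Consequently, for a tensor product basis function $\widetilde{e}_{d,\gamma,\bm{m}}=\bigotimes_{k=1}^d e_{1,\gamma_{d,k},m_k}$ we get $\norm{\widetilde{e}_{d,\gamma,\bm{m}}\sep\L_\infty([0,1]^d)}^2 = 2^{\#\{k\sep m_k\geq2\}}\prod_{k=1}^d\lambda_{1,\gamma_{d,k},m_k}$, so controlling the tail sum in \link{eq:L_inftyErrorBound} reduces to a counting/summation estimate over multi-indices $\bm{m}\in\N^d$.

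Next I would, for a fixed $\tau\in(1/2,1)$, bound the $\L_\infty$-tail sum by a weighted $\ell_\tau$-type quantity. The key inequality is the elementary one: for nonnegative reals and $0<\tau\leq1$, $\sum_j b_j \leq \big(\sum_j b_j^{\tau}\big)^{1/\tau}$ is false in general, but $\big(\sum_{j>n} b_j\big) \leq n^{-(1-\tau)/\tau}\big(\sum_j b_j^{\tau}\big)^{1/\tau}$ when the $b_j$ are non-increasingly ordered — this is the standard Stechkin-type bound and mirrors the estimate $\lambda_{d,n}(n-f(d)+1)^{1/\tau}\leq(\sum\lambda_{d,i}^\tau)^{1/\tau}$ used in the proof of \autoref{Thm:General_Tract_abs}. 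Applying this with $b_j=\norm{\xi_j\sep\L_\infty}^2$ and using the ordering \link{eq:OrderedBasis}, the right-hand side of \link{eq:L_inftyErrorBound} is at most
\begin{gather*}
		n^{-(1-\tau)/(2\tau)}\left(\sum_{\bm{m}\in\N^d}\norm{\widetilde{e}_{d,\gamma,\bm{m}}\sep\L_\infty([0,1]^d)}^{2\tau}\right)^{1/(2\tau)}.
\end{gather*}
By the tensor product structure this sum factorizes:
\begin{gather*}
		\sum_{\bm{m}\in\N^d}\prod_{k=1}^d\big(2^{[m_k\geq2]}\lambda_{1,\gamma_{d,k},m_k}\big)^{\tau}
		= \prod_{k=1}^d\left(1 + 2^{\tau}\sum_{i=2}^\infty\lambda_{1,\gamma_{d,k},i}^{\tau}\right),
\end{gather*}
and the univariate sum $\sum_{i\geq2}\lambda_{1,\gamma,i}^\tau = \gamma^\tau\sum_{i\geq2}(\gamma+\pi^2(i-1)^2)^{-\tau} \leq c_\tau\,\gamma^\tau$ for some constant $c_\tau$ depending only on $\tau$ (here $2\tau>1$ is needed for convergence, which is why we require $\tau>1/2$). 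Then each factor is $\leq 1 + 2^\tau c_\tau(\gamma_{d,k})^\tau \leq \exp{2^\tau c_\tau(\gamma_{d,k})^\tau}$, and taking the product over $k$ and the $1/(2\tau)$-th power yields the factor $\exp{b_\tau\sum_{k=1}^d(\gamma_{d,k})^\tau}$ with $b_\tau = 2^\tau c_\tau/(2\tau)$; the prefactor $a_\tau$ absorbs any loss from passing between the strict and non-strict inequality and from the Stechkin constant. Setting $A_{n,d}^* = A_{n,d}^{\Xi}$ for this ordered basis completes the construction.

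The main obstacle I anticipate is the bookkeeping needed to pass cleanly from the multi-index sum over $\bm{m}\in\N^d$ (which is what \autoref{prop:L_inftyAlgo} naturally produces after one fixes the \emph{ordering} \link{eq:OrderedBasis}) to the re-indexed non-increasing sequence $(\xi_j)_{j\in\N}$, and in particular making the Stechkin-type truncation bound rigorous with a clean, $d$-independent constant. One must be careful that the ordering in \link{eq:OrderedBasis} is the $\L_\infty$-ordering rather than the $\L_2$-ordering; fortunately, since the $\L_\infty$-norm squared is comparable (up to the harmless factor $2^{\#\{k:m_k\geq2\}}$, which in the $\tau$-summed estimate just gets absorbed into $b_\tau$) to the $\L_2$-eigenvalue, the tail sum is unaffected by which of these orderings one uses — the sum $\sum_{j>n}b_j$ with $(b_j)$ non-increasing is minimized precisely by this ordering, and any fixed ordering only makes the tail larger, so the inequality direction is the favorable one. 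A secondary point is ensuring the geometric factor is written in the stated exponential form rather than as a product $\prod_k(1+\ldots)$; this is the routine $1+x\leq e^x$ step. No genuinely new idea beyond the Stechkin bound and the tensor factorization is required, so I expect the proof to be short once the indexing is set up carefully.
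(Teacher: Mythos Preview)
Your proposal is correct and follows essentially the same route as the paper: apply \autoref{prop:L_inftyAlgo} with the basis $E_d(\gamma)$ ordered as in \link{eq:OrderedBasis}, use the monotonicity bound $j\,\norm{\xi_j^2}_\infty^\tau \leq \sum_m \norm{\xi_m^2}_\infty^\tau$ (your ``Stechkin-type bound''), factorize the $\tau$-sum over $\bm{m}\in\N^d$ using the tensor product structure, bound each univariate factor by $1+c_\tau(\gamma_{d,k})^\tau\leq\exp{c_\tau(\gamma_{d,k})^\tau}$, and finally estimate $\sum_{j>n} j^{-1/\tau}$ by the integral $\tfrac{\tau}{1-\tau}\,n^{-(1-\tau)/\tau}$. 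The paper carries out exactly these steps, arriving at $a_\tau=\sqrt{\tau/(1-\tau)}$ and $b_\tau=(2/\pi^2)^\tau\zeta(2\tau)/(2\tau)$; your anticipated ``obstacle'' about passing between the multi-index and the ordered enumeration is a non-issue since the full $\tau$-sum is invariant under reordering.
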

\begin{proof}
To keep the notation as short as possible we abbreviate the $\L_\infty$-norm in $d$ dimensions,
$\norm{\cdot \sep \L_{\infty}([0,1]^d)}$, by $\norm{\cdot}_d$ within this proof.

Following our plan we fix $n\in\N_0$, as well as $d\in\N$, and take $A_{n,d}^*=A_{n,d}^{\Xi}$ 
defined in \autoref{prop:L_inftyAlgo} with $\Xi=E_d(\gamma)$ as above.
From \link{eq:basis_sobolev} we conclude for $d=1$ and any $\gamma>0$ that
\begin{gather*}
		\norm{e_{1,\gamma,1}^2}_1 = 1
		\quad \text{and} \quad
		\norm{e_{1,\gamma,i}^2}_1 = \frac{2\gamma}{\gamma + \pi^2 (i-1)^2} < \frac{2\,\gamma}{\pi^2} \cdot (i-1)^{-2}, \quad i\geq 2.
\end{gather*}
Moreover, for every simple tensor $f = \bigotimes_{k=1}^d f_k \in \Hi(K_d^\gamma)$ we clearly have 
\begin{gather*}
		\norm{f}_d = \prod_{k=1}^d \norm{f_k}_1
		\quad \text{and} \quad f(\bm{x})^2 = \prod_{k=1}^d f_k(x_k)^2, \quad \bm{x}\in[0,1]^d.
\end{gather*}
Consequently, for any $j\in\N$ and all $\tau \in(1/2,\infty)$ 
the ordering of $\Xi$ given in \link{eq:OrderedBasis} implies
\begin{align*}
			 j \cdot \norm{\xi_j^2}_d^\tau
			 &\leq \sum_{m=1}^\infty \norm{\xi_m^2}_d^\tau
			 = \sum_{\bm{m}\in\N^d} \norm{\widetilde{e}_{d,\gamma,\bm{m}}^2}_d^\tau
			 = \prod_{k=1}^d \sum_{i=1}^\infty \norm{e_{1,\gamma_{d,k},i}^2}_1^\tau \\
			 &=\prod_{k=1}^d \left( 1 + \sum_{i=2}^\infty \norm{e_{1,\gamma_{d,k},i}^2}_1^\tau \right)
			 <\prod_{k=1}^d \left( 1 + \left(\frac{2\, \gamma_{d,k}}{\pi^2}\right)^\tau \sum_{i=2}^\infty (i-1)^{-2\tau} \right) \\
			 &= \prod_{k=1}^d \left( 1 + c_\tau \gamma_{d,k}^\tau \right),
\end{align*}
where we set $c_\tau = (2/\pi^2)^\tau \, \zeta(2\tau)$.
Hence, if $\tau\in(1/2,1)$ then
\begin{align*}
			 \norm{\sum_{j=n+1}^\infty \xi_j^2}_d
			 \leq \sum_{j=n+1}^\infty \norm{\xi_j^2}_d
			 < \sum_{j=n+1}^\infty j^{-1/\tau} \cdot \left(\prod_{k=1}^d \left( 1 + c_\tau \gamma_{d,k}^\tau \right) \right)^{1/\tau}<\infty.
\end{align*}
Since the first factor is no larger than $\int_n^\infty x^{-1/\tau} \dlambda^1(x)= \tau/(1-\tau)\cdot n^{-(1-\tau)/\tau}$
and the second factor can be bounded by $\exp{c_\tau/\tau \cdot \sum_{k=1}^d (\gamma_{d,k})^\tau}$
we conclude
\begin{gather*}
				\norm{\sum_{j=n+1}^\infty \xi_j(\cdot)^2 \sep \L_{\infty}([0,1]^d)}^{1/2} 
				< a_\tau \exp{b_\tau \, \sum_{k=1}^d (\gamma_{d,k})^\tau} \cdot n^{-(1-\tau)/(2\tau)}
\end{gather*}
with $a_\tau = \sqrt{\tau/(1-\tau)}$ and $b_\tau = c_\tau/(2\tau)=(2/\pi^2)^\tau \, \zeta(2\tau) / (2\tau)$.
Now the claim follows from \link{eq:L_inftyErrorBound} in \autoref{prop:L_inftyAlgo}.
\end{proof}

\section{Uniform approximation in Banach spaces of smooth functions}\label{sect:LinftyApprox}
Our derivation of necessary and sufficient conditions for various kinds of tractability
for the $\L_\infty$-approximation problem defined on the weighted spaces $F_d^\gamma$
introduced in \autoref{sect:weightedSmooth} is based on simple embedding arguments.
To this end, we consider a whole scale of Banach spaces $\F_d^\gamma$ (where $F_d^\gamma$ is a special case of).
Then we first study lower bounds on the $n$th minimal error on 
a space $\P_d^\gamma \hookrightarrow \F_d^\gamma$ which consists of
$d$-variate polynomials of low degree. 
Afterwards, in \autoref{sect:embeddings}, we use the results 
for $\Hi_d^\gamma \hookleftarrow \F_d^\gamma$
from \autoref{sect:uniformRKHS} to conclude corresponding upper bounds.
Finally we discuss a couple of concrete examples 
in \autoref{sect:conclusions}.

\subsection{Lower bounds for spaces of low-degree polynomials}\label{sect:pol_bound}
Following the lines of 
\cite[Section 4]{W12} we use \autoref{needed_lemma} to obtain a lower bound 
for the $\L_\infty$-approximation error for the space
\begin{gather*}
        \P_d^\gamma
        = \spann{p_{\bm{i}} \colon [0,1]^d \nach \R, \, p_i(\bm{x}) = \bm{x}^{\bm{i}} = \prod_{j=1}^d \left( x_j\right)^{i_j} 
        		\sep \bm{i}=(i_1,\dots,i_d)\in\{0,1\}^d}
\end{gather*}
of all real-valued $d$-variate polynomials of degree at most one 
in each coordinate direction, defined on the unit cube $[0,1]^d$. 
We equip this linear space with the weighted norm
\begin{gather}\label{eq:norm_P}
        \norm{f \sep \P_d^\gamma} = \maxx_{\bm{\alpha} \in \{0,1\}^d} 
        \frac{1}{\gamma_{\bm{\alpha}}} \norm{D^{\bm{\alpha}} f \sep \L_\infty([0,1]^d)}, \qquad f \in \P_d^\gamma,
\end{gather}
similar to \link{eq:weightednorm}, where $\gamma$ is a product weight sequence as described 
in \link{ProdWeights}, and study the worst case setting.

\begin{theorem}\label{Theorem_Polynom}
				For $d\in\N$ and $n\in\N_0$ assume 
				$A_{n,d}\in\A_d^{n, \rm cont} \cup \A_d^{n, \rm adapt}$
				to be an arbitrary algorithm for the uniform approximation problem defined on $\P_d^\gamma$.
				Then we have 
        \begin{gather*}
                \Delta^\wor (A_{n,d}; \id_d \colon B_r(\P_d^\gamma)\nach \L_{\infty}([0,1]^d)) 
                \geq r \quad \text{for all} \quad r\geq 0
        \end{gather*}
        provided that $n<2^s$, where $s=s(\gamma,d)\in\{0,1,\ldots,d\}$ is some integer such that
				\begin{gather}\label{estimate_s}
                s > \frac{1}{2+C_\gamma} \cdot \left(\sum_{j=1}^d \gamma_{d,j} - 2 \right).
        \end{gather}
\end{theorem}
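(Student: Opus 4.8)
The plan is to deduce the statement from \autoref{needed_lemma}, applied to the linear (in particular homogeneous) operator $\id_d\colon\P_d^\gamma\nach\L_\infty([0,1]^d)$. Since the multi-index $\bm\alpha=\bm 0$ occurs in the supremum defining $\norm{\cdot\sep\P_d^\gamma}$ with $\gamma_{\bm 0}=1$, one always has $\norm{f\sep\P_d^\gamma}\geq\norm{f\sep\L_\infty([0,1]^d)}$; hence it suffices to exhibit a linear subspace $V\subseteq\P_d^\gamma$ with $\dim V=2^s$ on which equality holds, i.e.\ with $\norm{f\sep\P_d^\gamma}=\norm{\id_d f\sep\L_\infty([0,1]^d)}$ for all $f\in V$. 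Then \link{eq:NormCondition} holds with $a=1$, and \autoref{needed_lemma} gives $\Delta^\wor(A_{n,d};\id_d\colon B_r(\P_d^\gamma)\nach\L_\infty([0,1]^d))\geq r$ for every $n<2^s$ and all $r\geq 0$, which is exactly the assertion.

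To construct $V$ (and to fix $s$) I would split the coordinates greedily: running through $j=1,\ldots,d$ and accumulating weights, I open successive pairwise disjoint blocks $I_1,\ldots,I_s\subseteq\{1,\ldots,d\}$, closing a block the first time its weight $\Lambda_k:=\sum_{j\in I_k}\gamma_{d,j}$ reaches $2$. Closing overshoots the threshold by at most the weight $\gamma_{d,j}\leq C_\gamma$ of the last coordinate added, so $2\leq\Lambda_k<2+C_\gamma$ for each block, while the coordinates left over at the end have total weight $<2$. Summing, $\sum_{j=1}^d\gamma_{d,j}<s(2+C_\gamma)+2$, which is precisely the lower bound \link{estimate_s} on the (integer) number of blocks $s\in\{0,1,\ldots,d\}$; the degenerate case $s=0$ means $\sum_j\gamma_{d,j}<2$ and is covered by taking $V=\spann{1}$ (constants) in \autoref{needed_lemma}. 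In the generic case put $L_k(\bm x)=\sum_{j\in I_k}\gamma_{d,j}x_j$ (a linear form sweeping out $[0,\Lambda_k]$) and
\begin{gather*}
		V=\spann{\prod_{k\in u}L_k \sep u\subseteq\{1,\ldots,s\}}.
\end{gather*}
Since the $L_k$ are non-constant linear forms in disjoint groups of variables, their products over distinct subsets $u$ are linearly independent, so $\dim V=2^s$; moreover every element of $V$ has degree at most one in each $x_j$, hence lies in $\P_d^\gamma$.

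The technical core is to verify $\norm{D^{\bm\alpha}f\sep\L_\infty([0,1]^d)}\leq\gamma_{\bm\alpha}\norm{f\sep\L_\infty([0,1]^d)}$ for every $f\in V$ and $\bm\alpha\in\{0,1\}^d$. Each $f\in V$ is affine in each single $L_k$ and depends on $\bm x$ only through $(L_1,\ldots,L_s)$, so $D^{\bm\alpha}f=0$ unless $\supp\bm\alpha\subseteq\bigcup_k I_k$ and meets every block in at most one coordinate, say in $j_1\in I_{k_1},\ldots,j_t\in I_{k_t}$; in that case the chain rule gives $D^{\bm\alpha}f=\left(\prod_{l=1}^t\gamma_{d,j_l}\right)\partial_{L_{k_1}}\!\cdots\partial_{L_{k_t}}\widetilde f$ while $\gamma_{\bm\alpha}=\prod_{l=1}^t\gamma_{d,j_l}$, where $\widetilde f$ is $f$ viewed as a multilinear function on the box $\prod_k[0,\Lambda_k]$ (over which the $\L_\infty([0,1]^d)$-norm of such functions is attained). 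Thus it remains to show $\norm{\partial_{L_{k_1}}\!\cdots\partial_{L_{k_t}}\widetilde f}_\infty\leq\norm{\widetilde f}_\infty$, which I would obtain by iterating the elementary one-variable bound: if $\phi$ is affine on an interval of length $\Lambda\geq 2$, then $\partial_L\phi$ is a difference quotient across the whole interval, whence $\norm{\partial_L\phi}_\infty\leq(2/\Lambda)\norm{\phi}_\infty\leq\norm{\phi}_\infty$ — and the greedy construction guarantees $\Lambda_k\geq 2$ for every block, so all factors $2/\Lambda_{k_l}$ are at most $1$. I expect this verification — keeping track that the mixed derivative collapses exactly into the product of the single-block estimates — to be the main obstacle; the counting of $s$ and the invocation of \autoref{needed_lemma} are then routine.
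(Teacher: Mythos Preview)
Your proposal is correct and follows essentially the same route as the paper: the same greedy block construction of $I_1,\ldots,I_s$ with threshold~$2$, the same subspace $V$ spanned by subset-products of the block-linear forms $L_k$, the same reduction via the chain rule to a multilinear function on the box $\prod_k[0,\Lambda_k]$, and the same application of \autoref{needed_lemma} with $a=1$. The only cosmetic difference is your one-variable step: you use the difference-quotient bound $|\phi'|=|\phi(\Lambda)-\phi(0)|/\Lambda\leq(2/\Lambda)\norm{\phi}_\infty$, whereas the paper argues directly that $|a|\leq\max\{|b|,|a\Lambda+b|\}$ by minimizing over the free parameter $t=-b/a$; both need exactly $\Lambda\geq2$ and are equivalent.
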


\begin{proof}
The proof of this lower error bound consists of several steps. 
First we fix $d\in\N$ and construct a partition of the set of coordinates $\{1,\ldots,d\}$ 
into $s+1$ parts which we will need later and with $s=s(\gamma,d)$ 
satisfying~\link{estimate_s}. 
In a second step we define a special 
linear subspace $V \subseteq \P_d^{\gamma}$ with $\dim V = 2^s$. 
Step~3 then shows that~$V$ satisfies the assumptions of 
\autoref{needed_lemma}. The proof is completed in Step~4.

\textit{Step 1}. 
For $k\in\{0,\ldots,d\}$ let us define inductively $m_0=0$ and
\begin{gather*}
        m_k = \inf \left\{t \in \N \sep m_{k-1} < t 
        \leq d, \, \text{ with } \, 2 \leq \sum_{j=m_{k-1}+1}^t \gamma_{d,j} \right\}
\end{gather*}
with the usual convention $\inf \leer = \infty$. 
Note that the infimum coincides with the minimum in the finite case, since 
then $m_k\in\N$. 
Moreover we set
\begin{gather*}
        s = \max{ k\in\{0,\ldots,d\} \sep m_k < \infty }.
\end{gather*}
We denote 
$I_k = \{ m_{k-1}+1, m_{k-1}+2, \ldots, m_k \}$ for $k=1,\ldots,s$. 
Thus, this gives a uniquely defined disjoint partition of the set 
\begin{gather*}
        \{1,\ldots,d\} = \left( \bigcup_{k=1}^s I_k \right) \cup \{m_s+1,\ldots,d\}, 
\end{gather*}
and $m_k$ denotes the last element of the block $I_k$. 
For all $k=1,\ldots,s$ we conclude
\begin{gather*}
				2 \leq \sum_{j\in I_k} \gamma_{d,j} < 2 + \gamma_{d,m_k} \leq 2 + C_\gamma,
\end{gather*}
where $C_\gamma$ is the uniform upper bound for $\gamma_{d,j}$; see \autoref{sect:weightedSmooth}.
Finally, summation of these inequalities gives 
\begin{gather*}
				\sum_{j=1}^{d} \gamma_{d,j} < \sum_{k=1}^s \sum_{j\in I_k} \gamma_{d,j} + 2 < (2+C_\gamma) s + 2,
\end{gather*}
and \link{estimate_s} follows immediately. 

If $s=0$ then we can stop at this point since the initial error is $1$ 
as the norm of the embedding $\P_d^\gamma\hookrightarrow \L_\infty$ (cf. \autoref{prop:init_error})
and the remaining assertion is trivial.
Hence, from now on we can assume that $s>0$ and thus $m_s \geq 1$.

\textit{Step 2}. 
To apply \autoref{needed_lemma} we have to construct a linear subspace 
$V$ of $\F = \P_d^\gamma$ such that the condition \link{eq:NormCondition} 
holds for the target space $\G = \L_\infty([0,1]^d)$, the embedding operator $S=\id_d$, and $a = 1$.
Note that we restrict ourselves to the set 
\begin{gather*}
				\widehat{\F} = \left\{ f\in \F \sep \ f \text{ depends only on } x_1,\ldots,x_{m_s} \right\},
\end{gather*}
since we can interpret $\widehat{\F}$ 
as the space $\P_{m_s}^\gamma$ by a simple isometric isomorphism. 

We are ready to construct a suitable space $V$ using the partition 
from Step~1. 
We define~$V$ as the span of all functions
$g_{\bm{i}} \colon [0,1]^{m_s} \nach \R$, $\bm{i}=(i_1,\dots,i_s) \in\{0,1\}^s$, 
of the form
\begin{gather*}
        g_{\bm{i}}(\bm{x}) 
        = \prod_{k=1}^s \left( \sum_{j\in I_k} \gamma_{d,j} \cdot x_j \right)^{i_k}, 
        \quad \bm{x} \in X = [0,1]^{m_s}.
\end{gather*}
Clearly, $V$ is a linear subspace of $\P_{m_s}^\gamma$ and 
with the interpretation above it is also a linear subspace 
of $\F$. 
Moreover it is easy to see that we have by construction 
\begin{gather*}
				\norm{g \sep \F} = \norm{g \sep \P_{m_s}^\gamma}
				\quad \text{and}\quad 
				\norm{g \sep \L_\infty(X)} = \norm{g \sep \L_\infty([0,1]^d)}
				\quad \text{for} \quad g\in V.
\end{gather*}
Finally we note that $\dim V = \# \{0,1\}^s = 2^s$. 
It remains to show 
that this subspace is the right choice to prove the claim using 
\autoref{needed_lemma}.

\textit{Step 3}. 
The proof of the needed condition \link{eq:NormCondition},
\begin{gather*}
        \norm{g \sep \P_{m_s}^\gamma} \leq 
        \norm{g \sep \L_\infty(X)} \quad \text{for all} \quad g\in V,
\end{gather*}
is a little bit technical. Due to the special structure of the functions 
$g\in V$, the left-hand side reduces to 
$\max{\gamma_{\bm{\alpha}}^{-1} \norm{D^{\bm{\alpha}} g \sep \L_\infty(X)} \sep \bm{\alpha}\in \mathbb{M}}$,
where the maximum is taken over all multi-indices $\bm{\alpha}$ in the set
\begin{gather*}
        \mathbb{M}= \left\{ {\bm{\alpha}} \in \{0,1\}^{m_s} 
        \sep \sum_{j\in I_k} \alpha_j \leq 1 \text{ for all } k=1,\ldots,s \right\}.
\end{gather*}
This is simply because for ${\bm{\alpha}} \notin \mathbb{M}$ we have $D^{\bm{\alpha}} g \equiv 0$ 
and then the inequality is trivial. 
To simplify the notation let us define
\begin{gather*}
        T \colon \{0,1\}^{m_s} \nach \N_0^s, \quad 
        {\bm{\alpha}} \mapsto T({\bm{\alpha}}) = \bm{\sigma} = (\sigma_1,\dots,\sigma_s),
\end{gather*} 
where
\begin{gather*}
        \sigma_k= \sum_{j\in I_k} \alpha_j \quad \text{for} \quad k=1,\ldots,s.
\end{gather*}
Note that $T(\mathbb{M})=\{0,1\}^s$.
Moreover, for every $g=\sum_{\bm{i}\in\{0,1\}^s} c_{\bm{i}} \, g_{\bm{i}}(\cdot) \in V$ 
we define a function
\begin{gather*}
        h_g \colon Z= \bigtimes_{k=1}^s \left[ 0, \sum_{j\in I_k} \gamma_{d,j} \right] 
        \nach \R, 
        \qquad \bm{z} \mapsto h_g(\bm{z}) = \sum_{\bm{i}\in \{0,1\}^s} c_{\bm{i}} \prod_{k=1}^s z_k^{i_k} = \sum_{\bm{i}\in \{0,1\}^s} c_{\bm{i}} \, \bm{z}^{\bm{i}}.
\end{gather*}
Hence, $h_g(\bm{z})=g(\bm{x})$ under the transformation $\bm{x}\mapsto \bm{z}$ such that 
\begin{gather*}
				z_k = \sum_{j\in I_k} \gamma_{d,j} x_j 
				\quad \text{for every}\quad
				k=1,\ldots,s
				\quad \text{and every}\quad 
				\bm{x}\in X. 
\end{gather*}
The span, $W$, of all functions $h\colon Z \nach \R$ with this structure is a linear space, too. 
Furthermore, easy calculus yields that
\begin{gather}\label{g_and_h}
        \left( D_{\bm{x}}^{\bm{\alpha}} g \right)(\bm{x}) 
        = \left( \prod_{j=1}^{m_s} \left( \gamma_{d,j} \right)^{\alpha_j} \right) \left( D_{\bm{z}}^{T({\bm{\alpha}})} h_g \right)(\bm{z}) 
\end{gather}
for all $g\in V$, $\bm{\alpha} \in \mathbb{M}$ and $\bm{x}\in X$.
Here the $\bm{x}$ and $\bm{z}$ in $D_{\bm{x}}^{\bm{\alpha}}$ and $D_{\bm{z}}^{T({\bm{\alpha}})}$ 
indicate differentiation with respect to $\bm{x}$ and $\bm{z}$, respectively.
Since the mapping $\bm{x} \mapsto \bm{z}$ is 
surjective we obtain 
$\norm{D^{\bm{\alpha}} g \sep \L_\infty(X)} = \gamma_{\bm{\alpha}} 
\norm{D^{T({\bm{\alpha}})} h_g \sep \L_\infty(Z)}$ by the form of $\gamma$ given by 
\link{ProdWeights}. 
Thus,
\begin{gather*}
        \maxx_{{\bm{\alpha}} \in \mathbb{M}} \frac{1}{\gamma_{\bm{\alpha}}} \norm{D^{\bm{\alpha}} g \sep \L_\infty(X)} 
        = \maxx_{\bm{\sigma} \in \{0,1\}^s} \norm{D^{\bm{\sigma}} h_g \sep \L_\infty(Z)}.
\end{gather*}
Observe that \link{g_and_h} with $\bm{\alpha}=0$ particularly yields that
$\norm{g \sep \L_\infty(X)} = \norm{h_g \sep \L_\infty(Z)}$. 
Therefore the claim reduces to 
\begin{gather*}
				\maxx_{\bm{\sigma} \in \{0,1\}^s} \norm{D^{\bm{\sigma}} h_g \sep \L_\infty(Z)} 
				\leq \norm{h_g \sep \L_\infty(Z)} 
				\quad \text{for every} \quad g\in V.
\end{gather*}
We show this estimate for every $h \in W$, i.e.,  
\begin{gather}\label{estimate_norm_h_1}
        \norm{D^{\bm{\sigma}} h \sep \L_\infty(Z)} \leq \norm{h \sep \L_\infty(Z)} 
        \quad \text{for all} \quad 
				\bm{\sigma} \in \{0,1\}^s.
\end{gather}
We start with the special case of one derivative. 
That is, we first consider $\bm{\sigma} = \bm{e_k}$ 
for a certain $k\in \{1,\ldots,s\}$. 
Since $h$ is affine 
in each coordinate we can represent it as
\begin{gather*}
			h(\bm{z}) = a({\bm{z_{(k)}}}) \cdot z_k + b({\bm{z_{(k)}}})
\end{gather*}
with functions $a$ and $b$ which only depend on 
${\bm{z_{(k)}}}=(z_1,\ldots,z_{k-1},z_{k+1},\ldots,z_s)$. 
Hence we have 
$(D^{\bm{e_k}} h)(\bm{z}) = a({\bm{z_{(k)}}})$ and we need to show that
\begin{gather}\label{estimate_norm_h_2}
        \abs{a({\bm{z_{(k)}}})} 
        \leq \max{ \abs{b({\bm{z_{(k)}}})}, \abs{a({\bm{z_{(k)}}}) \cdot 
        \sum_{j\in I_k} \gamma_{d,j} + b({\bm{z_{(k)}}})}}.
\end{gather}
This is obviously true for every $\bm{z} \in Z$ with $a({\bm{z_{(k)}}})=0$. 
For $a({\bm{z_{(k)}}}) \neq 0$ we can divide by $\abs{a({\bm{z_{(k)}}})}$ to get 
\begin{gather*}
				1 \leq \max{ \abs{t}, \abs{\sum_{j\in I_k} \gamma_{d,j} - t}}
\end{gather*}
if we set $t=-b({\bm{z_{(k)}}})/a({\bm{z_{(k)}}})$. 
The last maximum is minimal if both of its entries coincide. 
This is for $t=\frac{1}{2} \sum_{j\in I_k} \gamma_{d,j}$. 
Consequently, we need to ensure that 
\begin{gather*}
        2 \leq \sum_{j\in I_k} \gamma_{d,j}
\end{gather*}
to conclude \link{estimate_norm_h_2} for all admissible $\bm{z}\in Z$. 
But this is true for every $k\in \{1,\ldots,s\}$ by definition of 
the sets $I_k$ in Step 1.
Thus we have shown \link{estimate_norm_h_1} for the special case 
$\bm{\sigma}=\bm{e_k}$ for all $k\in \{1,\ldots,s\}$.
 
The inequality \link{estimate_norm_h_1} also holds true for every 
$\bm{\sigma} \in\{0,1\}^s$ by an easy inductive argument on the cardinality 
of $\abs{\bm{\sigma}}$. 
Indeed, if $\abs{\bm{\sigma}} \geq 2$ then $\bm{\sigma}=\bm{\sigma'}+\bm{e_k}$ 
with $\abs{\bm{\sigma'}} = \abs{\bm{\sigma}}-1$. 
We now need to estimate 
$\norm{D^{\bm{\sigma'}+\bm{e_k}}h \sep \L_\infty(Z)}$. 
Since $(D^{\bm{e_k}} h)(\bm{z}) = a({\bm{z_{(k)}}})$ has the same structure as the 
function~$h$ itself, we see that $\norm{D^{\bm{\sigma'}+\bm{e_k}} h \sep \L_\infty(Z)}$
equals $\norm{D^{\bm{\sigma'}} a({\bm{z_{(k)}}}) \sep \L_\infty(Z)}$ and 
the proof of \link{estimate_norm_h_1} then is completed by the inductive step. 

\textit{Step 4}. 
Collecting the previous equalities and estimates we obtain
\begin{align*}
        \norm{g \sep \P_d^\gamma} 
        &= \norm{g \sep \P_{m_s}^\gamma} 
        = \maxx_{\substack{{\bm{\alpha}}\in\{0,1\}^{m_s}\\ T({\bm{\alpha}}) \in \{0,1\}^s}} \frac{1}{\gamma_{{\bm{\alpha}}}} 
        \norm{D^{\bm{\alpha}} g \sep \L_\infty(X)} 
        = \maxx_{\bm{\sigma} \in \{0,1\}^s} \norm{D^{\bm{\sigma}} h_g \sep \L_\infty(Z)} \\
        &\leq \norm{h_g \sep \L_\infty(Z)} = \norm{g \sep \L_\infty(X)} 
        =  \norm{g \sep \L_\infty([0,1]^d)}
\end{align*}
for every $g \in V$, where $V$ is a linear subspace of $\F = \P_d^\gamma$ with $\dim V = 2^s$. 
Therefore \autoref{needed_lemma} with $a = 1$ yields that 
for $n < \dim V$
the worst case error
\begin{gather*}
				\Delta^\wor(A_{n,d}; \id_d \colon B_r(\P_d^\gamma)\nach \L_\infty([0,1]^d))
\end{gather*}
of any algorithm $A_{n,d}$ 
from the class $\A_d^{n,\rm cont} \cup \A_d^{n,\rm adapt}$ 
is lower bounded by $r$, the radius of the centered ball $B_r(\P_d^\gamma)$.
\end{proof}

\subsection{Complexity results via embeddings}\label{sect:embeddings}
Keeping in mind the assertions shown in the previous sections, we 
are ready to give conditions for tractability of the 
uniform approximation problem 
\begin{gather*}
			\mathrm{App} = (\mathrm{App}_d)_{d\in\N},
			\qquad \mathrm{App}_d \colon \B(\F_d^\gamma) \nach \L_\infty([0,1]^d), \quad \mathrm{App}_d(f) =\id_d(f)= f.
\end{gather*}
We suppose $(\F_d^\gamma)_{d\in\N}$ to be a sequence of
Banach spaces of real-valued functions~$f$ defined on the unit cube $[0,1]^d$.
We further assume that this sequence depends on product weights $\gamma=(\gamma_{\bm{\alpha}})_{\bm{\alpha}\in\N_0^d}$ and 
fulfills one of the following simple assumptions:
\begin{enumerate}[label=(A\thechapter.\arabic*), ref=A\thechapter.\arabic{*}, leftmargin=!, labelwidth=\widthof{(A4.2)}]
        \item \label{Assumption1}
        			$\P_d^{\gamma} \hookrightarrow \F_d^\gamma$ 
        			with norm 
        			\begin{gather*}
        					C_{1,d}\leq c \cdot d^{q_1}
        					\quad \text{for all}\quad  d\in\N 
        			\end{gather*}
        			and some absolute constants $c,q_1 \geq 0$,
				\item \label{Assumption2}
							$\F_d^\gamma \hookrightarrow \Hi_d^\gamma$ 
							with norm
							\begin{gather}\label{eq:bound_c2}
									C_{2,d} \leq a \cdot \exp{b\cdot \sum_{j=1}^d (\gamma_{d,j})^t}
									\quad \text{for all} \quad d\in\N
							\end{gather}
							and some absolute constants $a>0$, $b\geq 0$, as well as a parameter $t\in(0,1]$ 
							independent of $d$ and $\gamma$.
\end{enumerate}
Here the spaces $\P_d^{\gamma}$ and $\Hi_d^\gamma=\Hi(K_d^\gamma)$ are defined as in
\autoref{sect:pol_bound} and \autoref{sect:weightedSobolev}, respectively.

To simplify the notation we use the commonly known definitions of the so-called
\textit{sum exponents}\footnote{Note that some authors use the name \emph{decay} for $1/p(\cdot)$.} 
for the product weight sequence 
$\gamma=(\gamma_{\bm{\alpha}})_{\bm{\alpha}\in\N_0^d}$, $d\in\N$,
induced by uniformly bounded generator weights $0<\gamma_{d,j}\leq C_\gamma$, $j=1,\ldots,d$; see \link{ProdWeights}.
We set
\begin{gather*}
        p(\gamma) 
        = \inf\left\{ \kappa \geq 0 \sep P_\kappa(\gamma) 
        = \limsup_{d\nach \infty} \sum_{j=1}^d \left( \gamma_{d,j} \right)^\kappa < \infty \right\},
\end{gather*}
as well as
\begin{gather*}
        q(\gamma) 
        = \inf\left\{ \kappa \geq 0 \sep Q_\kappa(\gamma) 
        = \limsup_{d\nach \infty} \frac{\sum_{j=1}^d \left( \gamma_{d,j} \right)^\kappa}{\ln(d+1)} < \infty \right\},
\end{gather*}
with the usual convention that $\inf \leer = \infty$.

The following necessary conditions for (strong) polynomial tractability 
slightly generalize Theorem 2 of \cite{W12}.
\begin{prop}[Necessary conditions]\label{Thm_Necessary}
				Assume that \link{Assumption1} holds true with some $q_1\geq 0$. 
				Consider $\L_\infty$-approximation
				over $(\F_d^\gamma)_{d\in\N}$ in the worst case setting with respect to the class of algorithms  
        $\A_d^{n,\rm cont} \cup \A_d^{n,\rm adapt}$ and the absolute error criterion. Then
				\begin{gather}\label{LowerBound_n}
						n^\wor(\epsilon,d; \mathrm{App}_d) 
						> \frac{1}{2} \cdot 2^\wedge\!\!\left( \frac{1}{2+C_\gamma} \sum_{j=1}^d \gamma_{d,j} \right)
				\end{gather}
				for all $d\in \N$ and every $\epsilon \in (0,C_{1,d}^{-1})$.
        Hence,
				\begin{itemize}
								\item if the problem $\mathrm{App}$ is polynomially tractable then $q(\gamma) \leq 1$,
								\item if $q_1=0$ and the problem is strongly polynomially tractable then $p(\gamma) \leq 1$.
				\end{itemize}
\end{prop}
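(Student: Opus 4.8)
The plan is to combine the lower bound for the space of low-degree polynomials (\autoref{Theorem_Polynom}) with the embedding assumption \link{Assumption1} via the mechanism described after \autoref{needed_lemma}. First I would fix $d\in\N$ and choose the integer $s=s(\gamma,d)\in\{0,1,\ldots,d\}$ provided by \autoref{Theorem_Polynom}, which satisfies
\begin{gather*}
		s > \frac{1}{2+C_\gamma}\left(\sum_{j=1}^d \gamma_{d,j} - 2\right).
\end{gather*}
By \autoref{Theorem_Polynom}, any algorithm $A_{n,d}\in\A_d^{n,\rm cont}\cup\A_d^{n,\rm adapt}$ with $n<2^s$ has worst case error at least $r$ on the ball $B_r(\P_d^\gamma)$. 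The embedding $\P_d^\gamma\hookrightarrow\F_d^\gamma$ with norm $C_{1,d}$ means that $B_{1/C_{1,d}}(\P_d^\gamma)$ is contained in $\B(\F_d^\gamma)$ (up to the identification of functions); hence choosing $r=C_{1,d}^{-1}$ and using that $\id_d$ acts the same on both spaces, I obtain
\begin{gather*}
		e^\wor(n,d;\mathrm{App}_d)
		\geq \Delta^\wor\!\left(A_{n,d}; \id_d\colon B_{1/C_{1,d}}(\P_d^\gamma)\nach\L_\infty([0,1]^d)\right)
		\geq \frac{1}{C_{1,d}}
\end{gather*}
for every $n<2^s$. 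Therefore, for all $\epsilon\in(0,C_{1,d}^{-1})$ the information complexity satisfies $n^\wor(\epsilon,d;\mathrm{App}_d)\geq 2^s$, and since $s>\frac{1}{2+C_\gamma}\sum_{j=1}^d\gamma_{d,j}-1$ we get
\begin{gather*}
		n^\wor(\epsilon,d;\mathrm{App}_d) \geq 2^s > \frac{1}{2}\cdot 2^\wedge\!\!\left(\frac{1}{2+C_\gamma}\sum_{j=1}^d\gamma_{d,j}\right),
\end{gather*}
which is exactly \link{LowerBound_n}.

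Next I would derive the two tractability consequences. Suppose $\mathrm{App}$ is polynomially tractable, so $n^\wor(\epsilon,d;\mathrm{App}_d)\leq C\,\epsilon^{-p}d^q$ for all $d$ and $\epsilon\in(0,1]$. Since \link{Assumption1} gives $C_{1,d}\leq c\,d^{q_1}$, the bound \link{LowerBound_n} is valid at least for $\epsilon = \tfrac12 c^{-1}d^{-q_1}\in(0,C_{1,d}^{-1})$ once $d$ is large enough; plugging this $\epsilon$ into both inequalities yields
\begin{gather*}
		\frac{1}{2}\cdot 2^\wedge\!\!\left(\frac{1}{2+C_\gamma}\sum_{j=1}^d\gamma_{d,j}\right) < C\,(2c)^p\, d^{pq_1+q}.
\end{gather*}
Taking logarithms, there is a constant $C'>0$ with $\sum_{j=1}^d\gamma_{d,j}\leq C'\ln(d+1)$ for all sufficiently large $d$, i.e. $Q_1(\gamma)<\infty$, hence $q(\gamma)\leq 1$. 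For the strong case, $q=0$ and additionally $q_1=0$, so $C_{1,d}\leq c$ is bounded; choosing the fixed $\epsilon=\tfrac12 c^{-1}\wedge\tfrac12$, the same computation now gives $\sum_{j=1}^d\gamma_{d,j}\leq C''$ uniformly in $d$, i.e. $P_1(\gamma)<\infty$, hence $p(\gamma)\leq 1$.

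I do not expect a genuine obstacle here: the proof is essentially a packaging of \autoref{Theorem_Polynom} through the embedding, exactly as the remark following \autoref{needed_lemma} anticipates (the role of $r\neq 1$). The only points requiring a little care are (i) making sure that for the chosen $\epsilon$ we indeed have $\epsilon<C_{1,d}^{-1}$ so that \link{Theorem_Polynom} and hence \link{LowerBound_n} applies — this is where the polynomial bound $C_{1,d}\leq c\,d^{q_1}$ on the embedding norm is used — and (ii) absorbing the $\tfrac12$ and the constants $2+C_\gamma$, $c$, $C$ into the final constants without fuss. The identification of $\P_d^\gamma$ as a subset of $\F_d^\gamma$ on which $\mathrm{App}_d$ restricts to $\id_d$ is immediate from the definitions of the norms \link{eq:norm_P} and \link{eq:weightednorm}. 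Everything else is routine estimation, so I would keep it brief.
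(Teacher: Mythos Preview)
Your proposal is correct and follows essentially the same route as the paper: embed $B_{1/C_{1,d}}(\P_d^\gamma)$ into $\B(\F_d^\gamma)$ via \link{Assumption1}, invoke \autoref{Theorem_Polynom} with $r=C_{1,d}^{-1}$ to get $e^\wor(n,d;\mathrm{App}_d)\geq C_{1,d}^{-1}$ for $n<2^s$, convert this to \link{LowerBound_n}, and then plug in a suitable $\epsilon$ (the paper takes $\epsilon=\tfrac12\min\{1,C_{1,d}^{-1}\}$, you take $\tfrac12 c^{-1}d^{-q_1}$---both work) to deduce the bounds on $q(\gamma)$ and $p(\gamma)$. The only cosmetic wrinkle is your displayed chain $e^\wor(n,d;\mathrm{App}_d)\geq\Delta^\wor(A_{n,d};\ldots)$, which as written holds for every $A_{n,d}$ only after first passing through $\Delta^\wor(A_{n,d};\B(\F_d^\gamma))$ and then taking the infimum; make that step explicit.
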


\begin{proof}
Let $d\in\N$. 
Due to \link{Assumption1}, every algorithm 
$A_{n,d}\in \A_d^{n,\rm cont} \cup \A_d^{n,\rm adapt}$ for 
$\L_\infty$-approximation defined on~$\F_d^\gamma$ also applies to 
the embedded space $\P_d^\gamma$.
Furthermore the embedding constant $C_{1,d}$ implies that
the ball $B_r(\P_d^\gamma)$ of radius $r=C_{1,d}^{-1}$ in $\P_d^\gamma$ 
is completely contained in the unit ball 
$\B(\F_d^\gamma)$ of $\F_d^\gamma$. 
Therefore, 
\begin{align*}
        &\Delta^{\wor}(A_{n,d}; \mathrm{App}_d\colon \B(\F_d^\gamma) \nach \L_\infty([0,1]^d)) \\
        &\qquad\qquad\geq \Delta^{\wor}\left(A_{n,d} 
        \big|_{\P_d^\gamma}; \id_d\colon B_r(\P_d^\gamma)\nach \L_\infty([0,1]^d)\right).
\end{align*}
From \autoref{Theorem_Polynom} we have that the latter quantity is lower bounded by $r=C_{1,d}^{-1}$
provided that $n<2^s$, where $s=s(\gamma,d)\in\{0,\ldots,d\}$ satisfies \link{estimate_s}.
Since this lower bound holds for any such $A_{n,d}$ it remains valid for the
$n$th minimal error, \ie
\begin{gather*}
				e^\wor(n,d;\mathrm{App}_d) 
				\geq C_{1,d}^{-1} \quad \text{for all} \quad n < 2^s.
\end{gather*}
Hence we obtain $n^\wor(\epsilon,d; \mathrm{App}_d) \geq 2^s$ for all $d\in\N$ 
and every $\epsilon \in (0,C_{1,d}^{-1})$ which implies \link{LowerBound_n} using \link{estimate_s}.

Now suppose the problem $\mathrm{App} = (\mathrm{App}_d)_{d\in\N}$ to be polynomially tractable.
Then there are constants $C,p>0$ and $q_2\geq0$ such that 
\begin{gather*}
				n^\wor(\epsilon,d; \mathrm{App}_d) 
				\leq C \, \epsilon^{-p} \, d^{q_2} 
				\quad	\text{for all} \quad 
				d\in\N \quad \text{and} \quad \epsilon \in (0,1].
\end{gather*}
For any given $d\in\N$ we can take, say,  
$\epsilon = \epsilon(d) = \frac{1}{2} \cdot \min{1, C_{1,d}^{-1}}$ to conclude
\begin{gather}\label{eq:strong_res}
		2^\wedge\!\!\left( \frac{1}{2+C_\gamma} \sum_{j=1}^d \gamma_{d,j} \right) 
		< C' \, \max{1,C_{1,d}^p}\, d^{q_2}
\end{gather}
for some $C'>0$ independent of $d$.
If we now assume that $C_{1,d}\in\0(d^{q_1})$ then the right-hand side of the last inequality
belongs to $\0(d^{p q_1+q_2})$, as $d\nach \infty$.
Provided that $\max{q_1,q_2}>0$ this is equivalent to the boundedness of 
$\sum_{j=1}^d \gamma_{d,j} / \ln(d+1)$ such that we arrive at $q(\gamma) \leq 1$, as claimed.

Finally, the case of strong polynomial tractability can be treated similarly
by setting $q_1=q_2=0$ in the latter bounds.
Then we obtain that $\sum_{j=1}^d \gamma_{d,j}$ is uniformly bounded in $d$ which implies
$p(\gamma) \leq 1$.
\end{proof}

Of course, the conditions $q(\gamma)\leq 1$ and $p(\gamma)\leq 1$ are also necessary for polynomial and strong polynomial tractability with respect to smaller classes of algorithms such as, e.g., $\A_d^{n,\mathrm{lin}}(\Lambda^{\all})$.

Observe that one of the improvements compared to \cite[Theorem 2]{W12} 
is the possibility to choose the uniform upper bound for the generator weights, $C_\gamma$, different than $1$.
Moreover, now we have weaker conditions on the embedding constant $C_{1,d}$.
For the application we have in mind we will see that there still $C_{1,d}=1$.
But we note in passing that the stated conclusions for (strong) polynomial tractability 
are only special instances of the more
general bound \link{eq:strong_res} obtained in the latter proof 
which we will not investigate further.

We next assume \link{Assumption2} and show that slightly stronger conditions on the product weights $\gamma$ than in
\autoref{Thm_Necessary} are sufficient for polynomial and strong polynomial tractability, respectively.
This is stated in the next assertion which can be found as Theorem~3 in \cite{W12}.

\begin{prop}[Sufficient conditions]\label{Theorem_Sufficient}
				Suppose that \link{Assumption2} holds true with some $t\in(0,1]$.
				Consider $\L_\infty$-approximation over $(\F_d^\gamma)_{d\in\N}$ in the worst case setting with respect 
				to the class of linear algorithms $\A_d^{n,\mathrm{lin}}(\Lambda^{\all})$
				and the absolute error criterion. 
				Then 
				\begin{itemize}
									\item $q(\gamma)<t$ implies polynomial tractability,
									\item $p(\gamma)<t$ implies strong polynomial tractability.
				\end{itemize}
\end{prop}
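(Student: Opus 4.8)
The plan is to chain the embedding \link{Assumption2} with the explicit algorithm constructed in \autoref{cor:optAlgoSobolev} for the unanchored Sobolev space $\Hi_d^\gamma$. Given a space $\F_d^\gamma$ satisfying \link{Assumption2}, every $f\in\B(\F_d^\gamma)$ lies in the ball $B_{C_{2,d}}(\Hi_d^\gamma)$, and any algorithm for $\L_\infty$-approximation on $\Hi_d^\gamma$ restricts to one on $\F_d^\gamma$ with its worst case error multiplied by at most $C_{2,d}$ (here I would invoke the scaling identity for homogeneous problems noted after \autoref{needed_lemma}, together with the embedding). Concretely, I would take the algorithm $A_{n,d}^*\in\A_d^{n,\lin}(\Lambda^{\all})$ from \autoref{cor:optAlgoSobolev} and estimate
\begin{gather*}
		\Delta^\wor(A_{n,d}^*; \mathrm{App}_d\colon \B(\F_d^\gamma)\nach\L_\infty([0,1]^d))
		\leq C_{2,d}\cdot \Delta^\wor(A_{n,d}^*; \id_d\colon \B(\Hi_d^\gamma)\nach\L_\infty([0,1]^d)),
\end{gather*}
so that for any $\tau\in(1/2,1)$ with $\tau\geq t$ (to be fixed below),
\begin{gather*}
		e^\wor(n,d;\mathrm{App}_d)
		< a\cdot a_\tau \, \exp{\left(b\sum_{j=1}^d (\gamma_{d,j})^t + b_\tau \sum_{j=1}^d (\gamma_{d,j})^\tau\right)} \cdot n^{-(1-\tau)/(2\tau)}.
\end{gather*}

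Next I would bound the two exponential sums. Suppose first $q(\gamma)<t$; pick $\kappa$ with $q(\gamma)<\kappa<t$ (and also $\kappa<1$, possible since $q(\gamma)<t\le 1$). Then $Q_\kappa(\gamma)<\infty$, so there is a constant $M$ with $\sum_{j=1}^d(\gamma_{d,j})^\kappa\le M\ln(d+1)$ for all $d$. Since the generators are uniformly bounded by $C_\gamma$, for any exponent $\sigma\ge\kappa$ we have $(\gamma_{d,j})^\sigma\le \max\{1,C_\gamma\}^{\sigma-\kappa}(\gamma_{d,j})^\kappa$, whence $\sum_{j=1}^d(\gamma_{d,j})^\sigma\le M'\ln(d+1)$ with $M'=M\max\{1,C_\gamma\}^{\sigma-\kappa}$. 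Applying this with $\sigma=t$ and with $\sigma=\tau$ (choosing $\tau\in[\max\{t,1/2\},1)$, say $\tau=\max\{t,3/4\}$ — note $t\le1$ forces $\tau<1$ unless $t=1$, in which case one takes $\tau$ slightly less than $1$ and absorbs the loss; the point is one can always pick $\tau\in(1/2,1)$ with $\tau\ge\kappa$ since $\kappa<1$), the exponent is at most $(b+b_\tau)M'\ln(d+1) = \ln\big((d+1)^{(b+b_\tau)M'}\big)$. Hence $e^\wor(n,d;\mathrm{App}_d) < C\,(d+1)^{Q}\,n^{-(1-\tau)/(2\tau)}$ for absolute constants $C,Q$. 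Inverting this to solve $e^\wor(n,d;\mathrm{App}_d)\le\epsilon$ gives $n^\wor(\epsilon,d;\mathrm{App}_d)\le \lceil (C\,(d+1)^Q)^{2\tau/(1-\tau)}\,\epsilon^{-2\tau/(1-\tau)}\rceil$, which is the polynomial-tractability bound \link{def_poltract} with $p=2\tau/(1-\tau)$ and a suitable power of $d$. For the strong case, if $p(\gamma)<t$ pick $\kappa$ with $p(\gamma)<\kappa<t$, $\kappa<1$; then $P_\kappa(\gamma)<\infty$ gives $\sum_{j=1}^d(\gamma_{d,j})^\kappa\le M$ uniformly, hence by the same monotonicity-in-exponent trick $\sum_{j=1}^d(\gamma_{d,j})^t$ and $\sum_{j=1}^d(\gamma_{d,j})^\tau$ are both bounded by absolute constants, the exponential factor is $\le$ const, and $n^\wor(\epsilon,d;\mathrm{App}_d)\le C'\,\epsilon^{-2\tau/(1-\tau)}$ with no $d$-dependence — strong polynomial tractability.

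The only genuinely delicate point is the bookkeeping of the exponent $\tau$: \autoref{cor:optAlgoSobolev} requires $\tau\in(1/2,1)$ strictly, while \link{Assumption2} only guarantees $t\in(0,1]$, so when $t$ is close to $1$ (or equals $1$) one must take $\tau$ strictly between $\max\{t,1/2\}$ and $1$, incurring a worse polynomial exponent $p=2\tau/(1-\tau)$ but still finite — tractability is qualitative, so this is harmless. When $t$ is small the constraint $\tau>1/2$ forces $\tau>t$, which is why I insist on choosing the auxiliary exponent $\kappa$ with $\kappa<t$ first and then using $(\gamma_{d,j})^\tau\le \max\{1,C_\gamma\}^{\tau-\kappa}(\gamma_{d,j})^\kappa$ to control the $\tau$-sum by the $\kappa$-sum. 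I expect this exponent-juggling, rather than any real analytic difficulty, to be the main obstacle; everything else is a direct substitution into the error bound of \autoref{cor:optAlgoSobolev} followed by inverting a power law. Finally I would remark, as the text does after the statement, that combining \autoref{Thm_Necessary} and \autoref{Theorem_Sufficient} pins down (strong) polynomial tractability up to the boundary cases $q(\gamma)=1$, $p(\gamma)=1$, and that for the concrete space $F_d^\gamma$ one has $C_{1,d}=1$, $t=1$, recovering \autoref{Z_en_thm:weighted_PT}.
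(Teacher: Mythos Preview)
Your proposal is correct and follows essentially the same approach as the paper: chain the embedding \link{Assumption2} with the algorithm and error bound of \autoref{cor:optAlgoSobolev}, then invert the resulting power law. The only cosmetic difference is in how you organize the exponent bookkeeping: the paper observes directly that $Q_\kappa(\gamma)<\infty$ for \emph{every} $\kappa>q(\gamma)$ (an interval property following from the same monotonicity $(\gamma_{d,j})^\sigma\le C_\gamma^{\sigma-\kappa}(\gamma_{d,j})^\kappa$ you use), and then simply picks $\tau\in(\max\{q(\gamma),1/2\},1)$ so that both $Q_t(\gamma)$ and $Q_\tau(\gamma)$ are finite --- no need to force $\tau\ge t$, which is what causes your awkward case split at $t=1$. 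Your requirement $\tau\ge\kappa$ (which you do eventually isolate) is the right one and is always satisfiable since $\kappa<1$.
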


\begin{proof}
Due to \link{Assumption2}, the restriction of the algorithm 
$A_{n,d}^*$ in \autoref{cor:optAlgoSobolev} from $\Hi_d^\gamma$ to 
$\F_d^\gamma$ is admissible for 
$\L_\infty$-approximation over $\F_d^\gamma$.
Furthermore, due to the linearity of $A_{n,d}^*$, we have
\begin{align*}
				\norm{f-A^*_{n,d}f \sep \L_\infty([0,1]^d) } 
				&\leq \Delta^{\wor}(A_{n,d}^*; \id_d\colon\B(\Hi_d^\gamma)\nach \L_\infty([0,1]^d)) \cdot \norm{f\sep \Hi_d^\gamma} \\
				&\leq \Delta^{\wor}(A_{n,d}^*;\id_d\colon\B(\Hi_d^\gamma)\nach \L_\infty([0,1]^d)) \cdot C_{2,d} \cdot \norm{f \sep \F_d^\gamma}
\end{align*}
for all $f\in \F_d^\gamma$.
Therefore we can estimate the $n$th minimal error by
\begin{align*}
        e^\wor(n,d;\mathrm{App}_d) 
        &\leq \Delta^{\wor}\left(A_{n,d}^* \big|_{\F_d^\gamma}; \mathrm{App}_d\colon\B(\F_d^\gamma)\nach \L_\infty([0,1]^d) \right) \\
        &\leq C_{2,d} \cdot \Delta^{\wor}(A_{n,d}^*; \id_d\colon\B(\Hi_d^\gamma)\nach \L_\infty([0,1]^d) \\
        &\leq 
        a \cdot a_\tau \cdot \exp{b \sum_{j=1}^d  \left( \gamma_{d,j}\right)^t + b_\tau \sum_{j=1}^d  \left( \gamma_{d,j}\right)^\tau} 
        \cdot n^{-(1-\tau)/(2\tau)},
\end{align*}
where $\tau$ is an arbitrary number from $(1/2, 1)$. 
Choosing $n$ such that the right-hand side is not greater than a given $\epsilon\in(0,1]$,
we obtain an estimate for the information complexity with respect to 
the class of linear algorithms,
\begin{gather}\label{UpperBound_tau}
        n^\wor(\epsilon,d;\mathrm{App}_d) 
        \leq c_1 \cdot \epsilon^{-2\tau/(1-\tau)} 
        \cdot \exp{ c_2 \sum_{j=1}^d \left( \gamma_{d,j} \right)^t + c_3 
        \sum_{j=1}^d  \left( \gamma_{d,j} \right)^{\tau}},
\end{gather}
where the non-negative constants $c_1$, $c_2$ and $c_3$ only depend on 
$\tau$, $a$ and $b$.

Suppose that $q(\gamma)<t$. Then $Q_\kappa(\gamma)$ is finite 
for every $\kappa >q(\gamma)$. Taking $\kappa=t$ we obtain
\begin{gather*}
        \frac{\sum_{j=1}^d  \left( \gamma_{d,j} \right)^t}{\ln(d+1)} 
        \cdot \ln(d+1) \leq (Q_t(\gamma)+\delta) \cdot \ln(d+1) = 
        \ln (d+1)^{Q_t(\gamma) + \delta}
\end{gather*}
for every $\delta>0$ whenever $d$ is larger than a certain
$d_\delta\in\N$. This means that the factor 
$\exp{c_2\sum_{j=1}^d(\gamma_{d,j})^t}$ in~\link{UpperBound_tau} 
is polynomially dependent on $d$. 
On the other hand, we can choose $\tau \in (\max{q(\gamma),1/2},1)$ 
such that $Q_\tau(\gamma)$ is finite and thus the factor
$\exp{c_3\sum_{j=1}^d(\gamma_{d,j})^\tau}$ in~\link{UpperBound_tau} 
is also polynomially dependent on $d$. 
So, for this value of $\tau$ we can rewrite~\link{UpperBound_tau} as 
\begin{gather*}
        n^\wor(\epsilon,d;\mathrm{App}_d) \in 
        \0 \left(\epsilon^{-2\tau/(1-\tau)} \cdot (d+1)^{c_4} \right),
\end{gather*}
with $c_4$, as well as the implied factor in the $\0$-notation, 
independent of $d$ and $\epsilon$ which means that
the problem is polynomially tractable, as claimed.

Suppose finally that $p(\gamma)<t$.
Then the sums $\sum_{j=1}^d(\gamma_{d,j})^t$ and 
$\sum_{j=1}^d(\gamma_{d,j})^\tau$ for $\tau\in(\max{p(\gamma),1/2},1)$
are both uniformly bounded in $d$. 
Consequently \link{UpperBound_tau} yields strong polynomial tractability, 
and completes the proof. 
\end{proof}

The conditions in \autoref{Theorem_Sufficient} 
are obviously also sufficient if we consider larger classes of algorithms such as, \eg,
$\A_d^{n,\mathrm{cont}} \cup \A_d^{n,\mathrm{adapt}}$.
Moreover note that the given proof
also provides explicit upper bounds for the exponents of tractability.

Let us briefly discuss the different roles of the assumptions \link{Assumption1} and
\link{Assumption2} in the following remark.
\begin{rem}
Assumption \link{Assumption1} is used to find a lower bound
on the information complexity for the space~$\F_d^\gamma$ as 
long the space $\P_d^\gamma$ is continuously embedded in $\F_d^\gamma$ 
with an embedding constant which grows at most polynomially with the dimension~$d$. 
Such an embedding can be shown for several different 
classes of functions.

On the other hand, assumption \link{Assumption2} is used 
to find an upper bound on the
information complexity for the space~$\F_d^\gamma$ as long as it is
continuously embedded in the unanchored weighted Sobolev space $\Hi_d^\gamma=\Hi(K_d^\gamma)$ with an embedding
constant depending exponentially on the sum of some power of the
generators $\gamma_{d,j}$ of the product weights $\gamma$. 
This considerably restricts the choice of $\F_d^\gamma$.
We need this assumption in order to use the linear algorithm 
$A_{n,d}^*$ defined on the space $\Hi_d^\gamma$  
and the error bound given in \autoref{cor:optAlgoSobolev}.

Obviously, we can replace the space $\Hi_d^\gamma$ in \link{Assumption2} 
by any other space which contains at least $\P_d^\gamma$ and 
for which we know a linear algorithm using
$n$ linear functionals 
whose worst case error is polynomial in $n^{-1}$ with 
an explicit dependence on the product weights $\gamma$.
\hfill$\square$
\end{rem}

We now show that the assumptions \link{Assumption1} and
\link{Assumption2} allow us to characterize weak tractability 
and the curse of dimensionality.

\begin{theorem}[Weak tractability and the curse of dimensionality]\label{Theorem_Equivalence}
        Suppose that for a sequence of Banach spaces $(\F_d^\gamma)_{d\in\N}$ 
				equipped with product weights~$\gamma$ 
        the assumptions \link{Assumption1} and \link{Assumption2} hold true 
        with some parameter $t\in(0,1]$.
        Consider the $\L_\infty$-approximation problem $\mathrm{App}$
        in the worst case setting and with respect to the absolute error criterion.
        Then the following statements are equivalent:
        \begin{enumerate}[label=(\roman*), ref=\roman{*}]
                \item The problem is weakly tractable with respect to the class 
                      $\A_d^{n, \lin}(\Lambda^{\all})$. \label{Equi_1}
                \item The problem is weakly tractable with respect to the class 
                      $\A_d^{n,\mathrm{cont}} \cup \A_d^{n,\mathrm{adapt}}$. \label{Equi_2}
                \item There is no curse of dimensionality for the class 
                      $\A_d^{n, \lin}(\Lambda^{\all})$. \label{Equi_3}
                \item There is no curse of dimensionality for the class 
                      $\A_d^{n,\mathrm{cont}} \cup \A_d^{n,\mathrm{adapt}}$. \label{Equi_4}
                \item For all $\kappa > 0$ we have  
                      $\lim_{d\nach \infty}\limits \frac{1}{d} 
                      \sum_{j=1}^d \left( \gamma_{d,j} \right)^\kappa = 0$. \label{Equi_5}
                \item There exists $\kappa \in (0,t)$ such that
                      $\lim_{d\nach \infty}\limits \frac{1}{d} 
                      \sum_{j=1}^d \left( \gamma_{d,j} \right)^\kappa = 0$. \label{Equi_6}
        \end{enumerate}
\end{theorem}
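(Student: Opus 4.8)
The plan is to prove all six equivalences through a single circle of implications, arranged so that only two genuinely substantial arguments occur — the sufficiency and the necessity of the weight condition — while the remaining links are soft. The soft links rest on the inclusion $\mathcal{A}_d^{n,\mathrm{lin}}(\Lambda^{\all})\subseteq\mathcal{A}_d^{n,\mathrm{cont}}$: enlarging the class of admissible algorithms cannot increase the $n$th minimal error, so $n^\wor(\eps,d;\mathrm{App}_d)$ taken over $\mathcal{A}_d^{n,\mathrm{cont}}\cup\mathcal{A}_d^{n,\mathrm{adapt}}$ is never larger than the one over $\mathcal{A}_d^{n,\mathrm{lin}}(\Lambda^{\all})$. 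Hence \link{Equi_1}$\Rightarrow$\link{Equi_2} and \link{Equi_3}$\Rightarrow$\link{Equi_4}, while \link{Equi_1}$\Rightarrow$\link{Equi_3} and \link{Equi_2}$\Rightarrow$\link{Equi_4} follow from the general fact recalled in \autoref{sect:TractDef} that weak tractability excludes the curse. Together with the trivial \link{Equi_5}$\Rightarrow$\link{Equi_6}, it then remains to close the circle by \link{Equi_4}$\Rightarrow$\link{Equi_6}$\Rightarrow$\link{Equi_5}$\Rightarrow$\link{Equi_1}.

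Step \link{Equi_6}$\Rightarrow$\link{Equi_5} is purely combinatorial and I would isolate it as a lemma: for non-negative, uniformly bounded generators the quantity $\limsup_{d\nach\infty}d^{-1}\sum_{j=1}^d(\gamma_{d,j})^\kappa$ vanishes for one exponent $\kappa_0>0$ if and only if it vanishes for every $\kappa>0$. Indeed, if it vanishes for $\kappa_0$, then for $0<\kappa\leq\kappa_0$ concavity of $x\mapsto x^{\kappa/\kappa_0}$ together with Jensen's inequality gives $d^{-1}\sum_{j=1}^d(\gamma_{d,j})^\kappa\leq\bigl(d^{-1}\sum_{j=1}^d(\gamma_{d,j})^{\kappa_0}\bigr)^{\kappa/\kappa_0}\nach 0$, whereas for $\kappa>\kappa_0$ the bound $\gamma_{d,j}\leq C_\gamma$ yields $(\gamma_{d,j})^\kappa\leq\max{1,C_\gamma}^{\kappa-\kappa_0}\,(\gamma_{d,j})^{\kappa_0}$, again forcing the limit to $0$. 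Since \link{Equi_6} provides such a $\kappa_0\in(0,t)$, condition \link{Equi_5} follows.

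For \link{Equi_5}$\Rightarrow$\link{Equi_1} I would invoke the explicit complexity bound \link{UpperBound_tau} obtained in the proof of \autoref{Theorem_Sufficient} from the linear algorithm of \autoref{cor:optAlgoSobolev} and assumption \link{Assumption2}: for every $\tau\in(1/2,1)$,
\begin{gather*}
		n^\wor(\eps,d;\mathrm{App}_d)\leq c_1\,\eps^{-2\tau/(1-\tau)}\cdot\exp{c_2\sum_{j=1}^d(\gamma_{d,j})^t+c_3\sum_{j=1}^d(\gamma_{d,j})^\tau}.
\end{gather*}
Fixing one such $\tau$, taking logarithms, dividing by $\eps^{-1}+d$ and letting $\eps^{-1}+d\nach\infty$, the contribution of the factor $\eps^{-2\tau/(1-\tau)}$ tends to $0$ because $\ln(\eps^{-1})/\eps^{-1}\nach 0$, and the two exponential factors contribute terms bounded by $c_2\,d^{-1}\sum_{j=1}^d(\gamma_{d,j})^t$ and $c_3\,d^{-1}\sum_{j=1}^d(\gamma_{d,j})^\tau$, both of which vanish by \link{Equi_5}. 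This is exactly weak tractability with respect to $\mathcal{A}_d^{n,\mathrm{lin}}(\Lambda^{\all})$, \ie \link{Equi_1}.

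The main obstacle is \link{Equi_4}$\Rightarrow$\link{Equi_6}, which I would prove by contraposition. Assume \link{Equi_6} fails. By the combinatorial lemma above, this already forces $\limsup_{d\nach\infty}d^{-1}\sum_{j=1}^d\gamma_{d,j}>0$, so there are a constant $c>0$ and a subsequence $(d_l)$ with $\sum_{j=1}^{d_l}\gamma_{d_l,j}\geq c\,d_l$. Feeding this into the lower bound \link{LowerBound_n} of \autoref{Thm_Necessary} — which is built on the polynomial-subspace estimate \autoref{Theorem_Polynom} and on assumption \link{Assumption1}, and which is valid for the class $\mathcal{A}_d^{n,\mathrm{cont}}\cup\mathcal{A}_d^{n,\mathrm{adapt}}$ — yields $n^\wor(\eps,d_l;\mathrm{App}_d)>\tfrac12\cdot 2^{\,c\,d_l/(2+C_\gamma)}$ for all $\eps\in(0,C_{1,d_l}^{-1})$. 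Since $C_{1,d}$ grows at most polynomially (and equals $1$ in the application to $F_d^\gamma$), one may fix $\eps$ small enough relative to the subsequence so that the information complexity grows exponentially in $d$ along $(d_l)$; hence the curse, \ie $\neg$\link{Equi_4}. This closes the circle. I expect the delicate points to be the bookkeeping between the admissible range $\eps<C_{1,d}^{-1}$ in \link{LowerBound_n} and the fixed-$\eps$ formulation of the curse, and the observation — supplied precisely by the combinatorial lemma — that the failure of the seemingly weak hypothesis \link{Equi_6} nevertheless makes the $\kappa=1$ partial sums $\sum_{j=1}^d\gamma_{d,j}$ linearly large, which is exactly the sum that governs \link{LowerBound_n}.
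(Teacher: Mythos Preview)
Your proposal is correct and uses essentially the same ingredients as the paper: the upper complexity bound \link{UpperBound_tau} (via \autoref{cor:optAlgoSobolev} and \link{Assumption2}) for the sufficiency direction, the lower bound \link{LowerBound_n} (via \autoref{Theorem_Polynom} and \link{Assumption1}) for the necessity direction, and the Jensen/monotonicity argument comparing different exponents~$\kappa$. The only organizational difference is that you isolate the $\kappa$-equivalence as a stand-alone combinatorial lemma and route the circle as \link{Equi_4}$\Rightarrow$\link{Equi_6}$\Rightarrow$\link{Equi_5}$\Rightarrow$\link{Equi_1}, invoking the lemma twice, whereas the paper proves \link{Equi_6}$\Rightarrow$\link{Equi_1} directly (reducing the sums with exponents $t$ and $\tau$ to the single exponent~$\kappa$ supplied by~\link{Equi_6}) and then \link{Equi_4}$\Rightarrow$\link{Equi_5} (embedding the Jensen step there). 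Your packaging is arguably cleaner, and you correctly flag the one genuinely delicate point---reconciling the admissible range $\eps<C_{1,d}^{-1}$ in \link{LowerBound_n} with the fixed-$\eps$ definition of the curse---which the paper passes over just as briefly; in all the concrete applications one has $C_{1,d}=1$, so the issue does not arise.
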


\begin{proof}
We start by showing that \link{Equi_6} implies \link{Equi_1}, i.e.,  
\begin{gather*}
        \lim_{\epsilon^{-1}+d \nach \infty} 
        \frac{\ln \left(n^\wor(\epsilon,d; \mathrm{App}_d)\right)}{\epsilon^{-1}+d} = 0,
\end{gather*}
where the information complexity is taken with respect to the class 
$\A_d^{n, \lin}(\Lambda^{\all})$ of linear 
algorithms that use continuous linear functionals. 
By the arguments used in the proof 
of \autoref{Theorem_Sufficient} we obtain estimate \link{UpperBound_tau} 
for all $\epsilon$ in $(0,1]$, as well as for every $d\in \N$, and all 
$\tau \in (1/2,1)$, due to assumption \link{Assumption2}. 
Clearly, for 
$\kappa \in (0,t)$ as in the hypothesis and $t\in(0,1]$ as in the 
embedding condition, we find $\tau \in (1/2, 1)$ such that 
$\kappa < \min{t,\tau}$. 
So, since $\gamma_{d,j} \leq C_\gamma$, we can estimate 
\begin{gather*}
		\sum_{j=1}^d \left( \gamma_{d,j} \right)^s 
		= C_\gamma^s \cdot \sum_{j=1}^d \left(\frac{\gamma_{d,j}}{C_\gamma}\right)^s
		\leq C_\gamma^{s-\kappa} \cdot \sum_{j=1}^d \left( \gamma_{d,j} \right)^\kappa
		\leq C \cdot \sum_{j=1}^d \left( \gamma_{d,j} \right)^\kappa,
\end{gather*}
where $s$ either equals $t$ or $\tau$ and $C=\max{1,C_\gamma}$.
Therefore the right-hand side of \link{UpperBound_tau} can be estimated from above
and thus
\begin{gather*}
        \frac{\ln \left( n^\wor(\epsilon,d;\mathrm{App}_d) \right)}{\epsilon^{-1}+d} 
        \leq \frac{\ln(c_1)}{\epsilon^{-1}+d} + \frac{2\tau}{1-\tau} \cdot \frac{\ln \left( \epsilon^{-1} \right)}{\epsilon^{-1}+d} + C \cdot \max{c_2,c_3} \cdot \frac{\sum_{j=1}^d \left( \gamma_{d,j} \right)^\kappa}{\epsilon^{-1}+d}
\end{gather*}
tends to zero when $\epsilon^{-1}+d$ approaches infinity, as claimed.

Clearly, \link{Equi_1} $\Rightarrow$ \link{Equi_2} $\Rightarrow$ \link{Equi_4} 
and \link{Equi_1} $\Rightarrow$ \link{Equi_3} $\Rightarrow$ \link{Equi_4}.
Moreover the implication from~\link{Equi_5} to~\link{Equi_6} is obvious.
Hence, it only remains to show that \link{Equi_4}~$\Rightarrow$~\link{Equi_5}.

From \link{Assumption1} we have estimate \link{LowerBound_n}. 
Then the absence of the curse of dimensionality implies
\begin{gather*}
				\lim_{d\to\infty}\frac1d\,\sum_{j=1}^d\gamma_{d,j}=0.
\end{gather*}
Now Jensen's inequality yields that
\begin{gather*}
       \frac{1}{d} \,\sum_{j=1}^d \gamma_{d,j} \geq 
       \left( \frac{1}{d} \,\sum_{j=1}^d 
       \left( \gamma_{d,j} \right)^\kappa \right)^{1/\kappa} 
       \quad \text{for} \quad 0 < \kappa \leq 1,
\end{gather*}
because $f(y) = y^{\kappa}$ is a concave function for $y > 0$.
This shows
\begin{gather*}
				\lim_{d\to\infty}\frac{1}{d}\,\sum_{j=1}^d 
				\left(\gamma_{d,j} \right)^\kappa=0
				\quad \text{for all}\quad 0 < \kappa \le 1.
\end{gather*}
Finally, for every $\kappa \geq 1$ we can estimate 
$\gamma_{d,j} \geq C_\gamma^{1-\kappa} \left( \gamma_{d,j} \right)^\kappa$ 
since $\gamma_{d,j}\leq C_\gamma$ for $j=1,\ldots,d$. 
Therefore 
$\lim_{d\nach \infty} d^{-1} \sum_{j=1}^d (\gamma_{d,j})^\kappa = 0$
also holds true for $\kappa>1$, and the proof is complete.
\end{proof}

\subsection{Conclusions and applications}\label{sect:conclusions}
In this last part of the current section we give some examples to illustrate 
the obtained complexity results. 
To this end, we only have to prove the corresponding embeddings, 
\ie we need to verify assumption \link{Assumption1} and/or \link{Assumption2} 
from the beginning of \autoref{sect:embeddings}.

\begin{example}[Limiting cases $\P_d^\gamma$ and $\Hi_d^\gamma$]
To begin with, we check the case where $\F_d^\gamma = \P_d^\gamma$ for every $d\in\N$. 
Then~\link{Assumption1} obviously holds with $C_{1,d}=1$, \ie $c=1$ and $q_1=0$. 
To prove \link{Assumption2}, note that the algebraical inclusion 
$\F_d^\gamma \subset \Hi_d^\gamma$ is trivial by the definition of $\Hi_d^\gamma = \Hi(K_d^\gamma)$
given in \autoref{sect:weightedSobolev}.
For $f \in \F_d^\gamma = \P_d^\gamma$ we calculate
\begin{gather*}
        \norm{f \sep \Hi_d^\gamma}^2 
        \leq \sum_{\bm{\alpha} \in \{0,1\}^d} \frac{1}{\gamma_{\bm{\alpha}}} 
        			\int_{[0,1]^d} \norm{D^{\bm{\alpha}} f \sep \L_\infty([0,1]^d)}^2 \dlambda^d(\bm{x})
        \leq \norm{f \sep \F_d^\gamma}^2 \cdot \sum_{\bm{\alpha} \in \{0,1\}^d} \gamma_{\bm{\alpha}}
\end{gather*}
using \link{eq:norm_Sobol}, as well as \link{eq:norm_P}.
Hence the norm of the embedding $\F_d^\gamma \hookrightarrow \Hi_d^\gamma$ 
is bounded by 
\begin{gather*}
        \left( \sum_{\bm{\alpha} \in \{0,1\}^d} \gamma_{\bm{\alpha}} \right)^{1/2} 
        = \left( \prod_{j=1}^d (1+\gamma_{d,j}) \right)^{1/2} 
				\leq \exp{ \frac{1}{2} \sum_{j=1}^d \gamma_{d,j} }.
\end{gather*}
So, with $a=1$, $b=1/2$, and $t=1$ the assumption \link{Assumption2} 
is also fulfilled and we can apply the stated assertions from \autoref{sect:embeddings} for the spaces 
$\F_d^\gamma = \P_d^\gamma$, $d\in\N$.

We now turn to the case $\F_d^\gamma = \Hi_d^\gamma$.
Unfortunately, the estimate above indicates that~\link{Assumption1} 
may not hold for 
$\F_d^\gamma=\Hi_d^\gamma$ with $C_{1,d} \in \0(d^{q_1})$ 
without imposing additional conditions on the product weights~$\gamma$. 
Nevertheless, in this case assumption~\link{Assumption2} 
is trivially true with $C_{2,d}=1$, i.e., $a=1$, $b=0$, and $t=1$. 
Therefore we can apply \autoref{Theorem_Sufficient} for this space. 
Thus the problem is polynomially tractable if $q(\gamma)<1$ 
and we have strong polynomial tractability if $p(\gamma)<1$.
It can be shown that these conditions are also necessary; 
see \autoref{sect:final_remarks}.
\hfill$\square$
\end{example}

Next we discuss a more advanced sequence of Banach function spaces.
\begin{example}[$C^{(1,\ldots,1)}$]
For every $d\in\N$ consider the space 
\begin{gather*}
        \F_d^\gamma 
        = \left\{ f \colon [0,1]^d \nach \R \sep f \in C^{(1,\ldots,1)}([0,1]^d), \, \text{ where } \, 
        			\norm{f \sep \F_d^\gamma} < \infty \right\}
\end{gather*}
of functions which are once continuously differentiable in every coordinate direction,
where
\begin{gather*}        			
       \norm{f \sep \F_d^\gamma} 
       = \maxx_{\bm{\alpha} \in \{0,1\}^d} 
        			\frac{1}{\gamma_{\bm{\alpha}}} \norm{D^{\bm{\alpha}} f \sep \L_\infty([0,1]^d)}.
\end{gather*}
Since $\P_d^\gamma$ is a linear subset of 
$\F_d^\gamma$ and, due to \link{eq:norm_P}, the norm $\norm{\cdot \sep \P_d^\gamma}$ is simply the restriction 
of $\norm{\cdot \sep \F_d^\gamma}$ we have
$\P_d^\gamma \hookrightarrow \F_d^\gamma$ with an embedding factor 
$C_{1,d}=1$ 
and hence \link{Assumption1} holds true. 
For the norm $C_{2,d}$ of the embedding $\F_d^\gamma \hookrightarrow \Hi_d^\gamma$, 
the same estimates hold exactly as in the previous example and, 
moreover, the set inclusion is obvious. 
Therefore also assumption \link{Assumption2} is fulfilled and we can 
apply the propositions and theorems of \autoref{sect:embeddings}
to the sequence $(\F_d^\gamma)_{d\in\N}$.
\hfill$\square$
\end{example}

Our last example $\F_d^\gamma = F_d^\gamma$, for all $d\in\N$, finally
shows that even very high smoothness 
does not improve the conditions for tractability.

\begin{example}[$C^\infty$]
For $d\in\N$ and product weights $\gamma$ let 
\begin{gather*}
				\F_d^\gamma 
				= F_d^\gamma 
				= \left\{ f \colon [0,1]^d \nach \R \sep f \in C^{\infty}([0,1]^d) \, \text{ with } \, \norm{f \sep F_d^\gamma} < \infty\right\},
\end{gather*}
where the norm is given by \link{eq:weightednorm}.
Obviously, $\P_d^\gamma \subset C^\infty$, because functions 
from~$\P_d^\gamma$ are at most linear in each coordinate. 
This moreover implies that $D^{\bm{\alpha}} f \equiv 0$ for all 
$\bm{\alpha} \in \N_0^d \setminus \{0,1\}^d$. 
Therefore, once again we have
\begin{gather*}
        \norm{f \sep \P_d^\gamma} 
        = \maxx_{\bm{\alpha} \in \{0,1\}^d} \frac{1}{\gamma_{\bm{\alpha}}} \norm{D^{\bm{\alpha}} f \sep \L_\infty([0,1]^d) }
        = \norm{f \sep \F_d^\gamma} \quad \text{for all} \quad f \in \P_d^\gamma.
\end{gather*}
Together this yields $\P_d^\gamma \hookrightarrow \F_d^\gamma$ with 
an embedding constant $C_{1,d}=1$ for all $d\in\N$.
In addition, also \link{Assumption2} can be concluded as in the examples above. 
So, even infinite smoothness leads to the the same conditions for 
tractability and the curse of dimensionality as before.
\hfill$\square$
\end{example}

Note that in the latter example we do not need to claim 
a product structure for the weights according to multi-indices 
$\bm{\alpha} \in \N_0^d \setminus \{0,1\}^d$. 
Furthermore, this example 
is a generalization of the space $F_d$ studied in~\cite{NW09}. 
For $\gamma_{\bm{\alpha}} \equiv 1$ we reproduce the intractability result 
stated there because then $F_d^\gamma$ equals $F_d$ for each $d\in\N$.\\

In conclusion we discuss the tractability behavior of 
uniform approximation defined on one of the spaces $\F_d^\gamma$ 
above using a special class of product weights $\gamma$ 
which are \emph{independent of the dimension}~$d$.
That is, for the generator weights we claim that
\begin{gather}\label{eq:pol_generator}
				\gamma_{d,j} 
				\equiv \gamma^{(j)} 
				\in \Theta(j^{-\beta})
				\quad \text{for some} \quad
				\beta \geq 0,
\end{gather}
and all $j$ and $d\in\N$.
The imposed polynomial behavior of $\gamma^{(j)}$ 
is a typical example in the theory of product weights.
Clearly, $p(\gamma)$ is finite if and only if $\beta>0$,
and if so then $p(\gamma)=1/\beta$.
For details see \cite[Section~5.3.4]{NW08}.

If $\beta = 0$ then the $\L_\infty$-approximation problem
$\mathrm{App}=(\mathrm{App}_d)_{d\in\N}$ is intractable 
(more precisely it suffers from the curse of dimensionality) due to
\autoref{Theorem_Equivalence}, assertion \link{Equi_5}, 
since then $d^{-1} \sum_{j=1}^d \gamma_{d,j}$ does not tend to zero.
For $\beta \in (0,1)$, easy calculus yields $q(\gamma)>1$. 
So, using \autoref{Thm_Necessary} we conclude 
polynomial intractability in this case. 
On the other hand, 
for all $\delta$ and $\kappa$ with $0< \delta < \kappa \leq 1$, we have
\begin{gather*}
        \frac{\sum_{j=1}^d j^{-\kappa}}{d} = \frac{\sum_{j=1}^d j^{-\kappa} 
        d^{\kappa -(1+\delta)}}{ d^{\kappa - \delta}} \leq 
        \frac{\sum_{j=1}^d j^{-(1+\delta)}}{d^{\kappa - \delta}} 
        \nach 0 \quad \text{for} \quad d\nach \infty
\end{gather*}
and if $\kappa > 1$ then the most left fraction obviously tends to zero, too. 
Hence condition~\link{Equi_6} of \autoref{Theorem_Equivalence} holds and
the problem is weakly tractable for all $\beta>0$. 

For $\beta=1$ we use inequality \link{LowerBound_n} from 
\autoref{Thm_Necessary} and estimate
\begin{gather*}
				\sum_{j=1}^d \gamma_{d,j} \geq c \cdot \ln (d+1) 
\end{gather*}
for some positive $c$. 
Therefore, for every $\eps\in(0,1)$ the information complexity~$n(\epsilon, d; \mathrm{App}_d)$ 
is lower bounded polynomially in $d\in\N$.
This proves that strong polynomial tractability does not hold for $\beta=1$. 
Moreover, it is easy to show that in this case the sufficient condition 
$q(\gamma)<1$ for polynomial tractability is not fulfilled. 
So, we do not know whether polynomial tractability holds or not.

Finally, consider $\beta>1$ in \link{eq:pol_generator}. 
Then we easily see that $p(\gamma) = \frac{1}{\beta}<1=t$. 
Thus \autoref{Theorem_Sufficient} provides strong polynomial tractability 
in this situation.

In summary, we proved all the assertions we claimed at the end of \autoref{sect:weightedSmooth}.

\section{Possible extensions and further results}\label{sect:final_remarks}
Note that the main result of this chapter, the lower bound given in 
\autoref{Theorem_Polynom}, can be easily transferred from $[0,1]^d$ 
to more general domains $\Omega_d\subset\R^d$. 
Indeed, the case $\Omega_d = [c_1,c_2]^d$, where $c_1<c_2$, 
can be immediately obtained using the presented techniques.
It turns out that in this case we have to modify estimate \link{estimate_s} 
by a constant which depends only on the length of the interval 
$[c_1,c_2]$. 
Consequently, the general tractability behavior does not change.

Another extension of the obtained results is possible if we consider 
$\L_p$-norms ($1\leq p <\infty$) instead of the $\L_\infty$-norm. 
In \autoref{sect:Lp_approx} we briefly discuss these norms for the unweighted case. 
Then the modifications for the weighted case are obvious and
thus we leave it for the interested reader.
In passing we correct a small mistake stated in \cite{NW09}.

Finally, in \autoref{sect:opt_uniform_algo}, we show that
the algorithm studied in \autoref{cor:optAlgoSobolev}
is essentially optimal for the uniform approximation problem on
the unanchored weighted Sobolev space $\Hi(K_d^\gamma)$
defined in \autoref{ex:Unanchored_Sobolev}.

\subsection{\texorpdfstring{$\L_p$-approximation}{Lp-approximation}}\label{sect:Lp_approx}
As in \cite[Section~7]{W12} we follow Novak and Wo\'zniakowski~\cite{NW09} and define the spaces
\begin{gather*}
        F_{d,p} = \left\{ f \in C^{\infty}([c_1,c_2]^d) \sep  \norm{f \sep F_{d,p}} = 
        		\sup_{\bm{\alpha} \in \N_0^d} \norm{D^{\bm{\alpha}} f \sep \L_p([c_1,c_2]^d)} < \infty \right\}
\end{gather*}
for $1\leq p < \infty$ and $d\in\N$, where we assume that $l=c_2-c_1>0$. 
In what follows we want to approximate $f\in F_{d,p}$ in the norm of $\L_p$. 
That is, we modify \link{eq:uniform_app} and consider the problem $S^p=(S^p_d)_{d\in\N}$ given by
\begin{gather*}
		S_d^p = \id_{d}^p \colon \B(F_{d,p}) \nach \L_p([c_1,c_2]^d), \quad f\mapsto \id^p_{d}(f) = f.
\end{gather*}
Hence we try to minimize the $n$th minimal worst case error
\begin{gather*}
        e_p^\wor(n,d; \id_d^p)
        = \inf_{A_{n,d}} \sup_{f \in \B(F_{d,p})} \norm{f- A_{n,d}(f) \sep \L_p([c_1,c_2]^d)} 
\end{gather*}
which now depends on the additional integrability parameter~$p$.
Observe that, without loss of generality, we can restrict ourselves to the case $[c_1,c_2]=[0,l]$. 

In order to conclude a lower bound analogue to \link{eq:NW09_bound} and \autoref{Theorem_Polynom}, 
i.e., $e_p^\wor(n,d; \id_{d}^p) \geq 1$ for $n<2^{s}$, we once again use 
\autoref{needed_lemma} with $\F = F_{d,p}$ and $\G = \L_p([0,l]^d)$.\footnote{Note that
it is sufficient to restrict ourselves to the case $r=1$ since now we do not need to take care of embedding constants as in the proof of \autoref{Thm_Necessary}.} 
The authors of \cite{NW09} suggest to use 
the subspace $V_d^{(k)} \subset F_{d,p}$ defined as
\begin{gather*}
        V_d^{(k)}=\spann{ g_i \colon [0,l]^d \nach \R, \, \bm{x} 
        \mapsto g_{\bm{i}}(\bm{x})=\prod_{j=1}^s 
        \left( \sum_{m=(j-1)k+1}^{jk} x_m \right)^{i_j} \sep \bm{i}\in\{0,1\}^s },
\end{gather*}
where $s=\floor{d/k}$ and $k\in\N$ such that $kl \geq 2(p+1)^{1/p}$. 
Hence, if $l<2(p+1)^{1/p}$ then we have to use blocks of variables with 
size $k>1$, in order to guarantee \link{eq:NormCondition}. 
That is, to fulfill the condition
\begin{gather}\label{NormCondition_Lp}
        \norm{g \sep F_{d,p}} 
        \leq \norm{g \sep \L_p([0,l]^d)} 
        \quad \text{for all} \quad g\in V_d^{(k)}.
\end{gather}
Therefore Novak and Wo\'zniakowski defined $k=\ceil{2(p+1)^{1/p}/l}$, 
but this is too small as the following example shows. 

\begin{example}
For $d\geq 4$ take $l=1$, i.e. $[c_1,c_2]^d=[0,1]^d$, and $p=1$. 
Then $k=4$ should be a proper choice, but for $g^*(\bm{x})=(x_1+x_2+x_3+x_4)-2$ 
it can be checked (using a computer algebra system) that
\begin{gather*}
		\norm{g^* \sep \L_1([0,1]^d)} 
		= \frac{7}{15}
		< 1 
		= \norm{\frac{\partial g^*}{\partial x_1} \sep \L_1([0,1]^d)}.
\end{gather*}
This obviously contradicts \link{NormCondition_Lp}. 
\hfill$\square$
\end{example}

For an exhaustive proof of an assertion which states that a slightly larger choice of $k\in\N$ suffices to conclude
the desired intractability result we need to show the following technical lemma first.
Its proof is based on some well-known arguments from Banach space geometry.

\begin{lemma}\label{lemma:int_est}
		Let $p\in[1,\infty)$ and $k\in\N$. Then
		\begin{gather}\label{Estimate_Integrals}
        \mathfrak{I}_{k,p}
        = \int_{[-1/2,1/2]^k} \abs{\sum_{m=1}^k z_m}^p \dlambda^k(\bm{z})
        \geq C_p \cdot k^{p/2}
\end{gather}
		with some $C_p \geq 1/[(2\sqrt{2})^p(1+p)]$ independent of $k$.
\end{lemma}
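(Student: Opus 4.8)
The quantity $\mathfrak{I}_{k,p}$ is the $p$-th moment of $\sum_{m=1}^k z_m$ where the $z_m$ are i.i.d.\ uniform on $[-1/2,1/2]$. The plan is to reduce the lower bound to a comparison of $\L_p$- and $\L_2$-norms of this sum. First I would record that $\E[z_m]=0$ and $\E[z_m^2]=1/12$, so that by independence $\E\!\left[\left(\sum_{m=1}^k z_m\right)^2\right] = k/12$, i.e.\ $\mathfrak{I}_{k,2} = k/12$. This already settles the case $p=2$ with $C_2 = 1/12$, and one checks $1/12 \geq 1/[(2\sqrt2)^2\cdot 3] = 1/24$.

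For general $p\in[1,\infty)$ I would split into the two regimes $p\geq 2$ and $1\leq p<2$. If $p\geq 2$, Jensen's inequality (applied to the convex function $t\mapsto t^{p/2}$ and the probability measure $\dlambda^k$ on $[-1/2,1/2]^k$) gives
\begin{gather*}
		\mathfrak{I}_{k,p} = \int_{[-1/2,1/2]^k} \left(\left(\sum_{m=1}^k z_m\right)^2\right)^{p/2} \dlambda^k(\bm{z})
		\geq \left( \int_{[-1/2,1/2]^k} \left(\sum_{m=1}^k z_m\right)^2 \dlambda^k(\bm{z}) \right)^{p/2}
		= \left(\frac{k}{12}\right)^{p/2},
\end{gather*}
so $C_p = 12^{-p/2}$ works in this range; one verifies $12^{-p/2}\geq (2\sqrt2)^{-p}(1+p)^{-1}$ since $12^{1/2} = 2\sqrt3 \leq 2\sqrt2\cdot(1+p)^{1/p}$ holds for all $p\geq 1$ (the right side is at least $2\sqrt2\cdot 1 \geq 2\sqrt3$ is false, so I would instead argue directly that $(2\sqrt2)^p(1+p) \geq 12^{p/2}$, equivalently $(1+p)^{1/p}\geq \sqrt{3/2}$, which holds as $(1+p)^{1/p}\geq 1$ is too weak — so the sharper route below is needed). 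The clean way: since $\abs{z_m}\leq 1/2$, we have $\abs{\sum z_m}\leq k/2$, hence $\abs{\sum z_m}^p \leq (k/2)^{p-2}\abs{\sum z_m}^2$ when $p\geq 2$, giving a reverse bound, whereas for a \emph{lower} bound in the regime $1\leq p<2$ I use Hölder/Lyapunov in the other direction as follows.

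For $1\leq p<2$, by the Lyapunov (or reverse Hölder via interpolation) inequality applied with exponents $p<2<\infty$ and the pointwise bound $\abs{\sum_{m=1}^k z_m}\leq k/2$:
\begin{gather*}
		\frac{k}{12} = \int \abs{\textstyle\sum z_m}^2 \dlambda^k
		= \int \abs{\textstyle\sum z_m}^{2-p}\,\abs{\textstyle\sum z_m}^{p} \dlambda^k
		\leq \left(\frac{k}{2}\right)^{2-p} \int \abs{\textstyle\sum z_m}^{p} \dlambda^k
		= \left(\frac{k}{2}\right)^{2-p} \mathfrak{I}_{k,p},
\end{gather*}
whence $\mathfrak{I}_{k,p} \geq \dfrac{k/12}{(k/2)^{2-p}} = \dfrac{2^{2-p}}{12}\, k^{p-1} \cdot k^{\,?}$. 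Here $k^{p-1}$ is \emph{weaker} than the target $k^{p/2}$ for $p<2$, so this crude bound is not yet sufficient and the main obstacle is precisely obtaining the correct $k^{p/2}$ exponent in the range $1\le p<2$. I expect the fix is a genuine central-limit / square-function argument: one shows $\mathfrak{I}_{k,p}\geq c_p\,(\operatorname{Var}\sum z_m)^{p/2} = c_p (k/12)^{p/2}$ using Khintchine-type inequalities for bounded independent symmetric variables (the lower Khintchine inequality: $\E\abs{\sum z_m}^p \geq A_p\,(\E\abs{\sum z_m}^2)^{p/2}$ for $0<p\le 2$, with $A_p$ an absolute positive constant, e.g.\ $A_p\geq 2^{-1/2}$ by the Haagerup-type bounds, or more elementarily $A_1 = 1/\sqrt3$ by Szarek and the general case by monotonicity of moments). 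With such an $A_p$ in hand, $\mathfrak{I}_{k,p}\geq A_p\,(k/12)^{p/2} = (A_p\,12^{-p/2})\,k^{p/2}$, and I would then set $C_p = \min\{A_p\,12^{-p/2},\,12^{-p/2}\}$ and verify the stated numerical bound $C_p\geq [(2\sqrt2)^p(1+p)]^{-1}$ by a short case check — for $p=1$ one needs $C_1\geq 1/(4\sqrt2)$, comfortably satisfied since $A_1/\sqrt{12}\geq (1/\sqrt3)/(2\sqrt3) = 1/6 > 1/(4\sqrt2)$, and for larger $p$ the denominator $(2\sqrt2)^p(1+p)$ grows fast enough relative to $12^{-p/2}$ that the inequality persists, so monotonicity finishes it. The hard part is thus importing or re-deriving the lower Khintchine constant; everything else is elementary moment computation and Jensen.
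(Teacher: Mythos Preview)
Your approach is genuinely different from the paper's, but it has real gaps that are not easily patched. The decisive issue is the range $1\le p<2$: you correctly diagnose that the crude bound via $\abs{\sum z_m}\le k/2$ only yields $k^{p-1}$, and then you defer the entire difficulty to an unproved ``lower Khintchine inequality'' $\E\abs{\sum z_m}^p \ge A_p\,(\E\abs{\sum z_m}^2)^{p/2}$ for uniform summands. That inequality \emph{is} essentially the lemma you are trying to prove; citing it with an unspecified constant is circular, and the constants you quote (Szarek's $A_1$) are for Rademacher variables, not uniforms. Your numerical check at $p=1$ is also wrong: you need $C_1\ge 1/(4\sqrt2)\approx 0.177$, but your candidate $A_1/\sqrt{12}=1/6\approx 0.167$ falls short. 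Separately, for $p\ge 2$ your Jensen constant $12^{-p/2}$ does \emph{not} dominate the required $(2\sqrt2)^{-p}(1+p)^{-1}$ once $p$ is large (the crossover is around $p\approx 12$, since $(1+p)\ge (3/2)^{p/2}$ fails there), so even in that regime you have not established the stated explicit bound on $C_p$.

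The paper avoids all of this by a convex-geometric argument that works uniformly in $p$ and delivers the constant directly. Writing $\sum_m z_m = \sqrt{k}\,\langle \bm{z},\bm{\xi}\rangle$ with $\bm{\xi}=k^{-1/2}(1,\dots,1)$, one gets
\[
\mathfrak{I}_{k,p}=2\,k^{p/2}\int_0^\infty t^p\,v(t)\,\dlambda^1(t),
\]
where $v(t)$ is the $(k-1)$-volume of the hyperplane section $[-1/2,1/2]^k\cap\{\langle\cdot,\bm{\xi}\rangle=t\}$. Now three classical facts do the work: $\int_0^\infty v(t)\,\dlambda^1(t)=\tfrac12$ (half the cube), $v(0)\le\sqrt2$ (Ball's theorem on central cube sections), and $v$ is non-increasing (Brunn's theorem), so $v(t)/v(0)=\Pr(X\ge t)$ for some non-negative random variable $X$. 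Integration by parts turns $\int t^p v(t)\,dt$ into $v(0)\,\E(X^{1+p})/(1+p)$, and Jensen gives $\E(X^{1+p})\ge(\E X)^{1+p}=(1/(2v(0)))^{1+p}$, yielding $C_p\ge 1/[(2\sqrt2)^p(1+p)]$ in one stroke. If you want to salvage your probabilistic route you would need to actually prove the lower moment comparison for uniforms with an explicit constant --- and the cleanest known way to do that is precisely this section-volume computation.
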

\begin{proof}
For $k=1$ we easily calculate $\mathfrak{I}_{1,p} = 1 / [2^p(1+p)]$.
Hence, without loss of generality we can assume $k\geq 2$ in what follows.

To abbreviate the notation, let us define
\begin{gather}\label{eq:def_sum}
        f=f_k \colon \R^k \nach \R, 
        \qquad \bm{z}=(z_1,\ldots,z_k) \mapsto f(\bm{z})=\sum_{m=1}^k z_m
\end{gather}
for any fixed $k\geq 2$. 
Moreover, for given vectors $\bm{z},\bm{y} \in \R^k$, let $\distr{\bm{z}}{\bm{y}}$ 
denote the inner product $\sum_{m=1}^k z_m y_m$ in $\R^k$.
In the special case $\bm{y}=\bm{\xi} = 1/\sqrt{k} \cdot \bm{(1,\ldots,1)}\in \mathbb{S}^{k-1}$ 
it is $\distr{\bm{z}}{\bm{\xi}} = t$ for a given $t\in \R$ if and only if 
$f(\bm{z}) = t\sqrt{k}$. 
Furthermore note that every $\bm{y}$ in the 
$k$-dimensional unit sphere $\mathbb{S}^{k-1}\subset \R^k$ 
uniquely defines a hyperplane 
$\bm{y}^{\bot} = \left\{\bm{z}\in\R^k \sep \distr{\bm{z}}{\bm{y}}=0 \right\}$ 
perpendicular to $\bm{y}$ which contains zero. 
Therefore, for $\bm{y}=\bm{\xi}$ and every $t\in[0,\infty)$,
the set
\begin{gather*}
		\mathfrak{H}_t=\bm{\xi}^\bot + t \cdot \bm{\xi} 
		= \left\{\bm{z}\in\R^k \sep \distr{\bm{z}}{\bm{\xi}}=t \right\}
\end{gather*}
describes a parallel shifted hyperplane in $\R^k$ with distance $t$ to the origin. 
Using Fubini's theorem, this leads to the following representation:
\begin{align*}
        \mathfrak{I}_{k,p}
        &= \int_{[-1/2,1/2]^k} \abs{f(\bm{z})}^p \dlambda^k(\bm{z}) 
        = 2 \cdot \int_{\substack{[-1/2, 1/2]^k\\\distr{\bm{z}}{\bm{\xi}} \geq 0}} f(\bm{z})^p \dlambda^k(\bm{z}) \\
        &= 2 \cdot k^{p/2} \cdot \int_0^\infty t^p 
        \left( \int_{[-1/2, 1/2]^k\cap \mathfrak{H}_t} 1 \, \dlambda^k(\bm{z})  \right) \dlambda^1(t).
\end{align*}
Now we see that the inner integral describes the $(k-1)$-dimensional volume 
\begin{gather*}
        v(t) = \uplambda^{k-1}\!\left( [-1/2,1/2]^k \cap \mathfrak{H}_t \right)
\end{gather*}
of the parallel section of the unit cube with the hyperplanes defined above. 
Because of Ball's famous theorem we know that $v(0)\leq \sqrt{2}$ holds
independently of $k$; see, e.g., Chapter 7 in the monograph of Koldobsky~\cite{K05}. 
Moreover taking $\mathfrak{H}_0 = \bm{\xi}^\bot$ provides a central hyperplane section of the 
unit cube. 
From this observation we conclude that
\begin{gather*}
        \int_0^\infty v(t) \, \dlambda^1(t) 
        = \frac{1}{2} \cdot \uplambda^{k}([-1/2,1/2]^k) 
        = \frac{1}{2}
\end{gather*}
because of the symmetry of $[-1/2,1/2]^k$ \wrt $\mathfrak{H}_0$.
In addition, by Brunn's theorem (cf. \cite[Theorem 2.3]{K05}), 
the function $v$ is non-negative and
non-increasing on the interval $[0,\infty)$. 
Thus $v$ is related to the 
distribution function of a certain non-negative real-valued 
random variable~$X$, up to some normalizing factor, i.e. 
$v(t) = v(0) \cdot \Pr(\{X\geq t\})$. 
Using H\"{o}lder's inequality\footnote{See also \cite[Lemma 7.5]{K05}.} 
we obtain $\E (X^{1+p}) \geq ( \E X )^{1+p}$ and, respectively,
\begin{gather*}
        \mathfrak{I}_{k,p} 
				= k^{p/2}\cdot 2\int_0^{\infty} t^p \, v(t) \, \dlambda^1(t) 
        \geq k^{p/2}\cdot \frac{2}{v(0)^p \, (1+p)} \left( \int_0^{\infty} v(t) \, \dlambda^1(t) \right)^{1+p}
\end{gather*}
by integration by parts. 

In summary we have shown \link{Estimate_Integrals} 
and hence the proof is complete.
\end{proof}

Now the mentioned intractability result reads as follows:
\begin{prop}\label{prop:Lp_intract}
				Let $1\leq p < \infty$ and $l>0$. 
				Moreover, choose $k\in\N$ such that
				\begin{gather}\label{LowerBound_k}
        			k \geq \kappa_{p,l}=\ceil{8(p+1)^{2/p}/l^2}.
				\end{gather}
				Then condition \link{NormCondition_Lp} holds for $V_d^{(k)}\subset F_{d,p}$. 
				Hence the $\L_p$-approximation problem 
				$S^p=(\id_d^p\colon F_{d,p}\nach \L_p([0,l]^d))_{d\in\N}$ suffers from the curse of dimensionality since 
				\begin{gather*}
							e_p^\wor(n,d; \id_d^p) \geq 1 \quad \text{for all} \quad n<2^{\floor{d/k}}
				\end{gather*} 
				and every $d\in\N$.
\end{prop}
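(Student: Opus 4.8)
The plan is to apply \autoref{needed_lemma} (the Borsuk--Ulam based lower bound) exactly as in the proof of \autoref{Theorem_Polynom}, but with the blocked subspace $V_d^{(k)}$ in place of $V$, and to verify the norm condition \link{eq:NormCondition} with $a=1$, $\F=F_{d,p}$, $\G=\L_p([0,l]^d)$, and $S=\id_d^p$. Since $\dim V_d^{(k)} = 2^s$ with $s=\floor{d/k}$, once \link{NormCondition_Lp} is established \autoref{needed_lemma} immediately yields $e_p^\wor(n,d;\id_d^p)\geq 1$ for all $n<2^{\floor{d/k}}$, hence $n^\wor(\epsilon,d;\id_d^p)\geq 2^{\floor{d/k}}$ for every $\epsilon\in(0,1)$, which is the claimed curse of dimensionality (one simply notes $\floor{d/k}\geq d/(2k)$ for $d\geq k$, giving exponential growth in $d$).

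So the real content is to prove \link{NormCondition_Lp}: for every $g\in V_d^{(k)}$ we must show $\norm{g\sep F_{d,p}}\leq \norm{g\sep \L_p([0,l]^d)}$. First I would reduce, as in Step~2 of the proof of \autoref{Theorem_Polynom}, to the $s\cdot k$ active variables and pass to the transformed function $h_g(\bm z)=\sum_{\bm i\in\{0,1\}^s} c_{\bm i}\,\bm z^{\bm i}$ on the box $Z=[0,l k]^s$ via the surjective substitution $z_j = \sum_{m=(j-1)k+1}^{jk} x_m$ restricted to $[0,l]^{sk}$. The key point here is that the pushforward of normalized Lebesgue measure on $[0,l]^k$ under $(x_1,\dots,x_k)\mapsto \sum x_m$ is a probability density on $[0,lk]$ whose values control the comparison of $\L_p$-norms in the $x$-variables versus the $z$-variables. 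As in the unweighted Step~3, differentiation in $x_m$ (for $m$ in block $j$) corresponds to differentiation in $z_j$, and since each $z_j$-derivative of $h_g$ again has the same affine-in-each-coordinate structure, an induction on $\abs{\bm\sigma}$ reduces everything to the one-derivative estimate $\norm{\partial h/\partial z_j \sep \L_p(Z)}\leq \norm{h\sep\L_p(Z)}$ for affine $h$.

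The one-derivative estimate is where \autoref{lemma:int_est} enters and is the main obstacle. Writing $h(\bm z)=a(\bm z_{(j)})\, z_j + b(\bm z_{(j)})$, one must show $\int_0^{lk}|a|^p\,\dlambda^1(z_j)\leq \int_0^{lk}|a z_j+b|^p\,\dlambda^1(z_j)$ after integrating over the remaining variables; equivalently, after centering $z_j$ around $lk/2$ and rescaling by $lk$, that $\int_{-1/2}^{1/2}|t|^p\,\dlambda^1(t)\cdot (lk)^{p}$ is bounded below appropriately --- but the crucial quantitative input is that the $k$-fold convolution structure, i.e. the integral $\mathfrak I_{k,p}=\int_{[-1/2,1/2]^k}|\sum_{m=1}^k z_m|^p\,\dlambda^k$, is at least $C_p\, k^{p/2}$ with $C_p\geq 1/[(2\sqrt2)^p(1+p)]$. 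Combining this with the normalization $l^k\cdot(\text{density bound})$ and demanding $(lk)^p \cdot (\text{const}) \geq (\text{const})\cdot \mathfrak I_{k,p}^{-1}\cdots$ leads precisely to the requirement $l^2 k \geq 8(p+1)^{2/p}$, i.e. \link{LowerBound_k}; I expect the bookkeeping of constants (tracking $v(0)\le\sqrt2$ from Ball's theorem through to the final inequality) to be the delicate part, and the choice $\kappa_{p,l}=\ceil{8(p+1)^{2/p}/l^2}$ is exactly what makes the comparison go through for every affine $h$ on $Z$.

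Finally, having verified \link{NormCondition_Lp}, I would assemble the chain of (in)equalities $\norm{g\sep F_{d,p}}=\norm{g\sep F_{sk,p}}=\maxx_{\bm\sigma\in\{0,1\}^s}\norm{D^{\bm\sigma}h_g\sep\L_p(Z)}\leq\norm{h_g\sep\L_p(Z)}=\norm{g\sep\L_p([0,l]^{sk})}=\norm{g\sep\L_p([0,l]^d)}$ for all $g\in V_d^{(k)}$, exactly paralleling Step~4 of the proof of \autoref{Theorem_Polynom}, and then invoke \autoref{needed_lemma} with $a=1$ and $r=1$ to conclude. The last sentence of the proposition ($n<2^{\floor{d/k}}$ forces error $\geq 1$, hence exponential information complexity) is then immediate.
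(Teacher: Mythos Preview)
Your overall plan (invoke \autoref{needed_lemma} with $V_d^{(k)}$, reduce \link{NormCondition_Lp} by induction to a one-derivative estimate, and feed in \autoref{lemma:int_est}) is correct and matches the paper. But the way you carry out the reduction has a genuine gap: you try to mimic the $\L_\infty$ proof of \autoref{Theorem_Polynom} by passing to the transformed function $h_g$ on $Z=[0,lk]^s$ and then work with the \emph{unweighted} norms $\norm{\cdot\sep\L_p(Z)}$. The chain
\[
\norm{g\sep F_{sk,p}}=\maxx_{\bm\sigma\in\{0,1\}^s}\norm{D^{\bm\sigma}h_g\sep\L_p(Z)}
\quad\text{and}\quad
\norm{h_g\sep\L_p(Z)}=\norm{g\sep\L_p([0,l]^{sk})}
\]
is simply false for $p<\infty$: the map $(x_1,\ldots,x_k)\mapsto\sum_m x_m$ is not measure-preserving, so $\L_p$-norms do not transfer as $\L_\infty$-norms did. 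You acknowledge the pushforward density in passing, but the inequalities you then write down (and the final assembly) ignore it. In fact your stated one-derivative estimate $\int_0^{lk}|a|^p\,\dlambda^1(z_j)\leq\int_0^{lk}|az_j+b|^p\,\dlambda^1(z_j)$ holds already for $lk\geq 2(p+1)^{1/p}$, which is precisely the insufficient bound from \cite{NW09} that the paper's counterexample refutes; so this route cannot yield \link{LowerBound_k}.

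The paper avoids the change of variables altogether. It stays in the $x$-variables, writes $g(\bm x)=a(\bm{\widetilde x})\sum_{m=1}^k y_m+b(\bm{\widetilde x})$ where $\bm y\in[0,l]^k$ is the $j$th block, and uses Fubini to reduce the one-derivative inequality to the pointwise estimate
\[
\int_{[0,l]^k}|a|^p\,\dlambda^k(\bm y)\;\leq\;\int_{[0,l]^k}\Big|a\sum_{m=1}^k y_m+b\Big|^p\,\dlambda^k(\bm y).
\]
After centering ($y_m\mapsto l(z_m+\tfrac12)$) the right-hand side is minimized at $b'=0$ and equals $|a|^p\,l^{p+k}\,\mathfrak I_{k,p}$, while the left-hand side is $|a|^p\,l^k$. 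Hence the needed inequality is exactly $l^p\,\mathfrak I_{k,p}\geq 1$, and \autoref{lemma:int_est} with $C_p\geq 1/[(2\sqrt2)^p(1+p)]$ turns this into $k\geq 8(p+1)^{2/p}/l^2$. If you want to keep your $h_g$-on-$Z$ viewpoint, you must work with the weighted space $\L_p(Z,\rho)$ where $\rho=\bigotimes_j\rho_1$ and $\rho_1$ is the pushforward density; the resulting one-derivative estimate then unwinds to the same $k$-dimensional integral inequality above, not to a one-dimensional integral over $[0,lk]$.
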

\begin{proof}
Due to the structure of the functions $g$ from $V_d^{(k)}$, it suffices to show that
\begin{gather*}
        \norm{D^{\bm{\alpha}} g \sep \L_p([0,l]^{ks})} \leq 
        \norm{g \sep \L_p([0,l]^{ks})} \quad \text{for all} 
        \quad g \in V_d^{(k)} \quad \text{and every} \quad \bm{\alpha} \in \mathbb{M}_d^{(k)},
\end{gather*}
where the set of multi-indices $\mathbb{M}_d^{(k)}$ is defined by
\begin{gather*}
        \mathbb{M}_d^{(k)} 
        = \left\{ \bm{\alpha}=(\alpha_1,\ldots,\alpha_{ks}) \in \{0,1\}^{ks} 
        \sep \sum_{m\in I_j} \alpha_m \leq 1 \, 
        \text{ for all } \, j=1,\ldots,s \right\}
\end{gather*}
and $I_j=\{(j-1)k+1, \ldots, jk\}$.
Observe that $\mathbb{M}_d^{(k)}$ depends on $d$ via $s=\floor{d/k}$.
Similar to the proof of \autoref{Theorem_Polynom}, we only need to consider the case 
$\bm{\alpha} = \bm{e_t} \in \{0,1\}^{ks}$ with $t \in I_j$. 
The rest then follows by induction.

Given $t\in I_j$ for some $j\in\{1,\ldots,s\}$ we can represent every fixed $g\in V_d^{(k)}$, 
as well as its partial derivative $D^{\bm{e_t}}g$, 
by some functions $a, b \colon [0,l]^{k(s-1)} \nach \R$ (depending on $g$ and $j$) such that
\begin{gather*}
        g(\bm{x}) = a(\bm{\widetilde{x}}) \sum_{m=1}^k y_m + b(\bm{\widetilde{x}}) 
        \quad \text{and} \quad 
				(D^{\bm{e_t}} g)(\bm{x}) = a(\bm{\widetilde{x}}), 
        \quad \bm{x}\in[0,l]^{ks}.
\end{gather*}
Here we split the $ks$-dimensional vector
$\bm{x} = (\bm{x_{I_1}}, \ldots, \bm{x_{I_{j-1}}}, \bm{y}, \bm{x_{I_{j+1}}}, \ldots, \bm{x_{I_s}})$  into
$\bm{\widetilde{x}} = (\bm{x_{I_1}}, \ldots, \bm{x_{I_{j-1}}}, \bm{x_{I_{j+1}}}, \ldots, \bm{x_{I_s}}) \in [0,l]^{k(s-1)}$ and $\bm{y}=(y_1,\ldots,y_k)\in [0,l]^k$, 
where $\bm{x_{I_j}}$ denotes the $k$-dimensional block of components 
$x_m$ in $\bm{x}$ with coordinates $m \in I_j$. 
Using this representation we can rewrite the inequality 
$\norm{D^{\bm{e_t}} g \sep \L_p([0,l]^{ks})} \leq \norm{g \sep \L_p([0,l]^{ks})}$ 
as
\begin{align*}
        &\int_{[0,l]^{k(s-1)}} \int_{[0,l]^k} \abs{a(\bm{\widetilde{x}})}^p \, \dlambda^k(\bm{y}) \, \dlambda^{k(s-1)}(\bm{\widetilde{x}}) \\
        &\qquad \leq \int_{[0,l]^{k(s-1)}} \int_{[0,l]^k} \abs{a(\bm{\widetilde{x}}) 
        \sum_{m=1}^k y_m + b(\bm{\widetilde{x}})}^p \, \dlambda^k(\bm{y}) \, \dlambda^{k(s-1)}(\bm{\widetilde{x}})
\end{align*}
such that it is enough to prove a pointwise estimate of the inner integrals 
for ($\uplambda^{k(s-1)}$-almost every) fixed $\bm{\widetilde{x}}\in [0,l]^{k(s-1)}$ with $a=a(\bm{\widetilde{x}}) \neq 0$. 
Easy calculus yields
\begin{gather*}
        \int_{[0,l]^k} \abs{a \sum_{m=1}^k y_m + b}^p \dlambda^k(\bm{y}) 
        = l^{p+k} \cdot \int_{[-1/2,1/2]^k} \abs{a \sum_{m=1}^k z_m + b'}^p \dlambda^k(\bm{z})
\end{gather*}
for some constant $b'\in\R$ that depends on $b=b(\bm{\widetilde{x}})$. 
Note that the right-hand side of the latter equality is minimized for $b'=0$. 
Therefore we can estimate the left-hand side from below by
\begin{align*}
        \int_{[0,l]^k} \abs{a \sum_{m=1}^k y_m + b}^p \dlambda^k(\bm{y}) 
        &\geq \abs{a}^p \cdot l^{p+k} \cdot \int_{[-1/2,1/2]^k} \abs{\sum_{m=1}^k z_m}^p \dlambda^k(\bm{z}) \\
        &= \int_{[0,l]^k} \abs{a}^p \dlambda^k(\bm{y}) \cdot l^p \cdot 
        \int_{[-1/2,1/2]^k} \abs{\sum_{m=1}^k z_m}^p \dlambda^k(\bm{z}).
\end{align*}
To complete the proof it remains to show that our choice of 
$k\geq \kappa_{p,l}$, with $\kappa_{p,l}$ given in \link{LowerBound_k}, implies that 
\begin{gather}\label{eq:int_bound}
				\int_{[-1/2,1/2]^k} \abs{\sum_{m=1}^k z_m}^p \dlambda^k(\bm{z}) \geq l^{-p}
\end{gather}
but this easily follows from \autoref{lemma:int_est} above.
\end{proof}

Actually, using other proof methods 
we can slightly improve the lower bound for 
$C_p$ in \autoref{lemma:int_est} and thus
also $\kappa_{p,l}$ in formula \link{LowerBound_k} of \autoref{prop:Lp_intract}.
This is the subject of our final remark within this subsection:

\begin{rem}
Let $\bm{Y}=(Y_1,\ldots,Y_k)$ denote a random vector of $k\in\N$ 
independent copies of some uniformly $[-1/2,1/2]$-distributed random variable $Y_0$.
Then $\mathfrak{I}_{k,p}$ can be interpreted as the $p$th absolute moment $\E(\abs{f_k(\bm{Y})}^p)$ of $f_k(\bm{Y})$, where
$f_k$ again is given by \link{eq:def_sum}.
In the case of even $p=2N$, $N\in\N$, this can be calculated exactly using the multinomial theorem.
For $k,N\in\N$ we obtain
\begin{gather*}
		\mathfrak{I}_{k,2N} 
		= \E\left( \abs{f_k(\bm{Y})}^{2N} \right)
		= 2^{-2N} \sum_{\substack{\bm{j}=(j_1,\ldots,j_k)\in\N_0^k\\j_1+\ldots+j_k=N}} \binom{2N}{2j_1,\ldots,2j_k} \prod_{m=1}^k \frac{1}{2j_m+1},
\end{gather*}
where we used the independence of the $Y_m$'s and fact that
\begin{gather*}
		\E(Y_0^n)
		= \int_{-1/2}^{1/2} y^n\, \dlambda^1(y)
		= \begin{cases}
				0	, &\text{if } n = 2j+1,\\
				(2j+1)^{-1} \cdot 2^{-2j}	, &\text{if } n = 2j,
		\end{cases}
\end{gather*}
and $j\in\N_0$.
In particular, we conclude
\begin{gather*}
		\mathfrak{I}_{k,2} = \frac{1}{2^2 \cdot 3} \cdot k
		\qquad \text{and} \qquad
		\mathfrak{I}_{k,4} = \frac{1}{48} \cdot k \left(k-\frac{2}{5} \right) \geq \frac{1}{2^4 \cdot 5} \cdot k^2.
\end{gather*}

Since $\mathfrak{I}_{k,p}=\norm{f_k\sep \L_p([-1/2,1/2]^k)}^p$ we can use the monotonicity of the Lebesgue spaces in
order to estimate $C_p$ for the remaining powers $p$.
For $k\in\N$ and $1 \leq q \leq p < \infty$ we obtain $\mathfrak{I}_{k,p} \geq (\mathfrak{I}_{k,q})^{p/q} \geq (C_q)^{p/q} \, k^{p/2}$, \ie $C_p \geq (C_q)^{p/q}$, provided that $\mathfrak{I}_{k,q} \geq C_q\, k^{q/2}$.
Consequently, we can take
\begin{gather*}
		k\geq \begin{cases}
				\ceil{12/l^2}, &\text{if } 2\leq p < 4,\\
				\ceil{4\sqrt{5}/l^2}, &\text{if } 4\leq p
		\end{cases}
\end{gather*}
to fulfill \link{eq:int_bound} in the proof of \autoref{prop:Lp_intract}.
This clearly improves the bound $k\geq \kappa_{p,l}$ in \link{LowerBound_k}.
\hfill$\square$
\end{rem}

Nevertheless, we want to stress the point that also with these improvements the lower bounds on $k$ 
are not sharp since we know from \cite{NW09} that in the limit case $p = \infty$ we can take $k = \ceil{2/l}$.
On the other hand, we note that Hoeffding's inequality implies the existence of some universal constants $C_p'$
such that $\mathfrak{I}_{k,p} \leq C_p'\, k^{p/2}$ for all $p\in[1,\infty)$ and every $k\in\N$. 
Thus the estimates on the integrals $\mathfrak{I}_{k,p}$ are of the right order in $k$
such that we need other proof techniques to obtain a better dependence of $k$ on $l=c_2-c_1$.

\subsection{Uniform approximation in the weighted Sobolev space}\label{sect:opt_uniform_algo}
To show that the linear algorithm $A_{n,d}^*$ studied in \autoref{cor:optAlgoSobolev} is
essentially optimal for $\L_\infty$-approximation on the unanchored Sobolev space $\Hi_d^\gamma=\Hi(K_d^\gamma)$ 
in the worst case setting we study (weighted) $\L_2$-approximation on a related Banach space $\F_d$ in the average case setting; 
see \autoref{ex:average_case_approx} for details.
The relation of these two problems is given by the assertion below
which follows from~\cite[Theorem 1]{KWW08}.

\begin{prop}
For $d\in\N$ let $\Hi(K_d)$ denote a RKHS induced by a kernel $K_d\colon[0,1]^d\times[0,1]^d\nach\R$ 
that satisfies \link{eq:BoundedKernel}.\footnote{Note that \link{eq:BoundedKernel} clearly implies that $\int_{[0,1]^d} K_d(\bm{x},\bm{x}) \, \rho(\bm{x})\, \dlambda^d(\bm{x})$ is finite for every probability density function $\rho$ on $[0,1]^d$.}
Moreover, define the set of non-vanishing probability density functions $\rho$ on the unit cube by 
\begin{gather*}
			\mathcal{D}_d 
			= \left\{ \rho \colon [0,1]^d \nach [0,\infty) \sep \int_{[0,1]^d} \rho(\bm{x}) \dlambda^d(\bm{x})=1
								\quad \text{and} \quad \rho>0 \text{ ($\uplambda^d$-a.e.)} \right\}.
\end{gather*}
Then, for every $n\in\N_0$ and all $d\in\N$,
\begin{align*}
			&e^\wor\! \left(n,d; \id_d \colon \B(\Hi(K_d))\nach \L_\infty([0,1]^d) \right) \\
			&\qquad\,\qquad \geq \sup_{\rho \in \mathcal{D}_d} \, e^{\mathrm{avg}} \!\left(n,d; \id_d^{\rho} \colon \F_d \nach \L_2^\rho([0,1]^d) \right). 
\end{align*}
Here the $n$th minimal errors are taken with respect to all algorithms from the class $\A_d^{n,\lin}(\Lambda^\all)$.
\end{prop}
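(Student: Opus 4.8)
The plan is to fix a probability density $\rho\in\mathcal{D}_d$ and prove the single-density estimate
\begin{gather*}
		e^\wor\!\left(n,d;\id_d\colon\B(\Hi(K_d))\nach\L_\infty([0,1]^d)\right)\geq e^{\mathrm{avg}}\!\left(n,d;\id_d^\rho\colon\F_d\nach\L_2^\rho([0,1]^d)\right);
\end{gather*}
the claim then follows by taking the supremum over $\rho\in\mathcal{D}_d$. First I would reformulate the right-hand side. By \autoref{ex:average_case_approx} the eigenvalues $\lambda_{d,\rho,i}$ of the covariance operator $C_{\nu_d}^\rho$ coincide with those of $W_d^\rho=\left(\mathrm{App}_d^\rho\right)^\dagger\mathrm{App}_d^\rho$, the $n$th minimal average case error equals $\left(\sum_{i>n}\lambda_{d,\rho,i}\right)^{1/2}$, and \link{eq:BoundedKernel} together with \link{eq:trace} ensures that $\trace W_d^\rho=\int_{[0,1]^d}K_d(\bm x,\bm x)\,\rho(\bm x)\dlambda^d(\bm x)$ is finite. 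Using the reproducing property \link{eq:reproducing_prop} and the expansion \link{eq:kernel_sum_basis} of $K_d$ in an orthonormal basis, I would verify that for every finite-dimensional $V\subseteq\Hi(K_d)$ with orthogonal projection $P_V$,
\begin{gather*}
		\int_{[0,1]^d}\norm{(\id-P_V)K_d(\cdot,\bm x)\sep\Hi(K_d)}^2\,\rho(\bm x)\dlambda^d(\bm x)=\trace\!\left((\id-P_V)W_d^\rho\right),
\end{gather*}
and minimising the right-hand side over all $V$ with $\dim V\leq n$ is the Ky~Fan maximum principle for the positive trace-class operator $W_d^\rho$, giving exactly $\sum_{i>n}\lambda_{d,\rho,i}$. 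Thus $e^{\mathrm{avg}}(n,d;\id_d^\rho)^2$ equals the infimum of the above integral over all subspaces $V\subseteq\Hi(K_d)$ of dimension at most $n$.

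On the worst case side, let $A_{n,d}\in\A_d^{n,\lin}(\Lambda^\all)$ be an arbitrary linear algorithm and write $A_{n,d}f=\sum_{j=1}^n\distr{f}{\eta_j}_{\Hi(K_d)}\,g_j$ with Riesz representers $\eta_j\in\Hi(K_d)$ and $g_j\in\L_\infty([0,1]^d)$; put $W=\spann{\eta_1,\ldots,\eta_n}$. For almost every $\bm x$ the functional $f\mapsto(f-A_{n,d}f)(\bm x)$ is represented in $\Hi(K_d)$ by $r(\bm x)=K_d(\cdot,\bm x)-\sum_j g_j(\bm x)\eta_j$, so its operator norm equals $w(\bm x)=\norm{r(\bm x)\sep\Hi(K_d)}$, a measurable function of $\bm x$. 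The key step is the identity
\begin{gather*}
		\esssup_{\bm x\in[0,1]^d}w(\bm x)=\Delta^\wor\!\left(A_{n,d};\id_d\colon\B(\Hi(K_d))\nach\L_\infty([0,1]^d)\right):
\end{gather*}
the inequality ``$\leq$'' is immediate from $|(f-A_{n,d}f)(\bm x)|\leq\norm{f\sep\Hi(K_d)}\,w(\bm x)$, whereas ``$\geq$'' needs a measurable-selection argument — using a countable dense subset of the unit ball of the separable space $\Hi(K_d)$ one replaces the pointwise supremum defining $w(\bm x)$ by a countable supremum and discards a countable union of null sets. Granting this, and using that $\sum_j g_j(\bm x)\eta_j\in W$ for every $\bm x$, one obtains
\begin{gather*}
		\Delta^\wor(A_{n,d};\id_d)^2\geq\int_{[0,1]^d}w(\bm x)^2\,\rho(\bm x)\dlambda^d(\bm x)\\
		\geq\int_{[0,1]^d}\norm{(\id-P_W)K_d(\cdot,\bm x)\sep\Hi(K_d)}^2\,\rho(\bm x)\dlambda^d(\bm x)\geq e^{\mathrm{avg}}(n,d;\id_d^\rho)^2,
\end{gather*}
the last step being the reformulation of $e^{\mathrm{avg}}$ applied with $V=W$. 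Taking the infimum over $A_{n,d}$ and then the supremum over $\rho\in\mathcal{D}_d$ finishes the argument.

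The main obstacle is precisely the exchange of ``$\sup$ over $f$'' and ``$\esssup$ over $\bm x$'' hidden in the identity $\esssup_{\bm x}w(\bm x)=\Delta^\wor(A_{n,d};\id_d)$; this is where separability of $\Hi(K_d)$ (as assumed throughout for reproducing kernel Hilbert spaces, cf.\ \autoref{sect:RKHS}) and the measurability of $\bm x\mapsto r(\bm x)$ come in. Everything else is routine: the trace/eigenvalue computation via Ky~Fan, the reproducing-property manipulations, and the harmless observation that bounding $\Delta^\wor(A_{n,d};\id_d)^2$ below by the infimum over \emph{all} $n$-dimensional $V\subseteq\Hi(K_d)$ — not merely those spanned by representers of functionals from $\F_d^*$ — means the two different information classes $\A_d^{n,\lin}(\Hi(K_d)^*)$ and $\A_d^{n,\lin}(\F_d^*)$ appearing on the two sides create no difficulty.
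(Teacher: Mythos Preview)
The paper does not prove this proposition at all; it simply records that the assertion ``follows from~[KWW08, Theorem~1]'' and then uses it. Your proposal is therefore not competing with any argument in the paper itself, but rather supplying a self-contained proof of the cited result. The overall strategy you outline --- rewrite the average case error as a subspace minimisation via Ky~Fan, represent the worst case pointwise error through the reproducing kernel, and compare --- is correct and is essentially the approach of Kuo, Wasilkowski and Wo\'zniakowski.

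One small slip: in your discussion of the identity $\esssup_{\bm x} w(\bm x)=\Delta^\wor(A_{n,d};\id_d)$ you have the two directions interchanged. The bound $|(f-A_{n,d}f)(\bm x)|\leq\|f\|\,w(\bm x)$ immediately gives $\Delta^\wor\leq\esssup_{\bm x} w(\bm x)$, not the other way round. The inequality you actually need for the chain $\Delta^\wor(A_{n,d})^2\geq\int w(\bm x)^2\rho(\bm x)\dlambda^d(\bm x)$ is $w(\bm x)\leq\Delta^\wor$ for $\uplambda^d$-a.e.\ $\bm x$, i.e.\ $\esssup_{\bm x} w(\bm x)\leq\Delta^\wor$, and \emph{that} is the direction requiring the separability/countable-dense-subset argument you describe. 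Since you supply both arguments, the mathematics is all present; only the labels are swapped.
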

In particular, it follows that the ($n$th minimal) worst case error for $\L_\infty$-approximation
on the unit ball of the \emph{Sobolev space} $\Hi_d^\gamma$ is
lower bounded by the average case error of \emph{unweighted} $\L_2$-approximation on the corresponding Banach space.
That is, we set $K_d=K_d^\gamma$ and $\rho = \chi_{[0,1]^d}\in\mathcal{D}_d$
in the following.

In turn we have (strong) polynomial tractability for the uniform approximation problem \wrt the worst case setting only if average case $\L_2$-approximation is polynomially tractable, as long as we consider the absolute error criterion.
Due to \cite[Theorem 6.1]{NW08} we know that the latter holds true if and only if
there exist a positive constant $c_1$, non-negative $q_1$, $q_2$ and $\tau\in(0,1)$ such that
\begin{gather*}
		c_2 = \sup_{d\in\N} \frac{1}{d^{q_2}} \left( \sum_{i=\ceil{c_1\, d^{q_1}}}^\infty (\lambda_{d,i})^\tau \right)^{1/\tau} < \infty,
\end{gather*}
where $(\lambda_{d,i})_{i=1}^\infty$ denotes the sequence of eigenvalues of the correlation operator 
$C_{\nu_d}$ with respect to a non-increasing ordering.
Moreover we have strong polynomial tractability if and only if this holds with $q_1=q_2=0$.

Because of the observation at the end of \autoref{ex:average_case_approx} it suffices to consider
the eigenvalues of $W_d^\gamma = \left( S_d^\gamma \right)^\dagger S_d^\gamma \colon \Hi_d^\gamma \nach \Hi_d^\gamma$, where $S_d^\gamma$ describes the embedding $\Hi_d^\gamma \hookrightarrow \L_2([0,1]^d)$.
Recall that these eigenvalues are given by 
\begin{gather*}
		\left\{ \widetilde{\lambda}_{d,\gamma,\bm{m}}=\prod_{k=1}^d \lambda_{1,\gamma_{d,k},m_k} 
		=\prod_{k=1}^d \frac{\gamma_{d,k}}{\gamma_{d,k}+\pi^2\,(m_k-1)^2} \sep \bm{m}=(m_1,\ldots,m_d)\in\N^d \right\};
\end{gather*}
see~\link{eq:L2_eigenvalues} at the end of \autoref{ex:Unanchored_Sobolev}.
Thus we only need to reorder this set appropriately using a rearrangement $\psi_d \colon \N \nach \N^d$
such that
\begin{gather*}
		\lambda_{d,i} = \widetilde{\lambda}_{d,\gamma,\psi_d(i)} \geq \widetilde{\lambda}_{d,\gamma,\psi_d(i+1)}
		\quad \text{for all} \quad i\in\N.
\end{gather*}

Given $d\in\N$, $\tau\in(0,1)$, as well as $c_1>0$, and $q_1\geq 0$ we estimate
\begin{align*}
		\sum_{i=\ceil{c_1\, d^{q_1}}}^\infty (\lambda_{d,i})^\tau
		&=\sum_{\bm{m}\in\N^d} \left( \widetilde{\lambda}_{d,\gamma,\bm{m}} \right)^\tau 
				- \sum_{i=1}^{\ceil{c_1 \, d^{q_1}}-1} (\lambda_{d,i})^\tau \\
		&\geq \prod_{k=1}^d \sum_{m\in\N} \left( \lambda_{1,\gamma_{d,k},m} \right)^\tau 
				- (\lambda_{d,1})^\tau \left( \ceil{c_1 \, d^{q_1}}-1 \right) \\
		&\geq \prod_{k=1}^d \left( 1+ \sum_{m=2}^\infty \left( \frac{\gamma_{d,k}}{\gamma_{d,k}+\pi^2(m-1)^2} \right)^\tau \right)
				- c_1 \, d^{q_1},
\end{align*}
since $\lambda_{d,1}=\widetilde{\lambda}_{d,\gamma,\bm{(1,\ldots,1)}}=\prod_{k=1}^d \lambda_{d,\gamma_{d,k},1} = 1$.
Due to the boundedness of the generator weights $\gamma_{d,k}\leq C_\gamma$
for every $k\in\{1,\ldots,d\}$, we can further estimate the sum by
\begin{gather*}
			\sum_{m=2}^\infty \left( \frac{\gamma_{d,k}}{\gamma_{d,k}+\pi^2(m-1)^2} \right)^\tau
			\geq \gamma_{d,k}^\tau \sum_{i=1}^\infty \frac{(C_\gamma')^\tau}{i^{2\tau}}
			= \gamma_{d,k}^\tau \, (C_\gamma')^\tau\, \zeta(2\tau),
\end{gather*}
where we set $C_\gamma' = (C_\gamma + \pi^2)^{-1}$.
Because of $\ln(1+y)\geq y/(1+y)$ for all $y \geq 0$ we conclude that
for $k=1,\ldots,d$ and some positive $C$ depending on $C_\gamma$ and $\tau$
\begin{gather*}
			\ln\left(1+\gamma_{d,k}^\tau \, (C_\gamma')^\tau\, \zeta(2\tau)\right) 
			\geq \frac{(C_\gamma')^\tau\, \zeta(2\tau)}{1+\gamma_{d,k}^\tau \, (C_\gamma')^\tau\, \zeta(2\tau)} \cdot \gamma_{d,k}^\tau
			\geq C \cdot \gamma_{d,k}^\tau.
\end{gather*}
Consequently, this yields
\begin{gather*}
		\sum_{i=\ceil{c_1\, d^{q_1}}}^\infty (\lambda_{d,i})^\tau
		\geq \prod_{k=1}^d \exp{C\cdot \gamma_{d,k}^\tau} - c_1 \, d^{q_1}
		= \exp{C \sum_{k=1}^d \gamma_{d,k}^\tau} - c_1 \, d^{q_1}.
\end{gather*}

Therefore, polynomial tractability implies $q(\gamma)<1$ and
strong polynomial tractability is possible only if $p(\gamma)<1$.
Here $p$ and $q$ describe the sum exponents of the product weight
sequence $\gamma=(\gamma_{\bm{\alpha}})_{\bm{\alpha}\in\N_0^d}$, $d\in\N$, defined at the beginning of
\autoref{sect:embeddings}.

Together with \autoref{Theorem_Sufficient} this finally proves
\begin{theorem}\label{thm:tract_Sobolev}
			Consider the uniform approximation problem defined on the sequence 
			of unanchored Sobolev spaces $(\Hi_d^{\gamma})_{d\in\N}$, where
			the product weight sequence~$\gamma$ is constructed out of a uniformly bounded
			generator sequence $C_\gamma \geq \gamma_{d,1}\geq\ldots\geq\gamma_{d,d}$, $d\in\N$.
			We study this problem in the worst case setting and with respect to the absolute error criterion.
			Then we have
			\begin{itemize}
					\item polynomial tractability if and only if $q(\gamma)<1$ and
					\item strong polynomial tractability if and only if $p(\gamma)<1$.
			\end{itemize}
\end{theorem}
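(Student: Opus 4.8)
The plan is to prove \autoref{thm:tract_Sobolev} by combining the sufficient conditions already established in \autoref{Theorem_Sufficient} with the necessary conditions derived from the reduction to average case $\L_2$-approximation that precedes the theorem statement. First I would observe that sufficiency is immediate: taking $\F_d^\gamma = \Hi_d^\gamma$ for every $d\in\N$, we have seen in \autoref{sect:conclusions} (the limiting case $\P_d^\gamma$ and $\Hi_d^\gamma$) that \link{Assumption2} holds trivially with $C_{2,d}=1$, i.e.\ with $a=1$, $b=0$, and $t=1$. Hence \autoref{Theorem_Sufficient} applies and yields that $q(\gamma)<1$ implies polynomial tractability while $p(\gamma)<1$ implies strong polynomial tractability.

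For the converse direction I would use the chain of reductions laid out just above the theorem. The key steps, in order, are: (i) invoke the proposition based on \cite[Theorem~1]{KWW08} to bound the worst case $n$th minimal error for $\L_\infty$-approximation on $\B(\Hi_d^\gamma)$ from below by the average case error of unweighted $\L_2$-approximation on the associated Banach space, choosing $K_d = K_d^\gamma$ and $\rho = \chi_{[0,1]^d}$; (ii) use the eigenvalue identification from the end of \autoref{ex:average_case_approx} together with \link{eq:L2_eigenvalues} to reduce everything to the spectrum $\{\widetilde{\lambda}_{d,\gamma,\bm{m}}\}$ of $W_d^\gamma$; (iii) apply the characterization of polynomial tractability of average case $\L_2$-approximation from \cite[Theorem~6.1]{NW08}, which requires the finiteness of $\sup_{d} d^{-q_2}\bigl(\sum_{i\geq \ceil{c_1 d^{q_1}}} \lambda_{d,i}^\tau\bigr)^{1/\tau}$ for some $\tau\in(0,1)$; and (iv) carry out the lower estimate on that tail sum, exactly as sketched before the theorem statement, namely
\begin{gather*}
		\sum_{i=\ceil{c_1 d^{q_1}}}^\infty (\lambda_{d,i})^\tau
		\geq \exp{C \sum_{k=1}^d \gamma_{d,k}^\tau} - c_1 d^{q_1},
\end{gather*}
using $\ln(1+y)\geq y/(1+y)$ and the uniform bound $\gamma_{d,k}\leq C_\gamma$. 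From this inequality, finiteness of the supremum with a polynomial correction in $d$ forces $\sum_{k=1}^d \gamma_{d,k}^\tau \in \0(\ln d)$, hence $Q_\tau(\gamma)<\infty$ and thus $q(\gamma)\leq \tau<1$; in the strongly polynomial case we set $q_1=q_2=0$, obtain boundedness of $\sum_{k=1}^d \gamma_{d,k}^\tau$ in $d$, and conclude $P_\tau(\gamma)<\infty$, i.e.\ $p(\gamma)\leq\tau<1$.

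Finally I would combine both directions to get the stated equivalences: polynomial tractability holds if and only if $q(\gamma)<1$, and strong polynomial tractability holds if and only if $p(\gamma)<1$. I expect the main obstacle to be step (iv), the tail-sum estimate: one must be careful that the product over coordinates of the per-coordinate sums $\sum_{m\geq 1}(\lambda_{1,\gamma_{d,k},m})^\tau = 1 + \sum_{m\geq 2}(\gamma_{d,k}/(\gamma_{d,k}+\pi^2(m-1)^2))^\tau$ is correctly bounded below — in particular that the Riemann zeta factor $\zeta(2\tau)$ (finite precisely because $\tau\in(1/2,1)$, so one may first shrink $\tau$ into that range) and the uniform generator bound $C_\gamma$ enter only through an absolute constant $C=C(C_\gamma,\tau)$, and that the subtraction of the finitely many largest eigenvalues $\lambda_{d,1}=1$ costs only the polynomial term $c_1 d^{q_1}$. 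Everything else is a routine assembly of results already available in the excerpt.
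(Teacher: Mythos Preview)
Your proposal is correct and follows essentially the same approach as the paper: sufficiency via \autoref{Theorem_Sufficient} applied to $\F_d^\gamma=\Hi_d^\gamma$ with the trivial embedding constant, and necessity via the reduction to average case $\L_2$-approximation (the KWW08 proposition), the characterization from \cite[Theorem~6.1]{NW08}, and the tail-sum lower bound $\exp(C\sum_k\gamma_{d,k}^\tau)-c_1 d^{q_1}$ using the explicit eigenvalues~\link{eq:L2_eigenvalues}. Your remark about needing $\tau\in(1/2,1)$ so that $\zeta(2\tau)<\infty$ is a useful refinement that the paper glosses over; note only that one \emph{increases} (rather than shrinks) $\tau$ into that range, which is harmless since all eigenvalues satisfy $\lambda_{d,i}\leq 1$.
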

  \cleardoubleplainpage
\chapter{Problems on Hilbert spaces with (anti)symmetry conditions}\label{chapt:antisym}
In this last chapter we describe an essentially new kind of a priori knowledge
which can help to overcome the curse of dimensionality.
As in \autoref{sect:TensorBasics}, we study compact linear problems $S=(S_d)_{d\in\N}$ 
defined between tensor products of Hilbert spaces but now we restrict our attention to problem 
elements which fulfill certain (anti)symmetry conditions.
After investigating some basic properties of the related subspaces of
(anti)symmetric problem elements in \autoref{sect:basic_antisym_def}
we construct a linear algorithm that uses finitely many continuous linear
functionals and show an explicit formula for its worst case error 
in terms of the eigenvalues $\lambda=(\lambda_m)_{m\in\N}$ 
of the operator $W_1 = {S_1}^{\!\dagger} S_1$.
Moreover, in \autoref{sect:opt_asym_algo} we show that this algorithm is optimal
\wrt a wide class of algorithms.
Next we clarify the influence of different
(anti)symmetry conditions on the complexity, 
compared to the case for the classical unrestricted problem studied in \autoref{sect:tensor_complexity}.
In particular, we give necessary and sufficient conditions for 
(strong) polynomial tractability of (anti)symmetric problems in \autoref{sect:complexity_antisym}.
Apart from the absolute error criterion we also deal with normalized errors.
Finally, in \autoref{sect:applications}, we discuss several applications.
\autoref{sect:wave} particularly indicates how to apply our
results to the approximation problem for wavefunctions.

Most of the results stated in this chapter are already published in the articles
\cite{W11} and \cite{W12b}.
At some points we improve the known results and/or proof techniques slightly.
In particular, the presented results also hold for problems defined on finite-dimensional or on non-separable source spaces.

\section{Basic definitions related to (anti)symmetry}\label{sect:basic_antisym_def}
The aim of this section is to introduce the notion of (anti)symmetry
in Hilbert spaces.
In order to illustrate this concept 
we mainly deal with \textit{function spaces}.
For this purpose in \autoref{sect:antisym_HFS} we start 
by defining (anti)symmetry properties 
for functions which will lead us to orthogonal projections, 
mapping the whole space onto its
subspace of (anti)symmetric functions.
In \autoref{sect:antisym_tensor} it will turn out that these projections 
applied to a given basis 
of a tensor product Hilbert function space lead us 
to handsome formulas for orthonormal bases of the subspaces.
Finally we generalize our approach
and define (anti)symmetry conditions 
for \textit{arbitrary} tensor product Hilbert spaces 
based on the deduced results for function spaces.
\autoref{sect:antisym_general} is devoted to this generalization.

\subsection{Hilbert function spaces}\label{sect:antisym_HFS}
Following Hamaekers~\cite[Section 2.5]{H09} we use a general approach
to (anti)sym\-metric functions
which also can be found in~\cite{W12b}.
Consider~$H$ to be a (possibly non-separable) Hilbert space of
real-valued multivariate functions~$f$ defined 
on some domain $\Omega$ in $\R^d$,
where we assume $d \geq 2$ to be fixed.
Furthermore, take an arbitrary non-empty 
subset of coordinates $I \subseteq \{1,\ldots,d\}$.
For every such subset we define the set
\begin{equation}
			\S_I = \left\{\pi \colon \{1,\ldots,d\} \nach \{1,\ldots,d\} 
								\sep \pi \text{ bijective and } \pi\big|_{\{1,\ldots,d\}\setminus I} = \id \right\}
\end{equation}
of all permutations on $\{1,\ldots,d\}$ that leave the complement of $I$ fixed.
To abbreviate the notation we identify $\pi \in \S_I$
with the corresponding permutation $\pi'$ on $\R^d$, 
\begin{equation*}
			\pi' \colon \R^d \nach \R^d, \qquad \bm{x}=(x_1,\ldots,x_d) \mapsto \bm{\pi'(x)}=(x_{\pi(1)}, \ldots, x_{\pi(d)}).
\end{equation*}

For an appropriate definition of partial (anti)symmetry of functions $f\in H$ 
we need the following simple assumptions. 
Given any $\pi \in \S_I$ we assume that
\begin{enumerate}[label=(A\thechapter.\arabic{*}), ref=A\thechapter.\arabic{*}, leftmargin=!, labelwidth=\widthof{(A5.2)}]
				\item \label{A1} $\bm{x}\in \Omega$ implies $\bm{\pi(x)}\in\Omega$,
				\item \label{A2} $f\in H$ implies $f(\bm{\pi(\cdot)}) \in H$ and
				\item \label{A3} there ex. $c_{\pi} \geq 0$ (independent of $f$) such that 
												 $\norm{f(\bm{\pi(\cdot)}) \sep H} \leq c_{\pi} \norm{f \sep H}$.
\end{enumerate}
A function $f \in H$ is called \emph{partially symmetric \wrt $I$} 
(or \emph{$I$-symmetric} for short) if any permutation $\pi\in\S_I$ 
applied to the argument $\bm{x}$ does not affect the value of~$f$. 
Hence,
\begin{gather}\label{sym}
				f(\bm{x})=f(\bm{\pi(x)}) \quad \text{for all} 
				\quad \bm{x}\in \Omega \quad \text{and every} \quad \pi \in \S_I.
\end{gather}
Moreover, we call a function $f \in H$ 
\emph{partially antisymmetric \wrt $I$}
(or \emph{$I$-antisymmetric}, respectively) 
if $f$ changes its sign by exchanging the variables~$x_i$ and $x_j$ with each other, where $i,j\in I$.
That is, we have 
\begin{gather}\label{antisym}
				f(\bm{x})=(-1)^{\abs \pi} f(\bm{\pi(x)}) \quad \text{for all} 
				\quad \bm{x}\in \Omega \quad \text{and every} \quad \pi \in \S_I,
\end{gather}
where $\abs \pi$ denotes the \emph{inversion number} 
of the permutation $\pi$. 
The term $(-1)^{\abs{\pi}}$ therefore coincides with the \emph{sign}, 
or \emph{parity of $\pi$} and is equal to the determinant 
of the associated permutation matrix.
In the case $\#I=1$ we do not claim any (anti)symmetry, 
since then the set $\S_I = \{ \id \}$ is trivial.
For $I=\{1,\ldots, d\}$ functions~$f$ which satisfy 
\link{sym} or \link{antisym}, respectively, are called 
\emph{fully (anti)symmetric}.

Note that, in particular, formula \link{antisym} yields that the value $f(\bm{x})$ of
$I$-antisym\-metric functions $f$ equals zero if $x_i=x_j$ with $i \neq j$ and $i,j\in I$. 
For $I$-symmetric functions such an implication does not hold.
Therefore the (partial) antisymmetry property is a somewhat more restrictive condition 
than the (partial) symmetry property with respect to the same subset $I$.
As we will see in \autoref{sect:complexity_antisym} 
this will also affect our complexity estimates.

Next we define the so-called \emph{symmetrizer} $\SI_I^H$ and 
\emph{antisymmetrizer $\AI_I^H$ on $H$ with respect to the subset $I$} by
\begin{gather*}
				\SI_I^H \colon H \nach H, \quad f\mapsto \SI_I^H (f) = \frac{1}{\#\S_I} \sum_{\pi \in \S_I} f(\bm{\pi(\cdot)})
\end{gather*}
and
\begin{gather*}
				\AI_I^H \colon H \nach H, \quad f\mapsto \AI_I^H (f) = \frac{1}{\#\S_I} \sum_{\pi \in \S_I} (-1)^{\abs{\pi}} f(\bm{\pi(\cdot)}).
\end{gather*}
If there is no danger of confusion we use the notation 
$\SI_I$ and $\AI_I$ instead of~$\SI_I^H$ and~$\AI_I^H$, respectively.
The following lemma collects some basic properties.
It generalizes Lemma~10.1 in Zeiser~\cite{Z10}.

\begin{lemma}\label{projection}
				For $\leer \neq I\subseteq\{1,\ldots,d\}$ both the mappings 
				$P_I\in\{\SI_I, \AI_I\}$ 
				define bounded linear operators on the Hilbert space $H$ with $P_I^2=P_I$. 
				Thus, $\SI_I$ and $\AI_I$ provide projections 
				of $H$ onto the closed linear subspaces
				\begin{gather}\label{antisymsubspace}
								\SI_I(H) = \{ f \in H \sep f \text{ satisfies } \link{sym} \}  
								\, \, \text{ and } \, \,
								\AI_I(H) = \{ f \in H \sep f \text{ satisfies } \link{antisym} \} 
				\end{gather}
				of all partially (anti)symmetric functions 
				\wrt $I$ in $H$, respectively. 
				If, in addition,
				\begin{gather}\label{eq:invariant}
						\distr{f(\bm{\pi(\cdot)})}{g(\bm{\pi(\cdot)})}_{H} = \distr{f}{g}_H
						\quad \text{for all} \quad f,g\in H \quad \text{and every} \quad \pi\in\S_I
				\end{gather}
				then the operators are self-adjoint and hence the projections are orthogonal.
				Consequently, 
				\begin{gather}\label{orth_decomp}
								H = \SI_I(H) \oplus (\SI_I(H))^\bot = \AI_I(H) \oplus (\AI_I(H))^\bot.
				\end{gather}				
\end{lemma}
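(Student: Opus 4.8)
The plan is to verify the stated properties of $P_I\in\{\SI_I,\AI_I\}$ in the natural order: linearity, boundedness, idempotency, the identification of the ranges, self-adjointness under \link{eq:invariant}, and finally the orthogonal decomposition. Linearity is immediate from the defining formulas, since each $P_I$ is a finite linear combination (with coefficients $\pm 1/\#\S_I$) of the composition operators $f\mapsto f(\bm{\pi(\cdot)})$, and composition with a fixed permutation of arguments is linear on $H$ by assumption \link{A2}. Boundedness then follows from the triangle inequality together with \link{A3}: we get $\norm{P_I(f)\sep H}\leq \frac{1}{\#\S_I}\sum_{\pi\in\S_I} c_\pi \norm{f\sep H}$, so $\norm{P_I\sep\LO(H,H)}\leq \frac{1}{\#\S_I}\sum_{\pi\in\S_I} c_\pi<\infty$. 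Note \link{A1} is what makes $f(\bm{\pi(\cdot)})$ a well-defined function on $\Omega$ in the first place.

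Next I would prove $P_I^2=P_I$. The key algebraic fact is that $\S_I$ is a group under composition: for fixed $\sigma\in\S_I$ the map $\pi\mapsto\sigma\circ\pi$ is a bijection of $\S_I$, and $\abs{\sigma\circ\pi}\equiv\abs{\sigma}+\abs{\pi}\pmod 2$ so that $(-1)^{\abs{\sigma\circ\pi}}=(-1)^{\abs{\sigma}}(-1)^{\abs{\pi}}$. Expanding $P_I(P_I(f))$ as a double sum over $\sigma,\pi\in\S_I$ and reindexing the inner sum via $\rho=\sigma\circ\pi$, each inner sum reproduces $P_I(f)$, and the outer averaging factor $\frac{1}{\#\S_I}$ (together with the sign bookkeeping in the antisymmetric case) collapses the double average to a single one, giving $P_I^2=P_I$. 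A bounded idempotent operator on a Hilbert space is a (not necessarily orthogonal) projection onto its range, which is closed; it remains to identify that range. For the inclusion ``$\subseteq$'': applying the group-reindexing argument once more shows $P_I(f)(\bm{\sigma(\cdot)})=\pm P_I(f)$ for every $\sigma\in\S_I$, i.e. $P_I(f)$ satisfies \link{sym} resp.\ \link{antisym}. For ``$\supseteq$'': if $f$ already satisfies \link{sym} then every summand $f(\bm{\pi(\cdot)})$ equals $f$, so $\SI_I(f)=f$; if $f$ satisfies \link{antisym} then $f(\bm{\pi(\cdot)})=(-1)^{\abs\pi}f$ so $(-1)^{\abs\pi}f(\bm{\pi(\cdot)})=f$ and again $\AI_I(f)=f$. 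Hence the range of $P_I$ is exactly the subspace in \link{antisymsubspace}, and these subspaces are closed as ranges of bounded projections (alternatively, as intersections of kernels of the continuous operators $f\mapsto f(\bm{\pi(\cdot)})\mp f$).

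For the self-adjointness claim, assume \link{eq:invariant}. It suffices to show each composition operator $U_\pi\colon f\mapsto f(\bm{\pi(\cdot)})$ satisfies $U_\pi^\dagger=U_{\pi^{-1}}$: indeed $\distr{U_\pi f}{g}_H=\distr{U_\pi f}{U_\pi U_{\pi^{-1}} g}_H=\distr{f}{U_{\pi^{-1}}g}_H$ by \link{eq:invariant} applied to the pair $(f,U_{\pi^{-1}}g)$. Then, using that $\pi\mapsto\pi^{-1}$ is a bijection of $\S_I$ with $\abs{\pi^{-1}}=\abs{\pi}$, the adjoint of $P_I=\frac{1}{\#\S_I}\sum_\pi (\pm1)^{\abs\pi} U_\pi$ is $\frac{1}{\#\S_I}\sum_\pi (\pm1)^{\abs\pi} U_{\pi^{-1}}=\frac{1}{\#\S_I}\sum_{\rho}(\pm1)^{\abs\rho}U_\rho=P_I$. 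A self-adjoint idempotent is an orthogonal projection, so $H$ splits as the orthogonal direct sum of $\operatorname{ran}P_I=P_I(H)$ and $\ker P_I=(P_I(H))^\bot$, which is precisely \link{orth_decomp}.

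The main obstacle is really just careful bookkeeping rather than any deep difficulty: one must be consistent about the group reindexing (left- versus right-multiplication), track the parity factors $(-1)^{\abs\pi}$ through each reindexing, and make sure the normalization $\frac{1}{\#\S_I}$ behaves correctly so that the double averages collapse to single ones in the proof of $P_I^2=P_I$ and in the self-adjointness computation. I would also take a moment to point out explicitly that without \link{eq:invariant} the projections need not be orthogonal, so that hypothesis is genuinely used only in the last two assertions.
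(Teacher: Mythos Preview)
Your proposal is correct and follows essentially the same approach as the paper's proof: linearity and boundedness from \link{A1}--\link{A3}, idempotency and range identification via the group reindexing $\pi\mapsto\sigma\circ\pi$ on $\S_I$ together with the parity identity $(-1)^{\abs{\sigma\circ\pi}}=(-1)^{\abs\sigma}(-1)^{\abs\pi}$, and self-adjointness from the observation that \link{eq:invariant} yields $U_\pi^\dagger=U_{\pi^{-1}}$. The only cosmetic differences are that the paper bounds $\norm{P_I}$ by $\max_\pi c_\pi$ rather than your average $\tfrac{1}{\#\S_I}\sum_\pi c_\pi$, and it organizes the idempotency argument by first showing $P_I(f)$ is (anti)symmetric and then that (anti)symmetric elements are fixed, rather than collapsing the double sum directly---but the substance is identical.
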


\begin{proof}
Obviously $P_I \in \{\SI_I, \AI_I\}$ is well-defined due to the assumptions~\link{A1} and~\link{A2}. 
The linearity directly follows from the definition and, using \link{A3}, 
we see that the operator norm of $P_I$ is bounded by $\max{c_\pi \sep \pi \in \S_I}$.

To show that the operators are idempotent, \ie that $P_I^2=P_I$, 
we first prove that~$\AI_I(f)$ satisfies~\link{antisym} for every $f \in H$. 
Therefore, we use the representation
\begin{align*}
				(\AI_I(f))(\bm{\pi(\cdot)})
				&= \frac{1}{\#\S_I} \sum_{\sigma \in \S_I} (-1)^{\abs{\sigma}} f(\bm{\sigma(\pi(\cdot))}) 
				= \frac{1}{\#\S_I} \sum_{\lambda \in \S_I} (-1)^{\abs{\lambda} + \abs{\pi}} f(\bm{\lambda(\cdot)})\\
				&= (-1)^{\abs{\pi}} (\AI_I(f))(\cdot)
\end{align*}
for every fixed $\pi\in \S_I$. 
Here we imposed $\lambda = \sigma \circ \pi \in \S_I$ and used that
\begin{gather*}
				\abs{\lambda \circ \pi^{-1}} = \abs{\lambda} + \abs {\pi^{-1}} = \abs{\lambda} + \abs {\pi}.
\end{gather*}
Hence we have shown $\AI_I(H) \subseteq \{ f \in H \sep f \text{ satisfies } \link{antisym} \}$.
In a second step, it is easy to check that for every function $g\in H$ which satisfies \link{antisym}
it is $\AI_I(g)=g$. Thus, $\{ f \in H \sep f \text{ satisfies } \link{antisym} \} \subseteq \AI_I(H)$
and $\AI_I$ is a projector onto $\AI_I(H)$.
Since the same arguments also apply for the symmetrizer $\SI_I$ this shows \link{antisymsubspace}, 
as well as $P_I^2=P_I$ for $P_I\in\{\SI_I, \AI_I\}$.

To prove the self-adjointness of $P_I$ we need to show that for $f$ and $g$ in $H$ we have
$\distr{P_I f}{g}_H=\distr{f}{P_I g}_H$.
To this end, note that \link{eq:invariant} is equivalent to the fact that
\begin{gather*}
			\distr{f(\bm{\pi(\cdot)})}{g}_H
			= \distr{f}{g(\bm{\sigma(\cdot)})}_H, \quad f,g\in H, \quad \pi\in\S_I,
\end{gather*}
where we set $\sigma=\pi^{-1}$ and used \link{A2}.
Now the claimed assertion follows from the bilinearity of the inner product $\distr{\cdot}{\cdot}_H$.
Moreover, orthogonality and the 
decompositions stated in \link{orth_decomp} are simple consequences.
\end{proof}

We note in passing that \link{eq:invariant} already implies \link{A3}.
Furthermore, the notion of partially (anti)symmetric functions 
can be easily extended to more than one subset~$I$.
Therefore, consider two non-empty subsets of coordinates 
$I,J\subset \{1,\ldots,d\}$ with $I\cap J = \leer$.
Then we call a function $f\in H$ 
\emph{multiple partially (anti)symmetric \wrt $I$ and $J$} 
if $f$ satisfies \link{sym}, or \link{antisym}, 
respectively, for~$I$ and~$J$.
Since $I$ and~$J$ are disjoint we observe that 
$\pi \circ \sigma = \sigma \circ \pi$
for all $\pi\in \S_I$ and $\sigma\in \S_J$.
Hence the linear projections $P_I \in \{\SI_I, \AI_I\}$ and 
$P_J \in \{\SI_J, \AI_J\}$ commute on $H$. 
That is, we have 
$P_I \circ P_J = P_J \circ P_I$.
Further extensions to more than two disjoint subsets of coordinates are possible. 
We will restrict ourselves to the case of at most two coordinate subsets, 
because in particular wavefunctions can be modeled as functions
which are antisymmetric \wrt $I$ and $J=I^c$, 
where~$I^c$ denotes the complement of~$I$ in $\{1,\ldots,d\}$; see, 
e.g., \autoref{sect:wave}.

\subsection{Tensor products of Hilbert function spaces}\label{sect:antisym_tensor}
In the previous subsection the function space $H$ was a somewhat abstract
Hilbert space of \mbox{$d$-variate} real-valued functions.
Indeed, for the definition of (anti)symmetry 
we do not need to claim any product structure.
On the other hand, 
it is also motivated by applications to consider tensor product function spaces;
see, e.g., Section 3.6 in Yserentant~\cite{Y10}.
In detail, it is well-known that so-called spaces of dominated mixed smoothness, 
e.g. $W_2^{(1,\ldots,1)}(\R^{3d})$, can be represented as certain tensor products; 
see Section 1.4.2 in Hansen~\cite{H10}. 

Anyway, let us take into account such a structure, 
\ie let us assume that
\begin{gather*}
		H = H_d = H_1\otimes \ldots \otimes H_1 \quad \text{($d\geq 2$ times)},
\end{gather*}
where $H_1$ is a suitable Hilbert space of functions $f\colon D\nach\R$; 
see also the constructions given in 
\autoref{subsect:def_tensor_prob}.
There it is stated that we can construct an orthonormal basis $E_d$ of $H_d$
out of a given ONB $E_1$ of $H_1$; see \link{tensor_ONB}.
Since now we deal with function spaces, the $d$-fold simple tensors in $E_d$ 
are $d$-variate functions $e_{d,\bm{j}}\colon D^d \nach \R$. 
More precisely, they are given by
\begin{equation*}
		e_{d,\bm{j}}(\bm{x}) = \prod_{l=1}^d e_{j_l}(x_l), 
		\quad \text{where} 
		\quad \bm{x}=(x_1,\ldots,x_d)\in D^d 
		\quad \text{and} 
		\quad \bm{j}\in\I_d=(\I_1)^d,
\end{equation*}
provided that $E_1=\{e_m \colon D\nach\R \sep m \in \I_1 \}$ 
denotes the underlying ONB in $H_1$.
To exploit this representation 
we start with a simple observation.
 
Let $d\in\N$. 
Moreover assume $\bm{j}\in \I_d$ and $\bm{x}\in D^d$, as well as a 
non-empty subset~$I$ of $\{1,\ldots,d\}$, to be arbitrarily fixed.
If we define $\sigma = \pi^{-1} \in \S_I$ then
\begin{equation}\label{pi_inside}
				e_{d,\bm{j}}(\bm{\pi(x)}) 
				= \prod_{l=1}^d e_{j_l}(x_{\pi(l)})
				= \prod_{l=1}^d e_{j_{\sigma(l)}}(x_l)
				= e_{d,\bm{\sigma(j)}}(\bm{x}).
\end{equation}
For simplicity, once again we identified $\bm{\pi(j)}=\pi(j_1,\ldots,j_d)$ 
with $(j_{\pi(1)},\ldots,j_{\pi(d)})$ for $\bm{j}\in\I_d=(\I_1)^d$.
Since $\bm{x}\in D^d$ was arbitrary and $\abs{\pi}=\abs{\pi^{-1}}=\abs{\sigma}$ we obtain
\begin{gather}\label{antisym_basis}
				\SI_I e_{d,\bm{j}} = \frac{1}{\#\S_I} \sum_{\sigma \in \S_I} e_{d,\bm{\sigma(j)}}	
				\quad \text{and} \quad \AI_I e_{d,\bm{j}} = \frac{1}{\#\S_I} \sum_{\sigma \in \S_I} (-1)^{\abs{\sigma}} e_{d,\bm{\sigma(j)}}
\end{gather}
for all $\bm{j}\in \I_d$.
Besides this, \link{pi_inside} can be used to verify that 
\link{eq:invariant} in \autoref{projection} always holds true 
for (unweighted) tensor products of Hilbert function spaces.

Note that in general, \ie for arbitrary $\bm{j}\in\I_d$ and $\sigma\in\S_I$, 
the tensor products $e_{d,\bm{\sigma(j)}}$ and $e_{d,\bm{j}}$ do not coincide,
because taking the tensor product is not commutative in general.
Therefore $\SI_I$ is not simply the identity on the set of basis functions 
$E_d=\{ e_{d,\bm{j}} \sep \bm{j} \in \I_d \}$.
On the other hand, 
we see that for different $\bm{j}\in \I_d$ many of the functions 
$\SI_I e_{d,\bm{j}}$ coincide.
Of course the same holds true for $\AI_I e_{d,\bm{j}}$, 
at least up to a factor of $(-1)$.

We will see in the following that for $P_I\in\{\SI_I, \AI_I\}$ 
a linearly independent subset of all projections 
$\{ P_I e_{d,\bm{j}} \sep \bm{j}\in\I_d \}$ 
equipped with suitable normalizing constants can be used as
an ONB of the linear subspace $P_I(H_d)$ of $I$-(anti)symmetric
functions in $H_d$.
For the application we have in mind, we need this result only in
the case where the underlying space $H_1$ is separable.
Without loss of generality, we can thus assume that\footnote{Note that also 
the case of abstract, countable index sets~$\I_1$ can be reduced to this form by 
the application of some simple isomorphism.} 
\begin{gather*}
		\I_1 = \M_1 = \{ m \in\N \sep m < \dim H_1 + 1\}
\end{gather*}
and consequently $\I_d = \M_d=(\M_1)^d \subseteq \N^d$.
Clearly, in the most interesting case the set~$\I_d$ equals $\N^d$.

To state the claimed assertion, we need a further definition.
For fixed $d\geq 2$ and $I\subseteq\{1,\ldots,d\}$, let us introduce a function
\begin{gather*}
				M_I =M_{I,d} \colon \N^d \nach \{0,\ldots,\#I\}^{\#I}
\end{gather*} 
which counts how often different indices occur in a 
given multi-index $\bm{j}\in\N^d$ among the subset $I$ of coordinates,
ordered with respect to their rate.
To give an example let $d=7$ and $I=\{1,\ldots,6\}$. 
Then $M_{I,7}$ applied to $\bm{j}=(12, 4, 4, 12, 6, 4, 4) \in \N^7$ 
gives the $\#I=6$ dimensional vector $\bm{M_{I,7}(j)} = (3, 2, 1, 0, 0, 0)$, 
because $\bm{j}$ contains the number ``$4$'' three times
among the coordinates $j_1,\ldots,j_6$, ``$12$'' two times, and so on. 
Since in this example there are only three different numbers involved,
the fourth to sixth coordinates of $\bm{M_{I,7}(j)}$ equal zero.
Obviously, $M_{I}$ is invariant under 
all permutations $\pi\in\S_I$ of the argument.
Thus, 
\begin{gather*}
				\bm{M_{I}(j)} = \bm{M_{I}(\pi(j))} 
				\quad \text{for all} \quad 
				\bm{j}\in\N^d 
				\quad \text{and} \quad 
				\pi \in \S_I.
\end{gather*}
In addition, since $\bm{M_{I}(j)}$ again is a multi-index, we see that
$\abs{\bm{M_{I}(j)}}=\#I$ and $\bm{M_{I}(j)}!$ are well-defined for every $\bm{j}\in\N^d$.
Prepared with this tool, we are ready to prove the following

\begin{lemma}\label{lemma_basis}
				Assume $E_d=\{e_{d,\bm{j}} \sep \bm{j}\in \M_d \}$ to be a given orthonormal 
				tensor product basis in the space $H_d$ 
				and let $\leer \neq I=\{i_1,\ldots,i_{\# I}\} \subseteq\{1,\ldots,d\}$.
				Moreover, for $P_I\in\{\SI_I,\AI_I\}$ define the functions $\xi_{\bm{j}} \colon D^d \nach \R$ by
				\begin{gather*}
								\xi_{\bm{j}} = \sqrt{\frac{\#\S_I}{\bm{M_{I}(j)}!}} \cdot P_I(e_{d,\bm{j}}) 
								\quad \text{ for } \quad \bm{j} \in \M_d.
				\end{gather*}
				Then the set $\Xi_d=\{ \xi_{\bm{k}} \sep \bm{k} \in \nabla_d \}$ builds an 
				orthonormal basis of the partially (anti)sym\-metric subspace $P_I(H_d)$, 
				where $\nabla_d$ is given by
				\begin{gather}\label{def_nabla}
								\nabla_d = \begin{cases}
															\{ \bm{k} \in \M_d \sep k_{i_1} \leq k_{i_2} \leq \ldots \leq k_{i_{\#I}}\},& \text{ if } P_I=\SI_I,\\
															\{ \bm{k} \in \M_d \sep k_{i_1} < k_{i_2} < \ldots < k_{i_{\#I}}\},& \text{ if } P_I=\AI_I.
													\end{cases}
				\end{gather}
\end{lemma}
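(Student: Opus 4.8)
The plan is to verify three things: (i) each $\xi_{\bm{j}}$ indeed lies in $P_I(H_d)$ and the collection $\{\xi_{\bm{k}}\sep \bm{k}\in\nabla_d\}$ spans it; (ii) distinct elements of $\Xi_d$ are orthogonal; (iii) each $\xi_{\bm{k}}$ with $\bm{k}\in\nabla_d$ has norm one. The key observation underlying all of this is formula \link{antisym_basis}: for a fixed $\bm{j}\in\M_d$ the projection $P_I(e_{d,\bm{j}})$ is (up to the sign factor $(-1)^{\abs{\sigma}}$ in the antisymmetric case) a sum over the orbit of $\bm{j}$ under the action of $\S_I$, and this orbit depends on $\bm{j}$ only through the multiplicities recorded by $\bm{M_I(j)}$.

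First I would analyze the orbit structure. For $\bm{j}\in\M_d$, the stabilizer of $\bm{j}$ in $\S_I$ is the subgroup permuting equal entries among the coordinates in $I$, so it has order $\bm{M_I(j)}!$ and the orbit has size $\#\S_I/\bm{M_I(j)}!$. In the symmetric case each distinct element $e_{d,\bm{\sigma(j)}}$ of the orbit therefore appears exactly $\bm{M_I(j)}!$ times in the sum $\sum_{\sigma\in\S_I}e_{d,\bm{\sigma(j)}}$, so
\begin{gather*}
		\SI_I(e_{d,\bm{j}}) = \frac{\bm{M_I(j)}!}{\#\S_I}\sum_{\bm{j}'\in\,\mathrm{orbit}(\bm{j})} e_{d,\bm{j}'},
\end{gather*}
and since the $e_{d,\bm{j}'}$ are orthonormal, $\norm{\SI_I(e_{d,\bm{j}})\sep H_d}^2 = (\bm{M_I(j)}!/\#\S_I)^2\cdot(\#\S_I/\bm{M_I(j)}!) = \bm{M_I(j)}!/\#\S_I$. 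Hence $\xi_{\bm{j}}=\sqrt{\#\S_I/\bm{M_I(j)}!}\cdot\SI_I(e_{d,\bm{j}})$ has norm one, which is (iii). For the antisymmetric case I would first note that if $\bm{j}$ has a repeated entry among the coordinates in $I$ then $\AI_I(e_{d,\bm{j}})=0$ (a transposition in the stabilizer flips the sign but fixes the term), which is exactly why $\nabla_d$ for $\AI_I$ requires strict inequalities and why $\bm{M_I(k)}!=1$ there; when all entries are distinct the stabilizer is trivial, the orbit has full size $\#\S_I$, the signs $(-1)^{\abs{\sigma}}$ are well-defined on orbit representatives up to the global sign, and $\norm{\AI_I(e_{d,\bm{j}})\sep H_d}^2 = \#\S_I/(\#\S_I)^2\cdot \#\S_I = 1$... wait, more carefully: $\AI_I(e_{d,\bm{j}}) = \frac{1}{\#\S_I}\sum_{\sigma\in\S_I}(-1)^{\abs{\sigma}}e_{d,\bm{\sigma(j)}}$ is a sum of $\#\S_I$ orthonormal terms each with coefficient $\pm 1/\#\S_I$, so its squared norm is $\#\S_I\cdot(1/\#\S_I)^2 = 1/\#\S_I$, matching $\bm{M_I(j)}!/\#\S_I$ with $\bm{M_I(j)}!=1$. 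Good.

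Next, orthogonality (ii): if $\bm{k},\bm{k}'\in\nabla_d$ are distinct then they lie in different $\S_I$-orbits (each orbit contains exactly one element with coordinates in $I$ sorted in the prescribed order), so the orbit sums involve disjoint sets of basis functions $e_{d,\cdot}$ and are therefore orthogonal in $H_d$; the normalizing scalars do not affect this. For the spanning part of (i): by \autoref{projection}, $P_I$ is a projection onto $P_I(H_d)$, so $P_I(H_d)$ is spanned by $\{P_I(e_{d,\bm{j}})\sep \bm{j}\in\M_d\}$ (apply $P_I$ to the basis expansion of any $f\in P_I(H_d)$, using boundedness of $P_I$ to pass the limit inside); since $P_I(e_{d,\bm{j}})$ depends only on the orbit of $\bm{j}$ and vanishes in the antisymmetric case unless $\bm{j}$ has distinct $I$-entries, the subfamily indexed by $\nabla_d$ already spans, and after normalization gives $\Xi_d$. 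Finally, each $\xi_{\bm{j}}\in P_I(H_d)$ because $P_I^2=P_I$ shows $P_I(e_{d,\bm{j}})$ is fixed by $P_I$. Combining (i), (ii), (iii) gives that $\Xi_d$ is an orthonormal basis of $P_I(H_d)$.

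\textbf{Main obstacle.} The one point requiring genuine care is the bookkeeping in the antisymmetric case: one must check that the sign $(-1)^{\abs{\sigma}}$ assigned to $e_{d,\bm{\sigma(j)}}$ is consistent (i.e. $\abs{\sigma}$ and $\abs{\sigma'}$ have the same parity whenever $\bm{\sigma(j)}=\bm{\sigma'(j)}$), which holds precisely because the stabilizer is trivial on the strictly-ordered representatives — and conversely that the normalization is correct exactly on $\nabla_d$ and not on larger index sets. Everything else is a direct orbit-counting argument built on \link{antisym_basis} and the orthonormality of $E_d$.
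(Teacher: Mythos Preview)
Your proof is correct and follows essentially the same route as the paper: both arguments rest on the orbit structure of $\S_I$ acting on $\M_d$ via \link{antisym_basis}, use that distinct $\bm{k}\in\nabla_d$ lie in distinct orbits for orthogonality, and count the stabilizer size $\bm{M_I(j)}!$ for the normalization. The only cosmetic difference is in the spanning step: the paper explicitly regroups the expansion $f=\sum_{\bm{j}\in\M_d}\distr{f}{e_{d,\bm{j}}}e_{d,\bm{j}}$ over orbits using $\distr{f}{e_{d,\bm{\pi(k)}}}=(\pm1)^{\abs{\pi}}\distr{f}{e_{d,\bm{k}}}$ to arrive at $f=\sum_{\bm{k}\in\nabla_d}\distr{f}{\xi_{\bm{k}}}\xi_{\bm{k}}$, whereas you apply $P_I$ to the expansion and invoke boundedness --- but these are equivalent once orthonormality is in hand.
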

\begin{proof}
To abbreviate the notation, we suppress
the index $H_d$ at the inner products $\distr{\cdot}{\cdot}_{H_d}$ 
in this proof. 

\textit{Step 1}. 
We start by proving orthonormality.
Therefore let us recall \link{antisym_basis} and remember that now $\I_d=\M_d$. 
For $P_I=\AI_I$ and $\bm{j},\bm{k} \in \nabla_d$ easy calculations yield
\begin{align*}
				\distr{\xi_{\bm{j}}}{\xi_{\bm{k}}} 
				&= \frac{\# \S_I}{\sqrt{\bm{M_{I}(j)}! \cdot \bm{M_{I}(\bm{k})}!}} \distr{\AI_I (e_{d,\bm{j}})}{\AI_I (e_{d,\bm{k}})} \\
				&= \frac{1}{\# \S_I \sqrt{\bm{M_{I}(j)}! \cdot \bm{M_{I}(\bm{k})}!}} \sum_{\pi,\sigma \in \S_I} (-1)^{\abs{\pi}+\abs{\sigma}} \distr{e_{d,\bm{\pi(j)}}}{e_{d,\bm{\sigma(k)}}}.
\end{align*}
Of course, up to the factor controlling the sign, the same is true for the case $P_I=\SI_I$.
Now assume that there exists $l \in \{1,\ldots,d\}$ such that $j_l \neq k_l$. 
Then the ordering of $\bm{j},\bm{k}\in\nabla_d$ implies that
$\bm{\pi(j)}\neq\bm{\sigma(k)}$ for all $\sigma,\pi \in \S_I$,
since $\pi$ and $\sigma$ leave the coordinates $l \in I^c$ fixed.
Hence, we conclude that we have
$\bm{\pi(j)} = \bm{\sigma(k)}$ only if $\bm{j}=\bm{k}$.

At this point we have to distinguish the antisymmetric and the symmetric case.
For $P_I=\AI_I$ the only way to conclude $\bm{\pi(j)} = \bm{\sigma(k)}$ is to claim $\bm{j}=\bm{k}$ and $\pi=\sigma$.
Furthermore we see that in the antisymmetric case we have $\bm{M_{I}(j)}!=1$ for all $\bm{j}\in\nabla_d$, 
because then all coordinates~$j_l$, where $l\in I$, differ.
Therefore, in this case the last inner product coincides with 
$\delta_{\bm{j},\bm{k}} \cdot \delta_{\pi,\sigma}$ because of the mutual orthonormality
of the elements from $E_d=\{e_{d,\bm{j}} \sep \bm{j}\in \M_d \}$.
Hence we arrive at
\begin{gather*}
				\distr{\xi_{\bm{j}}}{\xi_{\bm{k}}} 
				= \frac{1}{\# \S_I} \sum_{\pi \in \S_I} (-1)^{2\abs{\pi}} \delta_{\bm{j},\bm{k}} 
				= \delta_{\bm{j},\bm{k}} 
				\quad \text{for all} \quad \bm{j},\bm{k}\in\nabla_d,
\end{gather*}
as claimed. 

So, let us consider the case $P_I=\SI_I$ and $\bm{j}=\bm{k}\in \nabla_d$, 
since we already saw that otherwise $\distr{\xi_{\bm{j}}}{\xi_{\bm{k}}}$ equals zero.
Then for fixed $\sigma \in \S_I$ there are $\bm{M_{I}(j)}!$ different permutations
$\pi \in \S_I$ such that $\bm{\pi(j)} = \bm{\sigma(j)}$.
This leads to
\begin{gather*}
				\distr{\xi_{\bm{j}}}{\xi_{\bm{j}}} 
				= \frac{1}{\# \S_I \cdot \bm{M_{I}(j)}!} \sum_{\sigma \in \S_I} \bm{M_{I}(j)}! 
				= 1
\end{gather*}
and completes the proof of orthonormality.

\textit{Step 2}. It remains to show that 
the span of $\Xi_d=\{\xi_{\bm{k}} \sep \bm{k}\in \nabla_d\}$ 
is dense in $P_I(H_d)$ for $P_I\in \{\SI_I,\AI_I\}$.
Note that every multi-index $\bm{j}\in\M_d$ 
can be represented by a uniquely defined multi-index $\bm{k}\in \nabla_d$
and exactly $\bm{M_I(k)}!$ different permutations $\pi\in\S_I$
such that $\bm{j}=\bm{\pi(k)}$.
Assume that $f \in \AI_I(H_d)$, \ie $f\in H_d$ satisfies \link{antisym}. 
Then \link{pi_inside} together with \link{eq:invariant} yields
\begin{gather}\label{formula_coeff}
				\distr{f}{e_{d,\bm{j}}} 
				= (-1)^{\abs{\pi}} \cdot \distr{f}{e_{d,\bm{\pi(j)}}} 
				\quad \text{for all} \quad 
				\bm{j}\in \M_d 
				\quad \text{and} \quad 
				\pi \in \S_I.
\end{gather}
Now expanding $f$ with respect to the basis functions 
in $E_d \subset H_d$ gives
\begin{align*}
				f 
				&= \sum_{\bm{j}\in\M_d} \distr{f}{e_{d,\bm{j}}} e_{d,\bm{j}} 
				= \sum_{\bm{k}\in \nabla_d} \sum_{\pi \in \S_I} \frac{ \distr{f}{e_{d,\bm{\pi(k)}}} e_{d,\bm{\pi(k)}} }{\bm{M_I(k)}!} \\
				&= \sum_{\bm{k}\in \nabla_d} \frac{1}{\bm{M_I(k)}!} \sum_{\pi \in \S_I} (-1)^{\abs{\pi}} \distr{f}{e_{d,\bm{k}}} e_{d,\bm{\pi(k)}} \\
				&= \sum_{\bm{k}\in \nabla_d} \sqrt{\frac{\# \S_I}{\bm{M_I(k)}!}} \cdot \distr{f}{e_{d,\bm{k}}} \cdot \sqrt{\frac{\# \S_I}{\bm{M_I(k)}!}} \cdot \AI_I(e_{d,\bm{k}}),
\end{align*}
where we used \link{antisym_basis} for the last equality.
Furthermore, due to the self-adjointness of $\AI_I$, 
we have 
$\distr{f}{e_{d,\bm{k}}} = \distr{\AI_I f}{e_{d,\bm{k}}} = \distr{f}{\AI_I e_{d,\bm{k}}}$,
such that finally $f\in \AI_I(H_d)$ possesses the representation
\begin{gather*}
				f = \sum_{\bm{k} \in \nabla_d} \distr{f}{\xi_{\bm{k}}} \cdot \xi_{\bm{k}}
\end{gather*}
since $\xi_{\bm{k}}=\sqrt{\# \S_I / \bm{M_I(k)}!} \cdot \AI_I(e_{d,\bm{k}})$ per definition. 
This proves the assertion for the case $P_I=\AI_I$.
The remaining case $P_I=\SI_I$ can be treated in the same way.
\end{proof}

Observe that in the antisymmetric case the definition of $\xi_{\bm{j}}$
for $\bm{j}\in\nabla_d$ simplifies, since then $\bm{M_I(j)}!=1$ for all $\bm{j}\in\nabla_d$.
Moreover we see that in this case $\nabla_d$ is trivial if $d>\#\M_1$. 
Hence we should assume that $\dim H_1$ is infinite in order to work with antisymmetric
tensor products for arbitrarily many building blocks.
We note in passing that the square of the normalizing factor,
$\#\S_I / \bm{M_{I}(j)}!$, coincides with the multinomial coefficient $\binom{\abs{\bm{M_{I}(j)}}}{\bm{M_{I}(j)}}$
which is quite natural due to combinatorial issues.
Furthermore, in the special case $I=\{1,2,\ldots,\#I \}$ 
we have 
\begin{gather*}
				P_I(H_d) = P_I \!\left( \bigotimes_{m\in I} H_1 \right) \otimes \left( \bigotimes_{m\notin I} H_1 \right).
\end{gather*}
That is, we can consider the subspace of $I$-(anti)symmetric functions 
$f\in H_d$ as the tensor product
of the set of all fully (anti)symmetric $\#I$-variate functions 
with the $(d-\#I)$-fold tensor product of $H_1$.
If $\# I = 1$, \ie if we do not claim any (anti)symmetry, then
$P_I(H_d)=H_d$ and thus we have $\nabla_d=\M_d$, as well as $\Xi_d=E_d$.
Modifications in connection with multiple partially (anti)symmetric functions are obvious.

\subsection{Arbitrary tensor product Hilbert spaces}\label{sect:antisym_general}
Up to now we exclusively dealt with Hilbert \textit{function} spaces.
However, the proofs of \autoref{projection} and \autoref{lemma_basis} yield 
that there are only a few key arguments in connection with 
(anti)symmetry such that we can cut out this restriction.
We briefly sketch the points which need to be changed.

Starting from the very beginning we have to adapt the definition 
of $I$-(anti)sym\-metry due to \link{sym} and \link{antisym} in \autoref{sect:antisym_HFS}.
Of course it is sufficient to define this property at first only for basis elements.
Therefore, if $E_d=\{e_{d,\bm{k}} \sep \bm{k} \in (\I_1)^d=\I_d \}$ 
denotes a tensor product ONB of $H_d$
and $\leer \neq I\subseteq\{1,\ldots,d\}$ is given then
we call an element $e_{d,\bm{k}} = \bigotimes_{l=1}^d e_{k_l}$
\emph{partially symmetric with respect to $I$} 
(\emph{$I$-symmetric}), if
\begin{gather*}
			e_{d,\bm{k}} = e_{d,\bm{\pi(k)}} \quad \text{for all} \quad \pi \in \S_I,
\end{gather*}
where $\S_I$ and $\bm{\pi(k)}=(k_{\pi(1)},\ldots,k_{\pi(d)})\in\I_d$ are defined as before.
Analogously, we define \emph{$I$-antisymmetry} 
with an additional factor $(-1)^{\abs{\pi}}$.
Finally, an arbitrary element in $H_d$ is called $I$-(anti)symmetric
if in its basis expansion every element 
with non-vanishing coefficient possesses this 
property.\footnote{Note that even in the non-separable case any such expansion only has countably many terms.}

Next, the \emph{antisymmetrizer} $\AI_I$ is given as the uniquely 
defined continuous extension of the linear mapping
\begin{gather}\label{eq:general_def_AI}
			\widetilde{\AI}_I \colon E_d \nach H_d, 
			\quad 
			e_{d,\bm{k}} \mapsto \frac{1}{\# \S_I} \sum_{\pi \in \S_I} (-1)^{\abs{\pi}} e_{d,\bm{\pi(k)}}
\end{gather}
from $E_d$ to $H_d$.
Again the \emph{symmetrizer} $\SI_I$ is given in a similar way.
Hence, in the general setting we define the mappings using 
formula~\link{antisym_basis} which we derived for the special case of function spaces.
Note that the triangle inequality yields $\norm{P_I} \leq 1$, for $P_I\in\{\SI_I,\AI_I\}$.

Once more we denote the sets of all $I$-(anti)symmetric elements of $H_d$
by~$P_I(H_d)$, where $P_I\in \{\SI_I,\AI_I\}$.
Observe that this can be justified
since the operators~$P_I$ again provide orthogonal projections onto closed linear subspaces.
That is, a generalization of \autoref{projection} remains valid
also in the more general case of tensor products of arbitrary Hilbert space
which we consider here. 
This can be shown using \link{eq:general_def_AI} and its analogue for $\SI_I$, as well as with the help of some simple extension arguments.
Moreover, also the proof of \autoref{lemma_basis} can be adapted to the
generalized setting.
Indeed, the only difference is the
conclusion of formula~\link{formula_coeff} in Step 2. 
Now, for arbitrary Hilbert spaces, this simply follows from our
definitions.
Then the rest of the proof transfers literally.

Finally and without going into details, we stress 
the point that further generalizations are possible.
Here we can think of tensor products of arbitrary Hilbert spaces 
with multiple partial (anti)symmetry conditions or of
scaled tensor products in the sense of \autoref{chapt:ScaledNorms}.
Since the corresponding calculations are straightforward we leave them to the reader.

\section{Optimal algorithms for (anti)symmetric problems}\label{sect:opt_asym_algo}
Keeping the definitions and assertions from the previous \autoref{sect:basic_antisym_def} in mind, 
we are ready to study algorithms for linear problems defined on (anti)symmetric subsets of tensor product Hilbert spaces.

Let $S_d\colon H_d \nach \G_d$ denote a tensor product problem
in the sense of \autoref{sect:TensorBasics}.
It is constructed out of a compact linear operator $S_1 \colon H_1 \nach \G_1$
between arbitrary Hilbert spaces $H_1$ and $\G_1$ via a tensor product construction;
see \autoref{subsect:def_tensor_prob}.
Hence, let $H_d = H_1 \otimes\ldots\otimes H_1$ in what follows and
refer to the problem of approximating $S=(S_d)_{d\in\N}$ 
as the \emph{entire $d$-variate problem}.
Note that we completely solved this problem in \autoref{sect:TensorBasics}.
In detail, the $n$th optimal algorithm $A_{n,d}^*$, given by \link{eq:opt_tensor_algo},
was related to a certain subset 
$\{e_{d,\bm{j}}=\widetilde{\phi}_{d,\bm{j}}\sep \bm{j}\in\M_d\}$ of a tensor product ONB.

In contrast, now we are interested in the approximation of the restriction 
\begin{gather*}
			S_{d,I_d}=S_d\big|_{P_{I_d}(H_d)} \colon P_{I_d}(H_d) \nach \G_d
\end{gather*} 
of $S_d$ to some (anti)symmetric subspace $P_{I_d}(H_d)$ 
as defined in \autoref{sect:antisym_general}, where $P_{I_d}\in\{\SI_{I_d},\AI_{I_d}\}$
and $\leer\neq I_d \subseteq\{1,\ldots,d\}$ for $d\in\N$.
We refer to $S_I = (S_{d,I_d})_{d\in\N}$ as the \emph{$I$-(anti)symmetric problem}.
Using the notation from \autoref{sect:General} we thus have $\F_d = P_{I_d}(H_d)$
and, consequently, $\widetilde{\F}_d=\B(P_{I_d}(H_d))$.

Due to \link{formula_coeff} it is quite clear that $A_{n,d}^*$ 
cannot be optimal in this restricted setting
since it calculates redundant pieces of information.
Hence we need to go beyond this naive attempt to
solve $I$-(anti)symmetric problems efficiently.
On the other hand, $P_{I_d}(H_d)$ equipped with the inner product of $H_d$, $\distr{\cdot}{\cdot}_{H_d}$,
again is a Hilbert space. 
Therefore we basically know how to construct an optimal algorithm; see \autoref{sect:opt_Hilbert_Algo}.
If $\# I_d=1$
then our new algorithm should resemble~$A_{n,d}^*$, because 
then we do not claim any (anti)symmetry and thus we deal with the
entire tensor product problem.

Before we state the main assertion of this section
we present an auxiliary result which shows that any optimal algorithm 
$A^*$ for $S_{d,I_d}$ needs
to preserve the (anti)symmetry properties of its domain of definition.
The following proposition generalizes Lemma 10.2 in Zeiser~\cite{Z10} where this
assertion was shown for the approximation problem, that is for $S_{d,I_d} = \id\colon P_{I_d}(H_d)\nach \G_d$.

\begin{prop}\label{prop:bestapprox}
				Let $d>1$ and $\leer \neq I \subseteq \{1,\ldots,d\}$ be arbitrarily fixed. 
				Furthermore, for $X\in\{H,\G\}$ let $P_I^X$ denote the (anti)symmetrizer 
				$P_I\in\{\SI_I,\AI_I\}$ on~$X_d$ 
				with respect to $I$.
				Then we have
				\begin{gather}\label{commute}
								(S_{d} \circ P_I^{H})(g) = (P_I^{\G} \circ S_{d})(g) \quad \text{for any} \quad g \in H_d.
				\end{gather}
				Moreover, for all $A \colon P_I^{H}(H_d) \nach \G_d$ and every $f \in P_I^{H}(H_d)$,
				\begin{gather}\label{bestapprox}
								\norm{S_{d,I} f - A f \sep \G_d}^2 
								= \norm{S_{d,I} f - P_I^{\G} (A f) \sep \G_d}^2 + \norm{Af - P_I^{\G} (A f) \sep \G_d}^2.
				\end{gather}
				Hence an optimal algorithm $A^*$ for $S_{d,I}$ preserves (anti)symmetry, \ie
				\begin{gather*}
								A^*f \in P_I^{\G}(\G_d) \quad \text{for all} \quad f \in P_I^{H}(H_d).
				\end{gather*}
\end{prop}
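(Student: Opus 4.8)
\textbf{Proof plan for Proposition~\ref{prop:bestapprox}.}

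The plan is to first establish the commutation relation~\link{commute} by reducing it to the action on a tensor product basis and invoking the defining property of the tensor product operator $S_d$. First I would fix a tensor product ONB $E_d=\{e_{d,\bm k}\sep \bm k\in\M_d\}$ of $H_d$ constructed out of an ONB $E_1$ of $H_1$, and recall that $S_d$ is the unique continuous linear extension of $\widetilde S_d(\bigotimes_{l=1}^d e_{k_l})=\bigotimes_{l=1}^d S_1 e_{k_l}$; see \autoref{subsect:def_tensor_prob}. Since both sides of~\link{commute} are bounded linear operators on $H_d$ (here I use $\norm{P_I^H}\le 1$ and $\norm{P_I^\G}\le1$ from \autoref{sect:antisym_general}, and the boundedness of $S_d$), it suffices to check equality on each $e_{d,\bm k}$. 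For the antisymmetric case, $P_I^H e_{d,\bm k}=\frac{1}{\#\S_I}\sum_{\pi\in\S_I}(-1)^{\abs\pi}e_{d,\bm{\pi(k)}}$ by~\link{antisym_basis} (its analogue in the general setting), and applying $S_d$ gives $\frac{1}{\#\S_I}\sum_{\pi\in\S_I}(-1)^{\abs\pi}\bigotimes_{l=1}^d S_1 e_{k_{\pi(l)}}$. On the other hand $S_d e_{d,\bm k}=\bigotimes_{l=1}^d S_1 e_{k_l}$, and since $\{S_1 e_m=S_1\phi_m\sep m\in\M_1\}$ is an orthonormal system in $\G_1$, applying the antisymmetrizer $P_I^\G$ on $\G_d$ — whose action on such tensor products is again given by the permutation-sum formula — produces exactly the same expression. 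The symmetric case is identical without the sign factors. This proves~\link{commute}.

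Next I would prove the Pythagorean identity~\link{bestapprox}. Let $f\in P_I^H(H_d)$, so that $S_{d,I}f=S_d f$. Writing $Af=P_I^\G(Af)+(Af-P_I^\G(Af))$, the claim follows at once from the general fact $\norm{u-v}^2=\norm{u-Pv}^2+\norm{Pv-v}^2$ whenever $P$ is an orthogonal projection and $u$ lies in its range: indeed $u-Pv\in P_I^\G(\G_d)$ while $Pv-v\in (P_I^\G(\G_d))^\bot$, so the two summands are orthogonal. Here I must check that $S_{d,I}f=S_d f\in P_I^\G(\G_d)$; but this is precisely~\link{commute} applied to $g=f$, using $P_I^H f=f$. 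Since $P_I^\G$ is an orthogonal projection (generalization of \autoref{projection}, noted in \autoref{sect:antisym_general}), the decomposition is orthogonal and~\link{bestapprox} is immediate by expanding $\norm{S_{d,I}f-Af\sep\G_d}^2=\norm{(S_{d,I}f-P_I^\G(Af))-(Af-P_I^\G(Af))\sep\G_d}^2$ and noting the cross term vanishes.

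Finally, the statement about optimal algorithms follows by a replacement argument. Given any algorithm $A$ for $S_{d,I}$, define $\widetilde A=P_I^\G\circ A$; note $\widetilde A$ uses the same information as $A$ (post-composition with a fixed bounded linear map does not increase the information cost, and if $A$ is linear and non-adaptive so is $\widetilde A$), so $\widetilde A$ lies in the same admissible class. From~\link{bestapprox} we read off $\norm{S_{d,I}f-Af\sep\G_d}^2\ge\norm{S_{d,I}f-P_I^\G(Af)\sep\G_d}^2=\norm{S_{d,I}f-\widetilde Af\sep\G_d}^2$ for every $f\in P_I^H(H_d)$, hence $\Delta^\wor(\widetilde A;S_{d,I})\le\Delta^\wor(A;S_{d,I})$. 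Therefore there is always an optimal (or arbitrarily near-optimal) algorithm whose image lies in $P_I^\G(\G_d)$; in particular, if $A^*$ is optimal then $P_I^\G\circ A^*$ is optimal with the desired range property, and by strict convexity of the squared-norm minimization (the second summand in~\link{bestapprox} must vanish for an optimal algorithm) we get $A^*f=P_I^\G(A^*f)\in P_I^\G(\G_d)$ for all $f$. I expect the main obstacle to be the careful verification of~\link{commute} on basis elements in the genuinely non-separable / abstract setting — one must make sure the permutation-sum formulas for $P_I^H$ on $H_d$ and $P_I^\G$ on $\G_d$ are the correct general definitions (from~\link{eq:general_def_AI}) and that $S_d$ intertwines them, which ultimately rests on the multiplicativity $S_d(\bigotimes e_{k_l})=\bigotimes S_1 e_{k_l}$ together with the orthonormality of the images $S_1\phi_m$; everything else is routine Hilbert-space geometry.
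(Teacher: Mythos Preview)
Your approach is essentially identical to the paper's: verify~\link{commute} on tensor product basis elements via the permutation-sum formula and extend by linearity and continuity, then deduce~\link{bestapprox} from the orthogonal decomposition $\G_d = P_I^\G(\G_d)\oplus(P_I^\G(\G_d))^\bot$ once you know $S_{d,I}f\in P_I^\G(\G_d)$.

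Two small corrections. First, the system $\{S_1\phi_m\}$ is \emph{orthogonal} but not orthonormal (recall $\distr{S_1\phi_i}{S_1\phi_j}_{\G_1}=\lambda_j\delta_{ij}$ from~\link{T_orth}); fortunately you do not need orthonormality at all, since the permutation-sum formula for $P_I^\G$ extends from an ONB of $\G_d$ to \emph{all} simple tensors $\bigotimes_l g_l$ by multilinearity and continuity---this is what the paper uses implicitly. Second, your ``strict convexity'' remark overreaches: worst-case optimality of $A^*$ does not by itself force the second summand in~\link{bestapprox} to vanish for \emph{every} $f$, only that $P_I^\G\circ A^*$ has the same worst-case error. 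Your replacement argument already gives the correct conclusion (an optimal algorithm may be taken to map into $P_I^\G(\G_d)$), which is all the paper asserts.
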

\begin{proof}
The proof is organized as follows. 
First we show that the tensor product operator $S_{d}$ and 
the (anti)symmetrizer $P_I$ commute on $H_d$, \ie it holds \link{commute}.
In a second step we conclude \link{bestapprox} out of this. 
The (anti)symmetry of $A^*f$ for an optimal algorithm $A^*$ then follows immediately.

\textit{Step 1}. 
Assume $E_d=\{e_{d,\bm{j}} \sep \bm{j}\in \I_d\}$ 
to be an arbitrary tensor product ONB of~$H_d$,
as defined in \link{tensor_ONB}.  
Then, for fixed $\bm{j}\in\I_d$, formula \link{eq:general_def_AI} 
and the structure of 
$S_{d}=S_1 \otimes \ldots \otimes S_1$ 
yields in the case $P_I=\AI_I$
\begin{align*}
				S_{d}(\AI_I^{H} (e_{d,\bm{j}})) 
				&= S_{d}\left(\frac{1}{\# \S_I} \sum_{\pi \in \S_I} (-1)^{\abs{\pi}} \bigotimes_{l=1}^d e_{j_{\pi(l)}}\right)\\ 
				&= \frac{1}{\# \S_I} \sum_{\pi \in \S_I} (-1)^{\abs{\pi}} \bigotimes_{l=1}^d S_1(e_{j_{\pi(l)}})
				= \AI_I^{\G} (S_{d} (e_{d,\bm{j}})).
\end{align*} 
Obviously the same is true for $P_I=\SI_I$.
Hence, \link{commute} holds at least on the set of basis elements $E_d$ of $H_d$.
Because of the representation $g = \sum_{\bm{j}\in\I_d} \distr{g}{e_{d,\bm{j}}}_{H_d} \cdot e_{d,\bm{j}}$ of $g \in H_d$,
as well as the linearity and boundedness of the operators 
$P_I^{H}, P_I^{\G}$ and~$S_d$, we can extend the relation \link{commute}
from $E_d$ to the whole space $H_d$.

\textit{Step 2}. Now let $f\in P_I^{H}(H_d)$ and let $A f$ denote 
an arbitrary approximation to~$S_{d,I} f$.
Then $S_{d,I} f = S_d(P_I^{H} f) = P_I^{\G}(S_d f)$, due to Step 1.
Using the fact that $P_I^{\G}$ provides an orthogonal projection 
onto $P_I^{\G}(\G_d)$, see \link{orth_decomp} in \autoref{projection}, 
we obtain \link{bestapprox}, \ie
\begin{eqnarray*}
				\norm{S_{d,I} f - A f \sep \G_d}^2 
				&=& \norm{P_I^{\G} (S_d f) - [P_I^{\G} (A f) + (\id^{\G} - P_I^{\G})(A f)] \sep \G_d}^2 \\
				&=& \norm{P_I^{\G} (S_d f - A f) \sep \G_d}^2 + \norm{(\id^{\G} - P_I^{\G})(A f) \sep \G_d}^2 \\
				&=& \norm{S_{d,I} f - P_I^{\G}(A f) \sep \G_d}^2 + \norm{Af - P_I^{\G}(A f) \sep \G_d}^2,
\end{eqnarray*}
as claimed.
\end{proof}

Apart from this qualitative assertion, we are interested in an explicit formula 
for the optimal algorithm, as well as in sharp error bounds. 
To this end, let $d\in\N$ and $\leer \neq I_d=\{i_1, \ldots, i_{\#I}\} \subseteq\{1,\ldots,d\}$,
as well as $P\in\{\SI,\AI\}$.
Furthermore, consider the singular value decomposition of $S_1\colon H_1\nach\G_1$.
That is, let $\{(\lambda_m,\phi_m) \sep m\in \M_1\}$ 
denote the non-trivial eigenpairs of $W_1={S_1}^{\!\dagger} S_1$; see \autoref{sect:SVD}.
Due to \autoref{prop:tensor_eigenpairs} in \autoref{sect:Eigenpairs} we know that for $d>1$
the (tensor) product eigenpairs 
$\{ (\widetilde{\lambda}_{d,\bm{m}}, \widetilde{\phi}_{d,\bm{m}} ) \sep \bm{m}\in\M_d \}$
of $W_d={S_d}^{\!\dagger} S_d$ are given by \link{tensor_eigenpairs}.
Moreover, $E_d = \Phi_d = \{ \widetilde{\phi}_{d,\bm{m}} \sep \bm{m}\in\M_d \}$ 
builds an tensor product ONB in $H_d$.
Hence, we can apply \autoref{lemma_basis} to 
$e_{d,\bm{j}}=\widetilde{\phi}_{d,\bm{j}}$, $\bm{j}\in\M_d$,
in order to obtain an orthonormal basis 
$\Xi_d=\{ \widetilde{\xi}_{\bm{k}} \sep \bm{k}\in\nabla_d \}$ 
of the partially (anti)symmetric subspaces $P_{I_d}(H_d)$.
More precisely, for $\bm{k}\in\nabla_d$ we define
\begin{gather}\label{eq:eigenpairs_sym}
			\widetilde{\xi}_{\bm{k}} 
			= \sqrt{\frac{\# S_I}{\bm{M_I(k)}!}} \cdot P_{I_d} \left( \bigotimes_{l=1}^d \phi_{k_l} \right) \in P_I(H_d)
			\quad \text{and} \quad
			\widetilde{\lambda}_{d,\bm{k}} = \prod_{l=1}^d \lambda_{k_l}>0,
\end{gather}
where $\nabla_d$ is given by \link{def_nabla}.
Similar to the approach in \autoref{sect:Eigenpairs}, let
\begin{gather*}
			\psi = \psi_d\colon\{i\in\N \sep i < \# \nabla_d + 1\}\nach \nabla_d
\end{gather*}
denote a bijection which provides a non-increasing ordering of 
$\{ \widetilde{\lambda}_{d,\bm{k}} \sep \bm{k}\in\nabla_d \}$
and set $\lambda_{d,i}=\widetilde{\lambda}_{d,\psi(i)}$, as well as 
$\xi_{d,i}=\widetilde{\xi}_{\psi(i)}$ for $i<\# \nabla_d + 1$.
Finally, if $\# \nabla_d$ is finite then we extend the sequence of $\lambda$'s by
setting $\lambda_{d,i}=0$ for $i > \# \nabla_d$.

Given this bunch of notations we are well-prepared to prove our main theorem of this section.
For every $d\in\N$ it provides a linear algorithm $A_{n,d}'$ which uses at most~$n$ continuous linear functionals on the input
to approximate the solution operator~$S_{d,I_d}$ of a given~$I_d$-(anti)symmetric tensor product problem between Hilbert spaces.
Since the worst case error of this algorithm coincides with the $n$th minimal error of the problem, $A_{n,d}'$ is optimal in this setting;
thus it cannot be improved by any other algorithm from the class $\A_d^{n,\rm cont} \cup \A_d^{n,\rm adapt}$; see \autoref{sect:Algos}.
The assertion reads as follows.

\begin{theorem}\label{theo:opt_sym_algo}
				Assume $S_I=(S_{d,I_d})_{d\in\N}$ to be the linear tensor product problem $S$ restricted 
				to the $I_d$-(anti)symmetric subspaces $P_{I_d}(H_d)$ 
				of the $d$-fold tensor product spaces $H_d$.
				Then for every $d\in\N$ the set
				\begin{gather}\label{eq:eigenpairs_sym_ordered}
								\{ ( \lambda_{d,i}, \xi_{d,i}) \sep 1 \leq i < \# \nabla_d + 1 \} 
								= \left\{ \left( \widetilde{\lambda}_{d,\bm{k}}, \widetilde{\xi}_{\bm{k}} \right) \sep \bm{k} \in \nabla_d \right\}
				\end{gather} 
				denotes the eigenpairs of 
				$W_{d,I_d}={S_{d,I_d}}^{\!\!\dagger} {S_{d,I_d}} \colon P_{I_d}(H_d)\nach P_{I_d}(H_d)$. 
				Thus, for every $n \in \N_0$ and all $d\in\N$ the linear algorithm 
				$A_{n,d}'\colon P_{I_d}(H_d) \nach P_{I_d}(\G_d)$ given by
				\begin{gather}\label{opt_sym_algo}
							A_{n,d}' f 
							= \sum_{i=1}^{\min{n,\#\nabla_d}} \distr{f}{\xi_{d,i}}_{H_d} \cdot S_d \xi_{d,i},
				\end{gather}
				is $n$th optimal for $S_{d,I_d}$ \wrt the worst case setting. 
				Furthermore we have
				\begin{gather}\label{nth_error}
								e^\wor(n,d; P_{I_d}(H_d)) 
								= \Delta^{\mathrm{wor}}(A_{n,d}'; P_{I_d}(H_d)) 
								= \sqrt{\lambda_{d, n+1}}.
				\end{gather}
\end{theorem}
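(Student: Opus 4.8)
The plan is to reduce the restricted problem $S_{d,I_d}$ on the Hilbert space $P_{I_d}(H_d)$ to the general Hilbert space theory from \autoref{sect:General_HSP}, for which the optimal algorithm and its worst case error are already known via \autoref{Cor:OptAlgo}. The only missing ingredient is a description of the eigenpairs of $W_{d,I_d}={S_{d,I_d}}^{\!\dagger} {S_{d,I_d}}$, and everything else will then follow formally. First I would fix $d\in\N$ and note that $P_{I_d}(H_d)$, equipped with the inner product $\distr{\cdot}{\cdot}_{H_d}$ inherited from $H_d$, is itself a Hilbert space, and that $S_{d,I_d}=S_d\big|_{P_{I_d}(H_d)}$ is compact (being the restriction of the compact operator $S_d$ to a closed subspace). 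Hence \autoref{Cor:OptAlgo} applies verbatim: the $n$th optimal algorithm is the partial SVD expansion truncated after the $\min\{n,v(W_{d,I_d})\}$ largest eigenvalues, with $n$th minimal error equal to $\sqrt{\lambda_{d,n+1}}$ where $(\lambda_{d,i})_i$ is the non-increasingly ordered eigenvalue sequence of $W_{d,I_d}$. So it suffices to identify these eigenpairs with the set in \link{eq:eigenpairs_sym_ordered}.

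The heart of the argument is therefore the computation of the adjoint and the verification that \link{eq:eigenpairs_sym} gives eigenpairs of $W_{d,I_d}$ that are complete. I would proceed as follows. By \autoref{prop:bestapprox}, $S_d$ and the (anti)symmetrizer $P_{I_d}$ commute (formula \link{commute}), and $P_{I_d}$ is an orthogonal projection on both $H_d$ and $\G_d$ by \autoref{projection}. From these two facts one computes the adjoint: for $f\in P_{I_d}(H_d)$ and $g\in\G_d$,
\begin{gather*}
		\distr{S_{d,I_d} f}{g}_{\G_d}
		= \distr{S_d f}{g}_{\G_d}
		= \distr{f}{{S_d}^{\!\dagger} g}_{H_d}
		= \distr{f}{P_{I_d}^H({S_d}^{\!\dagger} g)}_{H_d},
\end{gather*}
where the last step uses $f=P_{I_d}^H f$ together with self-adjointness of $P_{I_d}^H$. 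Hence ${S_{d,I_d}}^{\!\dagger} = P_{I_d}^H \circ {S_d}^{\!\dagger}\big|_{\G_d}$ as a map into $P_{I_d}(H_d)$, and consequently $W_{d,I_d} = P_{I_d}^H \circ W_d\big|_{P_{I_d}(H_d)}$, again using \link{commute} in the form ${S_d}^{\!\dagger} = P_{I_d}^H \circ {S_d}^{\!\dagger} \circ P_{I_d}^{\G}$ on $\G_d$. Now I would take the orthonormal basis $\Xi_d=\{\widetilde{\xi}_{\bm{k}}\sep\bm{k}\in\nabla_d\}$ of $P_{I_d}(H_d)$ furnished by \autoref{lemma_basis} applied with $e_{d,\bm{j}}=\widetilde{\phi}_{d,\bm{j}}$, and check that each $\widetilde{\xi}_{\bm{k}}$ is an eigenvector: since $\widetilde{\xi}_{\bm{k}}$ is (up to the normalizing constant $\sqrt{\#\S_{I_d}/\bm{M_{I_d}(k)}!}$) a linear combination $\sum_{\pi} (\pm 1)^{\abs\pi} \widetilde{\phi}_{d,\bm{\pi(k)}}$ of eigenvectors of $W_d$ all sharing the same eigenvalue $\prod_{l=1}^d \lambda_{k_l}=\widetilde{\lambda}_{d,\bm{k}}$ (the eigenvalue is permutation-invariant), linearity gives $W_d \widetilde{\xi}_{\bm{k}} = \widetilde{\lambda}_{d,\bm{k}} \widetilde{\xi}_{\bm{k}}$; applying $P_{I_d}^H$ and using $P_{I_d}^H \widetilde{\xi}_{\bm{k}} = \widetilde{\xi}_{\bm{k}}$ yields $W_{d,I_d}\widetilde{\xi}_{\bm{k}} = \widetilde{\lambda}_{d,\bm{k}}\widetilde{\xi}_{\bm{k}}$.

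Completeness then follows because $\Xi_d$ is already an \emph{orthonormal basis} of the whole space $P_{I_d}(H_d)$ by \autoref{lemma_basis}: a self-adjoint compact operator that is diagonalized by an orthonormal basis has no further eigenpairs beyond those coming from that basis (an argument identical to the second part of the proof of \autoref{prop:tensor_eigenpairs} — any putative extra eigenvector with nonzero eigenvalue would be orthogonal to all $\widetilde{\xi}_{\bm{k}}$, hence zero). After relabelling the $\widetilde{\lambda}_{d,\bm{k}}$ non-increasingly via the bijection $\psi_d$ and padding with zeros if $\#\nabla_d<\infty$, this establishes \link{eq:eigenpairs_sym_ordered}. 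Feeding this eigenpair description into \autoref{Cor:OptAlgo} immediately gives that $A_{n,d}'$ in \link{opt_sym_algo} — which is precisely the algorithm $A_{n,d}^*$ of that corollary written out in the basis $\{\xi_{d,i}\}$ — is $n$th optimal in $\A_d^{n,\rm cont}\cup\A_d^{n,\rm adapt}$, with worst case error \link{nth_error}; that $A_{n,d}'$ indeed maps into $P_{I_d}(\G_d)$ follows from $S_d\xi_{d,i} = S_d P_{I_d}^H\xi_{d,i} = P_{I_d}^{\G}(S_d\xi_{d,i})$ by \link{commute}. The main obstacle, modest but essential, is the careful bookkeeping in the adjoint computation and in verifying that $W_{d,I_d} = P_{I_d}^H W_d\big|_{P_{I_d}(H_d)}$; once that commutation identity is in place, the rest is a direct appeal to results already proved.
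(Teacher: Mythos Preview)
Your proposal is correct and follows essentially the same route as the paper: reduce to \autoref{Cor:OptAlgo}, use the commutation $S_d\circ P_{I_d}^H = P_{I_d}^{\G}\circ S_d$ from \autoref{prop:bestapprox} together with self-adjointness of the projections to identify $W_{d,I_d}$ with $P_{I_d}^H\, W_d$ on $P_{I_d}(H_d)$, and then verify the eigenpair relation on the ONB $\Xi_d$ from \autoref{lemma_basis}. The only cosmetic difference is that the paper derives the operator identity $W_{d,I_d}\,P_{I_d}^H = P_{I_d}^H\,W_d$ first and applies it to $\widetilde{\phi}_{d,\bm{k}}$, while you instead observe directly that $W_d\widetilde{\xi}_{\bm{k}}=\widetilde{\lambda}_{d,\bm{k}}\widetilde{\xi}_{\bm{k}}$ by permutation-invariance of the eigenvalue; the two computations are equivalent.
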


\begin{proof}
Since $S_I$ is a compact problem between Hilbert spaces it is enough to prove that
for $d\in\N$ the eigenpairs of $W_{d,I_d} = {S_{d,I_d}}^{\!\!\dagger} S_{d,I_d}$ are given by~\link{eq:eigenpairs_sym_ordered}.
The remaining assertions then follow from \autoref{Cor:OptAlgo}.
Indeed, we only need to show that $W_{d,I_d} \widetilde{\xi}_{\bm{k}} = \widetilde{\lambda}_{d,\bm{k}} \cdot \widetilde{\xi}_{\bm{k}}$ 
for every $\bm{k}\in\nabla_d$ because we already know that the set $\Xi_d=\{\widetilde{\xi}_{\bm{k}}\sep \bm{k}\in\nabla_d\}$ builds an ONB in $P_{I_d}(H_d)$.
Hence there cannot be more than these eigenpairs.

To prove the claim, observe that from the first part of \autoref{prop:bestapprox} it follows
\begin{gather*}
			S_{d,I_d} = S_d \circ P_{I_d}^{H} = P_{I_d}^{\G} \circ S_d 
			\quad \text{which implies that} \quad 
			S_{d,I_d} \colon P_{I_d}^{H}(H_d)\nach P_{I_d}^{\G}(\G_d).
\end{gather*}
Moreover, due to the self-adjointness of the projectors (see \autoref{projection}), 
it is easily seen that this yields
\begin{gather*}
			{S_{d,I_d}}^{\!\!\dagger} = P_{I_d}^{H} \circ {S_d}^{\!\dagger} = {S_d}^{\!\dagger} \circ P_{I_d}^{\G} 
			\quad \text{such that} \quad 
			{S_{d,I_d}}^{\!\!\dagger} \colon P_{I_d}^{\G}(\G_d)\nach P_{I_d}^{H}(H_d).
\end{gather*}
Consequently, we have
\begin{align*}
			W_{d,I_d} \, P_{I_d}^{H} 
			= \left( P_{I_d}^{H} \, {S_d}^{\!\dagger} \right) \left(S_d \, P_{I_d}^{H} \right) P_{I_d}^{H}
			= P_{I_d}^{H} \, \left( {S_d}^{\!\dagger} P_{I_d}^{\G} \right) \, S_d
			= P_{I_d}^{H} \, \left( {S_d}^{\!\dagger} S_d \right)
			= P_{I_d}^{H} \, W_d,
\end{align*}
because of $(P_{I_d}^X)^2 = P_{I_d}^X$, where $X\in\{H,\G\}$.
Since for every $\bm{j}\in\M_d$ the simple tensor $\widetilde{\phi}_{d,\bm{j}}$ is an eigenelement of $W_d$ with respect to the eigenvalue $\widetilde{\lambda}_{d,\bm{j}}$, we conclude
\begin{gather*}
			W_{d,I_d} \left( P_{I_d}^{H} \, \widetilde{\phi}_{d,\bm{j}} \right) 
			= \widetilde{\lambda}_{d,\bm{j}} \cdot \left( P_{I_d}^{H} \,  \widetilde{\phi}_{d,\bm{j}} \right)
\end{gather*}
from the linearity of $P_{I_d}^{H}$.
In particular, this is true for every $\bm{j}=\bm{k}\in\nabla_d\subseteq \M_d$.
But now we note that $\widetilde{\xi}_{\bm{k}}$ equals $P_{I_d}^{H} \, \widetilde{\phi}_{d,\bm{k}}$, 
at least up to some normalizing constant.
Hence, using linearity once again, we have proven the claimed assertion.
\end{proof}

We conclude this section by adding some final remarks on the above theorem.

\begin{rem}
Obviously, our former result for the entire tensor product problem $S=(S_d\colon H_d\nach\G_d)_{d\in\N}$ in \autoref{sect:Eigenpairs} is also covered by \autoref{theo:opt_sym_algo}.
We simply have to choose $I_d$ such that $\# I_d = 1$ for every $d\in\N$ and obtain $A_{n,d}' = A_{n,d}^*$.
As in this case, the worst case error can be attained by the element $\xi_{d,n+1}$ provided that $n<\#\nabla_d$.
Otherwise it trivially equals zero.

It should be clear to the reader how to generalize the results of this section to
the case of multiple partially (anti)symmetric problems where we claim (anti)symmetry \wrt 
more than one subset of coordinates $I$. 
Recall that this definition is given at the end of \autoref{sect:antisym_HFS}.

Finally we want to mention that we decided to give a different proof of \autoref{theo:opt_sym_algo}
than in \cite{W11} and \cite{W12b}, respectively. 
The reason is that the usage of the self-adjointness of the projections $P_{I_d}$ seems to be more elegant
than again repeating the arguments used for \autoref{Cor:OptAlgo} in \autoref{sect:opt_Hilbert_Algo}.
Furthermore, now we can handle also problems defined on non-separable or on finite-dimensional 
source spaces~$H_d$.
Thus we slightly generalized our old results.
\hfill$\square$
\end{rem}

\section{Complexity of (anti)symmetric problems}\label{sect:complexity_antisym}
Encouraged by the exact formula for the
$n$th minimal worst case error in \autoref{theo:opt_sym_algo}
the intention of the present section is to investigate the information complexity
of (anti)symmetric tensor product problems.
We restrict our attention to the study of polynomial and strong polynomial tractability in
what follows.
The aim is to find necessary and sufficient conditions for these properties in terms of the univariate sequence $\lambda=(\lambda_m)_{m\in\N}$
and the number of (anti)symmetry conditions we impose.
From the definition of $\nabla_d$ in \link{def_nabla} it is quite clear that 
antisymmetric problems are significantly easier than their symmetric counterparts.
Therefore, after proving some general assertions, 
we handle these cases separately in order to
conclude sharp conditions.
Moreover, we distinguish between the absolute and the normalized error criterion.

Let us fix the basic notation for this section.
As before, assume $S_{I}=(S_{d,I_d})_{d\in\N}$ to denote
a tensor product problem $S=(S_d\colon H_d\nach\G_d)_{d\in\N}$, 
restricted to some sequence of (anti)symmetric subspaces $P_{I_d}(H_d)$,
where $P\in\{\SI,\AI\}$,
of the tensor product Hilbert spaces $H_d=H_1\otimes\ldots\otimes H_d$, $d\in\N$.
Here for every $d\in\N$ the elements are (anti)symmetric \wrt
the non-empty subset $I_d \subseteq \{1,\ldots,d\}$ of coordinates.
The cardinality of these subsets will be denoted by $a_d=\#I_d$
and we set $b_d=d-a_d$ for the number of coordinates without (anti)symmetry conditions.
Finally, for $d\in\N$ the non-increasingly ordered eigenvalues $\lambda_{d,i}=\widetilde{\lambda}_{d,\psi(i)}$, $i\in\N$,
are given by \link{eq:eigenpairs_sym} and \link{eq:eigenpairs_sym_ordered}, respectively.
They are constructed out of the squared singular values $\lambda=(\lambda_m)_{m\in\N}$
of the underlying solution operator $S_1\colon H_1\nach\G_1$.

As an immediate consequence of \link{nth_error} we see that the
initial error of approximating $S_{d,I_d}$ on the unit ball $\widetilde{\F}_d=\B(P_{I_d}(H_d))$ is given by
\begin{gather*}
				\eps_d^{\mathrm{init}} 
				= e^\wor(0,d; P_I(H_d)) 
				= \sqrt{\lambda_{d,1}}  
				= \begin{cases}
						\sqrt{\lambda_1^d}, & \text{ if } P=\SI,\\
						\sqrt{\lambda_1^{b_d} \cdot \lambda_1 \cdot \ldots \cdot \lambda_{a_d}}, & \text{ if } P=\AI.
				\end{cases}
\end{gather*}
Clearly, we need to assume that this initial error is strictly positive
for any reasonably large $d\in\N$
because otherwise we have (strong) polynomial tractability by default.
In particular,
if the number of antisymmetric coordinates $a_d$
grows with the dimension then this condition implies that
the whole sequence of univariate eigenvalues $\lambda$
need to be strictly positive.
Moreover, similar to the entire tensor product problems studied in \autoref{sect:tensor_complexity},
we always assume that $\lambda_2>0$ in order to avoid triviality.
Consequently, we have $\#\M_1 \geq 2$.

Now we are ready to conclude a first general condition which is 
necessary for (strong) polynomial tractability of both symmetric and antisymmetric problems as long as we deal with the
absolute error criterion.
It is independent of the concrete choice of the (anti)symmetry conditions we impose.
\begin{lemma}[General necessary conditions, absolute errors]\label{prop_general}
					Let $P\in\{\SI,\AI\}$ and consider $S_I=(S_{d,I_d})_{d\in\N}$ as defined above,
					where $I_d$ is arbitrarily fixed for every $d\in\N$.
					Then the fact that $S_I$ is polynomially tractable
					with the constants $C,p>0$ and $q\geq 0$ implies that 
					$\lambda=(\lambda_m)_{m\in\N} \in \l_\tau$ for all $\tau > p/2$.
					Moreover, for	any such $\tau$ and all $d\in\N$ the following estimate holds:
					\begin{gather*}
								\frac{1}{(\lambda_{d,1})^\tau} \sum_{\bm{k}\in\nabla_d} \left(\widetilde{\lambda}_{d,\bm{k}}\right)^\tau 
								\leq (1+C)\, d^q
									+ C^{2\tau/p} \, \zeta\!\left(\frac{2\tau}{p}\right) \left( \frac{d^{2q/p}}{\lambda_{d,1}} \right)^\tau.
					\end{gather*}
\end{lemma}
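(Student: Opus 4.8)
The strategy is to apply the general tractability characterization from \autoref{Thm:General_Tract_abs} to the $I$-(anti)symmetric problem $S_I$, using the fact that by \autoref{theo:opt_sym_algo} this is a compact Hilbert space problem whose eigenvalue sequence in dimension $d$ is exactly $(\lambda_{d,i})_{i\in\N}$, obtained by reordering $\{\widetilde{\lambda}_{d,\bm{k}} \sep \bm{k}\in\nabla_d\}$ (padded with zeros). Thus all the hypotheses needed to invoke \autoref{Thm:General_Tract_abs} are in place.

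First I would assume $S_I$ is polynomially tractable with constants $C,p>0$ and $q\geq 0$. Then the first part of \autoref{Thm:General_Tract_abs} applies directly: for every $\tau > p/2$ we get
\begin{gather*}
		C_\tau = \sup_{d\in\N} \frac{1}{d^{r}} \left( \sum_{i=f(d)}^\infty \lambda_{d,i}^\tau \right)^{1/\tau} < \infty,
\end{gather*}
where $r = 2q/p$ and $f(d) = \ceil{(1+C)\,d^q}$, together with the bound $C_\tau \leq C^{2/p}\cdot \zeta(2\tau/p)^{1/\tau}$. To extract the claimed $\l_\tau$-membership of $\lambda=(\lambda_m)_{m\in\N}$, I would specialize to $d=1$: in dimension one $\nabla_1 = \M_1$ and the reordering of $\{\widetilde{\lambda}_{1,\bm{k}}\}$ is just the reordering of $(\lambda_m)_{m\in\M_1}$, so $\lambda_{1,i}=\lambda_i$ for all $i$. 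Since $\sum_{i=f(1)}^\infty \lambda_i^\tau \leq C_\tau^\tau < \infty$ and only finitely many initial terms $\lambda_1,\ldots,\lambda_{f(1)-1}$ are omitted (each finite because $\lambda_m\leq\lambda_1<\infty$), it follows that $\lambda\in\l_\tau$.

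Next I would convert the $C_\tau$-bound into the stated inequality. For fixed $d$, write $\sum_{\bm{k}\in\nabla_d}(\widetilde{\lambda}_{d,\bm{k}})^\tau = \sum_{i=1}^{\#\nabla_d}\lambda_{d,i}^\tau = \sum_{i=1}^{f(d)-1}\lambda_{d,i}^\tau + \sum_{i\geq f(d)}\lambda_{d,i}^\tau$. The tail is bounded by $C_\tau^\tau d^{r\tau} = C_\tau^\tau d^{2q} \leq C^{2\tau/p}\zeta(2\tau/p)\,d^{2q\tau/p}$, using the estimate on $C_\tau$. For the head, since the $\lambda_{d,i}$ are non-increasing, each of the $f(d)-1 \leq (1+C)d^q$ terms is bounded by $\lambda_{d,1}^\tau$, giving $\sum_{i=1}^{f(d)-1}\lambda_{d,i}^\tau \leq (1+C)d^q\,\lambda_{d,1}^\tau$. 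Dividing through by $(\lambda_{d,1})^\tau$ yields precisely
\begin{gather*}
		\frac{1}{(\lambda_{d,1})^\tau} \sum_{\bm{k}\in\nabla_d} \left(\widetilde{\lambda}_{d,\bm{k}}\right)^\tau
		\leq (1+C)\, d^q
			+ C^{2\tau/p} \, \zeta\!\left(\frac{2\tau}{p}\right) \left( \frac{d^{2q/p}}{\lambda_{d,1}} \right)^\tau,
\end{gather*}
as claimed. The only mild subtlety I would be careful about is the case $\#\nabla_d < f(d)$ (the head sum running past $\#\nabla_d$): there the padding $\lambda_{d,i}=0$ for $i>\#\nabla_d$ makes the head-bound still valid, so no separate argument is needed. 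There is no genuine obstacle here — the entire content is already encapsulated in \autoref{Thm:General_Tract_abs}, and this lemma is essentially its ``unwinding'' for the (anti)symmetric setting; the bookkeeping with the cutoff $f(d)$ and the $d=1$ specialization are the only points requiring attention.
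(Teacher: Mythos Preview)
Your proof is correct and follows essentially the same route as the paper: apply \autoref{Thm:General_Tract_abs}, specialize to $d=1$ to get $\lambda\in\l_\tau$, then split the full eigenvalue sum into a head of at most $(1+C)d^q$ terms (each $\leq\lambda_{d,1}^\tau$) and a tail bounded by $C^{2\tau/p}\zeta(2\tau/p)\,d^{2q\tau/p}$, and divide by $(\lambda_{d,1})^\tau$. One small slip: in the line ``$C_\tau^\tau d^{r\tau} = C_\tau^\tau d^{2q}$'' the exponent should be $2q\tau/p$, not $2q$ (your subsequent bound already has it right).
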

\begin{proof}
From \autoref{Thm:General_Tract_abs} we know that for any $\tau > p/2$ and $r=2q/p$
polynomial tractability yields
\begin{gather}\label{sup_condition3}
				\sup_{d\in\N} \frac{1}{d^r} \left( \sum_{i=f(d)}^\infty \left(\lambda_{d, i}\right)^\tau \right)^{1/\tau} <\infty,
\end{gather}
where the function $f\colon \N \nach \N$ is given by $f(d)=\ceil{(1+C)\,d^q}$.
This particularly implies that 
the sum in the brackets converges for every fixed $d\in\N$.
Therefore, especially for $d=1$ the tail series 
$\sum_{i=f(1)}^\infty (\lambda_{1,i})^\tau = \sum_{m= \ceil{1+C}}^\infty (\lambda_{m})^\tau$
needs to be finite which is possible only if $\lambda=(\lambda_m)_{m\in\N} \in \l_\tau$.

So, let us turn to the second assertion.
Obviously \link{sup_condition3} implies the existence of some constant $C_1>0$
such that
\begin{gather*}
				\sum_{i=f(d)}^\infty \left(\lambda_{d,i}\right)^\tau 
				\leq C_1 d^{r\tau} \quad \text{for all} \quad d\in\N.
\end{gather*}
Indeed, \autoref{Thm:General_Tract_abs} yields that we can take 
$C_1 = C^{2\tau/p} \zeta(2\tau/p)$.
Due to the ordering of $(\lambda_{d,i})_{i\in\N}$
the rest of the sum can also be bounded easily for any $d\in\N$ by
\begin{gather*}
				\sum_{i=1}^{f(d)-1} \left(\lambda_{d,i}\right)^\tau 
				\leq \left(\lambda_{d,1}\right)^\tau \cdot (f(d)-1).
\end{gather*}
Since $\sum_{\bm{k}\in\nabla_d} (\widetilde{\lambda}_{d,\bm{k}})^\tau = \sum_{i=1}^\infty (\lambda_{d,i})^\tau$, 
it remains to show that $f(d)-1 \leq (1 + C) d^{q}$
for every $d\in\N$ which is also obvious due to the definition of $f$.
\end{proof}

\subsection{Symmetric problems (absolute errors)}
Apart from the general assertion $\lambda \in \l_\tau$, 
we focus our attention on further necessary conditions for (strong) polynomial tractability
in the symmetric setting.
The following proposition yields a slight improvement compared to the corresponding assertion
stated in \cite{W12b} which can be obtained without using essential new ideas.

\begin{prop}[Necessary conditions, symmetric case]
				Let $S_I=(S_{d,I_d})_{d\in\N}$ be the problem considered in \autoref{prop_general} and set $P=\SI$.
				Moreover, assume $\lambda_1 \geq 1$.
				\begin{itemize}
								\item If $S_I$ is polynomially tractable then $b_d \in \0(\ln d)$, as $d\nach\infty$.
								\item If $S_I$ is strongly polynomially tractable then $b_d \in \0(1)$, as $d\nach\infty$, and $\lambda_1=1>\lambda_2$.
				\end{itemize}
\end{prop}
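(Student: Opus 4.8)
The plan is to exploit the explicit structure of the eigenvalue sequence of the symmetric problem together with \autoref{prop_general}. Recall that for $P=\SI$ and $I_d$ with $\#I_d=a_d$, $b_d=d-a_d$, the non-trivial eigenvalues of $W_{d,I_d}$ are the numbers $\widetilde\lambda_{d,\bm k}=\prod_{l=1}^d\lambda_{k_l}$ indexed by $\bm k\in\nabla_d$, where $\nabla_d$ consists of those multi-indices that are non-decreasing on the coordinates in $I_d$. In particular the initial error is $\eps_d^{\mathrm{init}}=\sqrt{\lambda_1^d}=\lambda_{d,1}^{1/2}$, so $\lambda_{d,1}=\lambda_1^d$. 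Fix $\tau>p/2$ as provided by \autoref{prop_general}; then $\lambda\in\l_\tau$ and, dividing the estimate of \autoref{prop_general} by nothing further, we have for all $d$
\begin{gather*}
		\frac{1}{\lambda_1^{\tau d}}\sum_{\bm k\in\nabla_d}\left(\widetilde\lambda_{d,\bm k}\right)^\tau
		\leq (1+C)\,d^q + C^{2\tau/p}\,\zeta\!\left(\tfrac{2\tau}{p}\right)\left(\frac{d^{2q/p}}{\lambda_1^d}\right)^\tau.
\end{gather*}
Since we assume $\lambda_1\geq 1$, the factor $(d^{2q/p}/\lambda_1^d)^\tau$ is bounded (indeed tends to $0$ if $\lambda_1>1$ and is polynomial if $\lambda_1=1$), so the right-hand side is $\0(d^{\max\{q,2q\tau/p\}})$, i.e.\ polynomially bounded in $d$; if moreover strong polynomial tractability holds then $q=0$ and the right-hand side is bounded by an absolute constant.

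First I would bound the left-hand side from below. The coordinates outside $I_d$ (there are $b_d$ of them) are free, so $\nabla_d$ contains in particular all multi-indices whose $I_d$-coordinates are all equal to $1$ and whose remaining $b_d$ coordinates range over an arbitrary subset of $\M_1$; restricting those free coordinates to the index set $\{1,2\}$ and summing only over such $\bm k$ gives
\begin{gather*}
		\sum_{\bm k\in\nabla_d}\left(\widetilde\lambda_{d,\bm k}\right)^\tau
		\geq \lambda_1^{\tau\, a_d}\,(\lambda_1^\tau+\lambda_2^\tau)^{b_d}.
\end{gather*}
Dividing by $\lambda_1^{\tau d}=\lambda_1^{\tau a_d}\lambda_1^{\tau b_d}$ yields $\bigl(1+(\lambda_2/\lambda_1)^\tau\bigr)^{b_d}$ as a lower bound for the left-hand side above. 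Writing $\theta=(\lambda_2/\lambda_1)^\tau\in(0,1]$ (it is positive since $\lambda_2>0$), we obtain
\begin{gather*}
		(1+\theta)^{b_d}\leq (1+C)\,d^q + C^{2\tau/p}\,\zeta\!\left(\tfrac{2\tau}{p}\right)\left(\frac{d^{2q/p}}{\lambda_1^d}\right)^\tau .
\end{gather*}
In the polynomially tractable case the right-hand side is $\0(d^{c})$ for some absolute $c\geq 0$, so taking logarithms gives $b_d\ln(1+\theta)\leq c\ln d + \0(1)$, i.e.\ $b_d\in\0(\ln d)$. In the strongly polynomially tractable case $q=0$, so the right-hand side is bounded by an absolute constant, whence $(1+\theta)^{b_d}$ is bounded, which forces $b_d\in\0(1)$.

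It then remains, in the strongly polynomially tractable case, to rule out $\lambda_2=1$ (and hence, given $\lambda_1\geq 1\geq\lambda_2$ already forced by $\lambda_1\geq 1$ and the ordering together with $\lambda_2>0$, to deduce $\lambda_1=1>\lambda_2$). Here I would argue as follows: strong polynomial tractability of $S_I$ implies strong polynomial tractability of the one-dimensional problem $S_1$ (take $d=1$, where $\#I_1=1$ forces no symmetry and $S_{1,I_1}=S_1$), and more to the point, since $b_d\in\0(1)$ we may pick a subsequence $d_k$ along which $b_{d_k}$ is constant, equal to some fixed $b$. Along this subsequence $\nabla_{d_k}$ contains all multi-indices that are $1$ on $b$ fixed free coordinates and non-decreasing and $\geq$ some value on the remaining $a_{d_k}=d_k-b$ coordinates; if $\lambda_1=\lambda_2=1$ then $\lambda_1^{a_{d_k}}=1$ and, because the first two univariate eigenvalues coincide, the cardinality $\#\{\bm k\in\nabla_{d_k}: \widetilde\lambda_{d_k,\bm k}=\lambda_{d_k,1}\}$ of top eigenvalues grows at least like the number of non-decreasing $\{1,2\}$-valued strings of length $a_{d_k}$, namely $a_{d_k}+1\to\infty$, which already contradicts the bound $n(\eps,d)\leq C\eps^{-p}$ at, say, $\eps=\tfrac12$. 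Hence $\lambda_2<1$, and combined with $\lambda_1\geq 1$ and the necessity (from the $b_d\in\0(1)$ part applied with $\lambda_1>1$, where the extra factor $(d^0/\lambda_1^d)^\tau\to 0$ would actually give an even stronger contradiction unless the tail is controlled — this needs the full \autoref{prop_general} estimate, not just its lower-bound consequence) one sees $\lambda_1=1$. The main obstacle I anticipate is precisely this last step — excluding $\lambda_1>1$ under strong polynomial tractability: the clean combinatorial counting above handles $\lambda_1=\lambda_2=1$, but for $\lambda_1>1$ one must instead use that the initial error $\lambda_1^{d/2}$ grows exponentially, so that the normalizing denominator $\lambda_{d,1}^\tau=\lambda_1^{\tau d}$ in \autoref{prop_general} makes the displayed inequality say that $\sum_{\bm k\in\nabla_d}(\widetilde\lambda_{d,\bm k})^\tau$ is itself exponentially large, and then one has to re-examine whether this is compatible with the finiteness assertions; I would reconcile this by noting that strong polynomial tractability with respect to the \emph{absolute} criterion permits exponentially large initial error (cf.\ the example after \autoref{Thm:General_Tract_abs}), so the exclusion of $\lambda_1>1$ must actually come from the counting of large eigenvalues as in the $\lambda_1=\lambda_2=1$ case rather than from size considerations — and this is the step whose bookkeeping I expect to be the most delicate.
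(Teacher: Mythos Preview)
Your derivation of the bounds $b_d\in\0(\ln d)$ (polynomial case) and $b_d\in\0(1)$ (strong case) is correct and essentially identical to the paper's: both of you plug $\lambda_{d,1}=\lambda_1^d\geq 1$ into \autoref{prop_general}, bound the right-hand side polynomially, and bound the left-hand side from below by $(1+(\lambda_2/\lambda_1)^\tau)^{b_d}$ via the same restriction to $\{1,2\}$-valued multi-indices on the free coordinates. Your exclusion of $\lambda_1=\lambda_2=1$ under strong polynomial tractability by counting $a_d+1$ non-decreasing $\{1,2\}$-strings is also fine and a minor variant of what the paper does.

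The genuine gap is exactly the one you flag: excluding $\lambda_1>1$. You correctly diagnose that this must come from counting large eigenvalues rather than from the size of the initial error, but you stop short of the counting argument. Here is the paper's device. Since $\lambda_2>0$ and $\lambda_1>1$, there exists $K\in\N_0$ with $\lambda_2\geq\lambda_1^{-K}$. For each $l=0,1,\ldots,\lfloor d/(K+1)\rfloor$ choose $\bm k\in\nabla_d$ with exactly $l$ coordinates equal to $2$ and the remaining $d-l$ equal to $1$ (arranged so that the $I_d$-block is non-decreasing, which is always possible). Then
\[
\widetilde\lambda_{d,\bm k}=\lambda_1^{\,d-l}\lambda_2^{\,l}\geq \lambda_1^{\,d-l-Kl}\geq \lambda_1^{\,0}=1,
\]
so there are at least $1+\lfloor d/(K+1)\rfloor$ eigenvalues $\lambda_{d,i}\geq 1$. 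On the other hand, strong polynomial tractability together with \autoref{Thm:General_Tract_abs} gives $\sum_{i\geq \lceil 1+C\rceil}\lambda_{d,i}^\tau\leq C_1$ uniformly in $d$; once $d$ is large enough that $1+\lfloor d/(K+1)\rfloor\geq\lceil 1+C\rceil$, this tail contains at least $\lfloor d/(K+1)\rfloor-\lceil C\rceil$ terms each $\geq 1$, a contradiction. The same argument with $K=0$ re-proves the $\lambda_1=\lambda_2=1$ exclusion, so the paper treats both cases uniformly. This is the missing bookkeeping you were looking for; the point is that when $\lambda_1>1$ you can trade $K$ copies of $\lambda_1$ for one copy of $\lambda_2$ without dropping below $1$, which is what produces linearly many (in $d$) large eigenvalues rather than just the single top one.
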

\begin{proof}
Assume $\lambda_1\geq 1$ and let $\tau$ be given by \autoref{prop_general}.
Then, independent of the amount of symmetry conditions,
we have $\lambda_{d,1}=\lambda_1^d \geq 1$ and
there exist absolute constants $r\geq 0$ and $C>1$ such that
\begin{gather}\label{stp_estimate}
			\frac{1}{(\lambda_1)^{\tau d}} \sum_{\bm{k}\in\nabla_d} \left(\widetilde{\lambda}_{d,\bm{k}}\right)^\tau
			\leq C \, d^r, \quad d\in\N,
\end{gather}
due to \autoref{prop_general}.
In the case of strong polynomial tractability we even have $r=0$.
For $d\geq 2$ we use the product structure of 
$\widetilde{\lambda}_{d,\bm{k}}$, $\bm{k}\in\nabla_d$, 
provided by \link{eq:eigenpairs_sym}.
That is, we split the sum
\wrt the coordinates with and without symmetry conditions. 
Hence, we conclude
\begin{gather}\label{splitting}
			\sum_{\bm{k}=(\bm{h},\bm{j})\in\nabla_d} \widetilde{\lambda}_{d,\bm{k}}^\tau 
			= \sum_{\bm{j}\in (\M_1)^{b_d}} \widetilde{\lambda}_{b_d,\bm{j}}^\tau 
					\sum_{\substack{\bm{h}\in (\M_1)^{a_d},\\h_1\leq\ldots\leq h_{a_d}}} \widetilde{\lambda}_{a_d,\bm{h}}^\tau
			= \left( \sum_{m=1}^{\# \M_1} \lambda_m^\tau \right)^{b_d} 
					\sum_{\substack{\bm{h}\in\M_{a_d},\\h_1\leq\ldots\leq h_{a_d}}} \widetilde{\lambda}_{a_d,\bm{h}}^\tau
\end{gather}
for $d=a_d+b_d\geq 2$
which leads to
\begin{gather*}
			\left( \sum_{m=1}^{\# \M_1} \left(\frac{\lambda_m}{\lambda_1}\right)^\tau \right)^{b_d} 
					\sum_{\substack{\bm{h}\in\M_{a_d},\\h_1\leq\ldots\leq h_{a_d}}} \prod_{l=1}^{a_d} 
							\left( \frac{\lambda_{h_l}}{\lambda_1}\right)^\tau
			\leq C \, d^r.
\end{gather*}
In any case the second sum in the above inequality is bounded from below by $1$.
Thus, using $\#\M_1 \geq 2$ we conclude that 
$(1+\lambda_2^\tau/\lambda_1^\tau)^{b_d} \leq \left( \sum_{m=1}^{\#\M_1} \lambda_m^\tau/\lambda_1^\tau \right)^{b_d}$
needs to be polynomially bounded from above.
Since we always assume $\lambda_2>0$ this leads to the claimed bounds on $b_d$.

It remains to show the assertions on the two largest univariate eigenvalues
in the case of strong polynomial tractability.
To this end, assume for a moment that $\lambda_1>1$.
Then, because of $\lambda_2>0$, there need to exist some $K\in\N_0$ such that
$\lambda_2 \geq (1/\lambda_1)^K$.
Now it is easy to see that 
(independent of the number of symmetry conditions) 
there are at least $1+\floor{d/(K+1)}$ different $\bm{k}\in\nabla_d$ 
such that $\widetilde{\lambda}_{d,\bm{k}}\geq 1$. 
Namely, for $l=0,\ldots,\floor{d/(K+1)}$ we can take the first $d-l$ coordinates of $\bm{k}\in\nabla_d$
equal to one. To the remaining coordinates we assign the value two
and obtain
\begin{gather*}
		\widetilde{\lambda}_{d,\bm{k}}
		= \lambda_1^{d-l}\lambda_2^{l}
		\geq \lambda_1^{Kl}\lambda_2^{l} \geq 1.
\end{gather*}
In other words, we have $\lambda_{d,1+\floor{d/(K+1)}} \geq 1$.
On the other hand, strong polynomial tractability implies 
$\sum_{i=\ceil{1+C}}^\infty \lambda_{d,i}^\tau \leq C_1$ 
for some absolute constants $\tau,C,C_1>0$ and all $d\in\N$; 
see~\link{sup_condition3}.
Consequently, for every $d\geq d_0= (2+C)(K+1)$ we obtain
$1+\floor{d/(K+1)}\geq \ceil{1+C}$ and thus
\begin{align*}
			C_1 
			&\geq \sum_{i=\ceil{1+C}}^\infty \lambda_{d,i}^\tau 
			\geq \sum_{i=\ceil{1+C}}^{1+\floor{d/(K+1)}} \lambda_{d,i}^\tau \\
			&\geq \lambda_{d,1+\floor{d/(K+1)}}^\tau (2+\floor{d/(K+1)}-\ceil{1+C}) \\
			&\geq \frac{d}{K+1}-(1+C).
\end{align*}
Obviously this is a contradiction and we conclude $\lambda_1=1$.
Finally, we need to show that we necessarily have $\lambda_2<1$.
Assuming that $\lambda_1=\lambda_2=1$ leads to $K=0$ in the discussion above 
and hence we obtain the same contradiction as before.
Therefore the proof is complete.
\end{proof}

Note in passing that
independent of the number of symmetry conditions 
the information complexity $n^{\mathrm{wor}}_{\mathrm{abs}}(\epsilon,d;S_{d,I_d}\colon\B(\SI_{I_d}(H_d))\nach\G_d)$ 
needs to grow at least linearly in $d$
if we assume $\lambda_1 \geq 1$ and $\lambda_2>0$.

We continue the analysis of $I$-symmetric problems with respect to the absolute error criterion
by proving that the stated necessary conditions are also sufficient for (strong) polynomial tractability.
For this purpose we need a rather technical preliminary lemma.
For the convenience of the reader we include a full proof 
that uses only elementary induction arguments. 

\begin{lemma}\label{lemma_symNEW}
				Let $(\mu_m)_{m\in\N}$ be a non-increasing sequence of 
				non-negative real numbers with $\mu_1>0$
				and set $\mu_{s,\bm{k}}=\prod_{l=1}^s \mu_{k_l}$ for $\bm{k}\in\N^s$ and $s\in\N$.\\
				Then, for all $V\in\N_0$ and every $d\in\N$, it holds
				\begin{gather}\label{estimate_VNEW}
								\sum_{\substack{\bm{k}\in\N^d,\\1\leq k_1\leq\ldots\leq k_d}} \!\!\mu_{d,\bm{k}}
								\leq \left(\mu_1\right)^d \, d^V \left( 1 + V + \sum_{L=1}^d \left(\mu_1\right)^{-L} \!\!
										 \sum_{ \substack{ \bm{j^{(L)}}\in\N^L,\\V+2\leq j_1^{(L)}\leq\ldots\leq j_L^{(L)} } } \!\! \mu_{L,\bm{j^{(L)}}} \right).
				\end{gather}
\end{lemma}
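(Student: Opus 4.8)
The plan is to prove \link{estimate_VNEW} by induction on the dimension $d$, tracking how the ``number of small coordinates'' (those with index $\le V+1$) contributes. First I would rewrite the left-hand sum by splitting off the largest coordinate run: for an ordered tuple $1\le k_1\le\ldots\le k_d$, let $t\in\{0,1,\ldots,d\}$ be the number of indices $k_l$ that equal $1$. (More generally one classifies tuples by how many of the leading coordinates lie in $\{1,\ldots,V+1\}$ versus in $\{V+2,\ldots\}$.) Using $\mu_{d,\bm{k}}=\mu_1^{t}\cdot\mu_{d-t,\bm{k'}}$ where $\bm{k'}$ records the remaining $d-t$ coordinates (all $\ge 2$, still ordered), the sum becomes
\begin{gather*}
		\sum_{\substack{\bm{k}\in\N^d,\\1\le k_1\le\ldots\le k_d}} \mu_{d,\bm{k}}
		= \sum_{t=0}^{d} (\mu_1)^t
			\sum_{\substack{\bm{k'}\in\N^{d-t},\\2\le k_1'\le\ldots\le k_{d-t}'}} \mu_{d-t,\bm{k'}}.
\end{gather*}
The idea is then to iterate this peeling: a coordinate equal to $2$ can again be split off, and so on up to the threshold $V+1$, so that after finitely many peeling steps every remaining ordered block has all coordinates $\ge V+2$ — exactly the ``residual'' sums $\sum_{\bm{j^{(L)}}} \mu_{L,\bm{j^{(L)}}}$ that appear on the right of \link{estimate_VNEW}.

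The key bookkeeping step is a counting estimate: the number of ordered tuples in $\{1,\ldots,V+1\}^{m}$ is at most $\binom{m+V}{V}\le (m+1)^V$ (or a similarly crude polynomial bound), and each such tuple contributes a factor at most $\mu_1^{m}$ since $\mu_1$ is the largest element of the sequence. So the total contribution of configurations with exactly $m$ ``large-but-below-threshold'' coordinates and $d-m$ coordinates at the bottom is bounded by $(\mu_1)^d \cdot (\text{poly in }m)$ times a residual sum on the remaining block. I would organize the induction hypothesis so that \link{estimate_VNEW} for dimension $d$ follows from the identity above together with the bound for dimension $d-t$: substituting the inductive bound into each summand, factoring out $(\mu_1)^{d-t}$, and then re-summing the geometric-type factors $(\mu_1)^t$ against the polynomial weights $t^V$. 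The base case $d=0$ (empty product equals $1$) or $d=1$ is checked directly: for $d=1$ the left side is $\sum_{k\ge1}\mu_k = \mu_1 + \sum_{k\ge 2}\mu_k$, and one verifies the right side with $d=1$ dominates this by peeling indices $2,\ldots,V+1$ one at a time, each peel costing a factor $\mu_1$ and the tail $\sum_{j\ge V+2}\mu_j$ being exactly the $L=1$ residual term.

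The main obstacle I anticipate is making the re-summation of the cross terms clean: after substituting the inductive estimate for dimension $d-t$ into the $t$-sum, one gets nested sums of the form $\sum_{t} (\mu_1)^t (\mu_1)^{d-t} (d-t)^V \big(1+V+\sum_{L=1}^{d-t}\cdots\big)$, and one must check that collecting these reproduces the claimed shape with the single polynomial factor $d^V$ and the single residual double-sum $\sum_{L=1}^d (\mu_1)^{-L}\sum_{\bm{j^{(L)}}}\mu_{L,\bm{j^{(L)}}}$, rather than a messier expression. The point is that $(d-t)^V\le d^V$ and that the residual sums for different blocks only ever involve coordinates $\ge V+2$, so they can all be absorbed into the single outer double-sum indexed by the block length $L$; the polynomial prefactors from the peeling counts combine into $d^V$ up to the additive constant $1+V$. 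Once this algebra is arranged, no analytic input is needed — the lemma is purely combinatorial, and its role (as one sees from the surrounding text) is to control $\sum_{\bm k\in\nabla_d}(\widetilde\lambda_{d,\bm k})^\tau$ for symmetric problems by reducing it to ordinary $\ell_\tau$-type tail sums of $\lambda$ in low dimensions.
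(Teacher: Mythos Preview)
Your iterated peeling idea is sound: if you peel off coordinates equal to $1$, then $2$, and so on up to $V+1$, you obtain the exact identity
\[
\sum_{\substack{\bm{k}\in\N^d\\1\le k_1\le\ldots\le k_d}}\mu_{d,\bm{k}}
=\sum_{\substack{t_1,\ldots,t_{V+1}\ge 0,\ L\ge 0\\t_1+\cdots+t_{V+1}+L=d}}
\Bigl(\prod_{i=1}^{V+1}(\mu_i)^{t_i}\Bigr)
\sum_{\substack{\bm{j}\in\N^L\\V+2\le j_1\le\ldots\le j_L}}\mu_{L,\bm{j}},
\]
and then bounding each $\mu_i\le\mu_1$ together with the stars-and-bars count $\#\{(t_1,\ldots,t_{V+1}):\sum t_i=d-L\}=\binom{d-L+V}{V}$ gives a direct proof of \link{estimate_VNEW}, once you check $\binom{d-L+V}{V}\le d^V$ for $L\ge 1$ (via your bound $\binom{m+V}{V}\le(m+1)^V$) and the slightly sharper $\binom{d+V}{V}\le(V+1)\,d^V$ for $L=0$ (easy induction on $V$). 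No induction on $d$ is needed, and this is a legitimate alternative to the paper's argument.

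Where your proposal has a genuine gap is the inductive \emph{organization}: you frame the proof as induction on $d$ and propose to substitute the bound for dimension $d-t$ into the one-level decomposition $\sum_{t}(\mu_1)^t\cdot(\text{inner sum over coords}\ge 2)$. This does not close. The inner sum for fixed $t$ has all coordinates $\ge 2$, so to apply the inductive hypothesis you must first enlarge it to the full sum with coordinates $\ge 1$; then
\[
\sum_{t=0}^{d}(\mu_1)^t\cdot(\mu_1)^{d-t}(d-t)^V\bigl(1+V+\cdots\bigr)
=(\mu_1)^d\sum_{t=0}^{d}(d-t)^V\bigl(1+V+\cdots\bigr),
\]
and your fix $(d-t)^V\le d^V$ still leaves the residual sum $\sum_{t=0}^d 1=d+1$, so you overshoot by a factor of $d$. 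This is precisely the obstacle you anticipated, and the bookkeeping you describe does not resolve it.

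The paper avoids this by inducting on $V$ rather than on $d$. It first proves (by induction on $s$) the exact one-level identity
\[
\sum_{\substack{\bm{k}\in\N^s\\m\le k_1\le\ldots\le k_s}}\mu_{s,\bm{k}}
=(\mu_m)^s+\sum_{l=1}^s(\mu_m)^{s-l}
\sum_{\substack{\bm{j}\in\N^l\\m+1\le j_1\le\ldots\le j_l}}\mu_{l,\bm{j}},
\]
which for $m=1$, $s=d$ is the base case $V=0$ of \link{estimate_VNEW}, and then passes from $V$ to $V+1$ by applying this identity with $m=V+2$ to each residual block; each such step gains exactly one factor of $d$, which is why the final bound carries $d^V$. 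Your peel-all-levels-at-once computation and the paper's induction on $V$ are two packagings of the same combinatorics; but the induction on $d$ you outline is not a third.
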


\begin{proof}
\emph{Step 1}. By induction on $s$ we first prove that for every fixed $m\in\N$
\begin{gather}\label{case_mNEW}
				\sum_{\substack{\bm{k}\in\N^s,\\m\leq k_1\leq\ldots\leq k_s}} \!\!\mu_{s,\bm{k}}
				= (\mu_m)^s + \sum_{l=1}^s (\mu_m)^{s-l} \!\!\sum_{ \substack{ \bm{j^{(l)}}\in\N^l,\\m+1\leq j_1^{(l)}\leq\ldots\leq j_l^{(l)} } } \!\! \mu_{l,\bm{j^{(l)}}}
				\quad \text{for all} \quad s\in\N.
\end{gather}
Easy calculations show that this holds at least for the initial step $s=1$.
Therefore, assume the assertion~\link{case_mNEW} to be true for some $s\in\N$.
Then
\begin{align*}
				\sum_{\substack{\bm{k}\in\N^{s+1},\\m\leq k_1\leq\ldots\leq k_{s+1}}} \!\!\mu_{s+1,\bm{k}}
				&= \sum_{k_1=m}^\infty \mu_{k_1} \!\!\sum_{\substack{\bm{h}\in\N^{s},\\k_1\leq h_1\leq\ldots\leq h_{s}}} \!\!\mu_{s,\bm{h}}	\\
				&= \mu_m \!\!\sum_{\substack{\bm{h}\in\N^{s},\\m\leq h_1\leq\ldots\leq h_{s}}} \!\! \mu_{s,\bm{h}} + \!\!\sum_{\substack{\bm{k}\in\N^{s+1},\\m+1\leq k_1\leq\ldots\leq k_{s+1}}} \!\!\mu_{s+1,\bm{k}}.
\end{align*}
Now, by inserting the induction hypothesis for the first sum and 
renaming $\bm{k}$ to~$\bm{j^{(s+1)}}$ in the remaining sum, we conclude that 
$\sum_{\substack{\bm{k}\in\N^{s+1},\\m\leq k_1\leq\ldots\leq k_{s+1}}} \mu_{s+1,\bm{k}}$ 
equals
\begin{gather*}
				(\mu_m)^{s+1} 
				+ \sum_{l=1}^s (\mu_m)^{s+1-l} \!\!\sum_{ \substack{ \bm{j^{(l)}}\in\N^l,\\m+1\leq j_1^{(l)}\leq\ldots\leq j_l^{(l)} } } \!\!\mu_{l,\bm{j^{(l)}}} 
				+ \!\!\sum_{\substack{\bm{j^{(s+1)}}\in\N^{s+1},\\m+1\leq j^{(s+1)}_1\leq\ldots\leq j^{(s+1)}_{s+1}}} \!\!\mu_{s+1,\bm{j^{(s+1)}}}.
\end{gather*}
Hence \link{case_mNEW} also holds for $s+1$ and the induction is complete.

\emph{Step 2}. Here we prove \link{estimate_VNEW} via another induction on $V\in\N_0$.
Therefore, let $d\in\N$ be arbitrarily fixed.
The initial step, $V=0$, corresponds to \link{case_mNEW} for $s=d$ and $m=1$.
Thus assume \link{estimate_VNEW} to be valid for some fixed $V\in\N_0$.
Then, by using \link{case_mNEW} for $s=L$ and $m=V+2$, we see that the right-hand side of \link{estimate_VNEW} equals 
\begin{align*}
					(\mu_1)^d \, d^V \left( 1 + V + \sum_{L=1}^d (\mu_1)^{-L} \left( (\mu_{V+2})^L + \sum_{l=1}^L (\mu_{V+2})^{L-l} \!\! \sum_{ \substack{ \bm{j^{(l)}}\in\N^l,\\(V+2)+1\leq j_1^{(l)}\leq\ldots\leq j_l^{(l)} } } \!\!\mu_{l,\bm{j^{(l)}}} \right) \right).
\end{align*}
Now we estimate $1+V$ by $d\,(1+V)$, take advantage of the non-increasing ordering of $(\mu_m)_{m\in\N}$, and extend the inner sum from $L$ to $d$
in order to obtain
\begin{align*}
					\sum_{\substack{\bm{k}\in\N^d,\\1\leq k_1\leq\ldots\leq k_d}} \!\!\mu_{d,\bm{k}}
					\leq (\mu_1)^{d} \, d^{V+1} \left( 1 + (V + 1) + \sum_{l=1}^d (\mu_1)^{-l} \!\!\sum_{ \substack{ \bm{j^{(l)}}\in\N^l,\\(V+1)+2\leq j_1^{(l)}\leq\ldots\leq j_l^{(l)} } } \!\! \mu_{l,\bm{j^{(l)}}} \right).
\end{align*}
Since this estimate corresponds to \link{estimate_VNEW} for $V+1$ the claim is proven.
\end{proof}

Now the sufficient conditions read as follows.

\begin{prop}[Sufficient conditions, symmetric case]\label{prop_suf_sym}
				Let $P=\SI$, assume $S_I$ to be the problem considered in \autoref{prop_general}, and
				let $\lambda=(\lambda_m)_{m\in\N}\in \l_{\tau_0}$ for some $\tau_0\in(0,\infty)$.
				\begin{itemize}
								\item If $\lambda_1<1$ then $S_I$ is strongly polynomially tractable.
								\item If $\lambda_1=1>\lambda_2$ and $b_d \in \0(1)$ then $S_I$ is strongly polynomially tractable.
								\item If $\lambda_1=1$ and $b_d \in \0(\ln d)$, as $d\nach\infty$, then $S_I$ is polynomially tractable.
				\end{itemize}
\end{prop}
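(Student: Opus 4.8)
The plan is to verify the summability condition \link{sup_condition3} from \autoref{Thm:General_Tract_abs}, using the splitting of the eigenvalue sum \wrt coordinates with and without symmetry, together with the technical estimate in \autoref{lemma_symNEW}. Recall that for the symmetric problem the full eigenvalue sum factors as in \link{splitting},
\begin{gather*}
		\sum_{\bm{k}\in\nabla_d} \left(\widetilde{\lambda}_{d,\bm{k}}\right)^\tau
		= \left(\sum_{m=1}^{\#\M_1}\lambda_m^\tau\right)^{b_d}
			\sum_{\substack{\bm{h}\in\M_{a_d},\\ h_1\leq\ldots\leq h_{a_d}}} \left(\widetilde{\lambda}_{a_d,\bm{h}}\right)^\tau,
\end{gather*}
for $d=a_d+b_d$, where $a_d=\#I_d$ and $b_d=d-a_d$. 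Since $\lambda\in\l_{\tau_0}$, the factor $\norm{\lambda\sep\l_\tau}^{\tau b_d}$ is finite for every $\tau\geq\tau_0$; the interesting part is the ordered sum over the antisymmetry-free-of-repetition-but-nondecreasing multi-indices $\bm{h}$, which I would control via \autoref{lemma_symNEW} applied with $\mu_m=\lambda_m^\tau$.

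First I would treat the case $\lambda_1<1$. Here I would choose $\tau\geq\tau_0$ and apply \autoref{lemma_symNEW} with $\mu_m=\lambda_m^\tau$ and $s=a_d$, picking $V=V(\tau)\in\N_0$ large enough that $\sum_{m=V+2}^\infty\lambda_m^\tau \leq \lambda_1^\tau$, say, so that every inner tail sum $\sum_{j_1^{(L)}\leq\ldots\leq j_L^{(L)},\ j_1^{(L)}\geq V+2}\mu_{L,\bm{j^{(L)}}}$ is bounded by $(\lambda_1^\tau)^L / L!$ up to constants (a rough bound suffices). The point is that since $\lambda_1<1$, the prefactor $\lambda_1^{\tau a_d}$ decays geometrically in $a_d$, while the bracketed term in \link{estimate_VNEW} grows at most like $a_d^{V}\cdot\mathrm{const}^{a_d}$; balancing these, together with the bounded factor $\norm{\lambda\sep\l_\tau}^{\tau b_d}$, one gets $\sum_{\bm{k}\in\nabla_d}(\widetilde{\lambda}_{d,\bm{k}})^\tau \leq C_\tau$ uniformly in $d$ once $\tau$ is taken large enough — actually I expect one must choose $\tau$ so large that $\norm{\lambda\sep\l_\tau} \cdot (\text{the per-coordinate constant}) < 1$, which is possible because $\norm{\lambda\sep\l_\tau}\to\lambda_1<1$ as $\tau\to\infty$. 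Then $\lambda_{d,1}=\lambda_1^d \leq 1$, so setting $f\equiv 1$, $r=0$ in \link{sup_condition3} and invoking the second part of \autoref{Thm:General_Tract_abs} yields strong polynomial tractability.

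Next, for $\lambda_1=1>\lambda_2$ with $b_d\in\0(1)$: now the prefactor $\lambda_1^{\tau a_d}=1$ no longer helps, but the ordered constraint $h_1\leq\ldots\leq h_{a_d}$ together with $\lambda_1=1$, $\lambda_2<1$ does the work. I would normalize ($\mu_m=\lambda_m^\tau$, so $\mu_1=1$) and again apply \autoref{lemma_symNEW} with $s=a_d$; choosing $V$ with $\sum_{m=V+2}^\infty\lambda_m^\tau$ small, the inner tail sums are bounded by a constant independent of $L$, and the bracket in \link{estimate_VNEW} is $\0(a_d^{V+1})$ — polynomially bounded in $d$. The factor $(\sum_{m}\lambda_m^\tau)^{b_d}$ is bounded since $b_d=\0(1)$. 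Hence $\sum_{\bm{k}\in\nabla_d}(\widetilde{\lambda}_{d,\bm{k}})^\tau$ is polynomially bounded in $d$; but here $\lambda_{d,1}=1$, so \link{sup_condition3} holds with $f\equiv 1$ and some $r\geq 0$ — wait, that only gives polynomial tractability. To get \emph{strong} polynomial tractability one must remove the polynomial factor; the remedy is to shift the summation index: use \autoref{Thm:General_Tract_abs} with $f(d)=\ceil{C\,d^{q}}$ for suitable $C,q$, \ie throw away the first polynomially-many (in $d$) eigenvalues — precisely the ones generated by taking almost all $h_l=1$ — after which the remaining tail is uniformly bounded. Concretely, the eigenvalues $\widetilde{\lambda}_{d,\bm{h}}$ with all but $O(1)$ coordinates equal to $1$ number only polynomially many in $a_d$, and discarding them leaves a tail controlled by a geometric series in the number of non-one coordinates; I would formalize this by splitting $\nabla_d$ according to the number of $h_l\geq 2$ and bounding each layer. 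For $\lambda_1=1$ with $b_d=\0(\ln d)$, the same argument goes through except the prefactor $(\sum_m\lambda_m^\tau)^{b_d}$ is now $\exp(\0(\ln d))=\0(d^{c})$, polynomial in $d$, so one obtains only polynomial tractability — which is exactly what is claimed.

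The main obstacle I anticipate is the bookkeeping in the $\lambda_1=1$ cases: getting a \emph{uniform-in-$d$} bound (for strong polynomial tractability when $b_d=\0(1)$) rather than merely a polynomial one requires carefully identifying which eigenvalues to discard via the index-shift $f(d)=\ceil{C d^q}$ and showing the remaining sum is summable independently of $d$. This amounts to a layered estimate over $\nabla_d$ indexed by $\#\{l: h_l\geq 2\}$, using that for $j\geq 2$ one has $\lambda_j\leq\lambda_2<1$, so the contribution of the layer with $t$ non-one coordinates is at most $\binom{a_d}{t}\cdot(\text{something})^t$ times $\lambda_2^{\tau t}$-type decay from the tail, and choosing $\tau$ large enough to beat the binomial growth. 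Once \link{sup_condition3} is verified with the appropriate $f$, $r$ in each of the three cases, the conclusion follows verbatim from \autoref{Thm:General_Tract_abs}.
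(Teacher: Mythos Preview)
Your overall strategy --- use the splitting \link{splitting}, apply \autoref{lemma_symNEW}, and verify \link{sup_condition3} via \autoref{Thm:General_Tract_abs} --- matches the paper's. But in the case $\lambda_1=1>\lambda_2$ you miss the one observation that makes the argument work cleanly, and this sends you down an unnecessary (and partly wrong) detour.

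The missing step is this: since $\lambda\in\l_{\tau_0}$ and $\lambda_{m_0}<1$ (where $m_0$ is the first index with eigenvalue below~$1$), one can choose $\tau$ large enough that $\sum_{m\geq m_0}\lambda_m^\tau\leq 1/2$. The paper proves exactly this as its Step~1. With that in hand, in the case $\lambda_1=1>\lambda_2$ one has $m_0=2$, so in \autoref{lemma_symNEW} one takes $V=0$ (not a generic $V$), giving prefactor $d^V=1$, and the inner $L$-fold sum is bounded by $\bigl(\sum_{m\geq 2}\lambda_m^\tau\bigr)^L\leq 2^{-L}$. Hence the bracket in \link{estimate_VNEW} is at most $1+\sum_{L\geq 1}2^{-L}=2$, uniformly in $a_d$. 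Combined with $b_d\in\0(1)$ this gives a uniform bound on the full eigenvalue sum and hence strong polynomial tractability directly, with $f\equiv 1$ --- no discarding of eigenvalues is needed. Your bound ``constant independent of $L$'' throws away precisely this geometric decay, producing a spurious factor $a_d$ and the illusion that only polynomial tractability follows.

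Your proposed remedy (discard the first $\lceil Cd^q\rceil$ eigenvalues and do a layered estimate over $t=\#\{l:h_l\geq 2\}$) is not needed, and contains a combinatorial slip: in the \emph{symmetric} ordered index set, if exactly $t$ of the $h_l$ are $\geq 2$ then they must be the last $t$ coordinates, so there is no factor $\binom{a_d}{t}$ to beat. In fact the layered sum is precisely $\sum_{t=0}^{a_d}\bigl(\sum_{m\geq 2}\lambda_m^\tau\bigr)^t$, which is again the geometric series above. For the third bullet ($\lambda_1=1$, $b_d\in\0(\ln d)$) the same idea works with $V=m_0-2$; the prefactor $d^V$ and the factor $\norm{\lambda\sep\l_\tau}^{\tau b_d}$ are each polynomial in $d$, giving polynomial tractability. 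For $\lambda_1<1$ the paper bypasses \autoref{lemma_symNEW} entirely: just use $\nabla_d\subseteq\N^d$ and $\bigl(\sum_m\lambda_m^\tau\bigr)^d\leq 2^{-d}$ for $\tau$ large.
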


\begin{proof}
\emph{Step 1}. 
We start the proof by exploiting the property $\lambda\in\l_{\tau_0}$;
namely we use the ordering of $(\lambda_m)_{m\in\N}$ to conclude that
\begin{gather*}
				m \, \lambda_m^{\tau_0} 
				\leq \lambda_1^{\tau_0} + \ldots + \lambda_m^{\tau_0} 
				< \sum_{i=1}^\infty \lambda_i^{\tau_0} = \norm{\lambda \sep \l_{\tau_0}}^{\tau_0} < \infty
				\quad \text{for any} \quad m\in\N.
\end{gather*}
Hence, there exists some $C_{\tau_0}>0$ such that 
$\lambda_m$ is bounded from above by 
$C_{\tau_0} \cdot m^{-r}$ for every $r\leq 1/\tau_{0}$.
Therefore there is some index such that for every larger $m\in\N$ we have
$\lambda_m<1$. We denote the smallest of these indices by $m_0$.
Similar to the calculations of Novak and Wo{\'z}niakowski~\cite[p. 180]{NW08} this leads to
\begin{gather*}
				\sum_{m=m_0}^\infty \!\!\lambda_m^\tau 
				\leq (p+1) \lambda_{m_0}^\tau + C_{\tau_0}^\tau \int_{m_0+p}^\infty \!\!x^{-\tau r} \dlambda^1(x) 
				= (p+1) \lambda_{m_0}^\tau + \frac{C_{\tau_0}^\tau}{\tau r - 1} \frac{1}{(m_0+p)^{\tau r -1}}
\end{gather*}
for every $p\in\N_0$ and all $\tau$ such that $\tau r > 1$.
In particular, with $r=1/\tau_0$ we obtain for all $\tau>\tau_0$ and any $p\in\N_0$ the estimate
\begin{gather*}
				\sum_{m=m_0}^\infty (\lambda_m)^\tau 
				\leq (p+1) \,(\lambda_{m_0})^\tau + \frac{1/\tau}{1/\tau_0 - 1/\tau} \left( \frac{C_{\tau_0}^{1/(1/\tau_0 - 1/\tau)}}{m_0+p} \right)^{\tau (1/\tau_0-1/\tau)}.
\end{gather*}
Note that for a given $\delta>0$ there exists some constant $\tau_1\geq \tau_0$ 
such that for all $\tau > \tau_1$ it is $1/(1/\tau_0 - 1/\tau) \in (\tau_0,\tau_0+\delta)$.
Hence, if $p\in\N_0$ is sufficiently large then we conclude that for all $\tau > \tau_1$
\begin{align*}
				\sum_{m=m_0}^\infty (\lambda_m)^\tau 
				\leq (p+1) \,(\lambda_{m_0})^\tau + \frac{\tau_0+\delta}{\tau_1} \left( \frac{C_1}{m_0+p} \right)^{\tau/(\tau_0+\delta)},
\end{align*}
where we set $C_1 = \max{1,C_{\tau_0}^{\tau_0+\delta}}<m_0+p$.
Finally, since $\lambda_{m_0}<1$, both the summands tend to zero as $\tau$ approaches infinity.
In particular, there need to exist some $\tau > \tau_1\geq \tau_0$ such that
\begin{gather*}
				\sum_{m=m_0}^\infty (\lambda_m)^\tau \leq \frac{1}{2}.
\end{gather*}

\emph{Step 2}. 
Now all the stated assertions can be seen using the second point of 
\autoref{Thm:General_Tract_abs}.
Indeed, for polynomial tractability it is sufficient to show that
\begin{gather}\label{sum_pol}
				\sum_{\bm{k}\in\nabla_d} \left( \widetilde{\lambda}_{d,\bm{k}} \right)^\tau 
				= \sum_{i=1}^\infty \left( \lambda_{d,i} \right)^\tau
				\leq C \,d^{r \tau}\quad \text{for all} \quad d\in\N
\end{gather}
and some $C,\tau > 0$, as well as some $r\geq 0$.
If this even holds for $r=0$ we obtain strong polynomial tractability.

In the case $\lambda_1<1$ we can estimate the sum on the left of \link{sum_pol}
from above by $( \sum_{m=1}^\infty \lambda_m^\tau )^d$ 
since clearly $\nabla_d\subseteq \M_d\subseteq\N^d$.
Using Step 1 with $m_0=1$ we conclude that 
$\sum_{\bm{k}\in\nabla_d} (\widetilde{\lambda}_{d,\bm{k}})^\tau \leq 2^{-d}$
for some large $\tau > \tau_0$. 
Hence the problem is strongly polynomially tractable in this case.

For the proof of the remaining points we assume that $\lambda_1=1$.
In any case we have 
\begin{gather*}
			\sum_{k\in\nabla_1} \left( \widetilde{\lambda}_{1,k} \right)^\tau 
			\leq \sum_{m=1}^\infty (\lambda_m)^{\tau_0} 
			= \norm{\lambda \sep \l_{\tau_0}}^{\tau_0} < \infty
\end{gather*}			
for all $\tau\geq \tau_0$ because of $\lambda \in \l_{\tau_0}$. 
Therefore we can assume $d\geq 2$ in the following.
Recall that we can split the first sum in \link{sum_pol} 
\wrt the coordinates with and without symmetry conditions.
That is, for $d=a_d+b_d\geq 2$ we use \link{splitting}.

If $\lambda_2<1$ and $b_d$ is universally bounded then
the first factor in this splitting can be bounded by a constant and the second factor 
can be estimated using \autoref{lemma_symNEW} with $V=0$, $d$ replaced by $a_d$ 
and $\mu$ replaced by $\lambda^\tau$.\footnote{Observe that this choice particularly implies that $\mu_m=0$ for any $m>\#\M_1$.}
Consequently, for any $\tau\geq \tau_0$,
\begin{gather}\label{symfactor}
				\!\sum_{\substack{\bm{h}\in \M_{a_d},\\h_1\leq\ldots\leq h_{a_d}}} \!\!\! \left(\widetilde{\lambda}_{a_d, \bm{h}}\right)^\tau 
				\leq 1 + \sum_{L=1}^{a_d} \!\sum_{ \substack{ \bm{j^{(L)}}\in\N^L,\\2\leq j_1^{(L)}\leq\ldots\leq j_L^{(L)} } } \!\!\! \left(\widetilde{\lambda}_{L,\bm{j^{(L)}}}\right)^\tau
				\leq 1 + \sum_{L=1}^{a_d} \left( \sum_{m=2}^\infty (\lambda_{m}^\tau) \right)^L.
\end{gather}
Now, with the help of Step 1 and the properties of geometric series, 
we see that if~$\tau$ is large enough then \link{symfactor}
can be estimated further by $1 + \sum_{L=1}^\infty 2^{-L} = 2$.
In summary also $\sum_{\bm{k}\in\nabla_d} (\widetilde{\lambda}_{d,\bm{k}})^\tau$ 
is universally bounded in this case 
and therefore the problem~$S_I$ is strongly polynomially tractable.

To prove the last point we argue in the same manner.
Here the assumption $b_d \in \0(\ln d)$, as $d\nach\infty$, yields that the first factor in the splitting \link{splitting}
is polynomially bounded in $d$.
For the second factor we again apply \autoref{lemma_symNEW}, but in this case we set
$V=m_0-2$, where $m_0$ denotes the first index $m\in\N$ such that $\lambda_{m}<1$.
Keep in mind that this index is at least two because of $\lambda_1=1$. 
On the other hand, it needs to be finite,
since $\lambda \in \l_{\tau_0}$.
Therefore, due to the same arguments as above, the second factor in the splitting \link{splitting} is polynomially bounded in $d$, too.
All in all, this proves \link{sum_pol} and thus $S_I$ is polynomially tractable in this case.
\end{proof}

We summarize the results obtained for $I$-symmetric tensor product problems $S_I=(S_{d,I_d})_{d\in\N}$ in the following
theorem.

\begin{theorem}[Polynomial tractability of sym. problems, absolute errors]\label{thm:tract_sym_abs}
			Let\\ $S_1 \colon H_1 \nach \G_1$ denote a compact linear operator 
			between two Hilbert spaces and let $\lambda=(\lambda_m)_{m\in \N}$ be the sequence 
			of eigenvalues of $W_1={S_1}^{\!\dagger} S_1$ \wrt a non-increasing ordering. 
			Moreover, for $d>1$ let $\leer \neq I_d \subseteq\{1,\ldots,d\}$ and
			assume $S_I=(S_{d,I_d})_{d\in\N}$ to be the linear tensor product problem 
			$S=(S_d)_{d\in\N}$ restricted to the 
			$I_d$-symmetric subspaces $\SI_{I_d}(H_d)$ 
			of the $d$-fold tensor product spaces $H_d$. 
			Consider the worst case setting with respect to the absolute error criterion
			and let $\lambda_2>0$.
			Then $S_I$ is strongly polynomially tractable 
			if and only if $\lambda \in \l_\tau$ for some $\tau\in(0,\infty)$ and
			\begin{itemize}
						\item $\lambda_1<1$, or
						\item $1=\lambda_1>\lambda_2$ and $(d-\#I_d) \in \0(1)$, as $d\nach\infty$.
			\end{itemize}
			Moreover, provided that $\lambda_1 \leq 1$ the problem is polynomially tractable 
			if and only if $\lambda \in \l_\tau$ for some $\tau\in(0,\infty)$ and
			\begin{itemize}
						\item $\lambda_1<1$, or
						\item $\lambda_1=1$ and $(d-\#I_d) \in \0(\ln d)$, as $d\nach\infty$.
			\end{itemize}
\end{theorem}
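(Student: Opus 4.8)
The plan is to obtain \autoref{thm:tract_sym_abs} by simply packaging the necessary and sufficient conditions established in the three preceding statements, treating strong polynomial tractability and polynomial tractability separately and, inside each, splitting according to whether $\lambda_1<1$ or $\lambda_1=1$. Recall that the standing assumption $\lambda_2>0$ forces $\lambda_1>0$, so the case $\lambda_1=0$ cannot occur; for the polynomial-tractability part the standing hypothesis is moreover $\lambda_1\leq 1$, so there $\lambda_1>1$ need not be addressed at all (and is left open after the theorem).

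First I would dispose of the sufficiency (``if'') directions by invoking \autoref{prop_suf_sym}. Its first bullet gives strong polynomial tractability whenever $\lambda\in\l_\tau$ for some finite $\tau$ and $\lambda_1<1$; since strong polynomial tractability implies polynomial tractability, this covers the $\lambda_1<1$ cases of both claims at once. The second bullet of \autoref{prop_suf_sym} handles the remaining strong-tractability case $1=\lambda_1>\lambda_2$ together with $b_d:=d-\#I_d\in\0(1)$, and the third bullet handles the remaining polynomial-tractability case $\lambda_1=1$ together with $b_d\in\0(\ln d)$. Thus every ``if'' implication in the theorem is already on record, and for this direction no new argument is needed.

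For the necessity (``only if'') directions I would proceed as follows. In both cases (strong) polynomial tractability forces $\lambda\in\l_\tau$ for some $\tau\in(0,\infty)$; this is the first assertion of \autoref{prop_general}, which is insensitive to the amount of symmetry imposed. It therefore only remains to produce the dichotomy on $\lambda_1$ and the growth condition on $b_d$. If $S_I$ is strongly polynomially tractable, then either $\lambda_1<1$ and there is nothing left to prove, or $\lambda_1\geq 1$; in the latter situation the symmetric necessary-conditions proposition applies and yields simultaneously $\lambda_1=1$, $\lambda_2<1$, and $b_d\in\0(1)$, which is precisely the second listed alternative. In particular that proposition already rules out $\lambda_1>1$, so under the hypotheses of the theorem strong polynomial tractability automatically entails $\lambda_1\leq 1$, and the two listed alternatives are exhaustive. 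Similarly, if $S_I$ is polynomially tractable and $\lambda_1\leq 1$, then either $\lambda_1<1$, giving the first alternative, or $\lambda_1=1$, in which case the same proposition gives $b_d\in\0(\ln d)$, giving the second alternative.

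I do not anticipate a genuine obstacle: the theorem is a reorganisation of results proved above. The only point requiring care is the bookkeeping of which hypothesis feeds which direction --- specifically, making sure that the clause ``$1=\lambda_1>\lambda_2$'' in the strong-tractability characterisation is genuinely \emph{extracted} as a necessary condition from the symmetric necessary-conditions proposition rather than silently assumed, that the case $\lambda_1<1$ is not accidentally excluded by the standing assumption $\lambda_2>0$, and that the polynomial-tractability characterisation is stated only under $\lambda_1\leq 1$ so that no analogue of ruling out $\lambda_1>1$ is required there.
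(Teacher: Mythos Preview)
Your proposal is correct and matches the paper's approach exactly: the paper presents \autoref{thm:tract_sym_abs} explicitly as a summary of the preceding results (\autoref{prop_general}, the symmetric necessary-conditions proposition, and \autoref{prop_suf_sym}) without giving a separate proof, and your write-up spells out precisely this packaging. The bookkeeping you flag---extracting $1=\lambda_1>\lambda_2$ and $b_d\in\0(1)$ from the necessary-conditions proposition in the strong case, and restricting to $\lambda_1\leq 1$ so that $\lambda_1>1$ need not be addressed in the polynomial case---is handled correctly.
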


Note that we do not have sufficient conditions for polynomial
tractability in the case when $\lambda_1>1$.
We only know that $(d-\#I_d) \in \0(\ln d)$, as $d\nach\infty$, is necessary in this situation.
Anyway, we completely characterized strong polynomial tractability of symmetric problems.
In this respect we improved the results known from \cite{W12b}.
Moreover, we have shown that the stated results also hold for finite-dimensional
and for non-separable source spaces $H_1$.

Before we turn to the complexity of antisymmetric problems 
we briefly focus on the normalized error criterion 
for the $I$-symmetric setting in the next subsection.

\subsection{Symmetric problems (normalized errors)}
Due to \link{n_wor_norm} and \link{eq:eigenpairs_sym} the information complexity
of $I$-symmetric problems $S_I=(S_{d,I_d})_{d\in\N}$ in the worst case setting
\wrt the normalized error criterion is given by
\begin{gather*}
			n_\no^\wor(\epsilon', d; \SI_{I_d}(H_d)) 
			= \# \left\{ \bm{k}\in\nabla_d \sep \frac{\widetilde{\lambda}_{d,\bm{k}}}{\lambda_{d,1}} = \prod_{l=1}^d \left( \frac{\lambda_{k_l}}{\lambda_1} \right) > (\epsilon')^2 \right\}
\end{gather*}
for $\epsilon' \in (0,1)$ and $d\in\N$,
since we have $(\epsilon_d^{\rm init})^2=\lambda_{d,1}=\lambda_1^d$ 
for any kind of symmetric problem. 
In contrast, for the absolute error criterion~\link{n_wor} yields that
$n_\ab^\wor(\epsilon, d; \SI_{I_d}(H_d)) = \# \left\{ \bm{k}\in\nabla_d \sep \widetilde{\lambda}_{d,\bm{k}}=\prod_{l=1}^d \lambda_{k_l} > \epsilon^2 \right\}$,
where $\epsilon>0$ and $d\in\N$.
Hence, using the ideas in the proof of \autoref{thm:unweightedtensor_norm} 
it suffices to study a scaled tensor product problem $T_d\colon \SI_{I_d}(H_d)\nach \G_d$
\wrt the absolute error criterion in order to obtain tractability results
for $S_I$ in the normalized situation.
To this end, recall that the squared singular values of $T_1$ equal $\mu = (\mu_m)_{m\in\N}$ with
$\mu_m = \lambda_m / \lambda_1$ such that we always have $\mu_1=1$. 
Furthermore, we obviously have $\mu \in \l_\tau$ if and only if $\lambda \in \l_\tau$.
This leads to the following theorem.

\begin{theorem}[Polynomial tractability of symmetric problems, normalized errors]
			Consider the situation of \autoref{thm:tract_sym_abs}.
			We study the the worst case setting with respect to the normalized error criterion.
			Then $S_I=(S_{d,I_d})_{d\in\N}$ is strongly polynomially tractable if and only if 
			\begin{gather*}
						\lambda \in \l_\tau \text{ for some } \tau\in(0,\infty) 
						\quad \text{and} \quad 
						\lambda_1>\lambda_2
						\quad \text{and} \quad 
						(d-\#I_d) \in \0(1), \text{ as } d\nach\infty.
			\end{gather*}
			Moreover, the problem $S_I$ is polynomially tractable if and only if 
			\begin{gather*}
						\lambda \in \l_\tau \text{ for some } \tau\in(0,\infty) 
						\quad \text{and} \quad 
						(d-\#I_d) \in \0(\ln d), \text{ as } d\nach\infty.
			\end{gather*}
\end{theorem}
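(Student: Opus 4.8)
The plan is to reduce the normalized-error statement for $I$-symmetric problems to the already-established absolute-error result of \autoref{thm:tract_sym_abs}, exactly mirroring the strategy used in the proof of \autoref{thm:unweightedtensor_norm}. First I would introduce the rescaled univariate operator $T_1 = (1/\sqrt{\lambda_1})\, S_1 \colon H_1 \nach \G_1$, which is again compact and linear between Hilbert spaces, with ${T_1}^{\!\dagger} = (1/\sqrt{\lambda_1})\, {S_1}^{\!\dagger}$ and hence $V_1 = {T_1}^{\!\dagger} T_1 = (1/\lambda_1)\, W_1$. Consequently the non-increasingly ordered eigenvalue sequence of $V_1$ is $\mu = (\mu_m)_{m\in\N}$ with $\mu_m = \lambda_m/\lambda_1$, so that $\mu_1 = 1 \geq \mu_2 = \lambda_2/\lambda_1 > 0$. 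Building the tensor product problem $T = (T_d)_{d\in\N}$ out of $T_1$ and restricting it to the same $I_d$-symmetric subspaces $\SI_{I_d}(H_d)$ yields an $I$-symmetric tensor product problem $T_I = (T_{d,I_d})_{d\in\N}$ which is eligible for \autoref{thm:tract_sym_abs} because $\mu_2 > 0$.

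Next I would establish the key identity relating the two information complexities. By \autoref{prop:tensor_eigenpairs} (or directly from the construction in \autoref{sect:Scaled_basics}) the product eigenvalues $\widetilde{\lambda}_{d,\bm k}$ of $W_d$ and $\widetilde{\mu}_{d,\bm k}$ of $V_d = W_d(T)$ satisfy $\widetilde{\mu}_{d,\bm k} = \prod_{l=1}^d \mu_{k_l} = (1/\lambda_1^d)\prod_{l=1}^d \lambda_{k_l} = \widetilde{\lambda}_{d,\bm k}/\lambda_{d,1}$ for every $\bm k$. Restricting to the index set $\nabla_d$ from \link{def_nabla} and using \autoref{theo:opt_sym_algo} together with the formula $(\eps_d^{\mathrm{init}})^2 = \lambda_{d,1} = \lambda_1^d$ valid for any symmetric problem, one obtains
\begin{gather*}
		n_\no^\wor(\epsilon', d; \SI_{I_d}(H_d))
		= \# \!\left\{ \bm k \in \nabla_d \sep \widetilde{\lambda}_{d,\bm k}/\lambda_{d,1} > (\epsilon')^2 \right\}
		= \# \!\left\{ \bm k \in \nabla_d \sep \widetilde{\mu}_{d,\bm k} > (\epsilon')^2 \right\}
		= n_\ab^\wor(\epsilon', d; T_{d,I_d})
\end{gather*}
for all $\epsilon' \in (0,1]$ and $d\in\N$. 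Thus every tractability statement for $T_I$ with respect to absolute errors translates verbatim into the corresponding statement for $S_I$ with respect to normalized errors.

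Finally I would apply \autoref{thm:tract_sym_abs} to $T_I$. Since $\mu_1 = 1$, the case $\mu_1 < 1$ is impossible, so strong polynomial tractability of $T_I$ holds if and only if $\mu \in \l_\tau$ for some $\tau\in(0,\infty)$, $1 = \mu_1 > \mu_2$, and $(d - \#I_d) \in \0(1)$; polynomial tractability of $T_I$ holds (note $\mu_1 = 1 \leq 1$, so the second clause of \autoref{thm:tract_sym_abs} applies) if and only if $\mu \in \l_\tau$ for some $\tau$ and $(d - \#I_d) \in \0(\ln d)$. It remains only to rewrite these conditions in terms of $\lambda$: clearly $\mu \in \l_\tau \iff \lambda \in \l_\tau$ since $\mu = \lambda/\lambda_1$, and $\mu_1 > \mu_2 \iff \lambda_1 > \lambda_2$. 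This yields precisely the two asserted characterizations. I do not anticipate a genuine obstacle here; the only point requiring a little care is to confirm that the eigenpair bookkeeping on the restricted index set $\nabla_d$ behaves under scaling exactly as on the full index set $\M_d$ — but this is immediate because the scaling factor $1/\lambda_1^d$ is a single positive constant common to all $\bm k \in \nabla_d$ and hence does not disturb the non-increasing ordering $\psi_d$, nor the orthonormality of the $\widetilde{\xi}_{\bm k}$ (which live in $\SI_{I_d}(H_d)$ equipped with the unscaled inner product of $H_d$).
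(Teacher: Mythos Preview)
Your proposal is correct and follows essentially the same approach as the paper: introduce the rescaled operator $T_1 = \lambda_1^{-1/2} S_1$ so that $\mu_m = \lambda_m/\lambda_1$ and $\mu_1 = 1$, identify $n_\no^\wor(\epsilon',d;\SI_{I_d}(H_d))$ with $n_\ab^\wor(\epsilon',d;T_{d,I_d})$ via the product structure of the eigenvalues on $\nabla_d$, and then apply \autoref{thm:tract_sym_abs} to $T_I$ using $\mu_1 = 1$. The paper carries this out in slightly less detail (it simply points to the ideas in the proof of \autoref{thm:unweightedtensor_norm} and records the formula for $n_\no^\wor$), but the argument is the same.
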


\subsection{Antisymmetric problems (absolute errors)}\label{sect:antisym_abs}
We start this subsection with sufficient conditions
for (strong) polynomial tractability which slightly improve the results stated in~\cite[Proposition~5]{W12b}.

\begin{prop}[Sufficient conditions, antisymmetric case]\label{prop_suf_asy}
				Let $P=\AI$, sup\-pose $S_I=(S_{d,I_d})_{d\in\N}$ to be the problem considered in \autoref{prop_general}, 
				and let $\lambda=(\lambda_m)_{m\in\N}\in \l_{\tau_0}$ for some $\tau_0\in(0,\infty)$.
				\begin{itemize}
								\item If $\lambda_1<1$ then $S_I$ is strongly polynomially tractable,
											independent of the number of antisymmetry conditions.
								\item If $\lambda_1 \geq 1$	and if there exist constants 
											$\tau \geq \tau_0$, $d_0\in\N$, as well as $C\geq1$, and $q\geq0$ such that 
											for the number of antisymmetric coordinates	$a_d$ in dimension~$d$ 
											it holds that
											\begin{gather}\label{suf_condition}
														\frac{\ln{(a_d!)}}{d} + \frac{\ln{(C\,d^q)}}{d} \geq \ln(\norm{\lambda \sep \l_\tau}^\tau) \quad \text{for all} \quad d\geq d_0
											\end{gather}
											then the problem $S_I$ is polynomially tractable.
											If this even holds for $q=0$ then we obtain strong polynomial tractability.
				\end{itemize}
\end{prop}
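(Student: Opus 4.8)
\textbf{Proof plan for Proposition~\ref{prop_suf_asy}.}

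The plan is to reduce everything to the sufficient criterion in the second point of \autoref{Thm:General_Tract_abs}, exactly as in the proof of \autoref{prop_suf_sym}. That is, writing $a_d=\#I_d$ and $b_d=d-a_d$, it suffices to exhibit constants $\tau>0$, $C>0$ and $r\geq0$ (with $r=0$ giving strong polynomial tractability) such that
\begin{gather}\label{eq:red_asy}
				\sum_{\bm{k}\in\nabla_d} \left(\widetilde{\lambda}_{d,\bm{k}}\right)^\tau = \sum_{i=1}^\infty \left(\lambda_{d,i}\right)^\tau \leq C\,d^{r\tau} \quad \text{for all} \quad d\in\N,
\end{gather}
where now $\nabla_d=\{\bm{k}\in\M_d \sep k_{i_1}<\ldots<k_{i_{a_d}}\}$ is the \emph{strictly} ordered index set from \link{def_nabla}. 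The crucial structural fact I would use is the product splitting \link{splitting}: for $d=a_d+b_d\geq2$,
\begin{gather*}
				\sum_{\bm{k}\in\nabla_d}\left(\widetilde{\lambda}_{d,\bm{k}}\right)^\tau = \left(\sum_{m=1}^{\#\M_1}\lambda_m^\tau\right)^{b_d}\sum_{\substack{\bm{h}\in\M_{a_d},\\h_1<\ldots<h_{a_d}}}\left(\widetilde{\lambda}_{a_d,\bm{h}}\right)^\tau.
\end{gather*}
The point which makes antisymmetry easier than symmetry is the elementary combinatorial bound on the strictly-ordered antisymmetric factor: since each strictly increasing tuple $\bm{h}$ arises from $a_d!$ distinct unordered tuples in $\N^{a_d}$,
\begin{gather*}
				\sum_{\substack{\bm{h}\in\M_{a_d},\\h_1<\ldots<h_{a_d}}}\left(\widetilde{\lambda}_{a_d,\bm{h}}\right)^\tau \leq \frac{1}{a_d!}\sum_{\bm{h}\in\N^{a_d}}\left(\widetilde{\lambda}_{a_d,\bm{h}}\right)^\tau = \frac{1}{a_d!}\left(\sum_{m=1}^\infty\lambda_m^\tau\right)^{a_d} = \frac{\norm{\lambda\sep\l_\tau}^{\tau a_d}}{a_d!}.
\end{gather*}

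With these two estimates in hand the two cases are straightforward. \textbf{Case $\lambda_1<1$:} here I would invoke Step~1 of the proof of \autoref{prop_suf_sym} (which uses only $\lambda\in\l_{\tau_0}$) to pick $\tau\geq\tau_0$ large enough that $\norm{\lambda\sep\l_\tau}^\tau=\sum_{m=1}^\infty\lambda_m^\tau\leq\tfrac12$; since $\nabla_d\subseteq\M_d\subseteq\N^d$ we get $\sum_{\bm{k}\in\nabla_d}(\widetilde{\lambda}_{d,\bm{k}})^\tau\leq 2^{-d}$, which is \link{eq:red_asy} with $r=0$, independently of $a_d$. \textbf{Case $\lambda_1\geq1$:} here I bound the $b_d$-factor crudely by $(\sum_{m=1}^{\#\M_1}\lambda_m^\tau)^{b_d}\leq\norm{\lambda\sep\l_\tau}^{\tau b_d}$ and combine with the antisymmetric bound to obtain
\begin{gather*}
				\sum_{\bm{k}\in\nabla_d}\left(\widetilde{\lambda}_{d,\bm{k}}\right)^\tau \leq \frac{\norm{\lambda\sep\l_\tau}^{\tau(a_d+b_d)}}{a_d!} = \frac{\norm{\lambda\sep\l_\tau}^{\tau d}}{a_d!}.
\end{gather*}
Taking logarithms, this is $\leq C\,d^q$ precisely when $\ln(a_d!)+\ln(C\,d^q)\geq \tau d\,\ln\norm{\lambda\sep\l_\tau} = d\ln(\norm{\lambda\sep\l_\tau}^\tau)$, i.e.\ when \link{suf_condition} holds for all $d\geq d_0$; for the finitely many $d<d_0$ (and for $d=1$, handled separately via $\sum_{k\in\nabla_1}(\widetilde{\lambda}_{1,k})^\tau\leq\norm{\lambda\sep\l_{\tau_0}}^{\tau_0}<\infty$) the sum is trivially finite, so after enlarging $C$ we get \link{eq:red_asy} with $r\tau=q$, hence polynomial tractability; and if $q=0$ the bound is uniform in $d$, giving strong polynomial tractability.

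I do not expect a serious obstacle here — the argument is essentially bookkeeping once the product splitting and the $1/a_d!$ combinatorial gain are written down. The one point requiring a little care is making sure the choice of $\tau$ is consistent between the two invocations: in the case $\lambda_1<1$ the exponent $\tau$ is produced by Step~1 of \autoref{prop_suf_sym} and must simultaneously satisfy $\tau\geq\tau_0$ and $\norm{\lambda\sep\l_\tau}^\tau\leq\tfrac12$, while in the hypothesis \link{suf_condition} the exponent $\tau$ is given to us; I would simply note that \link{suf_condition}, being an assumption on a fixed admissible $\tau$, is exactly what \autoref{Thm:General_Tract_abs} needs, so no optimization over $\tau$ is required. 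A secondary check is that $\widetilde{\lambda}_{a_d,\bm{h}}>0$ forces $a_d\leq\#\M_1$ in the antisymmetric case when $\lambda$ has only finitely many nonzero terms, so that $\nabla_d$ (and the initial error) is nontrivial only under the standing positivity assumption already made at the start of \autoref{sect:complexity_antisym}; this is consistent with the hypotheses and needs only a one-line remark.
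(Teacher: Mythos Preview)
Your proposal is correct and follows essentially the same approach as the paper: reduce to the second point of \autoref{Thm:General_Tract_abs}, split off the $b_d$ unrestricted coordinates, and exploit the $1/a_d!$ combinatorial gain on the strictly ordered antisymmetric factor to obtain $\sum_{\bm{k}\in\nabla_d}(\widetilde{\lambda}_{d,\bm{k}})^\tau\leq \norm{\lambda\sep\l_\tau}^{\tau d}/a_d!$. The only cosmetic differences are that the paper routes the $\lambda_1<1$ case through \link{suf_condition} itself (observing that its right-hand side is then negative, so the condition holds trivially with $q=0$, $C=1$) rather than bounding $\nabla_d\subseteq\N^d$ directly, and that the paper writes the combinatorial estimate via finite cubes $Q_{d,s}$ before passing to the limit; neither affects the substance.
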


\begin{proof}
Like for the symmetric setting, the proof of these 
sufficient conditions is based on
the second point of \autoref{Thm:General_Tract_abs}.
We show that under the given assumptions for some $\tau \geq \tau_0$ the whole sum of the eigenvalues
\begin{gather}\label{sum_abs}
				\sum_{\bm{k}\in\nabla_d} \left(\widetilde{\lambda}_{d,\bm{k}}\right)^\tau
				= \sum_{i=1}^\infty (\lambda_{d,i})^\tau
\end{gather}
is universally bounded, or polynomially bounded in $d$, respectively.
Note that since we deal with the case $P=\AI$ now, 
the set $\nabla_d$ is given by the second line in \link{def_nabla}.
Moreover observe that for $d=1$ there is no antisymmetry condition at all.
That is, we have $\nabla_1 = \M_1\subseteq\N$ and the sums in~\link{sum_abs}
equal $\norm{ \lambda \sep \l_\tau}^\tau \leq \norm{ \lambda \sep \l_{\tau_0}}^\tau$
in this case.
Therefore, due to the hypothesis $\lambda \in \l_{\tau_{0}}$, the term for $d=1$ is finite.

Hence, let $d\geq 2$ be arbitrarily fixed.
Without loss of generality we may reorder the set of coordinates such that 
$I_d = \{i_1,\ldots,i_{a_d}\}=\{1,\ldots,a_d\}$.
That means, we assume partial antisymmetry with respect to the first $a_d$ coordinates. 
For $s\in\N$ with $s\geq d$ let us define cubes of 
multi-indices 
\begin{gather*}
				Q_{d,s} = \{1,\ldots,s\}^d.
\end{gather*}
Furthermore, let $U_{a_d,s}= \{ \bm{j} \in Q_{a_d,s} \sep j_1<j_2<\ldots<j_{a_d} \}$
denote the $a_d$-dimensional projection of $Q_{d,s}$ which reflects the assumed antisymmetry conditions.
With this notation we obtain
\begin{gather*}
				\sum_{\bm{k} \in \nabla_d} \left(\widetilde{\lambda}_{d,\bm{k}}\right)^\tau 
				= \lim_{s \nach \infty} \sum_{\bm{k} \in \nabla_d \cap Q_{d,s}} \left(\widetilde{\lambda}_{d,\bm{k}}\right)^\tau,
\end{gather*}
where the set of multi-indices under consideration 
$\nabla_d \cap Q_{d,s}$ can be represented as a subset of $U_{a_d,s} \times Q_{b_d,s}$.
We will assume $b_d=d-a_d>0$ in what follows to ensure this splitting to be non-trivial.
Because of the product structure of $\widetilde{\lambda}_{d,\bm{k}}$, $\bm{k}\in\nabla_d$, this implies
\begin{gather}\label{eq:cube_est}
				\sum_{\bm{k}=(\bm{j},\bm{i}) \in \nabla_d \cap Q_{d,s}} \left(\widetilde{\lambda}_{d,\bm{k}}\right)^\tau 
				\leq \left(\sum_{\bm{j} \in U_{{a_d},s}} \prod_{l=1}^{a_d} \lambda_{j_l}^\tau \right) \left( \sum_{\bm{i} \in Q_{b_d,s}} \prod_{l=1}^{b_d} \lambda_{i_l}^\tau\right).
\end{gather}
Since the sequence $\lambda=(\lambda_m)_{m\in\N}$ is an element of $\l_{\tau_0}\hookrightarrow \l_\tau$ 
we can easily estimate the second factor for every $s\geq d$ from above by
\begin{gather}\label{est}
				\sum_{\bm{i} \in Q_{b_d,s}} \prod_{l=1}^{b_d} \lambda_{i_l}^\tau 
				= \prod_{l=1}^{b_d} \sum_{m=1}^s \lambda_{m}^\tau 
				= \left( \sum_{m=1}^s \lambda_{m}^\tau \right)^{b_d} 
				\leq \left( \sum_{m=1}^\infty \lambda_{m}^\tau \right)^{1/\tau \cdot b_d \cdot \tau } 
				= \norm{\lambda \sep \l_\tau}^{b_d \cdot \tau}.
\end{gather}
To handle the first term we need an additional argument.
Note that due to the structure of $U_{{a_d},s}$ we have
\begin{gather*}
				\sum_{\bm{j} \in Q_{{a_d},s}} \prod_{l=1}^{a_d} \lambda_{j_l}^\tau 
				= \sum_{\substack{\bm{j} \in Q_{{a_d},s}\\\exists k,m: \, j_k=j_m}} \prod_{l=1}^{a_d} \lambda_{j_l}^\tau 
						+ a_d! \sum_{\bm{j} \in U_{{a_d},s}} \prod_{l=1}^{a_d} \lambda_{j_l}^\tau
				\geq a_d! \sum_{\bm{j} \in U_{{a_d},s}} \prod_{l=1}^{a_d} \lambda_{j_l}^\tau.
\end{gather*}
Consequently, using the same arguments as in \link{est}, this
yields the upper bound $\norm{\lambda \sep \l_\tau}^{a_d \cdot \tau} / (a_{d}!)$ 
for the first factor in \link{eq:cube_est}.
Once again this bound does not depend on $s\geq d$.
Hence, due to $d=a_d+b_d$, we conclude that
\begin{gather*}
				\sum_{\bm{k} \in \nabla_d} \left(\widetilde{\lambda}_{d,\bm{k}}\right)^\tau 
				= \lim_{s \nach \infty} \sum_{\bm{k} \in \nabla_d \cap Q_{d,s}} \left(\widetilde{\lambda}_{d,\bm{k}}\right)^\tau
				\leq \frac{1}{a_d!} \norm{\lambda \sep \l_\tau}^{\tau d}
				\quad \text{for every} \quad d\in\N
\end{gather*}
and any choice of $\AI_{I_d}$.
Of course, for every $d < d_0$ this upper bound is trivially 
less than an absolute constant.
Thus, to prove the second assertion of this \autoref{prop_suf_asy} it is enough to show that
\begin{gather*}
			\frac{1}{a_d!} \norm{\lambda \sep \l_\tau}^{\tau d} 
			\leq C\, d^q
			\quad \text{for all} \quad d\geq d_0,
\end{gather*} 
as well as for some $C\geq1$ and some $q\geq 0$.
But this is equivalent to our hypothesis stated in \link{suf_condition}.
Hence the condition \link{suf_condition} implies (strong) polynomial tractability of $S_I$,
independently of the value of $\lambda_1$.

Note that now it suffices to show that $\lambda_1<1$ already yields \link{suf_condition}
with $q=0$ and $C=1$ in order to complete the proof.
To see this, observe that (due to Step 1 in the proof of \autoref{prop_suf_sym}) 
we know that there 
exists some $\tau > \tau_0$ such that
$\norm{\lambda \sep \l_\tau}^\tau = \sum_{m=1}^\infty \lambda_m^\tau$ is strictly less than $1$.
Thus the right-hand side of \link{suf_condition} is negative in this case,
whereas the left-hand side is non-negative for every choice of~$a_d$.
\end{proof}

Let us briefly comment the latter result.
Clearly, for any $q\geq0$ the term $\ln (C\, d^q)/d$ in~\link{suf_condition} tends to zero as $d$ approaches infinity.
Hence there is not much difference in the stated sufficient condition for strong polynomial and for polynomial tractability.
Moreover, we need to mention that \autoref{Thm:General_Tract_abs}
allows us to omit the largest $f(d)-1$ eigenvalues
$\lambda_{d,i}$, where $f(d)$ may grow polynomially in $(\epsilon_d^{\rm init})^{-1}$
with $d$, but we did not use this fact in the above proof.

The next example investigates how fast $a_d$ needs to grow with the dimension~$d$ in order to fulfill the condition~\link{suf_condition}.

\begin{example}\label{ex:gamma}
For any $d\in\N$ and some $\gamma > 0$ let
\begin{gather}\label{eq:def_ad}
			a_d = \ceil{\frac{d}{\ln d^\gamma}}.
\end{gather}
Then Stirling's formula provides that $a_d \ln(a_d/e) \leq \ln(a_d!) < \ln(a_d)-a_d + a_d \ln(a_d)$ if $d$ (and hence also $a_d$) is sufficiently large.
Consequently,
\begin{gather*}	
				\frac{\ln(a_d!)}{d}
				\geq \frac{a_d \, \ln(a_d/e)}{d} 
				\geq \frac{1}{\gamma} \cdot \frac{\ln \left( d \cdot \frac{1}{e\gamma\ln d} \right)}{\ln d} 
				= \frac{1}{\gamma} \left( 1 -  \frac{\ln \left( e\gamma\ln d \right)}{\ln d} \right)
				\nearrow
				\frac{1}{\gamma},
\end{gather*}
as $d\nach\infty$. 
On the other hand, we have $a_d/d \leq 1/(\gamma \ln d)+1/d$ and thus
\begin{gather*}
				\frac{\ln(a_d!)}{d}
				< \frac{\ln(a_d)-a_d}{d} 
						+ \left( \frac{1}{\gamma \ln d} + \frac{1}{d} \right) \ln(a_d)
				= \frac{2 \ln (a_d) - a_d}{d} 
						+ \frac{1}{\gamma} \cdot \frac{\ln(a_d)}{\ln d} \leq \frac{1}{\gamma}.
\end{gather*}
So we see that $\gamma$ in \link{eq:def_ad} needs to be strictly 
smaller than $\ln^{-1}(\norm{\lambda\sep\l_\tau}^\tau)$ in order to fulfill \link{suf_condition}
with $q=0$.
In particular, it follows that assumptions like $a_d=\ceil{d^\beta}$
with $\beta<1$ are not sufficient to conclude tractability using 
the second point of \autoref{prop_suf_asy}.\hfill$\square$
\end{example}

Now we turn to necessary conditions.
As in the symmetric setting \autoref{prop_general} yields that $\lambda\in\l_\tau$ is needed
for polynomial tractability.
In addition, we will see that we need a condition similar to~\link{suf_condition},
particularly if we deal with slowly decreasing eigenvalues~$\lambda$.

\begin{prop}[Necessary conditions, antisymmetric case]\label{prop_nec_asy}
			Let $P = \AI$ and assume $S_I=(S_{d,I_d})_{d\in\N}$ to denote the problem considered
			in \autoref{prop_general}.
			Furthermore, let $S_I$ be polynomially tractable
			with the constants $C,p>0$ and $q\geq 0$.
			Then, for $d$ tending to infinity, the initial error 
			$\epsilon_d^{\mathrm{init}}$ tends to zero faster than the inverse 
			of any polynomial.
			Moreover, $\lambda= (\lambda_m)_{m\in\N} \in \l_\tau$ for every $\tau > p/2$ 
			and there exists some $d^* \in \N$, as well as $C_2\geq1$, such that
			\begin{gather}\label{bound_a}
						\frac{1}{d} \sum_{m=1}^{a_d} \ln \!\left( \frac{\norm{\lambda \sep \l_\tau}^\tau}{\lambda_m^\tau} \right)
								+ \frac{\ln(C_2 \, d^{2q\tau/p})}{d}
						\geq
						\ln\left(\norm{\lambda \sep \l_\tau}^\tau\right)
						\quad \text{for all} \quad d\geq d^*.
			\end{gather}
			Thus we either have $\lambda_1<1$, or $\lim_{d\nach \infty} a_d = \infty$.
\end{prop}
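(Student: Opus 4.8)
The strategy is to invoke the second part of \autoref{Thm:General_Tract_abs}: polynomial tractability with constants $C,p>0$ and $q\geq0$ gives, for every $\tau>p/2$,
\begin{gather*}
		C_\tau = \sup_{d\in\N} \frac{1}{d^{2q/p}} \left( \sum_{i=\ceil{(1+C)\,d^q}}^\infty (\lambda_{d,i})^\tau \right)^{1/\tau} < \infty.
\end{gather*}
From \autoref{prop_general} we already get $\lambda \in \l_\tau$ for all $\tau>p/2$, together with the explicit estimate stated there; this takes care of the summability claim. For the initial error, \autoref{prop_general} (or directly the argument behind it) shows that $(\lambda_{d,1})^{-\tau} \sum_{\bm{k}\in\nabla_d} (\widetilde\lambda_{d,\bm{k}})^\tau$ is polynomially bounded in $d$; since this sum is at least $1$, we get $(\lambda_{d,1})^{-\tau} \leq C'\, d^{q'}$ for suitable constants, i.e. $\lambda_{d,1} \geq (C'd^{q'})^{-1/\tau}$. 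Wait — that is the wrong direction. The correct route is: $\sum_{\bm{k}\in\nabla_d}(\widetilde\lambda_{d,\bm{k}})^\tau \geq (\lambda_{d,1})^\tau$ trivially, but to force $\lambda_{d,1}\to 0$ polynomially we instead note that the tail sum $\sum_{i=\ceil{(1+C)d^q}}^\infty(\lambda_{d,i})^\tau$ is bounded by $C_\tau^\tau d^{2q\tau/p}$, while the number of eigenvalues that are $\geq$ some fixed fraction of $\lambda_{d,1}$ grows — this is exactly the mechanism giving the lower bound \link{bound_a}, and once we have that bound, if $\lambda_1\geq 1$ it forces $\ln(\lambda_{d,1}) \in o(d)$ is \emph{false} and instead $\lambda_{d,1}\to 0$. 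Let me lay out the main step carefully below.

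\textbf{The core estimate.} Fix $\tau>p/2$ with $\lambda\in\l_\tau$ (possible by \autoref{prop_general}). I would lower-bound the full eigenvalue sum $\sum_{\bm{k}\in\nabla_d}(\widetilde\lambda_{d,\bm{k}})^\tau = \sum_{i=1}^\infty(\lambda_{d,i})^\tau$ from below. Reorder coordinates so $I_d=\{1,\ldots,a_d\}$; then, using the product structure \link{eq:eigenpairs_sym} and splitting $\nabla_d$ over the antisymmetric block $U_{a_d}$ and the free block $(\M_1)^{b_d}$ as in the proof of \autoref{prop_suf_asy},
\begin{gather*}
		\sum_{\bm{k}\in\nabla_d} (\widetilde\lambda_{d,\bm{k}})^\tau
		= \left( \sum_{\substack{\bm{j}\in U_{a_d}}} \prod_{l=1}^{a_d}\lambda_{j_l}^\tau \right) \left( \sum_{m=1}^{\#\M_1}\lambda_m^\tau \right)^{b_d}.
\end{gather*}
For the antisymmetric factor I would get a \emph{lower} bound: summing over \emph{ordered} tuples with entries drawn from $\{1,\ldots,a_d\}$ (each distinct) gives $\sum_{\bm{j}\in U_{a_d}}\prod_l\lambda_{j_l}^\tau \geq \frac{1}{a_d!}\prod_{m=1}^{a_d}\lambda_m^\tau \cdot (\text{correction})$; more simply, the single ordered tuple $(1,2,\ldots,a_d)$ already contributes $\prod_{m=1}^{a_d}\lambda_m^\tau$, so the antisymmetric factor is $\geq \prod_{m=1}^{a_d}\lambda_m^\tau$. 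Hence
\begin{gather*}
		\sum_{i=1}^\infty(\lambda_{d,i})^\tau \geq \left(\prod_{m=1}^{a_d}\lambda_m^\tau\right) \norm{\lambda\sep\l_\tau}^{\tau b_d}.
\end{gather*}
On the other hand, the polynomial-tractability tail bound plus the crude bound $\sum_{i=1}^{\ceil{(1+C)d^q}-1}(\lambda_{d,i})^\tau \leq (\lambda_{d,1})^\tau(1+C)d^q = \lambda_1^{\tau d}(1+C)d^q$ (splitting off the leading eigenvalue, which is $\lambda_1^d$ for $P=\AI$ only when all $\lambda_m\leq\lambda_1$, which holds) gives an \emph{upper} bound $\sum_{i=1}^\infty(\lambda_{d,i})^\tau \leq \lambda_1^{\tau d}(1+C)d^q + C_\tau^\tau d^{2q\tau/p}$. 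Combining the two, taking logarithms, dividing by $d$, and using $b_d=d-a_d$ together with $\norm{\lambda\sep\l_\tau}^\tau \geq \lambda_1^\tau$ should produce \link{bound_a} after rearranging; the constants $C_2$ and $d^*$ absorb the lower-order terms and the range where $\lambda_1^{\tau d}(1+C)d^q$ dominates. For the initial-error claim, rearranging \link{bound_a} as $\frac{1}{d}\ln(\lambda_1^{\tau a_d}/\norm{\lambda\sep\l_\tau}^{\tau a_d}) + \frac{1}{d}\ln(C_2 d^{2q\tau/p}) \geq \ln(\norm{\lambda\sep\l_\tau}^\tau) - \ldots$; since $(\epsilon_d^{\mathrm{init}})^2 = \lambda_1^{b_d}\lambda_1\cdots\lambda_{a_d}$, I would read off that $\ln\!\left((\epsilon_d^{\mathrm{init}})^2\right)/\ln d \to -\infty$, i.e. $\epsilon_d^{\mathrm{init}}$ decays faster than any polynomial. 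Finally, if $\lambda_1 \geq 1$ then $\lambda_1^{\tau a_d}\geq 1$, so \link{bound_a} forces $\frac{1}{d}\sum_{m=1}^{a_d}\ln(\norm{\lambda\sep\l_\tau}^\tau/\lambda_m^\tau)$ plus a vanishing term to stay $\geq\ln(\norm{\lambda\sep\l_\tau}^\tau)>0$; since each summand is bounded (as $\lambda_m\geq\lambda_{a_d}$ and there are $a_d$ of them, the sum is $O(a_d\log a_d)$... ), this cannot hold with $a_d$ bounded, forcing $a_d\to\infty$.

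\textbf{Main obstacle.} The delicate point is getting the lower bound on the antisymmetric sum sharp enough — the single-tuple bound $\prod_{m=1}^{a_d}\lambda_m^\tau$ may be too lossy, and I expect the cleanest derivation of \link{bound_a} requires instead comparing $\sum_{\bm{j}\in U_{a_d}}\prod_l\lambda_{j_l}^\tau$ with $\frac{1}{a_d!}\left(\sum_m\lambda_m^\tau\right)^{a_d}$ from below (discarding the diagonal contributions, as in \autoref{prop_suf_asy} but with the inequality reversed requires care: $\sum_{Q}\geq a_d!\sum_U$ gives an \emph{upper} bound on $\sum_U$, not lower). So I would actually need the reverse: bound $\sum_U$ from below by something like $\frac{1}{a_d!}\prod_{m}(\text{partial sums})$, or more robustly, choose the specific ordered tuples where the first $a_d-j$ coordinates are $1$ and build a combinatorial lower bound summing over how many entries exceed $1$, mirroring the argument in \autoref{lemma_symNEW}/the symmetric necessary-conditions proof. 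Matching the precise form of \link{bound_a} — in particular the factor $d^{2q\tau/p}$ rather than $d^q$ — will require tracking exactly which of the two terms in the upper bound $\lambda_1^{\tau d}(1+C)d^q + C_\tau^\tau d^{2q\tau/p}$ dominates for large $d$, and this is where the bookkeeping (and the definition of $d^*$) has to be done honestly; everything else is routine.
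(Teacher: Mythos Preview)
Your single-tuple lower bound $\sum_{\bm{j}\in U_{a_d}}\prod_l\lambda_{j_l}^\tau \geq \prod_{m=1}^{a_d}\lambda_m^\tau$ is precisely what the paper uses, so your ``Main obstacle'' worry is misplaced: combined with the free-block factor it gives $\sum_{\bm{k}\in\nabla_d}(\widetilde\lambda_{d,\bm{k}})^\tau \geq \norm{\lambda\sep\l_\tau}^{\tau b_d}\prod_{m=1}^{a_d}\lambda_m^\tau$, and feeding this into \autoref{prop_general} yields
\[
\left(\frac{\norm{\lambda\sep\l_\tau}^\tau}{\lambda_1^\tau}\right)^{b_d} \leq (1+C)\,d^q + C^{2\tau/p}\,\zeta\!\left(\frac{2\tau}{p}\right)\left(\frac{d^{2q/p}}{\lambda_{d,1}}\right)^\tau,
\]
which is exactly the paper's starting point. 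One correction: in the antisymmetric case $\lambda_{d,1}=\lambda_1^{b_d}\prod_{m=1}^{a_d}\lambda_m$, \emph{not} $\lambda_1^d$ as you wrote; this matters for everything that follows.

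The real gap is the logical order. To collapse the two terms on the right into $C_2(d^{2q/p}/\lambda_{d,1})^\tau$ and thereby obtain \link{bound_a}, you need $(1+C)\,d^q \leq C'(d^{2q/p}/\lambda_{d,1})^\tau$, which (since $2q\tau/p>q$) holds once $\lambda_{d,1}\leq 1$ for large $d$. You propose to extract the decay of $\epsilon_d^{\mathrm{init}}$ \emph{from} \link{bound_a}, but the paper must go the other way: it first proves $\lambda_{d,1}\cdot\P(d)\to 0$ for every polynomial $\P$ via a separate contradiction argument that you do not have. Namely, if $\lambda_{d_k,1}\P(d_k)\geq C_0>0$ along a subsequence, the displayed inequality forces $b_{d_k}\in\0(\ln d_k)$, hence $a_{d_k}\to\infty$; but $\lambda\in\l_\tau$ gives $\lambda_m\leq \norm{\lambda\sep\l_\tau}\,m^{-1/\tau}$, so $\prod_{m=1}^{a_{d_k}}\lambda_m^\tau \leq \norm{\lambda\sep\l_\tau}^{\tau a_{d_k}}/(a_{d_k}!)$, and combining this with the assumed lower bound on $\lambda_{d_k,1}$ and Stirling forces $a_{d_k}\in\0(\ln d_k)$ as well --- contradicting $a_{d_k}+b_{d_k}=d_k$. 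This Stirling-based step is the missing idea; without it your ``which term dominates'' bookkeeping cannot be finished, and \link{bound_a} does not follow.
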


\begin{proof}
\emph{Step 1}. 
For the whole proof assume $\tau > p/2$ to be fixed. 
Then \autoref{prop_general} shows that $\lambda \in \l_\tau$.
Like in \link{splitting} for the symmetric case, we can split the sum of the eigenvalues
such that for all $d\in\N$
\begin{gather*}
			\sum_{\bm{k}\in\nabla_d} \left(\widetilde{\lambda}_{d,\bm{k}}\right)^\tau 
			= \left( \sum_{m=1}^{\#\M_1} \lambda_m^\tau \right)^{b_d} \sum_{\substack{\bm{j}\in\M_{a_d},\\j_1<\ldots<j_{a_d}}} \!\!\left(\widetilde{\lambda}_{a_d,\bm{j}}\right)^\tau 
			\geq \norm{\lambda \sep \l_\tau}^{\tau b_d} \cdot \lambda_1^\tau \cdot \ldots \cdot \lambda_{a_d}^\tau.
\end{gather*}
Hence \autoref{prop_general} together with the fact that $\lambda_{d,1} = \lambda_1^{b_d} \cdot \lambda_1 \cdot \ldots \cdot \lambda_{a_d}$ gives
\begin{gather}\label{asy_est}
			\left(\frac{\norm{\lambda \sep \l_\tau}^\tau}{\lambda_1^\tau} \right)^{b_d}
			\leq (1 + C)\,d^q
						+ C^{2\tau/p} \,\zeta\!\left(\frac{2\tau}{p}\right) \left( \frac{d^{2q/p}}{\lambda_{d,1}} \right)^\tau.
\end{gather}
In what follows we will use this inequality to conclude all the stated assertions.

\emph{Step 2}.
Here we prove the limit property for 
the initial error $\epsilon_d^{\mathrm{init}}=\sqrt{\lambda_{d,1}}$, \ie we need to show that for every fixed polynomial $\P > 0$
\begin{gather}\label{zero_limit}
			\lambda_{d,1} \, \P(d) \longrightarrow 0, \quad \text{as} \quad d \nach \infty.
\end{gather}
Since $\lambda_{d,1}\leq \lambda_1^{b_d} \cdot \lambda_1^{a_d}=\lambda_1^d$ 
we can restrict ourselves to the non-trivial case $\lambda_1 \geq 1$ in the following.
Assume that there exists a subsequence $(d_k)_{k\in\N}$ of natural numbers, 
as well as some constant $C_0>0$, such that $\lambda_{d_k,1}\, \P(d_k)$ is bounded from below by~$C_0$
for every $k\in\N$.
Then for every $d=d_k$ the right-hand side of~\link{asy_est} is bounded from above by some other polynomial $\P_1(d_k)>0$. 
On the other hand, due to the general condition $\lambda_2 > 0$, 
the term $\norm{\lambda \sep \l_\tau}^\tau / \lambda_1^\tau$ is strictly larger than one.
Thus it follows that there exists some $C_1>0$ such that
\begin{gather*}
			b_{d_k} \leq C_1 \ln(d_k) \quad \text{for every} \quad k\in\N.
\end{gather*}
Therefore we obtain that $a_{d_k} = d_k - b_{d_k} \nach \infty$, as $k\nach \infty$.
Moreover, the assumed boundedness of $\lambda_{d_k,1} \, \P(d_k)$ leads to
\begin{gather*}
			C_0 \,\P(d_k)^{-1} 
			\leq \lambda_{d_k,1} 
			\leq \lambda_1^{C_1 \ln(d_k)} \cdot \lambda_1 \cdot \ldots \cdot\lambda_{a_{d_k}} 
			= d_k^{C_1 \ln(\lambda_1)} \cdot \lambda_1 \cdot \ldots \cdot\lambda_{a_{d_k}}
\end{gather*}
since $\lambda_1 \geq 1$.
In the first step of the proof of \autoref{prop_suf_sym} we saw that
 $\lambda \in \l_\tau$ yields the existence 
of some $C_\tau>0$ such that $\lambda_m \leq C_\tau m^{-1/\tau}$
for every $m\in\N$. 
Indeed, this holds for $C_\tau=\norm{\lambda \sep \l_\tau}>1$.
Hence 
$\lambda_1^\tau \cdot \ldots \cdot \lambda_{a_{d_k}}^\tau \leq C_\tau^{\tau a_{d_k}} (a_{d_k}!)^{-1}$ 
which gives
\begin{gather*}
			\left( \frac{a_{d_k}}{e}\right)^{a_{d_k}} 
			\leq a_{d_k}! \leq (C_\tau^\tau)^{a_{d_k}} \,\P_2(d_k) 
			\quad \text{for all} \quad k\in\N
\end{gather*}
and some other polynomial $\P_2 > 0$.
If $k$ is sufficiently large then we conclude that
\begin{gather*}
			a_{d_k} \leq a_{d_k} \ln \!\left( \frac{a_{d_k}}{e\,C_\tau^\tau} \right) \leq \ln(\P_2(d_k)),
\end{gather*}
since $a_{d_k} \nach \infty$ implies $a_{d_k}/(e\, C_\tau^\tau) \geq e$ for $k\geq k_0$.
Therefore the number of antisymmetric coordinates $a_d$ needs to be
logarithmically bounded from above for every $d$ out of the sequence $(d_k)_{k\geq k_0}$.
Because also $b_{d_k}$ was found to be logarithmically bounded this is a contradiction
to the fact $d_k = a_{d_k} + b_{d_k}$.
Consequently, the hypothesis $\lambda_{d_k,1} \P(d_k) \geq C_0 > 0$ can not be true
for any subsequence $(d_k)_k$.
In other words, it holds \link{zero_limit}.

\emph{Step 3}.
Next we show \link{bound_a}. 
From the former step we know 
that there needs to exist some $d^*\in\N$ such that
$1/\lambda_{d,1} \geq 1$ for all $d \geq d^*$.
Hence, \link{asy_est} together with $\tau > p/2$ implies
\begin{gather*}
			\left(\frac{\norm{\lambda \sep \l_\tau}^\tau}{\lambda_1^\tau} \right)^{b_d}
			\leq C_2 \left( \frac{d^{2q/p}}{\lambda_{d,1}} \right)^\tau 
			= \frac{C_2 \, d^{2q\tau/p}}{\lambda_1^{\tau b_d} \cdot \lambda_1^\tau \cdot \ldots \cdot \lambda_{a_d}^\tau}
			\quad \text{for} \quad d\geq d^*,
\end{gather*}
where we set $C_2 = 1 + C + C^{2\tau/p} \zeta(2\tau/p)$.
Therefore we obtain
\begin{gather*}
			C_2 \, d^{2q\tau/p} \prod_{k=1}^{a_d} \frac{\norm{\lambda \sep \l_\tau}^\tau}{\lambda_k^\tau}
			\geq \norm{\lambda\sep \l_\tau}^{\tau d} 
\end{gather*}
for all $d\geq d^*$, which is equivalent to the claimed estimate~\link{bound_a}.

\textit{Step 4}.
It remains to show that $\lambda_1 \geq 1$ implies that
$\lim_{d\nach \infty} a_d$ is infinite.
To this end, note that every summand in \link{bound_a} is strictly positive.
If we assume for a moment the existence of a subsequence $(d_k)_{k\in\N}$
such that $a_{d_k}$ is bounded for every $k\in\N$ then the left-hand side of
\link{bound_a} is less than some positive constant divided by $d_k$.
Hence it tends to zero if $k$ approaches infinity.
On the other hand, the right-hand side of \link{bound_a} 
is strictly larger than some positive constant, because of $\lambda_1 \geq 1$ and $\lambda_2>0$. 
This contradiction completes the proof.
\end{proof}

As mentioned before there are examples such that the sufficient condition
\link{suf_condition} from \autoref{prop_suf_asy} is also necessary 
(up to some constant factor) in order to conclude polynomial tractability
in the antisymmetric setting.
Now we are ready to give such an example.

\begin{example}
Consider the situation of \autoref{prop_general} for $P=\AI$
and assume the problem $S_I$ to be polynomially tractable.
In addition, for a fixed $\tau\in(0,\infty)$, let 
$\lambda=(\lambda_m)_{m\in\N} \in \l_\tau$ be given such that $\lambda_1 \geq 1$
and assume the existence of some $m_0 \in \N$ such that
\begin{gather}\label{assumption}
				\lambda_m \geq \frac{\norm{\lambda \sep \l_\tau}}{m^{\alpha / \tau}}
				\quad \text{for all} \quad m>m_0 \quad \text{and some} \quad \alpha>1.
\end{gather}
Then we claim that there exist constants $\bar{d}\in\N$, $C \geq 1$, and $r \geq 0$ such that
\begin{gather}\label{claimed_bound}
				\alpha \cdot \frac{\ln{(a_d!)}}{d} + \frac{\ln{(C\,d^r)}}{d} 
				\geq \ln(\norm{\lambda \sep \l_\tau}^\tau) 
				\quad \text{for all} \quad d\geq \bar{d}.
\end{gather}

Recall that due to \autoref{prop_suf_asy}, 
for the amount of antisymmetry $a_d$, 
it was sufficient to assume \link{claimed_bound} with $\alpha=1$
in order to conclude (strong) polynomial tractability; see \link{suf_condition}.
Moreover keep in mind that we know from \autoref{ex:gamma}
that $\ln(a_d!)/d$ tends to $1/\gamma$ if we assume $a_d$ to be given by \link{eq:def_ad}.
Hence in the present example we have strong polynomial tractability if $\gamma<\ln^{-1}(\norm{\lambda\sep\l_\tau}^\tau)$,
whereas the problem is polynomially intractable if $\gamma> \alpha/\ln(\norm{\lambda\sep\l_\tau}^\tau)$.

Before we prove the assertion it might be useful to give a concrete example where
\link{assumption} holds true.
Therefore set $\lambda_m=1/m^2$, $\tau=m_0=1$, and $\alpha=3$.
Then it is easy to check that $\norm{\lambda \sep \l_\tau}=\zeta(2)=\pi^2/6$ 
and we obviously have $\lambda_1=1$.

To see that \link{claimed_bound} holds true 
we can use \autoref{prop_nec_asy} and, in particular, inequality~\link{bound_a}.
Since $\lambda_1 \geq 1$ we know that $\lim_{d} a_d = \infty$, \ie $a_d > m_0$
for every $d$ larger than some $d_1\in\N$.
Furthermore, note that \link{assumption} is equivalent to
\begin{gather*}
				\ln \!\left( \frac{\norm{\lambda \sep \l_\tau}^\tau}{\lambda_m^\tau} \right) 
				\leq \alpha \ln(m)
				\quad \text{for all} \quad m>m_0.
\end{gather*}
Hence if $d\geq d_1$ then we can estimate the sum in \link{bound_a} from above by
\begin{gather*}
				\frac{1}{d} \sum_{m=1}^{a_d} \ln \!\left( \frac{\norm{\lambda \sep \l_\tau}^\tau}{\lambda_m^\tau} \right)
				\leq \frac{m_0}{d}  \cdot \ln \!\left( \frac{\norm{\lambda \sep \l_\tau}^\tau}{\lambda_{m_0}^\tau} \right) + \frac{\alpha}{d} \sum_{m=m_0+1}^{a_d} \ln(m) 
				\leq \frac{C_\lambda}{d} + \alpha \cdot \frac{\ln (a_d!)}{d}.
\end{gather*}
Obviously, for $d$ larger than some $d_2\in\N$ the term $C_\lambda + \ln(C_2 \, d^{2q\tau/p})$ is less than
$\ln(C\, d^r)$, where $C\geq 1$ and $r\geq 0$. 
Here $r=0$ if and only if $q=0$ in \link{prop_nec_asy}, \ie
if the problem is strongly polynomially tractable.
Consequently we can conclude \link{claimed_bound} from \link{bound_a} by choosing $\bar{d}=\max{d_1,d_2,d^*}$.
\hfill$\square$
\end{example}

Although there remains a small gap between the necessary 
and the sufficient conditions for the absolute error criterion, 
the most important cases of antisymmetric
tensor product problems are covered by our results.
Let us summarize the main facts.

\begin{theorem}[Tractability of antisymmetric problems, absolute errors]\label{thm_asy_abs}
			Let\\ $S_1 \colon H_1 \nach \G_1$ denote a compact linear operator 
			between two Hilbert spaces and let $\lambda=(\lambda_m)_{m\in \N}$ be the sequence 
			of eigenvalues of $W_1={S_1}^{\!\dagger} S_1$ \wrt a non-increasing ordering. 
			Moreover, for $d>1$ let $\leer \neq I_d \subseteq\{1,\ldots,d\}$ and
			assume $S_I=(S_{d,I_d})_{d\in\N}$ to be the linear tensor product problem 
			$S=(S_d)_{d\in\N}$ restricted to the 
			$I_d$-antisymmetric subspaces $\AI_{I_d}(H_d)$ 
			of the $d$-fold tensor product spaces $H_d$. 
			Consider the worst case setting with respect to the absolute error criterion
			and let $\lambda_2>0$.
			Then for the case $\lambda_1 < 1$ the following statements are equivalent:
			\begin{itemize}
						\item $S_I$ is strongly polynomially tractable.
						\item $S_I$ is polynomially tractable.
						\item There exists a constant $\tau \in (0,\infty)$ such that $\lambda \in \l_\tau$.
			\end{itemize}
			Moreover, the same equivalences hold true if $\lambda_1\geq 1$ and $\#I_d$ grows linearly with the dimension $d$.
\end{theorem}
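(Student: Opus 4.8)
\textbf{Proof plan for \autoref{thm_asy_abs}.}
The plan is to reduce the theorem entirely to the two previous propositions --- \autoref{prop_suf_asy} (sufficient conditions) and \autoref{prop_nec_asy} (necessary conditions) --- together with the general lower bound from \autoref{prop_general}. The scheme is the usual circular chain: strong polynomial tractability trivially implies polynomial tractability, polynomial tractability implies $\lambda\in\l_\tau$ for some $\tau$, and $\lambda\in\l_\tau$ implies strong polynomial tractability; all three equivalences then follow. The only subtlety is that the last implication requires different inputs depending on whether $\lambda_1<1$ or $\lambda_1\ge1$, and in the latter case we must feed the linear-growth hypothesis on $\#I_d$ into the sufficient condition \link{suf_condition}.

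First I would treat the case $\lambda_1<1$. The implication ``strongly polynomially tractable $\Rightarrow$ polynomially tractable'' is immediate from the definitions in \autoref{sect:TractDef}. For ``polynomially tractable $\Rightarrow$ $\lambda\in\l_\tau$'': by \autoref{prop_general}, polynomial tractability with constants $C,p>0$, $q\ge0$ forces $\lambda=(\lambda_m)_{m\in\N}\in\l_\tau$ for every $\tau>p/2$, so in particular $\lambda\in\l_\tau$ for some $\tau\in(0,\infty)$. For ``$\lambda\in\l_\tau$ $\Rightarrow$ strongly polynomially tractable'': this is precisely the first bullet of \autoref{prop_suf_asy}, which states that $\lambda\in\l_{\tau_0}$ together with $\lambda_1<1$ yields strong polynomial tractability independent of the number of antisymmetry conditions. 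This closes the cycle, so the three statements are equivalent when $\lambda_1<1$.

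Next I would handle the case $\lambda_1\ge1$ under the additional hypothesis that $\#I_d=a_d$ grows linearly with $d$, say $a_d\ge c\,d$ for some $c>0$ and all large $d$. Again the first two implications are immediate (definitions) and \autoref{prop_general}, respectively. The real work is ``$\lambda\in\l_\tau$ $\Rightarrow$ strongly polynomially tractable''. Here I invoke the second bullet of \autoref{prop_suf_asy}: I must verify condition \link{suf_condition} with $q=0$, i.e.\ produce $\tau\ge\tau_0$, $d_0\in\N$, $C\ge1$ such that
\begin{gather*}
		\frac{\ln(a_d!)}{d}+\frac{\ln C}{d}\ \ge\ \ln\!\left(\norm{\lambda\sep\l_\tau}^\tau\right)\quad\text{for all }d\ge d_0.
\end{gather*}
Since $a_d\ge cd$, Stirling's formula gives $\ln(a_d!)\ge a_d\ln(a_d/e)\ge cd\ln(cd/e)$, so $\ln(a_d!)/d\ge c\ln(cd/e)\to\infty$ as $d\to\infty$. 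Meanwhile $\ln(\norm{\lambda\sep\l_\tau}^\tau)$ is a fixed finite number (it decreases to $\ln\lambda_1$ as $\tau\to\infty$, but even for a single fixed $\tau$ it is a constant). Hence the left-hand side eventually dominates the right-hand side for any choice of $\tau\ge\tau_0$ and $C\ge1$; taking $C=1$ and $d_0$ large enough, \link{suf_condition} holds with $q=0$ and \autoref{prop_suf_asy} delivers strong polynomial tractability. This completes the cycle in the second case as well.

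\textbf{Main obstacle.} The chain itself is routine once the two propositions are in hand; the only point requiring genuine care is the last step in the case $\lambda_1\ge1$ --- namely checking that linear growth of $a_d$ is enough to force \link{suf_condition} with $q=0$ uniformly. One must be slightly careful that the required constant $\tau$ can be chosen simultaneously with $\tau\ge\tau_0$ (so that $\lambda\in\l_\tau$ and $\norm{\lambda\sep\l_\tau}$ is finite) and that the threshold $d_0$ depends only on $c$, $\tau$, and $\norm{\lambda\sep\l_\tau}$, not on anything uncontrolled; the Stirling estimate makes this transparent. A secondary bookkeeping remark is that ``$a_d$ grows linearly'' should be read as $a_d\in\Theta(d)$ (equivalently, $\liminf_d a_d/d>0$), which is exactly what the Stirling argument uses; if only $a_d\in\0(d)$ with $a_d/d$ possibly tending to zero the argument fails, consistent with \autoref{ex:gamma} showing $a_d=\ceil{d^\beta}$, $\beta<1$, is insufficient.
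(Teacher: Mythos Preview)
Your proposal is correct and matches the paper's approach exactly: the theorem is stated there as a summary of the preceding results without an explicit proof, and your circular chain via \autoref{prop_general} and \autoref{prop_suf_asy} is precisely the intended argument. The Stirling computation showing that linear growth of $a_d$ forces \link{suf_condition} with $q=0$ is the only detail the paper leaves implicit, and you have filled it in correctly.
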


At this point we mention that for the case of fully antisymmetric problems, 
\ie for $\#I_d=a_d=d$, an explicit formula for the information complexity
\wrt the absolute error criterion is known.
Furthermore, simple examples can be constructed which show that we cannot expect
the same nice tractability behavior if we deal with normalized errors.
For further details the interested reader is referred to \cite[Proposition~8]{W11}.

\subsection{Antisymmetric problems (normalized errors)}
Up to now every complexity assertion in this chapter 
was mainly based on \autoref{Thm:General_Tract_abs} which dealt with the 
general situation of arbitrary compact
linear operators between Hilbert spaces and with the absolute error criterion.
While investigating tractability properties of $I$-symmetric 
problems with respect to the normalized error criterion, 
we were able to use assertions from the absolute error setting.
Since for $I$-antisymmetric problems 
the structure of the initial error is more complicated,
this approach will not work again.
Therefore we recall \autoref{Thm:General_Tract_norm} 
as a replacement of \autoref{Thm:General_Tract_abs} 
for the normalized setting.
This in hand, we can give the following necessary conditions for (strong)
polynomial tractability.

\begin{prop}[Necessary conditions, antisymmetric case]
			Let $S_I=(S_{d,I_d})_{d\in\N}$ denote an $I$-antisymmetric problem
			as defined at the beginning of \autoref{sect:complexity_antisym}
			and consider the worst case setting \wrt to normalized errors.
			Then the fact that~$S_I$ is polynomially tractable
			with the constants $C,p>0$ and $q\geq 0$
			implies that $\lambda=(\lambda_m)_{m\in\N} \in \l_\tau$ for all $\tau>p/2$.
			Moreover, for $d$ tending to infinity, 
			$\epsilon_d^{\mathrm{init}}$ tends to zero faster 
			than the inverse of any polynomial and $b_d\in\0(\ln d)$, as $d\nach\infty$.
			Thus we have $\lim_{d\nach \infty} a_d/d= 1$.
			In addition, if $S_I$ is strongly polynomially tractable 
			then $b_d \in \0(1)$, as $d\nach\infty$.
\end{prop}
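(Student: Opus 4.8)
The plan is to run the argument of \autoref{prop_nec_asy} essentially verbatim, but with \autoref{Thm:General_Tract_abs} replaced by its normalized counterpart \autoref{Thm:General_Tract_norm}. The point is that for normalized errors the summation index in the tractability characterization \link{sup_condition_norm} starts at $f(d)\equiv1$, so that polynomial tractability with constants $C,p>0$ and $q\geq0$ directly gives, for every $\tau>p/2$, every $d\in\N$, and $r=2q/p$,
\begin{gather*}
\frac{1}{(\lambda_{d,1})^\tau}\sum_{\bm{k}\in\nabla_d}\bigl(\widetilde{\lambda}_{d,\bm{k}}\bigr)^\tau
=\sum_{i=1}^\infty\Bigl(\frac{\lambda_{d,i}}{\lambda_{d,1}}\Bigr)^\tau
\leq C_\tau^\tau\,d^{r\tau},
\end{gather*}
where $C_\tau<\infty$ is the supremum appearing in \link{sup_condition_norm}. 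Applying this for $d=1$, where $\nabla_1=\M_1$ and no antisymmetry is imposed, yields $\sum_m(\lambda_m/\lambda_1)^\tau<\infty$, i.e.\ $\lambda\in\l_\tau$ for all $\tau>p/2$; this is the first claim.

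Next I would split the sum over $\nabla_d$ with respect to the $a_d$ antisymmetric and the $b_d=d-a_d$ free coordinates exactly as in the antisymmetric version of \link{splitting} used in Step~1 of the proof of \autoref{prop_nec_asy}, and keep only the single admissible multi-index $\bm{j}=(1,2,\dots,a_d)$ in the antisymmetric block, which gives the lower bound $\sum_{\bm{j}\in\M_{a_d},\,j_1<\dots<j_{a_d}}(\widetilde{\lambda}_{a_d,\bm{j}})^\tau\geq\lambda_1^\tau\cdots\lambda_{a_d}^\tau$. Since $\lambda_{d,1}=\lambda_1^{b_d}\cdot\lambda_1\cdots\lambda_{a_d}$, the displayed bound collapses to
\begin{gather*}
\Bigl(\frac{\norm{\lambda\sep\l_\tau}^\tau}{\lambda_1^\tau}\Bigr)^{b_d}\leq C_\tau^\tau\,d^{r\tau}\qquad\text{for all }d\in\N.
\end{gather*}
As $\lambda_2>0$ forces $\norm{\lambda\sep\l_\tau}^\tau/\lambda_1^\tau\geq1+(\lambda_2/\lambda_1)^\tau>1$, taking logarithms produces $b_d\in\0(\ln d)$, and $b_d\in\0(1)$ as soon as $q=0$, i.e.\ in the strongly polynomially tractable case. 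In particular $a_d=d-b_d\nach\infty$ and $a_d/d=1-b_d/d\to1$ as $d\nach\infty$.

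It remains to handle the initial error $\epsilon_d^{\mathrm{init}}=\sqrt{\lambda_{d,1}}=\sqrt{\lambda_1^{b_d}\,\lambda_1\cdots\lambda_{a_d}}$. From $\lambda\in\l_\tau$ one gets, as in Step~1 of the proof of \autoref{prop_suf_sym}, the estimate $\lambda_m\leq\norm{\lambda\sep\l_\tau}\,m^{-1/\tau}$, hence $\lambda_1\cdots\lambda_{a_d}\leq\norm{\lambda\sep\l_\tau}^{a_d}/(a_d!)^{1/\tau}$; since $a_d\nach\infty$, Stirling's formula (in a quantitative form, as in \autoref{ex:gamma}) shows that the right-hand side decays faster than $c^{a_d}$ for any $c>0$, and therefore faster than the inverse of any polynomial in $a_d$, and hence in $d$ because $a_d\geq d-\0(\ln d)$. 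The remaining factor $\lambda_1^{b_d}$ is bounded by $d^{\,\0(\ln\lambda_1)}$ when $\lambda_1\geq1$ (and by $1$ otherwise), since $b_d\in\0(\ln d)$, so it contributes at most a polynomial factor. Combining the two bounds gives $\lambda_{d,1}\,\P(d)\nach0$ for every polynomial $\P>0$, which is the asserted limit property of the initial error; the relation $\lim_{d\nach\infty}a_d/d=1$ has already been obtained above.

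The main obstacle I anticipate is precisely this last step: one must make the comparison between the polynomial blow-up $\lambda_1^{b_d}$ (present only for $\lambda_1\geq1$) and the superexponential decay coming from $\lambda_1\cdots\lambda_{a_d}$ fully rigorous, which requires the explicit logarithmic bound $b_d\in\0(\ln d)$ from the second step together with a clean lower estimate for $\ln(a_d!)$ of the type $\ln(a_d!)\geq a_d\ln(a_d/e)$; everything else (the $d=1$ reduction, the coordinate splitting, the logarithmic manipulations) is routine and parallels the absolute-error case.
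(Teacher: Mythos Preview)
Your proof is correct and follows essentially the same route as the paper: apply \autoref{Thm:General_Tract_norm} to obtain the normalized eigenvalue bound, read off $\lambda\in\l_\tau$ from $d=1$, split the sum over $\nabla_d$ to derive the key inequality $(\norm{\lambda\sep\l_\tau}^\tau/\lambda_1^\tau)^{b_d}\leq C_1 d^{2\tau q/p}$, and deduce the bounds on $b_d$ and the limit $a_d/d\to1$. For the initial error the paper simply refers back to Step~2 of the proof of \autoref{prop_nec_asy}; your direct argument via $\lambda_m\leq\norm{\lambda\sep\l_\tau}\,m^{-1/\tau}$, Stirling, and the already established bound $b_d\in\0(\ln d)$ is exactly the content of that step, spelled out in the present setting.
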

\begin{proof}
From \autoref{Thm:General_Tract_norm} it follows that there 
is some $C_1>0$ such that
\begin{gather}\label{eq:est_normed_eigenvalues}
			\frac{1}{(\lambda_{d,1})^\tau} \sum_{\bm{k}\in\nabla_d} \left(\widetilde{\lambda}_{d,\bm{k}}\right)^\tau 
			= \sum_{i=1}^\infty \left( \frac{\lambda_{d,i}}{\lambda_{d,1}} \right)^\tau 
			\leq C_1 d^{2\tau q/p}
			\quad \text{ for every } \quad d\in\N
\end{gather}
and all $\tau > p/2$.
Once more the index set $\nabla_d$ is given as in~\link{def_nabla}.
Indeed, \autoref{Thm:General_Tract_norm} yields that it is sufficient to take
$C_1 = 2\,(1+C)^{2\tau/p} \, \zeta(2\tau/p)$.
As in the proof of \autoref{prop_general} it suffices 
to consider the case $d=1$ in \link{eq:est_normed_eigenvalues}
to see that $\lambda\in\l_\tau$ is necessary for polynomial tractability.
Moreover, like with the arguments of Step 1 in the proof of \autoref{prop_nec_asy},
it follows that
\begin{gather}\label{est_norm}
			\left( \frac{\norm{\lambda \sep \l_\tau}^\tau}{\lambda_1^\tau} \right)^{b_d}
			\leq C_1 \,d^{2\tau q/p}, 	\quad d\in\N,
\end{gather}
since $\lambda_{d,1}=\lambda_1^{b_d}\cdot \lambda_1 \cdot \ldots \cdot \lambda_{a_d}$.
Due to the general assertion $\lambda_2>0$ we have $\norm{\lambda \sep \l_\tau}^\tau > \lambda_1^\tau$ 
and thus polynomial tractability of $S_I$ implies the bound $b_d \leq C_2 \ln(d)$ for some $C_2 \geq 0$,
\ie $b_d \in \0(\ln d)$, as $d\nach\infty$.
Therefore we obviously have
\begin{gather*}
			1\geq \frac{a_d}{d} 
			= 1- \frac{b_d}{\ln d} \cdot \frac{\ln d}{d} 
			\geq 1-C_2 \cdot \frac{\ln d}{d} \longrightarrow 1, 
			\quad d\nach \infty.
\end{gather*}
The proof that strong polynomial tractability leads to $b_d \in \0(1)$, as $d\nach\infty$,
can be obtained using \link{est_norm} with the same arguments as before and $q=0$.
Finally we need to show the assertion concerning $\epsilon_d^{\rm init}$.
Here we refer to Step 2 in the proof of \autoref{prop_nec_asy}.
\end{proof}

\section{Applications}\label{sect:applications}
This last section of the present chapter is devoted to
applications of the theory developed previously.
In \autoref{sect:toy_ex} we follow the lines of the introduction of \cite{W12b} 
and illustrate the power of imposing additional (anti)symmetry conditions to linear tensor product problems
by using simple toy examples.
Afterwards, in \autoref{sect:wave}, 
we focus our attention to more advanced problems which we are faced with in practice.
There we briefly introduce wavefunctions 
and show how our results allow it to handle the approximation problem
for such classes of functions.
\subsection{Toy examples}\label{sect:toy_ex}
The aim of the following simple examples is to show that exploiting an a priori knowledge
about (anti)symmetries of a given tensor product problem can
help to obtain tractability, but it does not make the problem trivial
in general. 

Let $S=(S_d \colon H_d \nach \G_d)_{d\in\N}$
denote a tensor product problem between Hilbert spaces.
Remember that due to \autoref{sect:TensorBasics} for complexity studies it suffices to specify 
the singular values of the univariate operator $S_1$.
To simplify the presentation we slightly abuse the notation
and denote the information complexity of the entire problem $S$ by
$n^{\mathrm{ent}}(\epsilon,d)$.
We want to compare this quantity with the respective information complexities
of the restriction of $S$ to the fully symmetric and the fully antisymmetric subspaces of $(H_d)_{d\in\N}$.
These numbers will be denoted by $n^{\mathrm{sym}}(\eps,d)$ and $n^{\mathrm{asy}}(\eps,d)$, respectively.

Clearly, our results yield that in any case 
(as long as we deal with the worst case setting and the absolute error criterion)
\begin{gather*}
			n^{\rm asy}(\epsilon,d) 
			\leq n^{\rm sym}(\epsilon,d) 
			\leq n^{\rm ent}(\epsilon,d)
			\quad \text{for every} \quad \epsilon>0 \quad \text{and all} \quad d\in\N,
\end{gather*}
where for $d=1$ the terms coincide,
since then we do not claim any (anti)symmetry.
To see that additional (anti)symmetry conditions may reduce the 
information complexity dramatically consider the 
following three examples.

\begin{example}
Let us have a look at the simple case of a linear 
operator $S_1$ with singular values $\sigma$ such that $\lambda_1=\lambda_2=1$ 
and $\lambda_j=0$ for $j\geq 3$.
Then the information complexity of the entire tensor product problem
can be shown to be
\begin{gather*}
			n^{\rm ent}(\epsilon,d) = 2^d 
			\quad \text{for all} \quad d\in\N \quad \text{and} \quad \epsilon < 1.
\end{gather*}
Hence the problem suffers from the curse of dimensionality and is therefore 
intractable.

On the other hand, our results show that in the fully symmetric setting we
have polynomial tractability, because
\begin{gather*}
			n^{\rm sym}(\epsilon,d)=d+1 
			\quad \text{for all} \quad d\in\N \quad \text{and} \quad \epsilon < 1.
\end{gather*}

Moreover, it can be proved that in this case the complexity of the fully antisymmetric
problem decreases with increasing dimension $d$ and, finally, 
the problem even gets trivial.
In detail, we have
\begin{gather*}
			n^{\rm asy}(\epsilon,d) = \max{3-d,0} 
			\quad \text{for all} \quad d\in\N \quad \text{and} \quad \epsilon < 1
\end{gather*}
which yields strong polynomial tractability.
\hfill$\square$
\end{example}

\begin{example}
Next let us consider a more challenging problem, where 
$\lambda_1=\lambda_2=\ldots=\lambda_m=1$ and 
$\lambda_j=0$ for every $j>m\geq2$. 
For $m=2$ this obviously coincides with the example studied above, 
but letting $m$ increase may tell us more about the structure
of (anti)symmetric tensor product problems.
In this situation it is easy to check that
for every $d\in\N$ and all $\epsilon < 1$
\begin{gather*}
			n^{\rm ent}(\epsilon,d)	= m^d 
			\quad \text{and} \quad 
			n^{\rm asy}(\epsilon,d) = \begin{cases}
																		\binom{m}{d}, & d \leq m,\\
																		0, 						& d > m.
																\end{cases}
\end{gather*}
Since $\binom{m}{d}\geq 2^{d-1}$ for $d\leq\floor{m/2}$,
this means that for large $m$ the complexity in the antisymmetric case 
increases exponentially fast with $d$ up to a certain maximum.
Beyond this point it falls back to zero.

The information complexity in the symmetric setting is much harder to
calculate for this case. 
However, it can be seen that we have polynomial tractability, but
$n^{\rm sym}(\epsilon,d)$ needs to grow at least linearly with $d$
such that the symmetric problem cannot be strongly polynomially tractable,
whereas this holds in the antisymmetric setting. 
The entire problem again suffers from the curse of dimensionality.
\hfill$\square$
\end{example}

\begin{example}
For a last illustrating example consider the case $\lambda_1=1$
and $\lambda_{j+1}=j^{-\beta}$ for some $\beta\geq0$ and all $j\in\N$.
That means, we have the two largest singular values 
$\sigma_1=\sigma_2$ of $S_1$ equal to one.
The remaining series decays like the inverse of some polynomial.
If $\beta=0$ then the operator~$S_1$ is not compact, 
since the sequence $\lambda=(\lambda_m)_{m\in\N}$ does not tend to zero; 
hence all the information complexities are infinite in this case.

For $\beta>0$, any $\delta>0$, and some $C>0$ we have
\begin{gather*}
			n^{\rm ent}(\epsilon,d)\geq 2^{d}, 
			\quad n^{\rm sym}(\epsilon,d) \geq d+1,
			\quad \text{and} \quad n^{\rm asy}(\epsilon,d)\leq C \epsilon^{-(2/\beta+\delta)},
\end{gather*}
for all $\epsilon<1$, as well as every $d\in\N$.
Thus, again for the entire problem we observe the curse, whereas the
antisymmetric problem is strongly polynomially tractable.
Once more, the symmetric problem can be shown to be polynomially tractable.
Note that in this example the antisymmetric case is not trivial,
because all $\lambda_j$ are strictly positive.
If we replace $j^{-\beta}$ by $\log^{-1}(j+1)$ in this example we obtain
(polynomial) intractability even in the antisymmetric setting.
\hfill$\square$
\end{example}

\subsection{Wavefunctions}\label{sect:wave}
During the few last decades there has been considerable interest in finding approximations 
of so-called \emph{wavefunctions}, e.g., solutions of the electronic Schr\"{o}dinger equation. 
Due to the \emph{Pauli principle} of quantum physics only functions with certain
(anti)symmetry properties are of physical interest. 
For a more detailed view see, 
e.g, Hamaekers~\cite{H09}, Yserentant~\cite{Y10}, or Zeiser~\cite{Z10}.
Furthermore, for a comprehensive introduction to the topic, 
as well as a historical survey, 
we refer the reader to Hunziker and Sigal~\cite{HS00}
and Reed and Simon~\cite{RS78}.

In particular, the notion of multiple partial antisymmetry \wrt 
two sets of coordinates is useful for describing wavefunctions~$\Psi$.
In computational chemistry such functions 
occur as models which describe quantum states of certain physical $d$-particle systems. 
Formally, these functions depend on $d$ blocks of variables $\bm{y_i}=(\bm{x^{(i)}},s^{(i)})$, 
for $i=1,\ldots,d$, which represent the spacial coordinates 
$\bm{x^{(i)}}=(x_1^{(i)},x_2^{(i)},x_3^{(i)})\in\R^3$ and 
certain additional intrinsic parameters 
$s^{(i)} \in C$ of each particle $\bm{y_i}$ within the system.
Hence, rearranging the arguments such that 
$\bm{x}=(\bm{x^{(1)}},\ldots,\bm{x^{(d)}})$ and $\bm{s}=(s^{(1)},\ldots,s^{(d)})$ yields that
\begin{gather*}
				\Psi \colon (\R^{3})^{d} \times C^d \nach \R, \quad (\bm{x},\bm{s}) \mapsto \Psi(\bm{x},\bm{s}).
\end{gather*}
In the case of systems of electrons one of the most important parameters 
is called \emph{spin} and it can take only two values, i.e., 
$s^{(i)}\in C=\{-\frac{1}{2}, + \frac{1}{2}\}$. 
Due to the Pauli principle 
the only wavefunctions $\Psi$ 
that are physically admissible are those which are antisymmetric 
in the sense that for $I\subseteq\{1,\ldots,d\}$ and $I^c=\{1,\ldots,d\}\setminus I$
\begin{gather*}
				\Psi(\bm{\pi(x)},\bm{\pi(s)}) = (-1)^{\abs{\pi}} \Psi(\bm{x},\bm{s}) \quad \text{for all} \quad \pi \in \S_I \cup \S_{I^c}.
\end{gather*}
Thus $\Psi$ changes its sign if we replace any particles $\bm{y_i}$ and $\bm{y_j}$ 
by each other which possess the same spin, \ie $s^{(i)}=s^{(j)}$.
So the set of particles, and therefore also the set of spacial coordinates,
naturally split into two groups $I_+$ and $I_-$.
In detail, for wavefunctions of $d$ particles $\bm{y_i}$
we can (without loss of generality) assume
that the first $\#I_+$ indices $i$ belong to the group of positive spin, 
whereas the rest of the particles possess negative spin, 
\ie $I_+=\{1,\ldots,\#I_+\}$ and $I_-= I_+^c=\{\#I_+ + 1,\ldots, d\}$.

In physics it is well-known that some problems, e.g., the electronic Schr\"{o}dinger equation,
which involve (general) wavefunctions can be reduced to a bunch of similar problems,
where each of them only acts on functions $\Psi_{\bm{s}}$ 
out of a certain Hilbert space $\F_d = \F_d(\bm{s})$.
That is,
\begin{gather*}
				\Psi_{\bm{s}} =\Psi(\bm{\cdot},\bm{s}) \in \F_d \subset \{f \colon (\R^{3})^{d} \nach \R\}
\end{gather*}
with a given fixed spin configuration $\bm{s}\in C^d$. 
Of course every possible spin configuration~$\bm{s}$ 
corresponds to exactly one choice 
$I_+\subseteq\{1,\ldots,d\}$ of indices. 
Moreover, it is known that $\F_d$ is a Hilbert space which
possesses a tensor product structure.
Therefore we can model wavefunctions 
as elements of certain classes of smoothness, 
e.g., $\F_d \subset H_d = H_1\otimes\ldots\otimes H_1 = W_2^{(1,\ldots,1)}((\R^3)^d)$, 
as Yserentant~\cite{Y10} recently did,
and incorporate spin properties 
by using projections of the type 
$\AI = \AI_{I_+} \circ \AI_{I_-}$, 
as defined in \autoref{sect:antisym_HFS}.
In particular, \autoref{lemma_basis} then yields that
\begin{gather*}
				\F_d = \AI(H_d) = \AI_{I_+}(H_{\#I_+}) \otimes \AI_{I_-}(H_{\#I_-})
\end{gather*}
and the system of all
\begin{gather*}
				\overline{\xi}_{\bm{k}} 
				= \sqrt{\#\S_{I_+} \cdot \#\S_{I_-}} \cdot \AI(e_{\bm{k}}), \quad \bm{k} \in \overline{\nabla}_d,
\end{gather*}
with
\begin{gather*}
				\overline{\nabla}_d 
				= \left\{ \bm{k}=(\bm{i},\bm{j}) \in \N^{\#I_+}\!\times\N^{\#I_-} \sep i_1 < i_2 < \ldots < i_{\#I_+} \text{ and } j_{1} < \ldots < j_{\#I_-} \right\}
\end{gather*}
builds an orthonormal basis of $\F_d = \AI(H_d)$, where
the set $\{e_{\bm{m}} \sep \bm{m} \in \N^d \}$ 
is once again assumed to be an orthonormal 
tensor product basis of $H_d=H_1\otimes \ldots \otimes H_1$ 
constructed with the help of
$\{e_m \sep m\in\N\}$, an arbitrary orthonormal basis of $H_1$. 

Note that in the former sections the underlying 
Hilbert space $H_1$ always consists of 
univariate functions. 
In contrast, wavefunctions of one particle depend on at least three (spacial) variables, 
but we want to stress the point that this is just a formal issue.
Anyway, our approach radically decreases the degrees of freedom 
and improves the solvability of certain problems $S=(S_d)_{d\in\N}$ 
like the approximation problem, \ie $S_d=\id\colon H_d \nach \G_d$ for every $d\in\N$,
considered in connection with the electronic Schr\"{o}dinger equation.

\autoref{theo:opt_sym_algo} provides an algorithm which is
optimal for the $\G_d$-approxima\-tion of 
$d$-particle wavefunctions in $\F_d$
with respect to all linear algorithms 
that use at most $n$ continuous linear functionals.
Therefore we only need to choose the right ONB 
$\{e_m=\phi_m \sep m\in\N\}$ of $H_1$ which coincides 
with the eigenfunctions of the 
univariate operator $W_1={S_1}^{\!\dagger} S_1$.
Moreover, the error can be calculated 
exactly in terms of the eigenvalues 
$\lambda = (\lambda_m)_{m\in\N}$ of $W_1$.

Furthermore it is possible to prove 
a modification of \autoref{thm_asy_abs} 
for problems dealing with wavefunctions.
In fact, for the mentioned approximation problem 
polynomial tractability as well as strong polynomial tractability are
equivalent to the fact that the sequence $\lambda$ 
of the squared singular values
of the univariate problem belong to some $\l_\tau$-space
if we consider the absolute error criterion.
The reason is that all the assertions in \autoref{sect:antisym_abs}
can be easily extended to the multiple partially antisymmetric case.
In detail, if we denote the number of 
antisymmetric coordinates~$\bm{x^{(i)}}$ within
each antisymmetry group $I_d^{(m)}\subseteq\{1,\ldots,d\}$ 
by $a_{d,m}$ with $m=1,\ldots,M$
then the constraint $a_d + b_d = d$ extends to 
\begin{gather*}
		a_{d,1}+\ldots+a_{d,M}+b_d=d.
\end{gather*}
Here $b_d$ again denotes the number of coordinates 
without any antisymmetry condition.
In conclusion, the sufficient condition \link{suf_condition} in
\autoref{prop_suf_asy} transfers to
\begin{gather*}
			\frac{1}{d} \sum_{m=1}^M \ln(a_{d,m}!) \geq \norm{\lambda \sep \l_\tau}^\tau, 
			\quad \text{for all} \quad d\geq d_0,
\end{gather*}
which is always satisfied in the case of wavefunctions, 
since then $M=2$ and the cardinality $a_{d,m}$ 
of at least one of the groups of the same spin needs to grow linearly with the dimension~$d$.

  \cleardoubleplainpage
	\backmatter
	\kopffussBib
	\cleardoublepage
\phantomsection
\addcontentsline{toc}{chapter}{Bibliography}
\bibliographystyle{MyBibStyle}


\begin{thebibliography}{KSWW10b}

\bibitem[Aro50]{A50}
\textsc{N. Aronszajn} - \textit{Theory of reproducing kernels}. Trans. Amer.
  Math. Soc. \textbf{68(3)}, \textit{1950}, pp.~337--404.

\bibitem[Bau96]{B96}
\textsc{H. Bauer} - \textit{Probability {T}heory}. de Gruyter Studies in Math.
  23 (Transl. from the 4th German ed.). Walter de Gruyter \& Co., Berlin. 1996.

\bibitem[Bau01]{B01}
\textsc{H. Bauer} - \textit{Measure and {I}ntegration {T}heory}. de Gruyter
  Studies in Math. 26 (Transl. from the German). Walter de Gruyter \& Co.,
  Berlin. 2001.

\bibitem[Bel57]{B57}
\textsc{R.E. Bellman} - \textit{Dynamic {P}rogramming}. Princeton Univ. Press,
  Princeton, NJ. 1957.

\bibitem[Dei85]{D85}
\textsc{K. Deimling} - \textit{Nonlinear {F}unctional {A}nalysis}. Springer,
  Berlin. 1985.

\bibitem[DKS13]{DKS13}
\textsc{J. Dick, F.Y. Kuo and I.H. Sloan} - \textit{High-dimensional
  integration: the {Q}uasi-{M}onte {C}arlo way}. Acta Numerica \textbf{22},
  \textit{2013}, pp.~133 - 288.

\bibitem[DP10]{DP10}
\textsc{J. Dick and F. Pillichshammer} - \textit{Digital {N}ets and
  {S}equences: {D}iscrepancy {T}heory and {Q}uasi-{M}onte {C}arlo
  {I}ntegration}. {C}ambridge {U}niv. {P}ress, Cambridge. 2010.

\bibitem[Enf73]{E73}
\textsc{P. Enflo} - \textit{A counterexample to the approximation problem in
  {B}anach spaces}. Acta Math. \textbf{130}, \textit{1973}, pp.~309--317.

\bibitem[GW07]{GW07}
\textsc{M. Gnewuch and H. Wo{\'z}niakowski} - \textit{Generalized tractability
  for multivariate problems, part {I}: {L}inear tensor product problems and
  linear information}. J.~Complexity \textbf{23(2)}, \textit{2007},
  pp.~262--295.

\bibitem[GW08]{GW08}
\textsc{M. Gnewuch and H. Wo{\'z}niakowski} - \textit{Generalized tractability
  for linear functionals}, in: A. Keller, S. Heinrich and H. Niederreiter
  (Eds.) - Monte Carlo and Quasi-Monte Carlo Methods 2006. Springer, Berlin.
  2008, pp.~359--381.

\bibitem[GW09]{GW09}
\textsc{M. Gnewuch and H. Wo{\'z}niakowski} - \textit{Generalized tractability
  for multivariate problems, part {II}: {L}inear tensor product problems,
  linear information, and unrestricted tractability}. Found. Comput. Math.
  \textbf{9(4)}, \textit{2009}, pp.~431--460.

\bibitem[GW11]{GW11}
\textsc{M. Gnewuch and H. Wo{\'z}niakowski} - \textit{Quasi-polynomial
  tractability}. J.~Complexity \textbf{27(3--4)}, \textit{2011}, pp.~312--330.

\bibitem[Ham09]{H09}
\textsc{J. Hamaekers} - \textit{{T}ensor {P}roduct {M}ultiscale
  {M}any-{P}article {S}paces with {F}inite-{O}rder {W}eights for the
  {E}lectronic {S}chr\"{o}dinger {E}quation}. Ph.D. thesis, Uni. Bonn, 2009.
  \textit{urn:nbn:de:hbz:5N-18339}.

\bibitem[Han10]{H10}
\textsc{M. Hansen} - \textit{{N}onlinear {A}pproximation and {F}unction
  {S}paces of {D}ominating {M}ixed {S}moothness}. Ph.D. thesis, FSU Jena, 2010.
  \textit{urn:nbn:de:gbv:27-20110121-105128-4}.

\bibitem[HS00]{HS00}
\textsc{W. Hunziker and I.M. Sigal} - \textit{The quantum {$N$}-body problem}.
  J.~Math. Phys. \textbf{41}, \textit{2000}, pp.~3448--3510.

\bibitem[HW00]{HW00}
\textsc{F.J. Hickernell and H. Wo{\'z}niakowski} - \textit{Integration and
  approximation in arbitrary dimensions}. Advances in {C}omp. {M}ath.
  \textbf{12}, \textit{2000}, pp.~25--58.

\bibitem[Kol05]{K05}
\textsc{A. Koldobsky} - \textit{Fourier {A}nalysis in {C}onvex {G}eometry}.
  Amer. Math. Soc., Providence, RI. 2005.

\bibitem[K{\"o}n86]{K86}
\textsc{H. K{\"o}nig} - \textit{Eigenvalue {D}istribution of {C}ompact
  {O}perators}. Operator {T}heory: {A}dvances and {A}pplications 16.
  Birkh\"auser, Basel. 1986.

\bibitem[KR83]{KR83}
\textsc{R.V. Kadison and J.R. Ringrose} - \textit{Fundamentals of the {T}heory
  of {O}perator {A}lgebras. {V}ol. {I}: {E}lementary {T}heory}. Pure and
  Applied Mathematics 100. Academic Press Inc., New York. 1983.

\bibitem[KSS11]{KSS11}
\textsc{F.Y. Kuo, C. Schwab and I.H. Sloan} - \textit{Quasi-{M}onte {C}arlo
  methods for high-dimensional integration: The standard (weighted {H}ilbert
  space) setting and beyond}. ANZIAM~J. \textbf{53(1)}, \textit{2011},
  pp.~1--37.

\bibitem[KSWW10a]{KSWW10}
\textsc{F.Y. Kuo, I.H. Sloan, G.W. Wasilkowski and H. Wo{\'z}niakowski} -
  \textit{Liberating the dimension}. J.~Complexity \textbf{26(5)},
  \textit{2010}, pp.~422--454.

\bibitem[KSWW10b]{KSWW10b}
\textsc{F.Y. Kuo, I.H. Sloan, G.W. Wasilkowski and H. Wo{\'z}niakowski} -
  \textit{On decompositions of multivariate functions}. Math. Comp.
  \textbf{79}, \textit{2010}, pp.~953--966.

\bibitem[KWW08]{KWW08}
\textsc{F.Y. Kuo, G.W. Wasilkowski and H. Wo{\'z}niakowski} -
  \textit{Multivariate {$L_\infty$} approximation in the worst case setting
  over reproducing kernel {H}ilbert spaces}. J.~Approx. Theory \textbf{152(2)},
  \textit{2008}, pp.~135--160.

\bibitem[LC85]{LC85}
\textsc{W.A. Light and E.W. Cheney} - \textit{Approximation {T}heory in
  {T}ensor {P}roduct {S}paces}. Lecture Notes in Mathematics 1169. Springer,
  Berlin. 1985.

\bibitem[Mat90]{M90}
\textsc{P. Math{\'e}} - \textit{{$s$}-{N}umbers in information-based
  complexity}. J.~Complexity \textbf{6(1)}, \textit{1990}, pp.~41--66.

\bibitem[MW81]{MW81}
\textsc{C.A. Micchelli and G. Wahba} - \textit{Design problems for optimal
  surface interpolation}, in: Z. Ziegler (Eds.) - Approximation {T}heory and
  {A}pplications ({P}roc. {W}orkshop, {T}echnion---{I}srael {I}nst. {T}ech.,
  {H}aifa, 1980). Academic Press, New York. 1981, pp.~329--348.

\bibitem[NW08]{NW08}
\textsc{E. Novak and H. Wo{\'z}niakowski} - \textit{Tractability of
  {M}ultivariate {P}roblems. {V}ol. {I}: {L}inear {I}nformation}. EMS Tracts in
  Mathematics~6. European Mathematical Society (EMS), Z\"urich. 2008.

\bibitem[NW09]{NW09}
\textsc{E. Novak and H. Wo{\'z}niakowski} - \textit{Approximation of infinitely
  differentiable multivariate functions is intractable}. J.~Complexity
  \textbf{25(4)}, \textit{2009}, pp.~398--404.

\bibitem[NW10]{NW10}
\textsc{E. Novak and H. Wo{\'z}niakowski} - \textit{Tractability of
  {M}ultivariate {P}roblems. {V}ol. {II}: {S}tandard {I}nformation for
  {F}unctionals}. EMS Tracts in Mathematics 12. European Mathematical Society
  (EMS), Z\"urich. 2010.

\bibitem[NW12]{NW12}
\textsc{E. Novak and H. Wo{\'z}niakowski} - \textit{Tractability of
  {M}ultivariate {P}roblems. {V}ol. {III}: {S}tandard {I}nformation for
  {L}inear {O}perators}. EMS Tracts in Mathematics 18. European Mathematical
  Society (EMS), Z\"urich. 2012.

\bibitem[Pie87]{P87}
\textsc{A. Pietsch} - \textit{Eigenvalues and {$s$}-{N}umbers}. Cambridge
  Studies in Advanced Math. 13. Cambridge Univ. Press, Cambridge. 1987.

\bibitem[Pie07]{P07}
\textsc{A. Pietsch} - \textit{History of {B}anach {S}paces and {L}inear
  {O}perators}. Birkh\"auser, Boston, MA. 2007.

\bibitem[Pin85]{P85}
\textsc{A. Pinkus} - \textit{{$n$}-{W}idths in {A}pproximation {T}heory}.
  Results in Math. and Related Areas 3(7). Springer, Berlin. 1985.

\bibitem[PP09]{PP09}
\textsc{A. Papageorgiou and I. Petras} - \textit{On the tractability of linear
  tensor product problems in the worst case}. J.~Complexity \textbf{25(5)},
  \textit{2009}, pp.~415--419.

\bibitem[RS78]{RS78}
\textsc{M. Reed and B. Simon} - \textit{Methods of {M}odern {M}athematical
  {P}hysics. {V}ol. {IV}: {A}nalysis of {O}perators}. Academic Press, New York.
  1978.

\bibitem[Sie13]{P13}
\textsc{P. Siedlecki} - \textit{Uniform weak tractability}. To appear in:
  J.~Complexity, \textit{2013}.

\bibitem[SJ94]{SJ94}
\textsc{I.H. Sloan and S. Joe} - \textit{Lattice {M}ethods for {M}ultiple
  {I}ntegration}. Oxford Science Publ. Oxford {U}niv. Press, New York. 1994.

\bibitem[SW98]{SW98}
\textsc{I.H. Sloan and H. Wo{\'z}niakowski} - \textit{When are quasi-{M}onte
  {C}arlo algorithms efficient for high-dimensional integrals?}. J.~Complexity
  \textbf{14(1)}, \textit{1998}, pp.~1--33.

\bibitem[SW02]{SW02}
\textsc{I.H. Sloan and H. Wo{\'z}niakowski} - \textit{Tractability of
  integration in non-periodic and periodic weighted tensor product {H}ilbert
  spaces}. J.~Complexity \textbf{18(2)}, \textit{2002}, pp.~479--499.

\bibitem[{Tho}96]{TA96}
\textsc{C. {Thomas-Agnan}} - \textit{Computing a family of reproducing kernels
  for statistical applications}. Numer. Algo. \textbf{13(1)}, \textit{1996},
  pp.~21--32.

\bibitem[Tri92]{T92}
\textsc{H. Triebel} - \textit{Higher {A}nalysis}. Univ. Books for Math.
  (Transl. from the German). Johann Ambrosius Barth Verlag GmbH, Leipzig. 1992.

\bibitem[TWW88]{TWW88}
\textsc{J.F. Traub, G.W. Wasilkowski and H. Wo{\'z}niakowski} -
  \textit{Information-based {C}omplexity}. Academic Press Inc., Boston, MA.
  1988.

\bibitem[Wah90]{W90}
\textsc{G. Wahba} - \textit{Spline {M}odels for {O}bservational {D}ata}. Soc.
  Indust. Appl. Math. (SIAM), Philadelphia, PA. 1990.

\bibitem[Wei11]{W11}
\textsc{M. Weimar} - \textit{The complexity of linear tensor product problems
  in (anti-) symmetric {H}ilbert spaces}. Manuscript, available at:
  http://arxiv.org/abs/1111.0057, \textit{2011}.

\bibitem[Wei12a]{W12b}
\textsc{M. Weimar} - \textit{The complexity of linear tensor product problems
  in (anti)symmetric {H}ilbert spaces}. J.~Approx. Theory \textbf{164(10)},
  \textit{2012}, pp.~1345--1368.

\bibitem[Wei12b]{W12}
\textsc{M. Weimar} - \textit{Tractability results for weighted {B}anach spaces
  of smooth functions}. J.~Complexity \textbf{28(1)}, \textit{2012},
  pp.~59--75.

\bibitem[Wo{\'z}94a]{W94a}
\textsc{H. Wo{\'z}niakowski} - \textit{Tractability and strong tractability of
  linear multivariate problems}. J.~Complexity \textbf{10(1)}, \textit{1994},
  pp.~96--128.

\bibitem[Wo{\'z}94b]{W94b}
\textsc{H. Wo{\'z}niakowski} - \textit{Tractability and strong tractability of
  multivariate tensor product problems}. J.~of Computing and Information
  \textbf{4}, \textit{1994}, pp.~1--19.

\bibitem[WW07]{WW07}
\textsc{A.G. Werschulz and H. Wo{\'z}niakowski} - \textit{Tractability of
  quasilinear problems~{I}: {G}eneral results}. J.~Approx. Theory
  \textbf{145(2)}, \textit{2007}, pp.~266--285.

\bibitem[WW09]{WW09}
\textsc{A.G. Werschulz and H. Wo{\'z}niakowski} - \textit{Tractability of
  multivariate approximation over a weighted unanchored {S}obolev space}.
  Constr. Approx. \textbf{30}, \textit{2009}, pp.~395--421.

\bibitem[Yos80]{Y80}
\textsc{K. Yosida} - \textit{Functional {A}nalysis}. Fund. Principles of Math.
  Sci. 123 (6th ed.). Springer-Verlag, Berlin. 1980.

\bibitem[Yse10]{Y10}
\textsc{H. Yserentant} - \textit{{R}egularity and {A}pproximability of
  {E}lectronic {W}ave {F}unctions}. Lecture Notes in Mathematics.
  Springer-Verlag, Berlin. 2010.

\bibitem[Zei10]{Z10}
\textsc{A. Zeiser} - \textit{{D}irekte {D}iskretisierung der
  {S}chr\"{o}dingergleichung auf {D}\"{u}nnen {G}ittern}. Ph.D. thesis, TU
  Berlin, 2010. \textit{urn:nbn:de:kobv:83-opus-27910}.

\bibitem[ZZ13]{ZZ13}
\textsc{H. Zhang and J. Zhang} - \textit{Vector-valued reproducing kernel
  {B}anach spaces with applications to multi-task learning}. J.~Complexity
  \textbf{29(2)}, \textit{2013}, pp.~195--215.

\end{thebibliography}

\end{document}